\newtheorem{theorem}{Theorem}
\newtheorem{lemma}[theorem]{Lemma}
\newtheorem{problem}[theorem]{Problem}
\newtheorem{corollary}[theorem]{Corollary}
\newtheorem{conjecture}[theorem]{Conjecture}
\newtheorem{prop}[theorem]{Proposition}
\newcommand{\bb}[1]{\raisebox{-2ex}[0pt][0pt]{\shortstack{#1}}}
\title{The smallest number of vertices in a 2-arc-strong digraph which has no good pair}
\author{Ran Gu$^1$, Gregory Gutin$^2$, Shasha Li$^3$,\ Yongtang Shi$^4$, Zhenyu Taoqiu$^4$  \\[2mm]
{\small $^1$ College of Science, Hohai University}\\
{\small Nanjing, Jiangsu Province 210098,
P.R. China}\\
{\small Email: rangu@hhu.edu.cn}\\
{\small $^2$ Department of Computer Science}\\
{\small Royal Holloway, University of London}\\
{\small Egham, Surrey, TW20 0EX, UK
}\\
{\small Email: g.gutin@rhul.ac.uk}\\
{\small $^3$ Department of Mathematics}\\
{\small Ningbo University, Ningbo 315211, Zhejiang, China}\\
{\small Email: yezi$\_$pg@163.com}\\
{\small $^4$  Center for Combinatorics and LPMC}\\
{\small Nankai University, Tianjin 300071, China}\\
{\small Emails: shi@nankai.edu.cn, tochy@mail.nankai.edu.cn}
}
\date{\today}
\begin{document}
	\maketitle
\begin{abstract}
Bang-Jensen, Bessy, Havet and Yeo showed that every digraph of independence number at most $2$ and arc-connectivity at least $2$ has
an out-branching $B^+$ and an in-branching $B^-$ which are arc-disjoint (such a pair of branchings is called a {\it good pair}), which settled a conjecture of Thomassen for digraphs of independence number $2$. They also proved that every digraph on at most $6$ vertices and arc-connectivity at least $2$ has a good pair and gave an example of a $2$-arc-strong digraph $D$ on $10$ vertices with independence number 4 that has no good pair. They asked for the smallest number $n$ of vertices in a $2$-arc-strong digraph which has no good pair.
In this paper, we prove that every digraph on at most $9$ vertices and arc-connectivity at least $2$ has a good pair, which solves this problem.	\\
\noindent\textbf{Keywords:} Arc-disjoint branchings; out-branching; in-branching; arc-connectivity
\end{abstract}

\tableofcontents

\section{Introduction}

An {\em out-branching} ({\em in-branching}) of a
digraph $D=(V,A)$ is a spanning tree in the underlying graph of $D$ whose edges are oriented in $D$ such that every
vertex except one, called the {\em root}, has in-degree (out-degree) one. For a non-empty subset $X\subset V$, the \textit{in-degree} (resp. \textit{out-degree}) of the set $X$, denoted by $d_D^-(X)$ (resp. $d_D^+(X)$), is the number of arcs with head (resp. tail) in $X$ and tail (resp. head) in $V\setminus X$.
The {\it arc-connectivity} of $D$, denoted by $\lambda(D)$, is the
minimum out-degree of a proper subset of vertices. A digraph is \textit{$k$-arc-strongly connected} (or just \textit{$k$-arc-strong}) if $\lambda(D)\ge k$. In particular, a digraph is {\it strongly connected} (or just {\it strong}) if $\lambda(D)\geq 1$.

It is an interesting problem to characterize digraphs having an out-branching and an in-branching which are arc-disjoint. Such a pair of branchings are called a {\it good pair}. Thomassen \cite{Thomassen}
proved that it is {\sf NP}-complete to decide whether a given digraph $D$ has an out-branching
and an in-branching both rooted at the same vertex such that these are arc-disjoint. This implies that it is {\sf NP}-complete to decide if a given digraph has an out-branching and in-branching which are arc-disjoint \cite{BBHY}.
Thomassen also conjectured that every digraph of sufficiently high arc-connectivity has a good pair.

\begin{conjecture}[\cite{Thomassen}] \label{conj}
There is a constant $c$, such that every digraph with arc-connectivity at
least $c$ has an out-branching and an in-branching which are arc-disjoint.
\end{conjecture}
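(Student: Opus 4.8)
The plan is to attack Conjecture~\ref{conj} by combining a minimal-counterexample reduction with a joint augmentation argument, using the excess connectivity provided by a large constant $c$ as the sole resource (in place of the density that was available in the independence-number-$2$ case). First I would record the starting point supplied by Edmonds' branching theorem: since $\lambda(D)\ge c$ forces $d^-_D(X)\ge c$ and $d^+_D(X)\ge c$ for every proper nonempty $X\subset V$, for any chosen root $r$ there exist $c$ arc-disjoint out-branchings $B^+$ rooted at $r$, and dually $c$ arc-disjoint in-branchings $B^-$ rooted at any $s$. The whole difficulty is that these two packings are produced independently and may overlap. A crucial feature I would exploit is that a good pair allows \emph{distinct} roots for $B^+$ and $B^-$: Thomassen's NP-completeness result shows the common-root version is genuinely harder, so the roots $r,s$ should be chosen adaptively rather than fixed in advance.

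Next I would pass to a minimal counterexample $D$: among all digraphs with $\lambda\ge c$ and no good pair, take one minimizing $|A|$ and then $|V|$. Minimality makes every arc critical, so $D$ is minimally $c$-arc-strong, and by Mader's theorem on minimally $k$-arc-strong digraphs it contains a vertex $v$ with $d^-_D(v)=d^+_D(v)=c$. This low-degree vertex anchors the inductive step: I would apply directed splitting-off techniques (Mader, Frank) at $v$, pairing an in-arc with an out-arc so that $\lambda\ge c$ is preserved on the smaller digraph, with the intent that a good pair in the reduced digraph lifts back to a good pair in $D$, contradicting minimality. The tight cuts through $v$ form a crossing-submodular family, and the aim of the reduction is to force this family into a bounded, highly constrained base configuration in which a good pair can either be exhibited directly or shown to lift.

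The heart of the argument is a joint augmentation lemma. Starting from any out-branching $B^+$ and in-branching $B^-$, set $\Phi=|A(B^+)\cap A(B^-)|$. I would show that if $\Phi>0$ and $c$ is large, one of the two branchings can be locally rerouted to decrease $\Phi$ while remaining a branching: for a shared arc $a=uv$, the bound $d^-_D(v)\ge c$ guarantees many in-arcs at $v$ unused by $B^+$, and one swaps $a$ for such an alternative, using the excess connectivity to restore acyclicity of $B^+$ (or symmetrically of $B^-$), iterating until $\Phi=0$. The step I expect to be the \textbf{main obstacle} — and the reason the conjecture remains open — is exactly this rerouting. Unlike the single-root packing governed by Edmonds, the simultaneous out- and in- requirement is not known to reduce to any matroid intersection or submodular-flow feasibility problem, so there is no off-the-shelf min-max theorem certifying that the rerouting completes: swapping an in-arc to repair $B^+$ may create a directed cycle, and breaking that cycle may reintroduce a conflict with $B^-$. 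Controlling this interaction with a genuinely monotone potential is the crux.

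If the monotone potential resists, I would fall back on a cruder peeling argument aiming only at some explicit (large) $c$ rather than the optimal one: extract a single out-branching $B^+$ chosen to cross each tight cut as few times as possible, and try to show that every out-cut of $D-A(B^+)$ retains positive residual connectivity, so that $D-A(B^+)$ still admits an in-branching rooted at a suitable $s$. The delicate point here is bounding $d^+_{B^+}(X)$ uniformly over cuts $X$, which is essentially the same difficulty in another guise. In either route, the decisive new ingredient the proof must supply is that connectivity slack alone can play the structural role that independence number $2$ played in the known case; making this slack do the work of guaranteeing successful simultaneous rerouting is, I expect, where the real effort lies.
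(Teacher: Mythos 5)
You have not proved anything here, and it is important to be clear about the status of the statement itself: Conjecture~\ref{conj} is an \emph{open conjecture} quoted from \cite{Thomassen}; the paper contains no proof of it and does not claim one. What is known is the independence-number-$2$ case (Theorem~\ref{thm1}, from \cite{BBHY}) and the small-order verifications for $c=2$ proved in this paper (Theorems~\ref{thm2}--\ref{thm4}), which answer the extremal question of \cite{BBHY} rather than the conjecture. Your text is a research programme, not a proof, and you concede this yourself: the decisive step --- the joint augmentation lemma asserting that the overlap $\Phi=|A(B^+)\cap A(B^-)|$ can always be monotonically decreased --- is exactly what you label ``the main obstacle,'' and no argument is offered for it. Without that lemma (or, in your fallback route, without the uniform bound on $d^+_{B^+}(X)$ over all cuts, which you correctly identify as the same difficulty), nothing beyond Edmonds' theorem (Lemma~\ref{lem4}) is established, and Edmonds only packs out-branchings (or, dually, in-branchings) separately; it says nothing about making one of each arc-disjoint. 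The cited NP-completeness of the arc-disjoint out-/in-branching problem (\cite{Thomassen}, and its extension in \cite{BBHY}) is moreover strong evidence that no min-max or submodular-flow certificate of the kind your rerouting would need exists, so the gap is not a routine technicality.

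The reduction scaffolding also has concrete holes. Minimality does give a minimally $c$-arc-strong counterexample, and Mader--Frank splitting off at a vertex $v$ with $d^+(v)=d^-(v)=c$ can preserve $\lambda\ge c$ on $D-v$; but the claim that ``a good pair in the reduced digraph lifts back'' fails as stated. If the reduced good pair uses two or more split arcs through $v$, the lifted out-branching acquires in-degree at least $2$ at $v$ (dually for the in-branching), so it is no longer a branching; if it uses none, $v$ is not covered at all; and repairing either defect requires exchanging arcs incident with $v$ that may recreate intersections between the two lifted subdigraphs --- precisely the uncontrolled interaction your potential argument cannot yet handle. Splitting off may also create parallel arcs, leaving the simple-digraph setting of this paper. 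In short: there is a genuine gap, and it is not peripheral but the entire core of the argument; the proposal should not be mistaken for a proof of Conjecture~\ref{conj}, which remains open for every constant $c$.
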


Conjecture \ref{conj} has been verified for semicomplete digraphs \cite{Bang-Jensen} and their genearlizations: locally semicomplete digraphs \cite{BH} and semicomplete compositions \cite{BGY} (follows from its main result in \cite{BGY}).
In \cite{BBHY}, Bang-Jensen, Bessy, Havet and Yeo showed that every digraph of independence number at most $2$ and arc-connectivity at least $2$ has
a good pair, which settles the conjecture for digraphs of independence number $2$.
\begin{theorem}[\cite{BBHY}]\label{thm1}
If $D$ is a digraph with $\alpha(D)\leq 2\leq \lambda(D)$, then $D$ has a good pair.
\end{theorem}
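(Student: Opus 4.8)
The plan is to exploit two features of $D$: its density (from $\alpha(D)\le 2$) and its redundancy (from $\lambda(D)\ge 2$). The starting point is the observation that a directed Hamiltonian path $v_1\to v_2\to\cdots\to v_n$ is at once an out-branching rooted at $v_1$ and an in-branching rooted at $v_n$. Hence a good pair would follow immediately from two arc-disjoint Hamiltonian paths, and more generally from a Hamiltonian path $P$ together with any in-branching contained in $D-A(P)$. So the first reduction I would make is: find a Hamiltonian path $P$ to serve as $B^+$, and then produce an in-branching $B^-$ inside the leftover digraph $D-A(P)$.

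First I would secure the spanning directed paths. Since $\lambda(D)\ge 2$, the digraph $D$ is strong, and by the Gallai--Milgram theorem $V(D)$ is covered by at most $\alpha(D)\le 2$ vertex-disjoint directed paths. If one path suffices, $D$ has a Hamiltonian path outright; if two paths $P_1,P_2$ are needed, I would link them into a single spanning out-branching and, separately, into a single spanning in-branching, using connecting arcs supplied by the arc-connectivity. Concretely, to turn $P_1,P_2$ into an out-branching one attaches the start-vertex of one path to the tree grown from the start of the other, and $\alpha(D)\le 2$ together with $\lambda(D)\ge 2$ should guarantee enough arcs between the two paths to do this; the construction of $B^-$ is symmetric.

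The heart of the argument, and the step I expect to be the main obstacle, is arc-disjointness: the out-branching $B^+$ and the in-branching $B^-$ must avoid each other, both on the arcs lying inside the spanning paths and on the linking arcs. The danger is a tight cut, a set $X$ with $d_D^+(X)=2$: if $B^+$ already uses both out-arcs of $X$, then $D-A(B^+)$ has $d^+(X)=0$ and no in-branching can route out of $X$. Controlling these $2$-cuts is where $\alpha(D)\le 2$ must be used decisively, for the pairs of vertices spanning no arc form a triangle-free graph, which sharply restricts how tight cuts can cross and nest, and in turn lets one reroute a saturating arc of $B^+$ through a non-path arc whose existence is forced by the absence of independent triples. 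I would organize this as an exchange/augmentation argument: start from the spanning-path branchings and, whenever a $2$-cut is doubly saturated by $B^+$, perform a local swap that frees one of its out-arcs while keeping $B^+$ spanning and acyclic.

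Finally I would assemble the pieces, most cleanly via a minimal-counterexample framework: assume $D$ is a smallest $2$-arc-strong digraph with $\alpha(D)\le 2$ and no good pair, use the density to extract either a Hamiltonian path or a two-path cover, and derive a contradiction through the linking-and-exchange construction above, with the tight-cut analysis ensuring that the leftover digraph still admits an in-branching. The routine parts, namely verifying that each swap preserves the branching property and that the linking arcs exist, I would defer; the genuinely delicate part is the simultaneous control of all tight cuts, which is exactly where the hypothesis $\alpha(D)\le 2$, rather than mere high arc-connectivity, earns its keep.
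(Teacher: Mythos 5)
You are attempting a result that this paper does not prove at all: Theorem~\ref{thm1} is imported from \cite{BBHY}, where its proof is a long structural argument, so your sketch must stand on its own merits — and it does not, because the step you explicitly defer is the entire content of the theorem. The easy half is producing the spanning objects: indeed, with $D$ strong and $\alpha(D)\le 2$ you could even quote the Chen--Manalastas theorem to get a Hamilton dipath outright, instead of gluing two Gallai--Milgram paths (your gluing step is itself only asserted: to keep both paths' arcs you need an arc from $V(P_1)$ to the \emph{initial} vertex of $P_2$, and $\alpha(D)\le 2$ with $\lambda(D)\ge 2$ does not force such an arc, since all in-arcs of that vertex may come from inside $P_2$; one must restructure the path system, which you do not do). The hard half — choosing $B^+$ so that $D-A(B^+)$ still admits an in-branching, equivalently has exactly one terminal strong component (Proposition~\ref{prop5}) — is exactly where you gesture at an ``exchange/augmentation'' procedure with no argument that a freeing swap exists, that the process terminates, or that fixing one cut does not saturate another.

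Your tight-cut analysis is also set up on the wrong class of cuts: the obstruction is not only sets with $d_D^+(X)=2$. An out-branching may use up to $\min(d^+(X),|V\setminus X|)$ arcs leaving $X$, so a cut with $d^+(X)=3$ can be fully saturated by $B^+$ just as fatally, and you must control \emph{all} fully saturated out-cuts simultaneously. Since deciding whether a digraph has a good pair is NP-complete (Thomassen's theorem, as noted in the introduction of this paper), no generic local-exchange scheme can succeed without exploiting $\alpha(D)\le 2$ far more substantially than ``tight cuts cannot cross badly''; a correct proof must weave the independence hypothesis through the entire construction, which is why the proof in \cite{BBHY} is lengthy. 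The present paper itself illustrates the point: even after a Hamilton dipath $P$ is secured in Theorems~\ref{thm2}--\ref{thm4}, one still needs the special hypotheses of Proposition~\ref{prop6} on the strong components of $D-A(P)$ plus extensive case analysis, and the $10$-vertex $2$-arc-strong example of \cite{BBHY} with no good pair (independence number $4$) shows that the leftover-in-branching step is precisely where independence number, not arc-connectivity, must do the work. So your proposal correctly identifies the difficulty but does not resolve it; as written it is a plan, not a proof.
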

\noindent Moreover, they also proved that every
digraph on at most $6$ vertices and arc-connectivity at least $2$ has a good pair and gave an example of a
$2$-arc-strong digraph $D$ on $10$ vertices with independence number 4 that has no good pair. They asked for the smallest number $n$ of vertices in a $2$-arc-strong digraph which has no good pair.
The following is the first problem in Section 8 of [2].
\begin{problem}[\cite{BBHY}]
What is the smallest number $n$ of vertices in a $2$-arc-strong digraph which has no good pair?
\end{problem}
In this paper, we prove that every digraph on at most $9$ vertices and arc-connectivity at least $2$ has a good pair, which answers this problem. The main results of the paper are shown below.

\begin{theorem}\label{thm2}
	Every 2-arc-strong digraph on 7 vertices has a good pair.
\end{theorem}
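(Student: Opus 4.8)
The plan is to reduce to the case of large independence number and then carry out a structural case analysis. First I would dispose of the case $\alpha(D)\le 2$ immediately: by Theorem~\ref{thm1}, any $2$-arc-strong digraph with $\alpha(D)\le 2$ already has a good pair, so from now on I assume $\alpha(D)\ge 3$. Two observations then constrain the situation sharply. Since $\lambda(D)\ge 2$ we have $d^+(v)=d^+(\{v\})\ge 2$ and $d^-(v)=d^+(V\setminus\{v\})\ge 2$ for every vertex $v$. Moreover, if $I$ is an independent set with $|I|=k$, then all arcs incident with $I$ run between $I$ and $V\setminus I$, so each of the $k$ vertices of $I$ needs at least two out-neighbours and two in-neighbours inside the $7-k$ vertices of $V\setminus I$; this forces $7-k\ge 2$, i.e. $\alpha(D)\le 5$. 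Hence it suffices to treat the three cases $\alpha(D)\in\{3,4,5\}$, and in each I would write $S=V\setminus I$ for the \emph{core}, with $|S|=7-\alpha(D)\in\{2,3,4\}$.

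The large-$\alpha$ cases are easy because the structure is almost forced. When $\alpha(D)=5$ the core is $S=\{u,v\}$ and every $x\in I$ must have both $u$ and $v$ as in- and as out-neighbours, so all four arcs of each digon between $x$ and $\{u,v\}$ are present and the only freedom is the (possibly empty) set of arcs between $u$ and $v$. A good pair can then be written down explicitly: take $B^+$ rooted at $u$ using $u\to x_1$, $x_1\to v$ and $v\to x_i$ for $i\ge 2$, and $B^-$ rooted at $v$ using $x_i\to v$ for $i\ge 2$ together with $x_1\to u$ and $u\to x_2$; these span $V$ and are arc-disjoint. The case $\alpha(D)=4$, with $|S|=3$, has more freedom but is still small: I would again aim for a good pair in which the vertices of $I$ are attached as \emph{leaves} (each $x$ receiving one in-arc in $B^+$ and sending one out-arc in $B^-$, always possible since $x$ has at least two in- and two out-neighbours in $S$), routing through $I$-vertices only when this is needed to connect up a poorly-connected core.

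The crux is $\alpha(D)=3$, where $S$ has four vertices and can realise many non-isomorphic arc patterns, while each $x_i$ is joined to $S$ by only two guaranteed out-arcs and two in-arcs. The strategy is unchanged: construct an out-branching and an in-branching of all of $D$, using the three independent vertices either as leaves or, when the core is not sufficiently connected on its own, as internal \emph{connectors} (exactly as $u,v$ were linked only through some $x_i$ when $\alpha=5$). The main obstacle, and the reason a case analysis seems unavoidable, is that the arc budget at each $x_i$ is extremely tight: with only two in-arcs and two out-arcs, committing one in-arc to $B^+$ and one out-arc to $B^-$ leaves essentially no slack, so arc-disjointness must be tracked carefully throughout. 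I would therefore organise the analysis by the connectivity/isomorphism type of the core $D[S]$ and by how the three independent vertices distribute their few arcs over $S$, repeatedly exploiting that $\lambda(D)\ge 2$ supplies, via Menger's theorem, two arc-disjoint paths between any prescribed pair of vertices, so that the two branchings can be grown simultaneously while staying arc-disjoint. Checking that every resulting configuration admits such a construction is the bulk of the work.
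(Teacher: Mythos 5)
Your reduction is sound as far as it goes: $\alpha(D)\le 2$ is indeed dispatched by Theorem~\ref{thm1}, the degree argument correctly bounds $\alpha(D)\le 5$, and your explicit branchings for $\alpha(D)=5$ check out (all four arcs between each independent vertex and the two core vertices are forced, and the two branchings you list are arc-disjoint). The problem is that this is where the actual proof ends. For $\alpha(D)=4$ you offer only the intention to attach the independent vertices as leaves ``routing through $I$-vertices only when this is needed,'' and for $\alpha(D)=3$ --- which you yourself identify as the crux, and which is the generic case --- you give a plan for a case analysis rather than the analysis itself. A digraph on a $4$-vertex core with digons allowed, together with three independent vertices each contributing at least two in-arcs and two out-arcs (and possibly many more), yields a case space in the thousands; moreover $2$-arc-strongness is a global hypothesis that does not decompose into ``core type plus attachment pattern,'' so you cannot even enumerate the admissible configurations without auxiliary structural lemmas. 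The one tool you invoke, Menger's theorem (Lemma~\ref{lem3}) giving two arc-disjoint $(s,t)$-paths, does not by itself provide a mechanism for growing an out-branching and an in-branching \emph{simultaneously} while keeping them arc-disjoint --- that is precisely the difficulty, not a resource, and you supply no lemma or invariant that makes the tight arc budget at the independent vertices manageable.

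For contrast, the paper never fixes a maximum independent set. It shows instead that a counterexample can contain no small subdigraph with a good pair: Lemmas~\ref{lem5} and~\ref{lem6} with Propositions~\ref{prop3} and~\ref{prop4} force the largest clique to be a $3$-vertex tournament, and Lemma~\ref{lem7} then kills every digon (since both neighbourhoods of a digon must have at least two vertices), so $D$ is an oriented graph. Proposition~\ref{h1} supplies a Hamilton dipath $P$, and the decisive step is Proposition~\ref{prop6}: deleting $A(P)$ leaves exactly two non-adjacent strong components positioned so that a good pair can be assembled from $P$ and branchings inside those components. Something playing the role of these reduction lemmas --- a way to convert local substructures (digons, cliques, a Hamilton path) into global branchings --- is exactly what your blueprint is missing; without it, the $\alpha(D)=3$ case cannot be completed along the lines you describe.
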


\begin{theorem}\label{thm3}
	Every 2-arc-strong digraph on 8 vertices has a good pair.
\end{theorem}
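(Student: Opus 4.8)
The plan is to argue by contradiction, taking $D$ to be a $2$-arc-strong digraph on $8$ vertices with no good pair and having the fewest arcs among all such counterexamples. The first observation is that $D$ must be \emph{minimally} $2$-arc-strong: if some arc $e$ could be deleted while keeping $\lambda\ge 2$, then $D-e$ would be a smaller counterexample, since any good pair of $D-e$ is also a good pair of $D$. By Theorem~\ref{thm1}, $\alpha(D)\le 2$ already guarantees a good pair, so I may assume $\alpha(D)\ge 3$. Fix a maximum independent set $I$ and put $J=V\setminus I$. Because $I$ is independent, every arc incident with a vertex of $I$ runs between $I$ and $J$, and $2$-arc-strong connectivity forces each $v\in I$ to send at least two arcs into $J$ and to receive at least two from $J$; hence $|J|\ge 2$ and $3\le\alpha(D)\le 6$. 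This splits the argument into the four cases $|I|\in\{3,4,5,6\}$, and in each case $D$ is determined by $D[J]$ together with the bipartite pattern of arcs between $I$ and $J$.

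The working reformulation I would use throughout is the standard one: $D$ has a good pair precisely when there is an out-branching $B^{+}$ (rooted at some $s$) such that the leftover arc set $A(D)\setminus A(B^{+})$ contains a spanning in-branching, equivalently $d^{+}_{A(D)\setminus A(B^{+})}(X)\ge 1$ for some fixed root $t$ and every $X\not\ni t$. Building a good pair then amounts to choosing $B^{+}$ so as to leave at least one arc across every relevant cut. For the cases with a large independent set ($|I|=5,6$, so $|J|\in\{2,3\}$) the structure is rigid, and I would write down $B^{+}$ and $B^{-}$ explicitly: route the out-branching from a chosen vertex of $J$ to all of $I$ and organise the remaining $I\to J$ arcs into an in-branching, then verify arc-disjointness directly.

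The substantive work lies in $|I|=3$ and $|I|=4$, where $J$ has $5$ or $4$ vertices and both $D[J]$ and the bipartite arcs admit many configurations. Here I would exploit the structure of minimally $2$-arc-strong digraphs (which contain at least two vertices of in-degree exactly $2$ and at least two of out-degree exactly $2$) and take such a low-degree vertex $v$ as a pivot. One natural route is a reduction: at a vertex $v$ with $d^{+}(v)=d^{-}(v)=2$, perform a directed splitting-off that pairs the two in-arcs with the two out-arcs and deletes $v$, producing a $2$-arc-strong digraph $D'$ on $7$ vertices; by Theorem~\ref{thm2}, $D'$ has a good pair, which I would lift back to $D$ by re-inserting $v$ on the two split arcs, placing one split arc in $B^{+}$ and the other in $B^{-}$ so that $v$ receives exactly one in-arc and one out-arc in each branching. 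The main obstacle is exactly that this lifting can fail when the two split arcs do not distribute cleanly between the two branchings (for instance when a split arc is a non-tree arc of $D'$, leaving $v$ unspanned); guaranteeing that a usable splitting always exists, or else producing $B^{+}$ and $B^{-}$ by direct cut analysis around the independent set when it does not, is where the bulk of the case checking concentrates. The delicate point is controlling both branchings simultaneously: removing the arcs of $B^{+}$ must never empty a cut that the in-branching still has to cross, and near a size-$3$ or size-$4$ independent set the available cross arcs are few, so these subcases must be enumerated with care.
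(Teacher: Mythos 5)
Your strategy (minimal counterexample, maximum independent set, splitting off at a low-degree vertex and invoking Theorem~\ref{thm2}) is genuinely different from the paper's, but it contains gaps that are not merely technical, and you have flagged the main one yourself without resolving it. Concretely: suppose the pivot $v$ has in-arcs $u_1v,u_2v$ and out-arcs $vw_1,vw_2$, split into $u_1w_1$ and $u_2w_2$, and suppose the good pair $(B^+,B^-)$ of the $7$-vertex digraph $D'$ contains \emph{both} split arcs in $B^+$. Nothing in Theorem~\ref{thm2} lets you forbid this. The reinsertion you describe then forces $B^+$ to absorb both $vw_1$ and $vw_2$ (these are the only arcs of $D$ that can restore the in-arcs of $w_1$ and $w_2$ through $v$ while also spanning $v$), and since $d^+_D(v)=2$, no out-arc of $v$ is left for the in-branching, so $v$ cannot be spanned by any arc-disjoint in-branching built from this lift; repairing it by rerouting $w_2$ through some other in-arc requires exactly the kind of cut-and-cycle case analysis you defer to "direct cut analysis around the independent set" --- which is the entire content of the theorem, not a step of a proof. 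Two further defects in the reduction: (i) minimality gives you vertices of out-degree $2$ and vertices of in-degree $2$, but your pivot needs a \emph{single} vertex with both degrees equal to $2$, which requires Mader's stronger theorem on minimally $k$-arc-strong digraphs that you do not invoke; (ii) splitting off can create parallel arcs, so $D'$ is in general a directed multigraph, and Theorem~\ref{thm2} --- whose proof works by showing a counterexample is an oriented graph via digon analysis --- is not proved for multigraphs, while suppressing parallel arcs can destroy $2$-arc-strongness.

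Beyond the reduction, the cases $|I|\in\{3,4\}$, where you concede "the bulk of the case checking concentrates", are precisely where all the difficulty lives, and they are not carried out; the cases $|I|\in\{5,6\}$ you do sketch are the easy ones. For comparison, the paper's proof runs along a completely different axis: it shows a counterexample has largest clique of size $3$ (Lemma~\ref{lem6}, Proposition~\ref{prop4}), that no subdigraph on at least $3$ vertices has a good pair, then eliminates digons (Lemmas~\ref{lem2}, \ref{lem8}, \ref{lem8-1}, \ref{lem8-2} and Corollary~\ref{cor1}), extracts a Hamilton dipath from the resulting oriented graph (Propositions~\ref{h2-1} and \ref{h2-2}), and finishes by applying Proposition~\ref{prop6} to the strong components of $D-A(P)$. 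The long digon-elimination lemmas are the paper's analogue of the case analysis you postpone, so as it stands your proposal is an outline whose hardest steps are missing and whose central reduction breaks on an identifiable configuration.
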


\begin{theorem}\label{thm4}
	Every 2-arc-strong digraph on 9 vertices has a good pair.
\end{theorem}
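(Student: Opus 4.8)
The plan is to deduce Theorem~\ref{thm4} from the two smaller cases already in hand, Theorems~\ref{thm2} and~\ref{thm3}, together with the independence-number criterion of Theorem~\ref{thm1}. Since a good pair of a spanning subdigraph remains a good pair after arcs are added back, it suffices to prove the statement for \emph{minimally} $2$-arc-strong digraphs $D$ on nine vertices. For such $D$ two facts are immediate: if $\alpha(D)\le 2$ then Theorem~\ref{thm1} already supplies a good pair, so I may assume $D$ contains an independent set of size at least $3$; and by Mader's theorem on minimally $k$-arc-strong digraphs, $D$ has a vertex $v$ with $d^+(v)=d^-(v)=2$. Writing $N^-(v)=\{a,b\}$ and $N^+(v)=\{c,d\}$, I would use $v$ as the site of a reduction.

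First I would try to remove $v$ and pass to a $2$-arc-strong digraph on eight vertices. Two operations are natural: if $v$ lies on a digon (some neighbour appears in both $N^-(v)$ and $N^+(v)$) I contract that digon, which never decreases arc-connectivity; otherwise I split $v$ off completely, replacing the arcs $av,vc,bv,vd$ by the two arcs $ac$ and $bd$ and deleting $v$. Because $d^+(v)=d^-(v)$, Mader's directed splitting-off theorem guarantees a complete splitting that keeps the remaining digraph $2$-arc-strong. In either case the resulting digraph $D'$ has eight vertices and is $2$-arc-strong, so it has a good pair $(B'^+,B'^-)$ by Theorem~\ref{thm3} (or seven vertices, handled by Theorem~\ref{thm2}, when a second reduction applies).

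The crux is \emph{lifting} $(B'^+,B'^-)$ back to a good pair of $D$, and this does not go through automatically. In the splitting reduction, re-inserting $v$ forces me to expand each used split arc $ac$ into the path $a\to v\to c$ and $bd$ into $b\to v\to d$, since neither $ac$ nor $bd$ is an arc of $D$. A good pair of $D$ results exactly when one of the two split arcs lies in $B'^+$ and the other in $B'^-$: then $v$ acquires in-degree $1$ in $B^+$ and out-degree $1$ in $B^-$ and is covered once by each branching, while arc-disjointness is preserved. If instead both split arcs fall in the same branching, or one of them is unused, the naive lift fails, and the same parity obstruction appears for the digon contraction with the internal arcs of the digon. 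The hard part will be to show that the good pair of $D'$ can always be chosen, or locally rerouted at $a,b,c,d$, so that the split arcs are distributed one to each branching; this is where I expect the real work to lie.

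Finally, the configurations that survive every admissible reduction must be isolated and settled by hand. The constraints are severe: $D$ is minimally $2$-arc-strong on nine vertices, so $18\le |A(D)|\le 32$ and every in- and out-degree is small; $\alpha(D)\ge 3$ forces a fixed independent triple whose six incident arcs heavily restrict the remaining structure; and at the degree-$2$ vertex $v$ there is neither a digon nor a splitting whose good pair lifts. I would argue that these conditions cut the possibilities down to a bounded list and exhibit an explicit (if necessary, computer-verified) good pair for each. Thus the main obstacle is twofold: proving the lifting lemmas without exceptions, and discharging the residual irreducible family; throughout, the independence constraint $\alpha(D)\ge 3$ is the structural lever I would rely on to control where arcs can go.
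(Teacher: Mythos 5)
Your proposal is a reduction strategy, not a proof: the two steps on which everything hinges --- the lifting lemma and the analysis of the ``residual irreducible family'' --- are exactly the steps you leave undone, and the obstruction you yourself identify is genuine and not locally repairable. Consider the splitting case: if both split arcs $ac$ and $bd$ lie in the out-branching $B'^+$ of $D'$, then after re-inserting $v$ every way of giving $c$ and $d$ in-arcs through $v$ forces $B^+$ to use both $vc$ and $vd$; since these are the \emph{only} two out-arcs of $v$ in $D$, the in-branching $B^-$ then has no out-arc of $v$ left, so $v$ cannot be covered by $B^-$ at all. Repairing this requires rerouting $B'^+$ or $B'^-$ globally in $D'$, i.e.\ proving that \emph{some} good pair of $D'$ distributes the split arcs suitably --- a statement for which you give no argument, and which is not a consequence of Theorem~\ref{thm3} (which only asserts existence of one good pair, with no control over which arcs it uses). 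The same conflict occurs for the digon contraction when the in-arc of the contracted vertex in $B'^+$ and its out-arc in $B'^-$ attach to opposite ends of the digon, forcing both branchings to claim the same digon arc. There is also a technical leak: splitting off (and contraction) can create parallel arcs when $ac$ or $bd$ is already an arc of $D$, and then $D'$ is a multidigraph, outside the class of loopless simple digraphs for which Theorems~\ref{thm2} and~\ref{thm3} are stated and proved; deleting a parallel copy may destroy $2$-arc-strongness, so this cannot be waved away. Finally, the claim that the surviving configurations form ``a bounded list'' that can be ``settled by hand'' is asserted, not established; no such list is produced.

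For comparison, the paper takes an entirely different route that avoids any lifting: it first shows (Claims~\ref{thm4}.1--\ref{thm4}.3, built on Lemmas~\ref{lem5}--\ref{lem8-2} and \ref{lem11}--\ref{lem13}) that a counterexample $D$ must be an oriented graph with no $K_4$ and no small subdigraph possessing a good pair, then proves such a $D$ has a Hamilton dipath (Lemma~\ref{h3-2}), and finally extracts a good pair from the Hamilton dipath together with the strong-component structure of $D$ minus its arcs (Propositions~\ref{prop5} and~\ref{prop6}). The independence number $\alpha(D)\ge 3$, which you hope to use as a ``structural lever,'' plays no role there; the lever is instead the absence of good pairs in small subdigraphs. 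If you want to salvage your approach, the missing content is precisely a lifting theorem of the form ``every $2$-arc-strong simple digraph on $8$ vertices with two prescribed arcs $ac,bd$ has a good pair using $ac,bd$ in prescribed branchings (or avoiding them),'' and nothing in the present paper or in your outline supplies it.
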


This paper is organised as follows. In the rest of this section, we provide further terminology and notation on digraphs. Undefined terms can be found in \cite{BG,BG2}.
In Section~\ref{outline}, we outline the proofs of Theorem \ref{thm2}, \ref{thm3} and \ref{thm4} and state some auxiliary lemmas which we use in their proofs.
Section~\ref{pre} contains a number of technical lemmas which will be used in proofs of our main results. Then we respectively devote one section for proofs of each theorem and its relevant auxiliary lemmas.
A number of supplementary proofs are moved to Appendix.

\paragraph{Additional Terminology and Notation.} For a positive integer $n$, $[n]$ denotes the set $\{1,2,\ldots,n \}$. Throughout this paper, we will only consider digraphs without loops and multiple arcs. Let $D=(V,A)$ be a digraph.
We denote by $uv$ the arc whose \textit{tail} is $u$ and whose \textit{head} is $v$. Two vertices $u,v$ are \textit{adjacent} if at least one of $uv$ and $vu$ belongs to $A$. If $u$ and $v$ are adjacent, then we also say that $u$ is a \textit{neighbour} of $v$ and vice versa.
If $uv\in A$, then $v$ is called an \textit{out-neighbour} of $u$ and $u$ is called an \textit{in-neighbour} of $v$. Moreover, we say $uv$ is an \textit{out-arc} of $u$ and an \textit{in-arc} of $v$ and that $u$ {\em dominates} $v$. The {\em order} $|D|$ of $D$ is $|V|.$

In this paper, we will extensively use {\em digraph duality}, which is as follows. Let $D$ be a digraph and let $D^{\rm rev}$ be the {\em reverse} of $D$, i.e., the digraph obtained from $D$ by reversing every arc $xy$ to $yx.$
Clearly, $D$ contains a subdigraph $H$ if and only if $D^{\rm rev}$ contains $H^{\rm rev}.$ In particular, $D$ contains a good pair if and only if $D^{\rm rev}$ contains a good pair.

Let $N_D^-(X)=\{y: yx\in A, x\in A\}$ and $N_D^+(X)=\{y: xy\in A, x\in A\}$. Note that $X$ may be just a vertex. For two non-empty disjoint subsets $X,Y\subset V$, we use $N_Y^-(X)$ to denote $N_D^-(X)\cap Y$ and $d_Y^-(X)=|N_Y^-(X)|$. Analogously, we can define $N_Y^+(X)$ and $d_Y^+(X)$.
For two non-empty subsets $X_1,X_2\subset V$, define $(X_1,X_2)_D=\{(v_1,v_2)\in A\colon v_1\in X_1~\text{and}~v_2\in X_2 \}$ and $[X_1,X_2]_D=(X_1,X_2)_D\cup (X_2,X_1)_D$.
We will drop the subscript when the digraph is clear from the context.

We write $D[X]$ to denote the subdigraph of $D$ induced by $X$. A \textit{clique} in $D$ is an induced subdigraph $D[X]$ such that any two vertices of $X$ are adjacent. We say that $D$ contains $K_p$ if it has a clique on $p$ vertices. A vertex set $X$ of $D$ is {\em independent} if no pair of vertices in $X$ are adjacent.
A dipath (dicycle) of $D$ with $t$ vertices is denoted by $P_t$ ($C_t$).  We drop the subscript when the order is not specified. A dipath $P$ from $v_1$ to $v_2$, denoted by $P_{(v_1,v_2)}$, is often called a $(v_1,v_2)$-{\em dipath}. A dipath $P$ is a \textit{Hamilton} dipath if $V(P)=V(D)$.
We call $C_2$ a \textit{digon}. A digraph without digons is called an {\em oriented graph}.
If two digons have exactly one common vertex, then we call this structure a \textit{bidigon}.
A \textit{semicomplete} digraph is a digraph $D$ that each pair of vertices has an arc between them. A \textit{tournament} is a semicomplete oriented graph.

In- and out-branchings were defined above. An {\em out-tree} ({\em in-tree}) is an out-branching (in-branching) of a subdigraph of $D.$
We use $B_s^+$ ($B_t^-$) to denote an out-branching rooted at $s$ (an in-branching rooted at $t$). The root $s$ ($t$) is called \textit{out-generator }(\textit{in-generator}) of $D$. We denote by Out($D$) (In($D$)) the set of out-generators (in-denerators) of $D$.
If the root is not specified, then we drop the subscripts of $B_s^+$ and $B_t^-$. We also use $O_D$ ($I_D$) to denote an out-branching (in-branching) of a digraph $D$. If $O_D$ and $I_D$ are arc-disjoint, then we write $(O_D,I_D)$ to denote a good pair in $D$.

\section{Proofs Outline}\label{outline}
In this section, we outline constructions we use to prove our main results. We prove each of them by contradiction. We give the statements of some auxiliary lemmas. For simplicity, when outlining the proof of our main results, we assume that $|D_1|=7$, $|D_2|=8$ and $|D_3|=9$.

\subsection{Theorem~\ref{thm2}}
First we get that the largest clique in $D_1$ is a tournament by Lemma~\ref{lem6}, which is given in Section~\ref{pre}.
Next we prove that $D_1$ is an oriented graph in Claim~\ref{thm2}.1 by Lemma~\ref{lem7}, which is shown in Section~\ref{pre}.
The next proposition, which shows that $D_1$ has a Hamilton dipath, is proved in Section~\ref{7}.
\begin{description}
	\item[Proposition~\ref{h1}] A 2-arc-strong oriented graph $D$ on $n$ vertices has a $P_7$, where $7\le n\le9$.
\end{description}
After that, we prove that $D_1$ has a good pair by Propositon~\ref{prop6}, which is shown in Section~\ref{pre}.

\subsection{Theorem~\ref{thm3}}
Our proof will follow three steps.

Firstly, we get that the largest clique $R$ in $D_2$ has 3 vertices by Lemma~\ref{lem6}. And we show that $R$ is a tournament through Claim~\ref{thm3}.1, which is proved by Lemmas~\ref{lem6} and \ref{lem7}.

Our second step is to prove that $D_2$ is an oriented graph in Claim~\ref{thm3}.2 by Lemmas~\ref{lem8}, \ref{lem8-1} and \ref{lem8-2}, which are given in Section~\ref{pre}. For completeness, we give partial proofs of these three lemmas in Appendix.

In the last step, we proceed as follows. The next proposition, which implies that $D_2$ has a Hamilton dipath, is proved in Section~\ref{8}.
\begin{description}
	\item[Proposition~\ref{h2-2}] Let $D$ be a 2-arc-strong digraph on $n$ vertices without a good pair, where $8\le n\le9$. If $D$ is an oriented graph without $K_4$ as a subdigraph, then $D$ has a $P_8$.
\end{description}
To prove it, we first show the proposition below, which will also be given in Section~\ref{8}.
\begin{description}
	\item[Proposition~\ref{h2-1}] Let $D$ be a 2-arc-strong oriented graph on $n$ vertices without $K_4$ as a subdigraph, where $8\le n\le9$. If $D$ has two disjoint cycles $C^1$ and $C^2$ which cover 7 vertices, then $D$ contains a $P_8.$
\end{description}
After that, we prove that $D_2$ has a good pair by Propositon~\ref{prop6}.

\subsection{Theorem~\ref{thm4}}
Our proof will follow four steps.

Firstly, we show that the largest clique $R$ in $D_3$ has 3 vertices by Claim~\ref{thm4}.1, which is proved by Proposition~\ref{prop1} given in Section~\ref{pre}, Lemmas~\ref{lem6} and \ref{lem7}.

Next we show that $R$ has no digons by Claim~\ref{thm4}.2, which is proved analogously to Claim~\ref{thm3}.1 by Lemmas~\ref{lem7}, \ref{lem8}, \ref{lem8-1} and \ref{lem8-2}.

Our third step is to show that $D_3$ is an oriented graph in Claim~\ref{thm4}.3. To do this we need two lemmas below, which are proved in Section~\ref{9}.
\begin{description}
	\item[Lemma~\ref{lem11}] Let $D$ be a 2-arc-strong digraph on $9$ vertices that contains a digon $Q$. Assume that $D$ has no subdigraph with a good pair on 3 or 4 vertices. Set $X=N_D^-(Q)$ and $Y=N_D^+(Q)$ with $X\cap Y=\emptyset$. If $|X|=3$ and $|Y|=2$, then $D$ has a good pair.
	\item[Lemma~\ref{lem13}] Let $D$ be a 2-arc-strong digraph on $9$ vertices that contains a digon $Q$. Assume that $D$ has no subdigraph with a good pair on 3 or 4 vertices. Set $X=N_D^-(Q)$ and $Y=N_D^+(Q)$ with $X\cap Y=\emptyset$. If $|X|=2$ and $|Y|=2$, then $D$ has a good pair.
\end{description}
To prove Lemma~\ref{lem11}, we give a generalization of Proposition~\ref{prop2} in Proposition~\ref{prop10}, which is shown in Section~\ref{9}.
To prove Lemma~\ref{lem13}, we prove the lemma below first, which will also be given in Section~\ref{9}.
\begin{description}
	\item[Lemma~\ref{lem12}] Let $D=(V,A)$ be a 2-arc-strong digraph on $9$ vertices that contains a digon $Q$. Assume that $D$ has no subdigraph with a good pair on at least 3 vertices. Set $X=N_D^-(Q)$ and $Y=N_D^+(Q)$ with $X\cap Y=\emptyset$ and $W=V-V(Q)-X-Y$.
	Assume that $|X|=|Y|=2$ and there is an arc $e=st\in A$ such that $s\in Y$ and $t\in W$ (resp. $s\in W$ and $t\in X$). If there are at least three arcs in $D[Y\cup\{t\}]$ (resp. $D[X\cup\{s\}]$), then $D$ has a good pair.
\end{description}
The next lemma, which shows that $D_3$ has a Hamilton dipath, is proved in Section~\ref{9}.
\begin{description}
	\item[Lemma~\ref{h3-2}] Let $D$ be a 2-arc-strong digraph on 9 vertices without good pair. If $D$ is an oriented graph without $K_4$ as a subdigraph, then $D$ has a Hamilton dipath.
\end{description}
To prove it, we show the proposition below first, which will also be given in Section~\ref{9}.
\begin{description}
	\item[Proposition~\ref{h3-1}] Let $D$ be a 2-arc-strong oriented graph on 9 vertices without $K_4$ as a subdigraph. If $D$ has two cycles $C^1$ and $C^2$ with $C^1\cap C^2=\emptyset$ which cover 8 vertices, then $D$ contains a Hamilton dipath.
\end{description}
After that, we prove that $D_3$ has a good pair by Proposition~\ref{prop6}.
For completeness, we give some proofs of Lemmas~\ref{lem11}, \ref{lem12}, \ref{lem13} and \ref{h3-2} in Appendix.

\section{Preliminaries and useful lemmas}\label{pre}

\begin{prop}\label{prop1}
	Let $D$ be a digraph with $\lambda(D)\ge2$ and with a good pair $(B_s^+,B_s^-)$. If there exists a vertex $t$ in $D$ such that $D[\{s,t\}]$ is a digon, then $D$ has a good pair $(B_t^+,B_t^-)$.
\end{prop}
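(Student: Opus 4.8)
The plan is to surgically modify the given good pair $(B_s^+,B_s^-)$ using the two arcs $st$ and $ts$ of the digon, rerouting each branching so that its root moves from $s$ to $t$. First I would record two structural facts forced by the roots: since $s$ is the root of the out-branching $B_s^+$ it has in-degree $0$ in $B_s^+$, so $ts\notin A(B_s^+)$; dually, since $s$ is the root of the in-branching $B_s^-$ it has out-degree $0$ in $B_s^-$, so $st\notin A(B_s^-)$. Let $e_1$ be the unique in-arc of $t$ in $B_s^+$ and let $e_2$ be the unique out-arc of $t$ in $B_s^-$ (both exist because $t\neq s$). I then set
\[
B_t^+:=B_s^+-e_1+ts,\qquad B_t^-:=B_s^--e_2+st .
\]

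Next I would verify that $B_t^+$ is an out-branching rooted at $t$. Deleting the in-arc $e_1$ of $t$ splits the tree $B_s^+$ into the subtree hanging at $t$ and the remaining part, which still contains $s$; since $s\neq t$, adding the arc $ts$ reconnects these two parts, so the underlying graph is again a spanning tree ($n-1$ arcs, connected). For the degrees, deleting $e_1$ drops the in-degree of $t$ from $1$ to $0$ while adding $ts$ raises the in-degree of $s$ from $0$ to $1$, leaving every vertex other than $t$ with in-degree exactly $1$; hence $B_t^+$ is an out-branching rooted at $t$. The claim that $B_t^-$ is an in-branching rooted at $t$ is entirely dual and can either be checked the same way or deduced by applying the $B_t^+$ argument to the reverse digraph $D^{\rm rev}$, in which $B_s^-$ becomes an out-branching and the digon $\{s,t\}$ is preserved.

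The hard (and only genuinely delicate) part is checking that $B_t^+$ and $B_t^-$ are arc-disjoint, for which the crucial observation is the following: the arc $ts$ that I added to $B_t^+$ might also lie in $B_s^-$, but if $ts\in A(B_s^-)$ then, $ts$ being an out-arc of $t$ and $t$ having a unique out-arc in $B_s^-$, it must equal $e_2$ and is therefore deleted in forming $B_t^-$; symmetrically, if $st\in A(B_s^+)$ then $st=e_1$ and is deleted in forming $B_t^+$. Using $A(B_s^+)\cap A(B_s^-)=\emptyset$ together with these two facts, I would conclude that none of the inherited arcs of one branching meets the single added arc of the other, and that $ts\neq st$, so $A(B_t^+)\cap A(B_t^-)=\emptyset$. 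This makes $(B_t^+,B_t^-)$ the desired good pair; note that the construction uses only the digon and the existing good pair, and does not actually invoke $\lambda(D)\ge 2$.
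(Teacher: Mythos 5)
Your proof is correct and takes essentially the same approach as the paper's: the identical reroute $B_t^+=B_s^+-e_1+ts$ and $B_t^-=B_s^--e_2+st$, with arc-disjointness secured by exactly the paper's observation that $ts$ (resp.\ $st$) can lie only in $B_s^-$ (resp.\ $B_s^+$), where it must coincide with $e_2$ (resp.\ $e_1$) and is therefore deleted. Your extra details—the explicit tree-surgery check that $B_t^+$ is an out-branching and the remark that $\lambda(D)\ge2$ is never used—merely make explicit what the paper leaves as an observation.
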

\begin{proof}
	Let $B_t^+=ts+B_s^+-e_1$ and $B_t^-=B_s^-+st-e_2$, where $e_1$ ($e_2$) is the only in-arc (out-arc)
	of $t$ in $B_s^+$ ($B_s^-$). Observe that $B_t^+$ ($B_t^-$) is an out-branching (in-branching) rooted at $t$ in $D$.
	Since the root of any out-branching has in-degree zero, if $ts \in B_s^+ \cup B_s^-$, then
	$ts$ must be in $B_s^-$ and moreover $ts$ is the only out-arc $e_2$ of $t$ in $B_s^-$. Similarly,
	if $st \in B_s^+ \cup B_s^-$, then $st$ must be in $B_s^+$ and moreover $st$ is the only in-arc $e_1$ of $t$ in $B_s^+$.
	Thus, $B_t^+$ and $B_t^-$ are arc-disjoint and so $(B_t^+,B_t^-)$ is a good pair of $D$.
\end{proof}

\begin{prop}\label{prop2}
	Let $D$ be a digraph with a subdigraph $Q$ that has a good pair $(O_Q, I_Q)$. Let $X=N_D^-(Q)$ and $Y=N_D^+(Q)$ with $X\cap Y=\emptyset$ and $X\cup Y=V-V(Q)$. Let $X_i$ ($Y_j$) be the initial (terminal) strong components in $D[X]$ ($D[Y]$), $i\in[a]$ ($j\in [b]$). If one of the following holds, then $D$ has a good pair.
	\begin{enumerate}
		\item $d_Y^-(X_1)\ge 1$, $d_Y^-(X_i)\ge 2,~ i\in \{2,\ldots, a\}$ and $d_X^+(Y_j)\ge 2,~ j\in[b]$.
		\item $d_X^+(Y_1)\ge 1$, $d_X^+(Y_j)\ge 2,~ j\in \{2,\ldots,b\}$ and $d_Y^-(X_i)\ge 2,~ i\in[a]$.
	\end{enumerate}
\end{prop}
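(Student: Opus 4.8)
The plan is to extend the good pair $(O_Q,I_Q)$ of $Q$ to a good pair of $D$, and I would first note that it suffices to treat Condition~1: Condition~2 is exactly Condition~1 applied to $D^{\rm rev}$, since reversing all arcs interchanges $X=N_D^-(Q)$ with $Y=N_D^+(Q)$, interchanges initial strong components with terminal ones, sends $(O_Q,I_Q)$ to a good pair of $Q^{\rm rev}$, and a good pair of $D^{\rm rev}$ yields one of $D$. I may also assume $X,Y\neq\emptyset$, as otherwise the hypotheses force $V=V(Q)$ and $(O_Q,I_Q)$ already works. Before constructing anything I would record the structural facts forced by $X\cap Y=\emptyset$ and $X\cup Y=V-V(Q)$: there is no arc in $(V(Q),X)_D$ and no arc in $(Y,V(Q))_D$ (such an arc would put a vertex of $X$ into $N_D^+(Q)=Y$, or a vertex of $Y$ into $N_D^-(Q)=X$); moreover every $x\in X$ has an out-arc in $(X,V(Q))_D$ and every $y\in Y$ has an in-arc in $(V(Q),Y)_D$.

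Next I would build the two branchings so that their only possible shared arcs lie in $(Y,X)_D$. For the out-branching $O_D$, rooted at the root of $O_Q$: keep $O_Q$ on $V(Q)$; reach each $y\in Y$ by a single arc of $(V(Q),Y)_D$; and reach $X$ by choosing, for each initial strong component $X_i$ of $D[X]$, one entry arc $e_i\in(Y,X_i)_D$, then growing a spanning out-forest of $D[X]$ rooted at the heads of the $e_i$ (possible since every vertex of $D[X]$ is reachable from the initial components). For the in-branching $I_D$, rooted at the root of $I_Q$: keep $I_Q$ on $V(Q)$; send each $x\in X$ out by a single arc of $(X,V(Q))_D$; and route $Y$ by growing a spanning in-forest of $D[Y]$ that leaves each terminal strong component $Y_j$ through one exit arc $f_j\in(Y_j,X)_D$ (possible since every vertex of $D[Y]$ reaches a terminal component).

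The point of this design is that $O_D$ uses no arc of $(Y,Y)_D$ and $I_D$ uses no arc of $(X,X)_D$, so $O_D$ draws its non-$(Y,X)_D$ arcs from $O_Q\cup(V(Q),Y)_D\cup(X,X)_D$ while $I_D$ draws its non-$(Y,X)_D$ arcs from $I_Q\cup(X,V(Q))_D\cup(Y,Y)_D$; checking the cases shows these two collections are arc-disjoint. Consequently the only conflict that can arise is between the entry arcs $\{e_i\}$ and the exit arcs $\{f_j\}$, all of which lie in $(Y,X)_D$; the $e_i$ are automatically distinct (distinct targets $X_i$) and the $f_j$ are automatically distinct (distinct sources $Y_j$), so it remains to choose all $a+b$ of them pairwise distinct.

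I would phrase this last step as finding a system of distinct representatives for the family consisting of the $a$ sets $(Y,X_i)_D$ and the $b$ sets $(Y_j,X)_D$, with ground set $(Y,X)_D$. For a subfamily indexed by $S_X$ and $S_Y$, write $A_1=\bigcup_{i\in S_X}(Y,X_i)_D$ and $A_2=\bigcup_{j\in S_Y}(Y_j,X)_D$; since the $X_i$ are disjoint, $|A_1|\ge\sum_{i\in S_X}d_Y^-(X_i)\ge 2|S_X|-1$ by Condition~1 (the unique deficient component $X_1$ costs one), and likewise $|A_2|\ge 2|S_Y|$. As the representatives come from $A_1\cup A_2$ and $|A_1\cup A_2|\ge\max(|A_1|,|A_2|)\ge\max(2|S_X|-1,\,2|S_Y|)\ge|S_X|+|S_Y|$, Hall's condition holds, the disjoint selection exists, and the two branchings form a good pair of $D$. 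I expect this final counting to be the main obstacle: it is precisely where the asymmetric hypothesis is used, and the elementary inequality $\max(2p-1,2q)\ge p+q$ also explains why both conditions cannot simultaneously relax a component to degree $1$, since $\max(2p-1,2q-1)<p+q$ when $p=q$.
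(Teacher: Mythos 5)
Your proof is correct, and its overall decomposition coincides with the paper's: reduce to Condition 1 by digraph duality; build $B^+$ from $O_Q$ plus one arc from $Q$ into each vertex of $Y$, and $B^-$ from $I_Q$ plus one arc from each vertex of $X$ into $Q$; reduce the whole problem to choosing pairwise-distinct arcs of $(Y,X)_D$, one entering each initial component $X_i$ of $D[X]$ and one leaving each terminal component $Y_j$ of $D[Y]$; and finish with a spanning out-forest of $D[X]$ rooted at the chosen entry heads and a spanning in-forest of $D[Y]$ rooted at the chosen exit tails (your verification that all remaining arc classes are automatically disjoint matches the paper's implicit bookkeeping). Where you genuinely diverge is in the key selection step. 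The paper produces the connector arcs by an interleaved greedy procedure: it alternately picks the entry arc of the next initial component and the exit arc of the next terminal component, each pick required only to avoid the single arc chosen immediately before, and it relies on preference rules (favour tails lying in as-yet-unused terminal components, heads in as-yet-unused initial components) precisely to rule out collisions between non-consecutive picks; the paper asserts rather than spells out this global disjointness. You instead ask for a system of distinct representatives of the $a+b$ sets $(Y,X_i)_D$ and $(Y_j,X)_D$ and verify Hall's condition by the count $\max(2|S_X|-1,\,2|S_Y|)\ge |S_X|+|S_Y|$, which is legitimate since within each family the sets are pairwise disjoint (distinct heads, respectively distinct tails) and $|(Y,X_i)_D|\ge d_Y^-(X_i)$ because the degree counts neighbours rather than arcs. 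Your route buys a shorter, fully checkable existence argument and makes explicit where the asymmetry of the hypothesis is used (only one of the two families may contain a deficient member, as your closing inequality shows); the paper's route buys an explicit construction that avoids invoking Hall's theorem, at the price of a more delicate correctness argument. Both are sound.
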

\begin{proof}
	Let $B^+$ be an out-tree containing $O_Q$ and an in-arc of any vertex in $Y$ from $Q$. Let $B^-$ be an in-tree containing $I_Q$ and an out-arc of any vertex in $X$ to $Q$. Set $\mathcal{X}=\{X_i,~i\in[a]\}$ and $\mathcal{Y}=\{Y_j,~j\in[b]\}$.
	By the digraph duality, it suffices to prove that condition 1 implies that $D$ has a good pair.

	Now assume that $d_Y^-(X_1)\ge 1$, $d_Y^-(X_i)\ge 2,~ i\in \{2,\ldots, a\}$, and $d_X^+(Y_j)\ge 2,~ j\in[b].$ Then there are at least two arcs from $Y_j$ (for each $j\in[b]$) to $X$, at least two arcs from $Y$ to $X_i$ (for each $i\in\{2,\ldots,a\}$) and at least one arc from $Y$ to $X_1$. Set $X'_1=X_1$.
	If there is an arc $y^1x_1$ from $Y$ to $X'_1$ with $y^1$ in some $Y_j,~j\in[b]$, then we choose such an arc and let $Y'_1=Y_j$, otherwise we choose an arbitrary arc $y^1x_1$ from $Y$ to $X'_1$ and let $Y'_1$ be an arbitrary strong component in $\mathcal{Y}$. Let $\mathcal{P}_X=\{y^1x_1\}$. There now exists an arc, $y_1x^1$, out of $Y'_1$ ($x^1\in X$) which is different from $y^1x_1$ (as $Y'_1$ has at least two arcs out of it). If there is such an arc $y_1x^1$ with $x^1$ in some $X_i,~i\in\{2,\ldots,a\}$, then we choose one of these arcs and let $X'_2=X_i$, otherwise we choose such an arbitrary arc $y_1x^1$ out of $Y'_1$ ($x^1\in X$) and let $X'_2$ be an arbitrary strong component in $\mathcal{X}-X'_1$. Let $\mathcal{P}_Y=\{y_1x^1\}$.
	Likewise, for $t\ge2$, we get an arc $y^tx_t$ into $X'_t$ ($y^t\in Y$) which is different from $y_{t-1}x^{t-1}$ in $\mathcal{P}_Y$. If there is such an arc $y^tx_t$ with $y^t$ in some $Y_j\in \mathcal{Y}-\{Y'_1,\ldots,Y'_{t-1}\}$, then choose one of these arcs and let $Y'_t=Y_j$, otherwise we choose such an arbitrary  arc $y^tx_t$ and let $Y'_t$ be an arbitrary strong component in $\mathcal{Y}-\{Y'_1,\ldots,Y'_{t-1}\}$. Add $y^tx_t$ to $\mathcal{P}_X$.
	For $s\ge2$, we get an arc $y_sx^s$ out of $Y'_s$ ($x^s\in X$) which is different from $y^sx_s$ in $\mathcal{P}_X$. If there is such an arc $y_sx^s$ with $x^s$ in some $X_i\in \mathcal{X}-\{X'_1,\ldots,X'_{s-1}\}$, then we choose one of these arcs and let $X'_s=X_i$, otherwise we choose such an arbitrary arc $y_sx^s$ and let $X'_s$ be an arbitrary strong component in $\mathcal{X}-\{X'_1,\ldots,X'_{s-1}\}$. Add $y_sx^s$ to $\mathcal{P}_Y$.
	Hence we get two arc sets $\mathcal{P}_X$ and $\mathcal{P}_Y$ with $\mathcal{P}_X\cap \mathcal{P}_Y=\emptyset$.
	
	We will now show that $D$ has a good pair. Let $D_X$ be the digraph obtained from $D[X]$ by adding one new vertex $y^*$ and arcs from $y^*$ to $x_i$ for $i\in[a]$. Analogously let $D_Y$ be the digraph obtained from $D[Y]$ by adding one new vertex $x^*$ and arcs from $y_j$ to $x^*$ for $j\in[b]$. Since Out($D_X$)$=\{y^*\}$ and In($D_Y$)$=\{x^*\}$, there exists an out-branching $B_{y^*}^+$ in $D_X$ and an in-branching $B_{x^*}^-$ in $D_Y$. Set $T_X=B_{y^*}^+-y^*$ and $T_Y=B_{x^*}^--x^*$.
	
	By construction, $(O_D,I_D)$ is a good pair of $D$ with $O_D=B^++\mathcal{P}_X+T_X$ and $I_D=B^-+\mathcal{P}_Y+T_Y$.
\end{proof}

Note that we can always get two arc-disjoint $\mathcal{P}_X,\mathcal{P}_Y$ and respectively an out- and an in-forest $T_X$ and $T_Y$ as above in a digraph $D$ when one of the conditions in Proposition~\ref{prop2} holds for $D$.

\begin{corollary}\label{cor1}
	Let $D$ be a digraph with $\lambda(D)\ge2$ that contains a subdigraph $Q$ with a good pair. Set $X=N_D^-(Q)$ and $Y=N_D^+(Q)$. If $X\cap Y=\emptyset$ and $X\cup Y=V-V(Q)$, then $D$ has a good pair.
\end{corollary}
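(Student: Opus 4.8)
The plan is to deduce the statement directly from Proposition~\ref{prop2}: since the hypotheses of the corollary are exactly the structural hypotheses of that proposition ($Q$ has a good pair, $X\cap Y=\emptyset$, and $X\cup Y=V-V(Q)$), it suffices to check that $\lambda(D)\ge 2$ forces one of the two degree conditions there, say condition~1. So the whole corollary reduces to verifying the boundary-degree inequalities $d_Y^-(X_i)\ge 2$ for the initial strong components $X_i$ of $D[X]$ and $d_X^+(Y_j)\ge 2$ for the terminal strong components $Y_j$ of $D[Y]$.

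First I would record the structural consequence of the disjoint-cover hypothesis. Because $X\cup Y=V-V(Q)$ and $X\cap Y=\emptyset$, every vertex outside $Q$ is either a pure in-neighbour of $Q$ (lying in $X$) or a pure out-neighbour of $Q$ (lying in $Y$): a vertex of $X$ is not dominated by any vertex of $Q$, and a vertex of $Y$ dominates no vertex of $Q$. Hence every arc entering a set $S\subseteq X$ from outside $S$ originates in $(X\setminus S)\cup Y$, and every arc leaving a set $T\subseteq Y$ ends in $X\cup(Y\setminus T)$; no boundary arc of $S$ or $T$ touches $V(Q)$.

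Next I would apply this to the extreme strong components. Let $X_i$ be an initial strong component of $D[X]$. Being initial, $X_i$ receives no arc from $X\setminus X_i$, and by the previous paragraph it receives no arc from $V(Q)$; thus every arc entering $X_i$ comes from $Y$, so $d_D^-(X_i)=d_Y^-(X_i)$. Applying $\lambda(D)\ge 2$ to the proper subset $V\setminus X_i$ together with the identity $d_D^+(V\setminus X_i)=d_D^-(X_i)$ gives $d_D^-(X_i)\ge 2$, whence $d_Y^-(X_i)\ge 2$ for every $i\in[a]$. Dually, if $Y_j$ is a terminal strong component of $D[Y]$, then every arc leaving $Y_j$ goes to $X$, so $d_X^+(Y_j)=d_D^+(Y_j)\ge 2$ for every $j\in[b]$. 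These bounds are even stronger than condition~1 demands (which only requires $d_Y^-(X_1)\ge 1$), so condition~1 of Proposition~\ref{prop2} holds and $D$ has a good pair.

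The substance lies entirely in the first structural observation; once the hypothesis ``$X\cup Y=V-V(Q)$ with $X\cap Y=\emptyset$'' is translated into ``$X$ only feeds $Q$ and $Y$ is only fed by $Q$,'' the arc-connectivity bound on each extreme strong component is immediate, so I do not anticipate a genuine obstacle. The single point to be careful about is the orientation of the inequality coming from $\lambda(D)$: arc-connectivity is defined through out-degrees of proper subsets, so to bound the \emph{in}-degree of an initial component $X_i$ I would pass to its complement $V\setminus X_i$ via $d_D^+(V\setminus X_i)=d_D^-(X_i)$, and likewise confirm that $X$ or $Y$ being empty makes the corresponding conditions vacuous so that the construction of Proposition~\ref{prop2} still goes through.
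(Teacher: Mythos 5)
Your proposal is correct and matches the paper's proof exactly: the paper likewise observes that $\lambda(D)\ge 2$ forces $d_Y^-(X_i)\ge 2$ for every initial strong component $X_i$ of $D[X]$ and $d_X^+(Y_j)\ge 2$ for every terminal strong component $Y_j$ of $D[Y]$, and then invokes Proposition~\ref{prop2}. Your write-up merely makes explicit the two details the paper leaves implicit, namely that disjointness of $X$ and $Y$ rules out boundary arcs touching $V(Q)$ and that the in-degree bound follows from $d_D^+(V\setminus X_i)=d_D^-(X_i)$.
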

\begin{proof}
	Let $X_i$ be the initial strong components in $D[X]$ and $Y_j$ be the terminal strong components in $D[Y]$, $i\in[a]$ and $j\in [b]$. Since $\lambda(D)\ge2$, $d_Y^-(X_i)\ge2$ and $d_X^+(Y_j)\ge2$, for any $i\in[a]$ and $j\in[b]$, which implies that $D$ has a good pair by Proposition~\ref{prop2}.
\end{proof}

\begin{lemma}[\cite{BBHY}]\label{lem1}
	Let $D$ be a digraph and $X\subset V(D)$ be a set such that every vertex of $X$ has both an in-neighbour and an out-neighbour in $V-X$. If $D-X$ has a good pair, then $D$ has a good pair.
\end{lemma}

By Lemma \ref{lem1}, in this paper we will often use the fact that if $Q$ is a maximal subdigraph of $D$ with a good pair and $X=N_D^-(Q), Y=N_D^+(Q)$, then $X\cap Y=\emptyset$.

\begin{lemma}\label{lem2}
	Let $D$ be a 2-arc-strong digraph containing a subdigraph $Q$ with a good pair, $X=N_D^-(Q)$ and $Y=N_D^+(Q)$. If $X\cap Y=\emptyset$ and $X\cup Y=V-V(Q)-\{w\}$, where $w\in V-V(Q)$, then $D$ has a good pair.
\end{lemma}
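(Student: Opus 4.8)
The plan is to reduce the statement to the situation already covered by Corollary~\ref{cor1}, and then attach the exceptional vertex $w$ as a pendant. First I would record the basic facts forced by $2$-arc-strongness. Since $X=N_D^-(Q)$ and $Y=N_D^+(Q)$, the vertex $w$ is non-adjacent to $Q$, so all neighbours of $w$ lie in $X\cup Y$, and $d_D^-(w)\ge 2$, $d_D^+(w)\ge 2$. Moreover every $x\in X$ has an arc into $Q$ and every $y\in Y$ has an arc from $Q$, while there are no arcs from $Q$ to $X$ nor from $Y$ to $Q$ (otherwise $X\cap Y\ne\emptyset$).

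The key observation is that it suffices to produce a good pair of $D-w$. Indeed, given a good pair $(O,I)$ of $D-w$, choose any in-arc $uw$ and any out-arc $wv$ of $w$ in $D$ (which exist as $d_D^\pm(w)\ge 1$); then $O+uw$ is an out-branching and $I+wv$ is an in-branching of $D$, and since neither $O$ nor $I$ meets $w$, these remain arc-disjoint. Now $D-w$ satisfies exactly the hypotheses of Corollary~\ref{cor1} with the same $Q,X,Y$ (deleting the non-adjacent $w$ changes neither $N^-(Q)$ nor $N^+(Q)$, and $X\cup Y=V(D-w)-V(Q)$), except possibly for the requirement $\lambda(D-w)\ge 2$. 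Hence if $\lambda(D-w)\ge 2$ we are done immediately.

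It remains to treat the case $\lambda(D-w)<2$. Here the degree bounds needed to run Proposition~\ref{prop2} on $D-w$ can fail only at an initial strong component $X_i$ of $D[X]$ that receives an arc from $w$, or at a terminal strong component $Y_j$ of $D[Y]$ that sends an arc to $w$; these are the only places where the counts $d_Y^-(X_i)\ge 2$ and $d_X^+(Y_j)\ge 2$ can drop after removing $w$. When $w$ affects at most one component, and by at most one arc, the corresponding weaker inequality of Proposition~\ref{prop2} (one distinguished component of in-degree at least $1$) still holds for $D-w$, and the pendant attachment above finishes the proof. In the remaining case I would not delete $w$ but build the good pair of $D$ directly, letting $w$ act as an internal connector: in the out-branching $w$ is reached once (from $Y$, or from $X$ if $w$ has no in-neighbour in $Y$) and then branches to each initial component of $D[X]$ that is deprived of $Y$-arcs, while in the in-branching the deprived terminal components of $D[Y]$ reach $w$, which in turn reaches $Q$ through an out-neighbour in $X$ (or, failing that, through $Y$).

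The main obstacle is this last case: one must select, simultaneously and arc-disjointly, the connector arcs into and out of $w$ together with the connector arcs across the $Y$--$X$ interface, all while keeping both objects genuine branchings. I expect to handle it by a zig-zag allocation extending the one in the proof of Proposition~\ref{prop2}, treating $w$ as an extra ``sink'' for the out-branching and an extra ``source'' for the in-branching. The quantitative point that makes the bookkeeping close is that a fully deprived initial component (one with no $Y$-arc) has in-degree at least $2$ coming entirely from $w$, so $w$ sends it at least two arcs; consequently $w$ always has enough out-arcs to serve both the out-branching (reaching the deprived components) and the in-branching (reaching $Q$), and dually for its in-arcs. A short final case split, according to whether $w$ has neighbours on both sides or only in $X$ (respectively only in $Y$), then yields the required arc-disjoint branchings.
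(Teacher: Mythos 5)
Your reduction in the easy cases is fine: attaching $w$ as a pendant vertex (one in-arc added to the out-branching, one out-arc added to the in-branching) is exactly Lemma~\ref{lem1} specialised to the set $\{w\}$, and combined with Corollary~\ref{cor1}/Proposition~\ref{prop2} applied to $D-w$ it reproduces the paper's treatment of the case $(Y,w)_D=(w,X)_D=\emptyset$. The gap is the remaining case, which you yourself call the main obstacle and then only sketch. Two concrete problems. First, your intermediate step is stated too generously: Proposition~\ref{prop2} tolerates a single deficient component \emph{on one side only}, so already when $w$ supplies one arc to an initial component of $D[X]$ and simultaneously absorbs one arc from a terminal component of $D[Y]$, neither condition of Proposition~\ref{prop2} holds in $D-w$; this two-sided situation silently lands in your unproved case. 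Second, in that case the pendant strategy is not merely harder, it can be impossible: if two initial strong components of $D[X]$ receive all of their in-arcs from $w$ (consistent with $\lambda(D)\ge 2$, e.g.\ two digons each receiving two arcs from $w$), then $D-w$ has two initial strong components, hence no out-branching at all by Proposition~\ref{prop5}, and certainly no good pair. So the lemma genuinely requires building the branchings of $D$ directly, and your plan for doing so (``zig-zag allocation'', ``a short final case split'') is an expectation, not an argument. It also faces real obstructions you do not address, for instance circular routing: a deprived terminal component of $D[Y]$ must reach $Q$ through $w$ in the in-branching, while the exit arc you assign to $w$ may lead back into $Y$; excluding such configurations requires extra cut arguments (of the type $d^+(Y_1\cup\{w\})\ge 2$) that never appear in your text.

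What the paper does at this point --- and what is missing from your proposal --- is an absorption device that avoids all of this bookkeeping. Assuming by digraph duality that there is an arc $e=vw$ with $v\in Y$, it deletes $e$, sets $Y'=Y\cup\{w\}$ and $X'=X$, and works in $D'=D-e$: every initial strong component of $D'[X']$ still has at least two in-arcs from $Y'$, and every terminal strong component of $D'[Y']$ has at least two out-arcs to $X'$ except possibly the one containing $v$, which has at least one. This is precisely the relaxed hypothesis of Proposition~\ref{prop2}, whose machinery yields arc-disjoint $\mathcal{P}_{X'},T_{X'},\mathcal{P}_{Y'},T_{Y'}$; adding $e$ back to the out-branching attaches $w$, giving the good pair of $D$. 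Any mutual dependence between $w$ and deprived components is absorbed into the strong-component structure of $D'[Y']$ rather than fought by hand. Without this idea, or a fully executed substitute for it, your proof is incomplete exactly on the case that constitutes the substance of the lemma.
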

\begin{proof}
	Assume that $Q$ has a good pair $(O_Q,I_Q)$.
	Let $B^+$ be an out-tree containing $O_Q$ with an in-arc of any vertex in $Y$ from $Q$, while $B^-$ be an in-tree containing $I_Q$ with an out-arc of any vertex in $X$ to $Q$.
	
First assume that either $(Y,w)_D\ne \emptyset$ or $(w,X)_D\ne \emptyset$. By the digraph duality, we may assume that $(Y,w)_D\ne \emptyset$, i.e., there exists an arc $e$ from $Y$ to $w$ in $D$. Let $D'=D-e$. Set $X'=N_{D'}^-(Q)=X$ and $Y'=N_{D'}^+(Q)\cup \{w\}=Y\cup\{w\}$. Let $X'_i$ be the initial strong components in $D'[X']$ and $Y'_j$ be the terminal strong components in $D'[Y']$, $i\in[a]$ and $j\in [b]$. If $w$ has an in-neighbour $v$ in $Y$ with $v$ in some $Y'_j,~ j\in [b]$, then let $e=vw$ and $Y^*_1=Y'_j$, otherwise we choose an arbitrary in-neighbour $v$ of $w$ in $Y$ and let $e=vw$ and $Y^*_1$ be an arbitrary terminal strong component of $D'[Y']$.
	Since $\lambda(D)\ge2$, $d_{X'}^+(Y^*_1)\ge1$, $d_{X'}^+(Y'_j)\ge2$ and	$d_{Y'}^-(X'_i)\ge2$, for any $Y'_j\neq Y^*_1$, $j\in[b]$ and $i\in[a]$, which implies that we get arc sets $\mathcal{P}_{X'}$ and $\mathcal{P}_{Y'}$ with $\mathcal{P}_{X'}\cap \mathcal{P}_{Y'} =\emptyset$, and digraphs $T_{X'}$ and $T_{Y'}$ by Proposition~\ref{prop2}.
	By construction, $D$ has a good pair $(B^++\mathcal{P}_{X'}+T_{X'}+e,B^-+\mathcal{P}_{Y'}+T_{Y'})$.
	
	Now assume that $(Y,w)_D=\emptyset$ and $(w,X)_D=\emptyset$, which implies that $d_X^-(w)\ge 2$ and $d_Y^+(w)\ge 2$. Let $X_i$ be the initial strong components in $D[X]$ and $Y_j$ be the terminal strong components in $D[Y]$, $i\in[a]$ and $j\in [b]$.
	Since $\lambda(D)\ge2$ and $(w,X)_D=(Y,w)_D=\emptyset$, $d_Y^-(X_i)\ge2$ and $d_X^+(Y_j)\ge2$ for any $i\in[a]$ and $j\in[b]$. By Proposition~\ref{prop2}, we get $\mathcal{P}_X,T_X$ and $\mathcal{P}_Y,T_Y$ with $\mathcal{P}_X\cap \mathcal{P}_Y=\emptyset$. It follows that $(B^++\mathcal{P}_X+T_X+w^-w,B^-+\mathcal{P}_Y+T_Y+ww^+)$ is a good pair of $D$, where $w^-\in X$ and $w^+\in Y$.
\end{proof}

\begin{prop}[\cite{BBHY}]\label{prop3}
	Every digraph on 3 vertices with at least 4 arcs has a good pair.
\end{prop}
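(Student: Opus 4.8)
The plan is to exploit that on three vertices a good pair consists of an out-branching and an in-branching of two arcs each, arc-disjoint, and hence occupies exactly four arcs. Since an oriented graph on three vertices has at most three arcs, the hypothesis $|A|\ge 4$ forces $D$ to contain a digon; I would fix such a digon, label its endpoints $1,2$ and the third vertex $3$, and observe that $D[\{1,2\}]$ is precisely this digon and so already carries the good pair $(\{12\},\{21\})$. Because the digon accounts for two of the six possible arcs, at least two of the four arcs $13,31,23,32$ incident to vertex $3$ must be present.

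The main case is when vertex $3$ has both an in-neighbour and an out-neighbour in $\{1,2\}$, i.e. one of $13,23$ and one of $31,32$ lies in $A$. Here I would simply invoke Lemma~\ref{lem1} with $X=\{3\}$: every vertex of $X$ has an in-neighbour and an out-neighbour in $V-X=\{1,2\}$, and $D-X=D[\{1,2\}]$ has a good pair, so $D$ has a good pair. This disposes of the bulk of the configurations with no explicit branching construction.

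The only remaining possibility---and the step I expect to be the real obstacle, since Lemma~\ref{lem1} no longer applies---is when vertex $3$ is a sink or a source relative to $\{1,2\}$. If $3$ has no out-neighbour in $\{1,2\}$ then $31,32\notin A$, which forces $13,23\in A$ to reach four arcs, so $D$ has arc set $\{12,21,13,23\}$; here I would exhibit the good pair directly, taking the out-branching $1\to 2\to 3$ on arcs $\{12,23\}$ and the in-branching $2\to 1\to 3$ on the complementary arcs $\{21,13\}$, which are arc-disjoint and both spanning. The dual situation, where $3$ has no in-neighbour in $\{1,2\}$ (so $13,23\notin A$ and $31,32\in A$), is handled by applying digraph duality to the previous construction. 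Finally, a vertex incident to at least two of the arcs $13,31,23,32$ cannot simultaneously lack both an in- and an out-neighbour in $\{1,2\}$, so these three cases are exhaustive and the proposition follows.
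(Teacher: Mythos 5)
Your proof is correct. Note first that the paper itself offers nothing to compare against here: Proposition~\ref{prop3} is imported from \cite{BBHY} and stated without proof, so your argument is a self-contained verification rather than a variant of an in-paper proof. Your route is sound at every step: with at most three arcs available to an oriented graph on three vertices, $|A|\ge 4$ does force a digon on, say, $\{1,2\}$, and that digon carries the good pair $(\{12\},\{21\})$ (out-branching rooted at $1$, in-branching rooted at $1$; roots need not coincide for a good pair, so this is fine). Your appeal to Lemma~\ref{lem1} with $X=\{3\}$ is legitimate and non-circular, since Lemma~\ref{lem1} is an independent result of \cite{BBHY} stated before Proposition~\ref{prop3}; it cleanly absorbs every configuration in which vertex $3$ has both an in- and an out-neighbour in $\{1,2\}$. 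In the remaining case your arc count is tight: if $31,32\notin A$ then the only available arcs are $12,21,13,23$, all of which must be present, and your pair --- the out-branching on $\{12,23\}$ rooted at $1$ and the in-branching on $\{21,13\}$ rooted at $3$ --- is spanning, arc-disjoint, and correct; duality then finishes the source case, and your exhaustiveness remark closes the case analysis. What your approach buys, compared to the brute-force enumeration of $3$-vertex digraphs one would otherwise expect, is that a single structural lemma collapses all the two-sided cases, leaving only two explicit extremal digraphs to check by hand.
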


Following \cite{BG}, we shall use $\delta_0(D)$ to denote the \textit{minimum semi-degree} of $D$, which is the minimum over all in- and out-degrees of vertices of $D$.

\begin{prop}[\cite{BBHY}]\label{prop4}
	Let $D$ be a digraph on 4 vertices with at least 6 arcs except $E_4$ (see Figure~\ref{fig3}). If $\delta^0(D)\ge 1$ or $D$ is a semicomplete digraph, then $D$ has a good pair.
\end{prop}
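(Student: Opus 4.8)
The plan is to combine a single reduction step, based on Lemma~\ref{lem1} and Proposition~\ref{prop3}, with a short degree count, leaving only one genuinely exceptional configuration to be examined by hand.

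First I would record the central reduction. Suppose $D$ has a vertex $w$ that has both an in-neighbour and an out-neighbour in $D-w$ and such that $D-w$ has at least $4$ arcs. Then $D-w$ is a digraph on $3$ vertices with at least $4$ arcs, so it has a good pair by Proposition~\ref{prop3}; since $w$ has an in- and an out-neighbour outside $\{w\}$, Lemma~\ref{lem1} (applied with $X=\{w\}$) lifts this good pair to a good pair of $D$. Thus it suffices to find one vertex $w$ with $d_D^+(w)\ge 1$, $d_D^-(w)\ge 1$ and $d_D^+(w)+d_D^-(w)\le |A|-4$, because removing $w$ deletes exactly $d_D^+(w)+d_D^-(w)$ arcs.

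Next I would run the count under $\delta^0(D)\ge 1$, so that every vertex automatically has an in- and an out-neighbour in $D-w$. Since $\sum_{w}\big(d_D^+(w)+d_D^-(w)\big)=2|A|$, the minimum total degree is at most $|A|/2$. If $|A|\ge 8$ this is at most $|A|-4$ and we are done; if $|A|=7$ the minimum total degree is at most $3=|A|-4$, again done; and if $|A|=6$ we are done unless the minimum total degree is at least $3$, which together with the sum $12$ forces total degree exactly $3$ at each of the four vertices. So the only surviving case is $|A|=6$ with $d_D^+(w)+d_D^-(w)=3$ for all $w$, i.e. each vertex splits as $(d^+,d^-)\in\{(1,2),(2,1)\}$.

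In this final case a good pair must use all $6$ arcs, so it amounts to decomposing $D$ into an out-branching and an in-branching. I would enumerate, up to isomorphism and digraph duality, the $6$-arc digraphs in which every vertex has total degree $3$, and for each exhibit such a decomposition, the sole exception being $E_4$ (Figure~\ref{fig3}). For $E_4$ one checks directly that two \emph{forced} arcs (the unique in-arc of one vertex must lie in the out-branching, while the unique out-arc of another must lie in the in-branching) cannot be reconciled by any choice of the two roots, so $E_4$ has no good pair; this is exactly why it is excluded. The semicomplete hypothesis is handled in parallel: a semicomplete digraph on $4$ vertices with $\delta^0(D)\ge 1$ falls under the count above (and is never $E_4$, which contains non-adjacent vertices), while if it has a source or a sink one roots the out-branching (respectively in-branching) there and builds the complementary branching directly. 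The main obstacle is this last case: verifying completeness of the finite enumeration of total-degree-$3$ digraphs on $4$ vertices and checking that each one except $E_4$ admits the required arc-disjoint decomposition. The reduction and the count are routine; the delicate point is the tight arc-disjointness when $|A|=6$, where no arc is free and the good pair is forced to be an exact factorization of $D$.
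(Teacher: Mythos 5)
The paper contains no proof of Proposition~\ref{prop4} to compare you against: it is quoted from \cite{BBHY} and used as a black box, so your proposal should be judged as a self-contained alternative. On those terms your route is correct. The reduction is sound: taking $X=\{w\}$ in Lemma~\ref{lem1} needs only $d^+(w),d^-(w)\ge 1$ (there are no loops), deleting $w$ removes exactly $d^+(w)+d^-(w)$ arcs, and Proposition~\ref{prop3} then handles $D-w$; your count correctly isolates the single tight case $|A|=6$ with every vertex of total degree $3$ and type $(1,2)$ or $(2,1)$, where a good pair must be an exact factorization of the arc set.

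The one thing you leave undone --- the enumeration you call the main obstacle --- is much smaller than you suggest, and it does close. Since out-degrees sum to $6$, exactly two vertices have type $(2,1)$. A single digon is impossible: its endpoints would have underlying degree $2$, while the two digon-free vertices would need underlying degree $3$ and hence be adjacent to everything, overloading the digon's endpoints; three digons are impossible since some vertex would lie in two of them and have total degree at least $4$. So $D$ is either the unique tournament with score sequence $(1,1,2,2)$, i.e.\ the strong $4$-tournament --- with arcs $ab,bc,cd,da,ac,bd$ it splits into the out-branching $\{ab,ac,bd\}$ and the in-branching $\{bc,cd,da\}$ --- or two disjoint digons $p\leftrightarrow q$, $r\leftrightarrow s$ joined by two single arcs matching the pairs. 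If those arcs run in the same direction, say $p\to r$ and $q\to s$, the arc set splits into the two Hamiltonian dipaths $q\to p\to r\to s$ and $p\to q\to s\to r$; if they run oppositely, $D$ is exactly $E_4$ (Figure~\ref{fig3}), which the hypothesis excludes --- so your sketched verification that $E_4$ itself has no good pair, while true, is not actually needed for the statement. Your semicomplete branch also works but hides a small check: with a source $s$, if $|A|\ge 7$ then $D-s$ has at least $4$ arcs, Proposition~\ref{prop3} gives a good pair $(O,I)$ of $D-s$, and one attaches $s$ via $sr$ to $O$ (with $r$ the root of $O$) and via any other out-arc of $s$ to $I$; if $|A|=6$ then $D$ is a $4$-tournament with a source, and both subcases decompose by inspection (e.g.\ the transitive tournament $s>a>b>c$ splits into $\{sa,sb,ac\}$ and $\{ab,bc,sc\}$). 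With these few lines written out, your proposal is a complete and rather clean proof of the imported result, arguably more economical than a raw case analysis over all digraphs on $4$ vertices with at least $6$ arcs.
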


\begin{figure}[!htpb]
	\centering\includegraphics[scale=0.5]{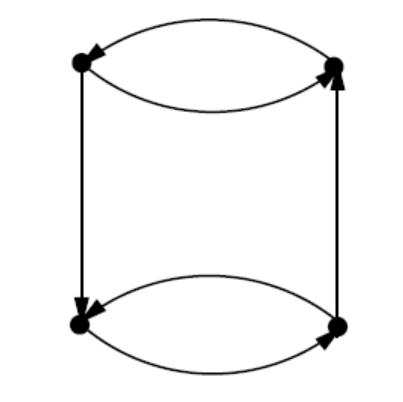}
	\caption{$E_4$.}
	\label{fig3}
\end{figure}

\begin{lemma}[\cite{BG2}, p.354]\label{lem3}
	Let $D=(V,A)$ be a digraph. Then $D$ is $k$-arc-strong if and only if it contains $k$ arc-disjoint $(s,t)$-paths for
	every choice of distinct vertices $s,t\in V$.
\end{lemma}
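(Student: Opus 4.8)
This is the arc-version of Menger's theorem (stated here in its global ``$k$-arc-strong'' form), so the plan is to separate the two implications and put essentially all of the work into the ``only if'' direction via a max-flow/min-cut argument.

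\textbf{The easy direction.} First I would show that if $D$ contains $k$ arc-disjoint $(s,t)$-dipaths for every ordered pair of distinct vertices $s,t$, then $\lambda(D)\ge k$. Fix an arbitrary proper non-empty $X\subsetneq V$, pick $s\in X$ and $t\in V\setminus X$, and take $k$ arc-disjoint $(s,t)$-dipaths. Each such dipath begins inside $X$ and ends outside $X$, so it uses at least one arc of $(X,V\setminus X)_D$; since the dipaths are arc-disjoint, these crossing arcs are pairwise distinct, whence $d_D^+(X)\ge k$. As $X$ was arbitrary, $\lambda(D)\ge k$, i.e. $D$ is $k$-arc-strong.

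\textbf{Reduction of the hard direction to a local statement.} For the converse I would fix a single ordered pair $(s,t)$ and establish the local Menger equality: the maximum size of a family of arc-disjoint $(s,t)$-dipaths equals $\mu(s,t):=\min\{d_D^+(X): s\in X\subseteq V\setminus\{t\}\}$, the minimum size of an $(s,t)$-arc-cut. Granting this, the lemma follows immediately: if $D$ is $k$-arc-strong then every $X$ with $s\in X$ and $t\notin X$ is a proper non-empty subset, so $d_D^+(X)\ge k$, hence $\mu(s,t)\ge k$, and the local equality then supplies $k$ arc-disjoint $(s,t)$-dipaths. Note that the ``$\le$'' half of the local equality is exactly the crossing argument used above, so the real content is to \emph{construct} $\mu(s,t)$ arc-disjoint dipaths.

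\textbf{The core: augmenting paths.} I would prove the local equality by the standard integral max-flow/min-cut argument. Assign unit capacity to every arc and build a maximum family $\mathcal{P}$ of arc-disjoint $(s,t)$-dipaths, augmenting one at a time. Given $\mathcal{P}$, form the residual digraph $D_{\mathcal{P}}$ by keeping every arc of $D$ not used by $\mathcal{P}$ and adding the reverse of every arc that is used. If $D_{\mathcal{P}}$ contains an $(s,t)$-dipath, then pushing one unit of flow along it and cancelling forward/backward uses yields a strictly larger arc-disjoint family, contradicting maximality. Hence $t$ is unreachable from $s$ in $D_{\mathcal{P}}$; let $X$ be the set of vertices reachable from $s$ in $D_{\mathcal{P}}$, so $s\in X$ and $t\notin X$. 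Every arc of $(X,V\setminus X)_D$ must be saturated (otherwise it would survive in $D_{\mathcal{P}}$ and enlarge $X$), while no arc of $(V\setminus X,X)_D$ is used (otherwise its residual reverse would enlarge $X$). Therefore $|\mathcal{P}|=d_D^+(X)\ge \mu(s,t)$, which meets the easy upper bound and proves the equality.

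\textbf{Main obstacle.} The genuinely delicate step is the flow-cancellation in the augmentation: after pushing one unit along a residual $(s,t)$-dipath, one must verify that the resulting unit flow of value $|\mathcal{P}|+1$ again decomposes into that many arc-disjoint $(s,t)$-dipaths. This is the flow-decomposition bookkeeping, where any cycles produced by the cancellation are discarded (their arcs reset to unused); checking that this always yields $|\mathcal{P}|+1$ genuine arc-disjoint dipaths, together with confirming that reachability in $D_{\mathcal{P}}$ produces a cut saturated in exactly one direction, is where the care lies. Everything else is the routine crossing count already used in the easy direction.
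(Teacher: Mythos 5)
Your proof is correct, but note that the paper does not prove this statement at all: Lemma~\ref{lem3} is the arc version of Menger's theorem, quoted with a citation to the textbook of Bang-Jensen and Gutin, so there is no in-paper argument to compare against. Your two-direction structure --- the crossing count for the easy implication and the Ford--Fulkerson augmenting-path/residual-digraph argument, with flow decomposition discarding cycles, for the hard one --- is exactly the standard proof found in the cited reference, and the one subtle point you flag (that the augmented unit flow of value $|\mathcal{P}|+1$ decomposes into that many arc-disjoint $(s,t)$-dipaths plus discardable cycles) is handled correctly.
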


\begin{lemma}[Edmonds' branching theorem \cite{BG}]\label{lem4}
	A directed multigraph $D=(V,A)$ with a special vertex $z$ has $k$ arc-disjoint
	out-branchings rooted at $z$ if and only if $d^-(X)\geq k$ for all $\emptyset\neq X\subseteq V-z$.
\end{lemma}

\begin{lemma}[\cite{BBHY}]\label{lem5}
	If $D$ is a 2-arc-strong digraph on $n$ vertices that contains a subdigraph on $n-3$ vertices with a good pair, then $D$ has a good pair.
\end{lemma}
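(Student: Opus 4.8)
The plan is to pass to a subdigraph with a good pair of maximum order and then read off the conclusion from Corollary~\ref{cor1} and Lemma~\ref{lem2}. First I would let $Q$ be a subdigraph of $D$ of maximum order subject to having a good pair; one exists, since the hypothesis supplies a subdigraph on $n-3$ vertices with a good pair, which also shows $|V(Q)|\ge n-3$. Writing $R=V-V(Q)$, we then have $|R|\le 3$. If $R=\emptyset$ then $Q=D$ and we are done, so assume $R\ne\emptyset$. Put $X=N_D^-(Q)$ and $Y=N_D^+(Q)$; by the convention used throughout (cf.\ Proposition~\ref{prop2}) these are the external neighbours of $Q$, so $X,Y\subseteq V-V(Q)=R$. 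By the observation following Lemma~\ref{lem1}, the maximality of $Q$ forces $X\cap Y=\emptyset$: a vertex $z\in X\cap Y$ has an in-neighbour and an out-neighbour in $V(Q)$, so adding the corresponding in-arc to the out-branching and out-arc to the in-branching of the good pair of $Q$ would absorb $z$ and enlarge $Q$, a contradiction.

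Next I would bound the set $W=R\setminus(X\cup Y)$ of vertices having no arc to or from $Q$. Since $R\ne\emptyset$ and $V(Q)\ne\emptyset$ (indeed $|V(Q)|\ge n-3\ge 4$ for the orders $n\in\{7,8,9\}$ at hand), the set $V(Q)$ is a proper nonempty subset, so $\lambda(D)\ge 2$ yields $d_D^+(V(Q))\ge 2$ and $d_D^-(V(Q))\ge 2$. Every arc leaving $V(Q)$ has its head in $Y$ and every arc entering $V(Q)$ has its tail in $X$; therefore $X\ne\emptyset$ and $Y\ne\emptyset$. As $X$ and $Y$ are disjoint subsets of $R$ and $|R|\le 3$, we conclude $|W|=|R|-|X|-|Y|\le 3-1-1=1$.

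It remains to treat the two surviving cases according to $|W|$. If $|W|=0$, then $X\cup Y=R=V-V(Q)$ with $X\cap Y=\emptyset$, and since $\lambda(D)\ge 2$, Corollary~\ref{cor1} immediately produces a good pair of $D$. If $|W|=1$, say $W=\{w\}$, then $X\cup Y=V-V(Q)-\{w\}$ with $X\cap Y=\emptyset$ and $w\in V-V(Q)$, so Lemma~\ref{lem2}, which is designed exactly to absorb one leftover vertex, produces a good pair of $D$. This exhausts all cases and completes the proof.

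The only real content of the argument is the reduction itself: maximality of $Q$ simultaneously clears the intersection $X\cap Y$ and caps $|R|$ at $3$, while $2$-arc-strong connectivity forces at least one in-neighbour and one out-neighbour of $Q$ to lie inside $R$, leaving at most one ``private'' vertex $w$. I expect the point most worth verifying carefully to be the disjointness $X\cap Y=\emptyset$ under maximality, namely the extension argument that absorbs a vertex of $X\cap Y$ into $Q$ by adding a single in-arc and a single out-arc at that vertex (these two arcs being automatically distinct and disjoint from the existing branchings), together with the trivial but necessary check that $V(Q)\ne\emptyset$ so that the cut bounds $d_D^{\pm}(V(Q))\ge 2$ are genuinely available. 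Both are routine for the orders in which Lemma~\ref{lem5} is applied.
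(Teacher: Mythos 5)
Your argument is correct, but there is nothing in the paper to compare it against: Lemma~\ref{lem5} is imported from \cite{BBHY} as a black box, and no proof of it appears in this paper. What you have written is, in substance, the specialization to $|V-V(Q)|\le 3$ of the route the paper takes for Lemma~\ref{lem6} (the $n-4$ analogue): clear $N_D^-(Q)\cap N_D^+(Q)$ via Lemma~\ref{lem1} (you phrase this as maximality of $Q$, the paper as an explicit one-vertex absorption followed by an appeal to the smaller-complement lemma), then split on $|X\cup Y|$, finishing with Corollary~\ref{cor1} when $X\cup Y=V-V(Q)$ and with Lemma~\ref{lem2} when exactly one vertex $w$ is left over. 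Your key observation --- that $X$ and $Y$ are disjoint and, by $\lambda(D)\ge 2$, both nonempty, so $|W|\le |R|-2\le 1$ --- is precisely why the $n-3$ case closes with no ad hoc work, whereas Lemma~\ref{lem6} must additionally fight the $|X|=|Y|=1$, $|W|=2$ configuration by hand; your proof makes that contrast visible, which is a genuine plus. I checked the two points that could sink such a derivation and both are fine: there is no circularity, since Proposition~\ref{prop2}, Corollary~\ref{cor1} and Lemma~\ref{lem2} are proved in the paper without any appeal to Lemma~\ref{lem5}; and the absorption step is sound, since the added in-arc and out-arc at $z$ are distinct and avoid the arcs of the branchings inside $Q$. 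Two cosmetic remarks: your aside restricting to $n\in\{7,8,9\}$ is unnecessary and slightly weakens the statement as written --- all you need is $V(Q)\neq\emptyset$, i.e.\ $n\ge 4$, and for $n\le 3$ the hypothesis is degenerate anyway; and when $|R|=1$ your derived facts ($X\cap Y=\emptyset$, $X,Y\neq\emptyset$, $X\cup Y\subseteq R$) are jointly contradictory, which is harmless but deserves the one-line gloss that this case is vacuous for a maximal $Q$ (equivalently, the absorption there already makes $Q=D$), rather than being swept into the inequality $|W|\le 1$.
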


\begin{lemma}\label{lem6}
	If $D$ is a 2-arc-strong digraph on $n$ vertices that contains a subdigraph $Q$ on $n-4$ vertices with a good pair, then $D$ has a good pair.
\end{lemma}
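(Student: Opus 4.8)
The plan is to take $Q$ to be a subdigraph of $D$ having a good pair with $|V(Q)|$ as large as possible. Since a $Q$ on $n-4$ vertices is given, $|V(Q)|\ge n-4$, and if $|V(Q)|\ge n-3$ then $D$ has a good pair by Lemma~\ref{lem5}. Hence I may assume $|V(Q)|=n-4$ and that no subdigraph of $D$ on more than $n-4$ vertices has a good pair. Write $X=N_D^-(Q)$ and $Y=N_D^+(Q)$. By the maximality of $Q$ and Lemma~\ref{lem1} we have $X\cap Y=\emptyset$. Put $W=V-V(Q)-X-Y$, so that $X,Y,W$ partition the four vertices lying outside $Q$.

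Next I would bound $|W|$. Since $\lambda(D)\ge 2$, there are at least two arcs leaving $V(Q)$ and at least two arcs entering $V(Q)$, so $X\ne\emptyset$ and $Y\ne\emptyset$; consequently $|W|\le 2$, and if $|W|=2$ then $|X|=|Y|=1$. The cases $|W|=0$ and $|W|=1$ are then immediate: when $|W|=0$ we have $X\cup Y=V-V(Q)$ and Corollary~\ref{cor1} applies, while when $|W|=1$ we have $X\cup Y=V-V(Q)-\{w\}$ for some $w$ and Lemma~\ref{lem2} applies.

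The substantive case is $|W|=2$; say $X=\{x\}$, $Y=\{y\}$, $W=\{w_1,w_2\}$, and set $U=\{x,y,w_1,w_2\}$. Here every vertex of $U$ can reach $V(Q)$ only through $x$ and is reachable from $V(Q)$ only through $y$, so I would reduce the problem to $D[U]$: starting from the good pair $(O_Q,I_Q)$ of $Q$, adding one arc from $V(Q)$ to $y$ and one arc from $x$ to $V(Q)$, a good pair of $D$ is obtained precisely when $D[U]$ contains an out-tree rooted at $y$ and an in-tree rooted at $x$, each spanning $U$, that are arc-disjoint (these internal arcs are automatically disjoint from $O_Q$, $I_Q$ and from the two connecting arcs). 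The constraints from $\lambda(D)\ge 2$ transfer to $D[U]$: since $y$ has no out-arc to $V(Q)$ and $x$ has no in-arc from $V(Q)$, and since $w_1,w_2$ are adjacent to neither, one gets $d_U^+(y)\ge 2$, $d_U^-(x)\ge 2$, and $d_U^-(w_i)\ge 2$, $d_U^+(w_i)\ge 2$ for $i=1,2$.

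The hard part is to produce the two arc-disjoint spanning trees in $D[U]$ under these degree conditions, and a purely local count does \emph{not} suffice: there are $4$-vertex configurations meeting the degree bounds for which no such arc-disjoint pair exists. The key observation is that in every such bad configuration some subdigraph of $D$ on more than $n-4$ vertices (typically $V(Q)$ together with $y$, $x$, or one or both of $w_1,w_2$) already carries a good pair, contradicting the maximality of $Q$. Accordingly I would carry out a finite case analysis on the arcs of $D[U]$: in each case either I exhibit the out-tree from $y$ and in-tree from $x$ directly, or I enlarge $Q$ to a bigger subdigraph with a good pair, typically by absorbing one or both of $w_1,w_2$ together with $x$ or $y$ and applying Corollary~\ref{cor1} or Proposition~\ref{prop2} to the enlarged configuration, which contradicts the choice of $Q$. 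Organising this case distinction so that the degree lower bounds and the maximality of $Q$ together cover all orientations of $D[U]$ is the main obstacle; everything else reduces to the already-established Corollary~\ref{cor1}, Lemma~\ref{lem2} and Lemma~\ref{lem5}.
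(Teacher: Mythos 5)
Your reduction is sound up to the crux: handling $X\cap Y\neq\emptyset$ via maximality of $Q$ with Lemmas~\ref{lem1} and \ref{lem5}, the cases $|W|\le 1$ via Corollary~\ref{cor1} and Lemma~\ref{lem2}, the translation of the $|W|=2$ case into finding an arc-disjoint spanning out-tree rooted at $y$ and in-tree rooted at $x$ inside $D[U]$, and the transferred degree bounds are all correct (only the word ``precisely'' overstates matters -- a good pair of $D$ need not contain $O_Q$ and $I_Q$, but sufficiency is all you use). However, at exactly the point you yourself call the main obstacle, the proof stops: the ``finite case analysis on the arcs of $D[U]$'' is announced, not performed, and it rests on two unproven assertions -- (a) that there exist degree-feasible orientations of $D[U]$ admitting no arc-disjoint tree pair, and (b) that in every such orientation $Q$ can be enlarged to a larger subdigraph with a good pair. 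Neither is argued, so the heart of the lemma is missing.

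Worse, assertion (a) appears to be false, so your dichotomy is organised around a phantom difficulty. The degree bounds you derived already yield, by Menger's theorem applied inside $D[U]$, two arc-disjoint $(y,x)$-paths: every set $S$ with $y\in S$, $x\notin S$ has $d^+(S)\ge 2$, since $d_U^+(y)\ge 2$ handles $S=\{y\}$; for $S=\{y,w_i\}$ each of $y$ and $w_i$ has at most one out-arc staying inside $S$, hence at least one leaving; and $S=\{y,w_1,w_2\}$ is handled by $d_U^-(x)\ge 2$. This is in effect how the paper proceeds: it invokes Lemma~\ref{lem3} on all of $D$ (the two $(y,x)$-paths automatically avoid $Q$ because $N_D^+(Q)=\{y\}$ and $N_D^-(Q)=\{x\}$), then runs a short case analysis on how $P^1,P^2$ meet $\{w_1,w_2\}$, each time completing the paths to a good pair directly; the degree bounds supply the missing tree arcs, e.g.\ when $P^1=yw_ix$ and $P^2=yw_jx$ with $w_iw_j\notin A$, they force $w_iy\in A$ and $xw_j\in A$. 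The paper never enlarges $Q$ in the $|W|=2$ case -- enlargement is needed only to dispose of $X\cap Y\neq\emptyset$ at the outset. To repair your write-up you must actually carry out the construction, and the two-arc-disjoint-paths skeleton is the organising tool that keeps the case count small; a case analysis structured around ``bad configurations versus enlarging $Q$'' would be chasing cases that do not occur.
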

\begin{proof}
	Let $(O_Q,I_Q)$ be a good pair of $Q$ and set $X=N^-_D(Q)$ and $Y=N^+_D(Q)$. Since $|Q|=n-4$,
	$|D-Q|=4$. If $X\cap Y\neq \emptyset$, then there is a vertex $v$ in $D-Q$ which has both an in-neighbour and an out-neighbour in $Q$.
	By Lemma \ref{lem1}, $D[V(Q)\cup\{v\}]$ has a good pair. Moreover, since $|Q\cup\{v\}|=n-3$, $D$ has a good pair by Lemma \ref{lem5}.
	Henceforth, we may assume that $X\cap Y= \emptyset$.
	
	If $X\cup Y=V-V(Q)$, namely $|X\cup Y|=4$, then $D$ has a good pair by Corollary \ref{cor1}.
	If $|X\cup Y|=3$, then $D$ has a good pair by Lemma \ref{lem2}.
	Therefore, we may assume that $|X\cup Y|=2$, i.e., $|X|=|Y|=1$.
	
	Set $Y=\{y\}$, $X=\{x\}$ and $V-V(Q)-X-Y=\{w_1,w_2\}$,
	and let $e_y$ (resp. $e_x$) be an arc from $Q$ to $y$ (resp. from $x$ to $Q$).
	Since $\lambda(D)\ge2$, $D$ contains $2$ arc-disjoint $(y,x)$-paths $P^1_{(y,x)}$ and $P^2_{(y,x)}$ by Lemma \ref{lem3}.
	Obviously, $(P^1_{(y,x)}\cup P^2_{(y,x)})\cap V(Q)=\emptyset$ and  $(P^1_{(y,x)}\cup P^2_{(y,x)})\cap \{w_1,w_2\}\neq \emptyset$
	(since there are no multiple arcs). W.l.o.g., assume that $P^1_{(y,x)}\cap \{w_1,w_2\}\neq \emptyset$.
	
	{\bf Case 1:} $P^2_{(y,x)}\cap \{w_1,w_2\}=\emptyset$.
	
	That is $P^2_{(y,x)}=yx$.
	Since $P^1_{(y,x)}\cup P^2_{(y,x)}$ uses at most one arc from $\{y,x\}$ to $\{w_1,w_2\}$, there is one arc
	$a_1 \notin P^1_{(y,x)}\cup P^2_{(y,x)}$ from $\{y,x\}$ to $\{w_1,w_2\}$. W.l.o.g., assume that the head of $a_1$ is $w_1$.
	Moreover, since $d^-_D(w_2)\geq 2$, there is one arc $a_2 \notin P^1_{(y,x)}\cup P^2_{(y,x)}$ with head $w_2$.
	Obviously, we can make $\{a_1\}\cup \{a_2\}\neq C_2$. Set $O_D=O_Q+e_y+P^2_{(y,x)}+a_1+a_2$.
	
	Now, we want to find $I_D$. If $P^1_{(y,x)}\supseteq \{w_1,w_2\}$, then let $I_D=P^1_{(y,x)}+e_x+I_Q$.
	If $P^1_{(y,x)}\cap \{w_1,w_2\}=w_1$, then there is one arc $a_3$
	with tail $w_2$. Note that $a_3 \neq a_1$ (since the tail of $a_1$ is in $\{y,x\}$), $a_3 \neq a_2$
	(since the head of $a_2$ is $w_2$) and obviously $a_3 \notin P^1_{(y,x)}\cup P^2_{(y,x)}$. Set $I_D=P^1_{(y,x)}+a_3+e_x+I_Q$.
	If $P^1_{(y,x)}\cap \{w_1,w_2\}=w_2$, then there is one arc $a_4 \neq a_2$
	with tail $w_1$ since $d^+_D(w_1)\geq 2$. Similarly, $a_4 \neq a_1$ and $a_4 \notin P^1_{(y,x)}\cup P^2_{(y,x)}$. Set $I_D=P^1_{(y,x)}+a_4+e_x+I_Q$.
	Therefore, $(O_D,I_D)$ is a good pair of $D$, a contradiction.\\
	
	The proof of the case when ``$P^2_{(y,x)}\cap \{w_1,w_2\}\neq \emptyset$'', which is analogous to Case 1, see Appendix.
\end{proof}

\begin{lemma}\label{lem7}
	Let $D$ be a 2-arc-strong digraph on $n$ vertices that contains a subdigraph $Q$ on $n-5$ vertices with a good pair, $X=N_D^-(Q),$ $Y=N_D^+(Q)$ and $X\cap Y=\emptyset$. If $|X|\ge 2$ or $|Y|\ge 2$, then $D$ has a good pair.
\end{lemma}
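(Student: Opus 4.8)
The plan is to prove the lemma by the same ``absorb, then route by hand'' strategy that drives Lemmas~\ref{lem5} and \ref{lem6}: reduce to a single hard configuration and then build the good pair explicitly from $(O_Q,I_Q)$. Throughout I may assume, by digraph duality (which interchanges $X$ and $Y$ and preserves having a good pair), that $|X|\ge|Y|$. Since $\lambda(D)\ge 2$ forces at least two arcs leaving $V(Q)$ and at least two arcs entering $V(Q)$, and all such arcs land in $Y$ and emanate from $X$ respectively, we have $|X|\ge 1$ and $|Y|\ge 1$.

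Set $W=V-V(Q)-X-Y$, so $|X|+|Y|+|W|=5$. First I would dispose of the easy ranges of $|W|$. If $|W|=0$, then $X\cup Y=V-V(Q)$ and Corollary~\ref{cor1} applies; if $|W|=1$, then $X\cup Y=V-V(Q)-\{w\}$ for a single $w$ and Lemma~\ref{lem2} applies. This leaves $|W|\ge 2$, i.e.\ $|X|+|Y|\le 3$. Combined with $|X|\ge|Y|\ge 1$ and the hypothesis $\max\{|X|,|Y|\}\ge 2$, the only surviving case is $|X|=2$, $|Y|=1$, $|W|=2$. Write $X=\{x_1,x_2\}$, $Y=\{y\}$ and $W=\{w_1,w_2\}$, and put $H=D[X\cup Y\cup W]$.

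In this case I would enlarge $(O_Q,I_Q)$ across $H$. By the definitions of $X,Y$ and $X\cap Y=\emptyset$, the only arcs between $Q$ and $H$ run from $Q$ into $y$ and from $x_1,x_2$ into $Q$; in particular each vertex of $W$ sends and receives all of its arcs inside $H$. Fix one arc $e_y$ from $Q$ to $y$ and one arc $e_{x_i}$ from each $x_i$ to $Q$; since $O_Q$ and $I_Q$ use only arcs inside $Q$, the out-tree $B^+=O_Q+e_y$ and the in-tree $B^-=I_Q+e_{x_1}+e_{x_2}$ are arc-disjoint. Hence it suffices to find, inside $H$ and arc-disjointly, an out-branching of $H$ rooted at $y$ together with a spanning in-forest of $H$ each of whose components is rooted at $x_1$ or $x_2$; adding the first to $B^+$ and the second to $B^-$ produces an out-branching and an in-branching of $D$ that are arc-disjoint. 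The $2$-arc-strong hypothesis, read off the cuts separating subsets of $H$ from the rest of $D$, supplies exactly the local degree conditions I will exploit: $d_H^+(y)\ge 2$, $d_H^-(x_1),d_H^-(x_2)\ge 2$, and $d_H^+(w_i),d_H^-(w_i)\ge 2$ for $i=1,2$.

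To produce the two arc-disjoint structures I would mimic the path argument of Lemma~\ref{lem6}: by Lemma~\ref{lem3} there are two arc-disjoint $(y,x_1)$-dipaths, and since $Q$ can only be left through $y$ (the already-used start vertex), each such simple dipath avoids $Q$ and hence lies inside $H$, so between them they cover much of $\{w_1,w_2,x_2\}$; one dipath can feed the in-forest (routing the vertices it meets to $x_1$) while the other, together with the extra independent exit $x_2\to Q$, feeds the out-branching. The main obstacle, and where the genuine case analysis lives, is guaranteeing that both $w_1$ and $w_2$ are \emph{simultaneously} reached by the out-branching and drained by the in-forest without reusing an arc; the danger is that the two out-arcs of a single $w_i$, or the two out-arcs of $y$, get entirely consumed by one of the two structures. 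The degree-$2$ conditions above, the freedom to send $x_1$ and $x_2$ to $Q$ independently, and the freedom to decide which of $w_1,w_2$ is covered through the arcs joining them give the needed slack, and I expect to finish by splitting on how $w_1,w_2$ attach to $X$, to $Y$, and to each other, paralleling the two-case split ($(Y,w)_D\cup(w,X)_D\neq\emptyset$ versus $d_X^-(w),d_Y^+(w)\ge 2$) that drives the proof of Lemma~\ref{lem2}.
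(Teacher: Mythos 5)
Your reduction is sound and coincides with the paper's: Corollary~\ref{cor1} handles $|W|=0$, Lemma~\ref{lem2} handles $|W|=1$, and then $|X\cup Y|=3$ together with the hypothesis forces (up to digraph duality) the single case $|X|=2$, $|Y|=1$, $|W|=2$. The absorption framework ($B^+=O_Q+e_y$, $B^-=I_Q+e_{x_1}+e_{x_2}$, reduce to two arc-disjoint structures inside $H$) and the degree bounds $d_H^+(y)\ge 2$, $d_H^-(x_i)\ge 2$, $d_H^{\pm}(w_i)\ge 2$ are also correct. The genuine gap is that everything after this point is a promise rather than a proof: the entire content of the lemma lives in the case analysis you defer (``I expect to finish by splitting on how $w_1,w_2$ attach\dots''), and that is exactly where the construction can fail. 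For instance, your two arc-disjoint $(y,x_1)$-dipaths may both avoid $W$ entirely (one being the arc $yx_1$, the other $yx_2x_1$), in which case $w_1$ and $w_2$ must be attached to \emph{both} the out-branching and the in-forest using only their local degrees, and one must check that the out-branching does not consume all out-arcs of $y$, or all out-arcs of some $w_i$, that the in-forest needs. You assert that the degree conditions ``give the needed slack,'' but verifying this is the whole difficulty, and it is not done.

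Moreover, the tool you chose is strictly weaker than the one the paper uses, which makes the missing step harder, not easier. The two-Menger-paths device is Lemma~\ref{lem6}'s argument for the case $|X|=|Y|=1$; for the present lemma the paper instead applies Edmonds' branching theorem (Lemma~\ref{lem4}) to obtain two arc-disjoint branchings of $D$ rooted at the vertex on the singleton side (in your orientation, two arc-disjoint out-branchings rooted at $y$), and then observes that, since the only way a path can cross $Q$ is via that root, their restrictions to $V-V(Q)$ are still two arc-disjoint \emph{spanning} branchings of $H$. Thus every vertex of $W$ automatically has arc-disjoint routes in both structures, and the remaining case analysis (the paper's indicator $f_i(w_j)$ recording whether $w_j$ lies on a $(y_j,x)$-path of one restricted branching) only has to decide how to recombine pieces of one branching with the other. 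Replacing this by two paths discards precisely the coverage of $w_1,w_2$ that makes the paper's cases close; to complete your plan you would either have to reconstruct a substantial amount of ad hoc casework of the kind Lemma~\ref{lem6} relegates to the appendix, or switch to the Edmonds-based argument.
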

\begin{proof}
	Let $(O_Q,I_Q)$ be a good pair of $Q$.
	If $X\cup Y=V-V(Q)$, namely $|X\cup Y|=5$, then $D$ has a good pair by Corollary \ref{cor1}.
	If $|X\cup Y|=4$, then $D$ has a good pair by Lemma \ref{lem2}.
	Therefore, assume that $|X\cup Y|\leq 3$. Moreover, from the assumption that $|X|\ge 2$ or $|Y|\ge 2$, and the fact that $|X|\ge 1$ and $|Y|\ge 1$, we have $|X\cup Y|=3$. W.l.o.g., assume that $|X|=1$ and $|Y|=2$.
	
	Set $X=\{x\}$, $Y=\{y_1,y_2\}$ and $V-V(Q)-X-Y=\{w_1,w_2\}$,
	and let $e_x$ (resp. $e_{y_1}$, $e_{y_2}$) be an arc from $x$ to $Q$ (resp. from $Q$ to $y_1$, $y_2$).
	Let $B^+=O_Q+e_{y_1}+e_{y_2}$ and $B^-=I_Q+e_x$.
	Since $\lambda(D)\ge2$, by the definition of arc-connectivity and Lemma \ref{lem4}, $D$ contains two arc-disjoint in-branchings rooted at $x$, denoted by $B^-_1$ and $B^-_2$. Since $X=N_D^-(Q)$ and $Y=N_D^+(Q)$, clearly
	the restriction of $B^-_1, B^-_2$ to $V-V(Q)$ are still two arc-disjoint in-branchings rooted at $x$.
	We denote them by $\hat{B}^-_1$ and $\hat{B}^-_2$.
	
	Now, in $\hat{B}^-_i$ ($i=1$ or 2), if there exists a $(y_1,x)$-path or a $(y_2,x)$-path containing $w_j$ ($j=1$ or 2), then we let $f_i(w_j)=1$; otherwise, let $f_i(w_j)=0$. Note that, in any in-branching rooted at $x$, there is a unique $(s,x)$-path for any vertex $s$. Hence, for $\hat{B}^-_i$, if $f_i(w_1)=1$ and $f_i(w_2)=0$, then it is impossible that $w_2$ is the out-neighbor
	of $y_1,y_2$ or $w_1$. So $w_2$ must be a leaf in $\hat{B}^-_i$.
	
	Next, we consider the following cases:
	
	{\bf Case 1:} $f_i(w_1)+f_i(w_2)=2$ for some $i\in[2]$.
	
	W.l.o.g., assume that $i=1$, i.e., $f_1(w_1)=f_1(w_2)=1$. Then, $\hat{B}^-_1$ contains
	a $P_{(y_i,x)}$ containing $w_1$ and a $P_{(y_j,x)}$ containing $w_2$, where $i,j\in \{1,2\}$.
	Since $P_{(y_i,x)}\ne P_{(y_j,x)}$, $i\ne j$. Obviously,
	$B^++P_{(y_i,x)}+P_{(y_j,x)}$ contains an out-branching of $D$, denoted by $O_D$. Now, set $I_D=\hat{B}^-_2+B^-$. Since
	$P_{(y_i,x)}\cup P_{(y_j,x)}\subseteq \hat{B}^-_1$ and $\hat{B}^-_1$ and $\hat{B}^-_2$ are arc-disjoint, $(O_D,I_D)$ is a good pair.
	
	{\bf Case 2:} $f_i(w_1)+f_i(w_2)=1$ for any $i\in[2]$.
	
	{\bf Subcase 2.1:} $f_1(w_i)=f_2(w_i)=1$ and $f_1(w_{3-i})=f_2(w_{3-i})=0$, where $i\in[2]$.
	
	W.l.o.g., assume that $i=1$. Since $f_1(w_1)=1$ and $f_1(w_2)=0$,
	$w_2$ is a leaf of $\hat{B}^-_1$. Similarly, $w_2$ is also a leaf of $\hat{B}^-_2$. Thus, $\hat{B}^-_1\cup \hat{B}^-_2$ does not use any in-arc to $w_2$. Let $a$ be an arc with head $w_2$ and then $a\notin \hat{B}^-_1\cup \hat{B}^-_2$.
	Since $f_1(w_1)=1$, $\hat{B}^-_1$ contains a $P_{(y_j,x)}$ containing $w_1$ ($j=1$ or $2$).
	Now, $D$ has a good pair $(O_D,I_D)$ with $O_D=B^++P_{(y_j,x)}+a$ and $I_D=\hat{B}^-_2+B^-$.
	
	\vspace{2mm}
	Discussions of the subcase when ``$f_1(w_i)=f_2(w_{3-i})=1$ and $f_1(w_{3-i})=f_2(w_i)=0$, where $i\in[2]$'' and cases when ``$f_i(w_1)+f_i(w_2)=1$ and $f_{3-i}(w_1)+f_{3-i}(w_2)=0$, where $i=1$ or $2$'' and ``$f_i(w_1)+f_i(w_2)=0$ for any $i\in[2]$'' are given in Appendix.
\end{proof}

\begin{lemma}\label{lem8}
	Let $D$ be a 2-arc-strong digraph on $n$ vertices that contains a subdigraph $Q$ on $n-6$ vertices with a good pair. Let $X=N_D^-(Q)$ and $Y=N_D^+(Q)$ with $X\cap Y=\emptyset$. If $|X|=|Y|=2$ and at most one of $X$ and $Y$ is an independent set, then $D$ has a good pair.
\end{lemma}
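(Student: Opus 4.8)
The plan is to push the good pair of $Q$ onto the six vertices outside it and then reduce to a purely local question on those six vertices.

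Write $(O_Q,I_Q)$ for a good pair of $Q$ and set $W=V-V(Q)-X-Y=\{w_1,w_2\}$. Because $X=N_D^-(Q)$, $Y=N_D^+(Q)$ and $X\cap Y=\emptyset$, the only arcs joining $Q$ to the rest of $D$ go from $X$ into $Q$ and from $Q$ into $Y$; in particular $W$ neither sends an arc to $Q$ nor receives one from $Q$. Put $S=X\cup Y\cup W$ and work inside $D[S]$. Since none of the relevant arcs of these vertices can touch $Q$, the bound $\lambda(D)\ge2$ gives $d_S^-(x)\ge2$ for $x\in X$, $d_S^+(y)\ge2$ for $y\in Y$, and $d_S^-(w),d_S^+(w)\ge2$ for $w\in W$. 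By digraph duality I may assume that $Y$ is the non-independent set, and, relabelling, that $y_1y_2\in A$. Fix the skeleton $B^+=O_Q+e_{y_1}+e_{y_2}$ and $B^-=I_Q+e_{x_1}+e_{x_2}$, where $e_{y_i}$ is an arc from $Q$ to $y_i$ and $e_{x_i}$ an arc from $x_i$ to $Q$; these four arcs are distinct and disjoint from $O_Q\cup I_Q$. It then suffices to find, inside $D[S]$, an out-forest $F^+$ with root set $Y$ spanning $S$ and an in-forest $F^-$ with root set $X$ spanning $S$ that are arc-disjoint: then $(B^++F^+,\,B^-+F^-)$ is a good pair of $D$.

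First I would check that each forest exists on its own. If some vertex of $X\cup W$ were unreachable from $Y$ in $D[S]$, let $T\subseteq S$ be the set reachable from $Y$; no arc leaves $T$ for $S\setminus T$ by maximality, and $Q$ sends arcs only into $Y\subseteq T$, so $d_D^-(S\setminus T)=0$, contradicting $\lambda(D)\ge2$. Hence $Y$ reaches all of $S$ and a spanning out-forest $F^+$ rooted at $Y$ exists; the dual cut argument shows a spanning in-forest $F^-$ rooted at $X$ exists. So the whole difficulty lies in their arc-disjoint coexistence. One might hope to delete $W$ instead, since each $w_i$ has an in- and an out-neighbour in $X\cup Y$ and Lemma~\ref{lem1} would then reduce the claim to $D-W$; but $D-W$ need not satisfy the hypotheses of Corollary~\ref{cor1}, because arcs leaving $Y$ or entering $X$ may run into $W$ and the degree conditions of Proposition~\ref{prop2} can collapse after deletion. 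This is exactly why $W$ must be retained.

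The hypothesis enters through the arc $y_1y_2$, which does two things. First, using $y_1y_2$ as the $F^-$-out-arc of $y_1$ lets $y_1$ reach $X$ through $y_2$, so $F^-$ needs only one arc leaving $\{y_1,y_2\}$ for $X\cup W$ instead of two. Second, since $F^+$ uses four arcs (in-arcs of $x_1,x_2,w_1,w_2$) and $F^-$ uses four arcs (out-arcs of $y_1,y_2,w_1,w_2$), I need eight distinct arcs of $D[S]$; the count $|A(D[S])|=\sum_{v\in S}d_S^-(v)\ge8$ is tight exactly when both $X$ and $Y$ are independent, and $y_1y_2$ raises it to $9$, providing a single spare arc. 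Concretely I would first assign to each $w_i$ a distinct in-arc $\alpha_i$ and out-arc $\beta_i$ (available since $d_S^-(w_i),d_S^+(w_i)\ge2$), chosen so that $\{\alpha_1,\alpha_2\}$ and $\{\beta_1,\beta_2\}$ are disjoint and a possible arc $w_1w_2$ or $w_2w_1$ is not claimed twice, and then complete $F^+$ from $\alpha_1,\alpha_2$ and two in-arcs into $X$, and $F^-$ from $\beta_1,\beta_2$, the arc $y_1y_2$, and one out-arc of $y_2$.

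The hard part will be to carry out this assignment so that the two forests are simultaneously acyclic and globally arc-disjoint in the tight regime where only one spare arc exists. This cannot follow from the arc count alone, since deciding the existence of arc-disjoint out- and in-branchings is {\sf NP}-complete in general; what rescues the argument is that $D[S]$ has only six vertices with very rigid degrees. I therefore expect a case analysis on how $w_1$ and $w_2$ attach to $X$, $Y$ and to each other --- whether each $w_i$ has an in-neighbour in $Y$, an out-neighbour in $X$, and whether $w_1w_2$ or $w_2w_1$ is present. The decisive point is that when both $X$ and $Y$ are independent the eight arcs are forced into a rigid pattern (no arc enters $Y$ and no arc leaves $X$ inside $D[S]$), and this is precisely the configuration in which $F^+$ and $F^-$ can be driven to share an arc; the extra arc $y_1y_2$ breaks that pattern, and checking case by case that it always frees the overlap is where the bulk of the work will lie.
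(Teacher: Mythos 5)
Your framework is sound and is in fact the same one the paper uses: the paper also fixes the skeleton $B^+=B_Q^++e_{y_1}+e_{y_2}$, $B^-=B_Q^-+e_{x_1}+e_{x_2}$ and then builds arc-disjoint structures on the six outside vertices (this is what Proposition~\ref{prop2} and its invocations implement), and your cut argument that each of $F^+$ and $F^-$ exists \emph{separately} is correct. But the proposal stops exactly where the lemma begins. Everything after ``I therefore expect a case analysis'' is a plan, not an argument: you never exhibit the arc-disjoint pair $F^+,F^-$, and that simultaneous construction is the entire content of the statement. In the paper it occupies Claims~\ref{lem8}.1 through \ref{lem8}.5 plus several appendix pages of subcases --- first ruling out degenerate domination patterns between $Y$ and $X$, then showing no vertex of $W$ dominates both vertices of the non-independent side, then that $(W,X)_D=\emptyset$, and finally finishing in the forced configuration $D[X]=C_2$ with $y_1x_1,y_2x_2\in A$. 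None of this is routine, and, as you yourself concede, your one-spare-arc count cannot substitute for it, since arc-disjointness of branchings does not follow from counting. So the proposal has a genuine gap: it is a correct reduction plus an unexecuted case analysis, and the case analysis \emph{is} the lemma.

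Two secondary points. The counting claim is also imprecise: $\sum_{v\in S}d_S^-(v)\ge 8$ is tight only if, in addition, no vertex of $X\cup W$ has a third in-arc inside $S$ and no in-arc of $Y$ lies in $S$ for any other reason, so tightness is not ``exactly when both $X$ and $Y$ are independent'' (an arc inside $X$ is absorbed into the bound $d_S^-(x)\ge 2$ and need not raise the total). More importantly, your reduction is only a sufficient condition, and you offer no reason that the target configuration is always achievable: a good pair of $D$ need not respect your fixed skeleton, since its branchings may weave through $Q$ --- the paper's proof of Lemma~\ref{lem8-2} (the doubly-independent case) does exactly this, exploiting $|Q|\in\{2,3\}$ and paths threading $Q$ --- and even within this lemma the paper sometimes abandons the skeleton, deleting $W$ or $W\cup\{y_2\}$ and lifting a good pair of the smaller digraph via Lemmas~\ref{lem1} and~\ref{lem5} (Claim~\ref{lem8}.1). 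Hence, even granting your framework, you would still have to prove that the arc-disjoint forest pair (or some substitute) exists in every configuration, which is precisely the work you have deferred.
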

\begin{proof}
	Let $D=(V,A)$ and $W=V-X-Y-V(Q)=\{w_1,w_2\}$. Set $X=\{x_1,x_2\}$ and $Y=\{y_1,y_2\}$. By contradiction, suppose that $D$ has no good pair. Assume that $(B_Q^+,B_Q^-)$ is a good pair of $Q$. Let $B^+$ be an out-tree containing $B_Q^+$ and an in-arc of $y_j$ from $Q$ for any $j\in[2]$. Let $B^-$ be an in-tree contraining $B_Q^-$ and an out-arc of $x_i$ into $Q$ for any $i\in[2]$. By construction, $B^+$ and $B^-$ are arc-disjoint.
	
	\begin{description}
		\item[Claim \ref{lem8}.1] {\it None of the following holds:
			\begin{enumerate}
				\item $|(Y,X)_D|=4$;
				\item $X$ is not independent and there exists a vertex in $Y$ which dominates each vertex in $X$.
				Analogously $Y$ is not independent and there exists a vertex in $X$ which is dominated by each vertex in $Y$;
				\item Both $X$ and $Y$ are not independent, say $x_ix_{3-i},y_jy_{3-j}\in A$, and $y_jx_i,y_{3-j}x_{3-i}\in A$, $i,j\in[2].$
			\end{enumerate}
	}
	\end{description}
	{\it Proof.}
	We will show that for each of the three cases $D$ has a good pair.
	\begin{enumerate}
		\item Since $|(Y,X)_D|=4$, for each $y\in Y$, $y$ dominates each vertex in $X$. Thus, $(B^++y_1x_1+y_2x_2,B^-+y_1x_2+y_2x_1)$ is a good pair of $D\setminus W$, which implies that $D$ has a good pair by Lemma~\ref{lem1}.
		\item By the digraph duality, it suffices to prove the first part.
		Assume that $x_1x_2\in A$ and $y_1$ dominates each vertex in $X$. Then $(B^++y_1x_1x_2,B^-+y_1x_2)$ is a good pair of $D\setminus \{W\cup \{y_2\}\}$, which implies that $D$ has a good pair by Lemma~\ref{lem5}.
		\item W.l.o.g., assume that $x_1x_2,y_1y_2,y_1x_1,y_2x_2\in A$. Then $(B^++y_1x_1x_2,B^-+y_1y_2x_2)$ is a good pair of $D\setminus W$, which implies that $D$ has a good pair by Lemma~\ref{lem1}. 	\hfill $\lozenge$
	\end{enumerate}
	
	Since at most one of $X$ and $Y$ is independent and $|X|=|Y|=2$, it suffices to consider the case when $X$ is not independent by the digraph duality. W.l.o.g., assume $x_1x_2\in A$. We now prove the following claims.
	
	\begin{description}
		\item[Claim \ref{lem8}.2] {\it No vertex in $W$ dominates both vertices of $X$. Analogously, if $Y$ is not an independent set, then no vertex in $W$ is dominated by both vertices in $Y$.}
	\end{description}
	{\it Proof.}
	By the digraph duality, it suffices to prove the first part.
	Suppose that $w_1$ dominates both vertices in $X$. Let $e_1=w_1x_2$.
	
	First assume that there exists an arc from $Y$ to $w_2$, say $e_2$. Set $D'=D-\{e_1,e_2\}$, $X'=X\cup \{w_1\}$ and $Y'=Y\cup \{w_2\}$. There is only one initial strong component in $D'[X']$, say $X_1'$. Note that $d_{Y'}^-(X_1')\ge2$. Let $Y_j'$ be the terminal strong components in $D'[Y']$, $j\in[a]$. Note that $a\le3$. Then $d_{X'}^+(Y_j')\ge2$ for all $j\in[a]$ except at most one, say $Y_1'$, has $d_{X'}^+(Y_1')=1$. Note that $e_2\in (Y_1',w_2)_D$. By Proposition~\ref{prop2}, we get arc-disjoint $\mathcal{P}_{X'}$ and $\mathcal{P}_{Y'}$ and $T_{X'}$, $T_{Y'}$. Then $D$ has a good pair $(B^++e_2+\mathcal{P}_{X'}+T_{X'},B^-+e_1+\mathcal{P}_{Y'}+T_{Y'})$, a contradiction.
	
	Henceforth we may assume that $(Y,w_2)_D=\emptyset$. Now $d_{X\cup \{w_1\}}^-(w_2)\ge2$. Let $e_2$ be an arbitrary out-arc of $w_2$. Set $D'=D-\{e_1,e_2\}$ and $X'=X\cup W$. There is only one initial strong component in $D'[X']$, say $X_1'$. Note that $d_{Y}^-(X_1')\ge2$. For any terminal strong component $Y_j$ in $D'[Y]$, $d_{X'}^+(Y_j)\ge2$, where $j\in[a]$ and $a\le2.$ By Proposition~\ref{prop2}, we get arc-disjoint $\mathcal{P}_{X'}$ and $\mathcal{P}_{Y}$, and $T_{X'}$, $T_{Y}$. Then $D$ has a good pair $(B^++\mathcal{P}_{X'}+T_{X'},B^-+e_1+e_2+\mathcal{P}_Y+T_Y)$, a contradiction.
	\hfill $\lozenge$
	
	\begin{description}
		\item[Claim \ref{lem8}.3] {\it If $Y$ is not independent, then at least one of $(w_k,X)_D$ and $(Y,w_{3-k})_D$ is empty, for any $k\in[2]$.}
	\end{description}
	{\it Proof.}
	W.l.o.g., assume that $y_1y_2\in A$ and $k=1$.
	Suppose that neither $(w_1,X)_D$ nor $(Y,w_2)_D$ is empty.
	
	{\bf Case 1:} $y_2w_2\in A$ ($w_1x_1\in A$).
	
	By the digraph duality, it suffices to prove the case of $y_2w_2\in A$. We distinguish several subcases as follows.
	
	{\bf Subcase 1.1: $(Y,x_1)_D\neq \emptyset.$}
	
	Assume $y_jx_1\in (Y,x_1)_D$ and $w_1x_i\in (w_1,X)_D$ as $(w_1,X)_D\neq \emptyset$, where $i,j\in[2]$.
	
	First assume that $(w_2,X\cup \{w_1\})_D\neq \emptyset$.
	Set $w_2w_2^+\in (w_2,X\cup \{w_1\})_D$. Then $(B^++y_jx_1x_2+w_1^-w_1+w_2^-w_2,B^-+w_1x_i+y_1y_2w_2w_2^+)$ is a good pair of $D$, where $w_1^-\neq w_2$ and $w_2^-\neq y_2$ as $\lambda(D)\ge2$, a contradiction.
	
	Next assume that $(w_2,X\cup \{w_1\})_D=\emptyset$.
	Namely $w_2y_1,w_2y_2\in A$. Since $\lambda(D)\ge2$, there exists an arc $e\neq y_jx_1$ which is from $Y\cup \{w_2\}$ to $X\cup \{w_1\}$. Now we find an out-branching of $D$ as $O=B^++y_jx_1x_2+w_1^-w_1+w_2^-w_2$, where $w_1^-w_1\neq e$ and $w_2^-\neq y_2$ as $\lambda(D)\ge2$. Note that $B^-+w_1x_i+y_1y_2w_2y_1+e$ contains an in-branching $I$ of $D$. Then $(O,I)$ is a good pair of $D$, a contradiction.
	
	\vspace{2mm}
	Discussions of subcases when ``$(Y,x_1)_D=\emptyset$ but $(Y,x_2)_D\neq \emptyset$'' and ``$(Y,X)_D=\emptyset$'' and the case when ``$y_2w_2,w_1x_1\notin A$'' are given in Appendix.
	\hfill $\lozenge$
	
	\begin{description}
		\item[Claim \ref{lem8}.4] {\it If $Y$ is an independent set and $(w_k,X)_D$ is not empty, then $(Y,w_{3-k})_D= \emptyset$ for any $k\in[2]$.}
	\end{description}
	{\it Proof.}
	W.l.o.g., assume that $k=1$. Set $w_1x_i\in A$, where $i\in[2]$. Suppose $(Y,w_2)_D\neq \emptyset$. We distinguish the following two cases.
	
	{\bf Case 1: $|(Y,w_2)_D|=2$.}
	
	That is $y_1w_2,y_2w_2\in A$. Let $y_jy_j^+$ be an out-arc of $y_j$ which is different from $y_jw_2$, for any $j\in[2]$. Note that $y_j^+\in X\cup \{w_1\}$ as $Y$ is independent.
	
	{\bf Subcase 1.1:} $w_2x_1\in A$.
	
	Let $w_2w_2^+$ be an out-arc of $w_2$ which is different from $w_2x_1$.
	
	First assume that $w_2^+\in X\cup \{w_1\}$. Since $\lambda(D)\ge2$, $w_1$ has an in-neighbour $w_1^-\neq w_2$ and there is at least one vertex in $Y$ which is not $w_1^-$, w.l.o.g., say $y_1\neq w_1^-$. Then $(B^++y_1w_2x_1x_2+w_1^-w_1,B^-+w_1x_i+y_2w_2w_2^++y_1y_1^+)$ is a good pair of $D$, a contradiction.
	
	Next assume that $w_2^+\in Y$, w.l.o.g., say $w_2^+=y_1$, i.e., $w_2y_1\in A$. Then $(B^++y_1w_2x_1x_2+w_1^-w_1,B^-+w_1x_i+y_2w_2y_1y_1^+)$ is a good pair of $D$, where $w_1^-\neq y_1$ as $\lambda(D)\ge2$, a contradiction.
	
	\vspace{2mm}
	Discussions of subcases when ``$w_2x_1\notin A$ but $(Y,x_1)_D\neq \emptyset$'' and ``$(Y\cup \{w_2\},x_1)_D= \emptyset$'' and the case when ``$|(Y,w_2)_D|=1$'' are given in Appendix.
	\hfill $\lozenge$
	
	\begin{description}
		\item[Claim \ref{lem8}.5] {\it $(W,X)_D=\emptyset$. Moreover, if $Y$ is not an independent set, then $(Y,W)_D=\emptyset$.}
	\end{description}
	{\it Proof.}
	By the digraph duality, it suffices to prove that $(W,X)_D=\emptyset$.
	Suppose $(W,X)_D\neq\emptyset$. W.l.o.g., assume that $(w_1,X)_D\neq \emptyset$, i.e., $w_1x_i\in A$, for some $i\in[2]$. Note that $(Y,w_2)_D=\emptyset$ by Claims \ref{lem8}.3 and \ref{lem8}.4.
	
	{\bf Case 1:} $(w_2,Y)_D\neq \emptyset$.
	
	Set $w_2y_j\in A$, where $j\in[2]$. Since $N^-(X\cup W)=Y$ and $\lambda(D)\ge2$, any initial strong component of $D[X\cup W]$ has at least two in-arcs from $Y$. Set $D'=D-w_1x_i-w_2y_j$. Now any initial strong component of $D'[X\cup W]$ has at least two in-arcs from $Y$, except at most one initial strong component, say $X'_1$, has exactly one in-arc from $Y$. Note that $x_i\in X'_1$ but $w_1\notin X'_1$ and any terminal strong component of $D'[Y]$ has at least two out-arcs to $X\cup W$. By Proposition~\ref{prop2}, we get $\mathcal{P}_{X\cup W},T_{X\cup W}$ and $\mathcal{P}_{Y},T_{Y}$ in $D$. Since $(Y,w_2)_D=\emptyset$, $(B^++\mathcal{P}_{X\cup W}+T_{X\cup W},B^-+\mathcal{P}_{Y}+T_{Y}+w_1x_i+w_2y_j)$ is a good pair of $D$, a contradiction.
	
	{\bf Case 2:} $(w_2,Y)_D=\emptyset$.
	
	That is $|(w_2,X)_D|\ge1$. By Claim \ref{lem8}.2, $|(w_2,X)_D|\le1$, i.e., $w_2w_1\in A$ and $|(w_2,X)_D|=1$. Interchange $w_1$ and $w_2$, likewise $(Y,w_1)_D=\emptyset$ by Claims~\ref{lem8}.3 and \ref{lem8}.4. Now $(Y,W)_D=\emptyset$. By Cliam~\ref{lem8}.1(2), $|(y_j,X)_D|\le1$ for any $j\in[2]$, namely $y_1y_2,y_2y_1\in A$ and $|(y_1,X)_D|=|(y_2,X)_D|=1$. We also get that $|(Y,x_i)_D|\le1$, for any $i\in[2]$. This implies that $D$ has a good pair by Cliam~\ref{lem8}.1(3).
	\hfill $\lozenge$
	
	\vspace{2mm}
	Now we are ready to finish the proof of Lemma~\ref{lem8}.
	By Claim~\ref{lem8}.1(2), $|(y_j,X)_D|\le1$ for any $j\in[2]$, i.e. $|(Y,X)_D|\le2$. Since $(W,X)_D=\emptyset$, $D[X]=C_2$ and $y_jx_1,y_{3-j}x_2\in A$ for some $j\in[2]$. Note that $Y$ is an independent set by Cliam~\ref{lem8}.1(3). W.l.o.g., assume that $j=1$, i.e., $y_1x_1,y_2x_2\in A$, which implies that $y_2x_1,y_1x_2\notin A$.
	Since $(W,X)_D=\emptyset$, $|(w_2,Y)_D|\ge1$ as $\lambda(D)\ge2$. W.l.o.g., assume that $w_2y_2\in A$.
	
	First assume $y_1w_k,w_ky_2\in A$, for some $k\in[2]$. W.l.o.g., assume $k=1$. Then $(B^++y_1x_1x_2+w_2^-w_2+w_1^-w_1,B^-+y_1w_1y_2x_2+w_2w_2^+)$ is a good pair of $D$, where $w_2^-,w_2^+\neq w_1$ and $w_1^-\neq y_1$, a contradiction.
	
	Next assume $y_1w_2\notin A$. As $\lambda(D)\ge2$, $y_1w_1\in A$, likewise $w_1y_2\notin A$. It follows that $w_1w_2,w_1y_1\in A$. Then $(B^++y_1x_1x_2+w_2^-w_2+w_1^-w_1,B^-+y_1w_1w_2y_2x_2)$ is a good pair of $D$, where $w_1^-\neq y_1$ and  $w_2^-\neq w_1$ as $\lambda(D)\ge2$, a contradiction.
	
	This completes the proof of Lemma~\ref{lem8}.	
\end{proof}

\begin{lemma}\label{lem8-1}
	Let $D=(V,A)$ be a 2-arc-strong digraph on $n$ vertices that contains a subdigraph $Q$ on $n-6$ vertices with a good pair. Set $X=N_D^-(Q)=\{x_1,x_2\}$ and $Y=N_D^+(Q)=\{y_1,y_2\}$ with $X\cap Y=\emptyset$, and $W=V-X-Y-V(Q)=\{w_1,w_2\}$. If $X,Y$ are both independent sets, then $D$ has a good pair except for the case below:
	
	{\bf ($\ast$)} $(Y,X)_D=\{y_jx_i,y_{3-j}x_{3-i}\}$ for some $i,j\in[2]$, $D[W]=C_2$ and $N_W^+(y_j)\cap N_W^+(y_{3-j})=N_W^-(x_i)\cap N_W^-(x_{3-i})=\emptyset$ while $N_W^+(y_j)\cap N_W^-(x_i)\neq \emptyset$ and $N_W^+(y_{3-j})\cap N_W^-(x_{3-i})\neq \emptyset$.
\end{lemma}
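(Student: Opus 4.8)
The plan is to reduce the statement to a purely combinatorial routing problem inside $D[X\cup Y\cup W]$ and then eliminate configurations one by one until only $(\ast)$ survives. Fix a good pair $(B_Q^+,B_Q^-)$ of $Q$ and, exactly as in the set-up, let $B^+$ be the out-tree spanning $V(Q)\cup Y$ obtained from $B_Q^+$ by adding one arc from $Q$ into each of $y_1,y_2$, and let $B^-$ be the in-tree spanning $V(Q)\cup X$ obtained from $B_Q^-$ by adding one arc from each of $x_1,x_2$ into $Q$; these are arc-disjoint. Since no arc joins $V(Q)$ to $W$ (such an arc would place a $w$ in $X$ or $Y$), and since $X,Y$ are independent, the in-neighbours of $X$ lie in $Y\cup W$ and the out-neighbours of $Y$ lie in $X\cup W$. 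Consequently, extending $(B^+,B^-)$ to a good pair is equivalent to finding two arc-disjoint forests inside $D[X\cup Y\cup W]$: an out-forest $F^+$ giving each of $x_1,x_2,w_1,w_2$ exactly one in-arc so that every component is rooted at a vertex fed from $Y$, and an in-forest $F^-$ giving each of $y_1,y_2,w_1,w_2$ exactly one out-arc so that every component drains into $X$. I would track which arc classes each forest may use ($F^+$ may use $(Y,X),(Y,W),(X,W),(W,X),(W,W)$, while $F^-$ may use $(Y,X),(Y,W),(W,X),(W,Y),(W,W)$) and record the $\lambda(D)\ge2$ bounds $d^-_{Y\cup W}(x_k)\ge2$, $d^+_{X\cup W}(y_k)\ge2$, and $d^\pm(w_k)\ge2$. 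Reversing all arcs and swapping $X\leftrightarrow Y$ preserves every hypothesis, so I may invoke digraph duality throughout to halve the case analysis.

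Assume for contradiction that $D$ has no good pair. I would first bound $(Y,X)_D$. Using the absorption trick from the proof of Lemma~\ref{lem2} (if there is an arc from $Y$ into some $w$, or from some $w$ into $X$, move that $w$ into $Y$ resp.\ $X$, delete the arc, and feed the resulting configuration into the $\mathcal{P}_X,\mathcal{P}_Y,T_X,T_Y$ machinery following Proposition~\ref{prop2}), together with direct constructions of $(F^+,F^-)$, I expect to show that a good pair exists whenever $|(Y,X)_D|\ge3$ or whenever the arcs of $(Y,X)_D$ share a tail or a head. Hence in the surviving case $(Y,X)_D$ is a matching of size at most $2$; the value $2$, realised by the perfect matching $\{y_jx_i,\,y_{3-j}x_{3-i}\}$, is the one demanded by $(\ast)$, while the sparser possibilities get cleared later because a missing $Y\to X$ arc forces an extra $W$-route that leaves enough slack to separate $F^+$ from $F^-$.

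Next I would constrain how $W$ attaches to $X\cup Y$ and pin down $D[W]$, proving analogues of Claims~\ref{lem8}.2--\ref{lem8}.5: no $w$ dominates both of $X$ and none is dominated by both of $Y$ (otherwise absorb that $w$ and finish by Lemma~\ref{lem5} or Proposition~\ref{prop2}), together with a pairing relation between $(w_k,X)_D$ and $(Y,w_{3-k})_D$. The decisive split is on $D[W]$. If $D[W]$ contains at most one arc, then at least one of $w_1,w_2$ has all its neighbours in $X\cup Y$, so its in-arc and out-arc can be assigned to $F^+$ and $F^-$ independently (threading it into a $Y\to X$ route for one forest while its remaining incident arcs feed the other); combined with the matching structure of $(Y,X)_D$, this produces arc-disjoint $F^+,F^-$ and hence a good pair. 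Thus only $D[W]=C_2$ can survive, which is the first requirement of $(\ast)$.

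The crux, and the step I expect to be hardest, is the terminal analysis when $D[W]=C_2$. Here the digon of $W$ and the size-two matching $(Y,X)_D$ leave only a few $W$-incident arcs with which both forests must cover $w_1,w_2$, so $F^+$ and $F^-$ are forced to compete for the same arcs and arc-disjointness becomes tight. I would analyse, for each $w$, through which of $y_1,y_2$ it can be reached and into which of $x_1,x_2$ it can drain, and show by explicit construction that a good pair always exists unless the two $y$'s have disjoint out-neighbourhoods in $W$, the two $x$'s have disjoint in-neighbourhoods in $W$, and each matched pair is linked through a single common $w$-vertex, i.e.\ $N_W^+(y_j)\cap N_W^+(y_{3-j})=N_W^-(x_i)\cap N_W^-(x_{3-i})=\emptyset$ while $N_W^+(y_j)\cap N_W^-(x_i)\neq\emptyset$ and $N_W^+(y_{3-j})\cap N_W^-(x_{3-i})\neq\emptyset$. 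This is precisely $(\ast)$, so in every other situation $D$ has a good pair, completing the proof. Throughout I would lean on digraph duality to suppress mirror-image arguments and on the forest construction of Proposition~\ref{prop2} to build $F^+,F^-$ whenever a single $w$ can be absorbed.
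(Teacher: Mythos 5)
Your outline follows the same route as the paper's own proof: extend $(B^+,B^-)$ across $X\cup Y\cup W$, use Claim~\ref{lem8}.1 to cap $|(Y,X)_D|$, eliminate shared tails and heads so that $(Y,X)_D$ becomes a matching, then constrain the $W$-attachment and $D[W]$ until only $(\ast)$ survives. The difficulty is that the proposal never executes any of these eliminations. Every step carrying the actual content of the lemma is deferred with ``I expect to show'' or ``show by explicit construction'': the elimination of $|(Y,X)_D|=3$ and of shared tails/heads, the clearing of the cases $|(Y,X)_D|\le 1$, and above all the terminal analysis when $D[W]=C_2$, which you yourself flag as the hardest step and which is exactly where the exceptional configuration must be isolated. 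This lemma's proof is nothing but a finite collection of explicit good-pair constructions (the paper devotes several appendix tables to them), so an outline of the case structure without the constructions does not establish the statement. In particular, nothing in the proposal certifies the decisive dichotomy: with matching $\{y_1x_1,y_2x_2\}$, disjoint $W$-neighbourhoods and $D[W]=C_2$, the crossed configuration ($y_1\to w_1\to x_2$, $y_2\to w_2\to x_1$) still yields a good pair, namely $(B^++y_1w_1w_2x_1+y_2x_2,\;B^-+y_1x_1+y_2w_2w_1x_2)$, while only the aligned configuration survives as $(\ast)$; distinguishing these two is the whole point of the lemma and requires the construction, not a slack count.

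One step is also false as stated. You claim that if $D[W]$ contains at most one arc then ``at least one of $w_1,w_2$ has all its neighbours in $X\cup Y$,'' and you use this to force $D[W]=C_2$. If $D[W]$ has exactly one arc, say $w_1w_2$, then $w_1$ and $w_2$ are neighbours of each other, so neither vertex has all its neighbours in $X\cup Y$ and your absorption argument does not apply. The paper instead kills the whole case $D[W]\ne C_2$ (one arc or none) with a single construction: $B^++y_1w_1x_i+y_2w_2x_{3-i}$ paired with $B^-+y_1x_1+y_2x_2+w_1w_1^++w_2w_2^+$, where $w_1^+\ne x_i$ and $w_2^+\ne x_{3-i}$; this is an in-branching precisely because $w_1^+=w_2$ and $w_2^+=w_1$ cannot hold simultaneously when $D[W]$ is not a digon. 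Some replacement of this kind is needed at that point of your argument, and analogous concrete constructions are needed in every other case you left open before the proposal can count as a proof.
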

\begin{proof}
	Suppose that $D$ has no good pair. Follow the definitions of $B_Q^+,B_Q^-,B^+$ and $B^-$ in the proof of Lemma~\ref{lem8}. Observe that Claim~\ref{lem8}.1 still holds here. We distinguish four cases below depending on $|(Y,X)_D|$. By Claim~\ref{lem8}.1, $|(Y,X)_D|\le3$.
	
	{\bf Case 1:} $|(Y,X)_D|=2$.
	
	{\bf Subcase 1.1:} $(Y,X)_D=\{y_jx_1,y_jx_2\}$, for some $j\in[2]$.
	
	W.l.o.g., assume that $j=1$. It implies that $y_2$ dominates both vertices in $W$. Note that there exists an arc from $W$ to $x_i$, say $e_{x_i}$, where $i\in[2]$.
	
	If $e_{x_1}$ is adjacent to $e_{x_2}$, say $w_1$ is the common vertex, then $(B^++y_1x_1+y_2w_1x_2+w_2^-w_2,B^-+y_1x_2+w_1x_1+y_2w_2w_2^+)$ is a good pair of $D$, where $w_2^-,w_2^+\neq y_2$ as $\lambda(D)\ge2$, a contradiction.
	
	Hence $e_{x_1}$ is non-adjacent to $e_{x_2}$, w.l.o.g., say $w_1x_2,w_2x_1\in A$.
	If $D[W]\neq C_2$, w.l.o.g., say $w_1w_2\notin A$, then $(B^++y_1x_1+y_2w_1x_2+w_2^-w_2,B^-+y_1x_2+y_2w_2x_1+w_1w_1^+)$ is a good pair of $D$, where $w_2^-\neq y_2$ and $w_1^+\neq x_2$ as $\lambda(D)\ge2$, a contradiction.
	If $D[W]=C_2$, then $(B^++y_1x_2+y_2w_1w_2x_1,B^-+y_1x_1+y_2w_2w_1x_2)$ is a good pair of $D$, a contradiction.
	
	By the digraph duality, we also get a contradiction when $(Y,X)_D=\{y_1x_i,y_2x_i\}$, for some $i\in[2]$.
	
	Discussions of the subcase when ``$(Y,X)_D=\{y_jx_i,y_{3-j}x_{3-i}\}$, for some $i,j\in[2]$'' and cases when ``$|(Y,X)_D|=3$'', ``$|(Y,X)_D|=1$'' and ``$|(Y,X)_D|=0$'' are given in Appendix.
\end{proof}

We use $D \supseteq E_3$ ($D\nsupseteq E_3$) to denote that $D$ contains an arbitrary orientation (no orientation) of $E_3$ as a subdigraph. ($E_3$ is a mixed graph and only the two edges are to be oriented.) $E_3$ is shown in Figure~\ref{fig4}.

\begin{figure}[!htpb]
	\centering\includegraphics[scale=0.5]{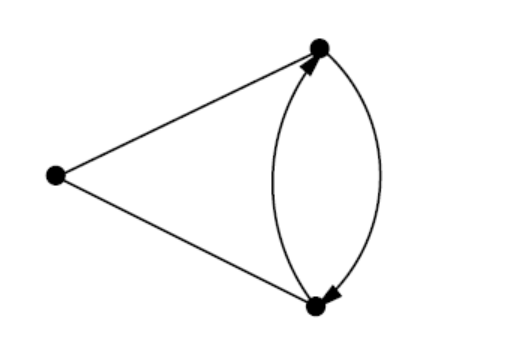}
	\caption{$E_3$.}
	\label{fig4}
\end{figure}

\begin{lemma}\label{lem8-2}
	Let $D=(V,A)$ be a 2-arc-strong digraph on $n$ vertices that contains a subdigraph $Q$ on $n-6$ vertices with a good pair. Set $X=N_D^-(Q)=\{x_1,x_2\}$ and $Y=N_D^+(Q)=\{y_1,y_2\}$ with $X\cap Y=\emptyset$, and $W=V-X-Y-V(Q)=\{w_1,w_2\}$. If $n=8$ or 9 and $X,Y$ are both independent, then $D$ has a good pair.
\end{lemma}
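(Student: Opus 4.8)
The plan is to invoke Lemma~\ref{lem8-1} to reduce immediately to its unique exceptional configuration $(\ast)$ and then to rule that configuration out. Since $X$ and $Y$ are both independent, Lemma~\ref{lem8-1} gives a good pair unless $(\ast)$ holds, so I assume $(\ast)$. After relabelling I may take $(Y,X)_D=\{y_1x_1,y_2x_2\}$, $D[W]=C_2$ with arcs $w_1w_2,w_2w_1$, and the two parallel paths $y_1\to w_1\to x_1$ and $y_2\to w_2\to x_2$. Feeding the four intersection conditions of $(\ast)$ into this, one deduces that the remaining cross arcs $y_1w_2,y_2w_1,w_1x_2,w_2x_1$ are all absent; combined with the independence of $X$ and $Y$ this pins down exactly which arcs occur among $X\cup Y\cup W$.

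Next I would extract the consequences of $\lambda(D)\ge 2$. In this configuration each vertex of $W$ already has in- and out-degree $2$, and so do the in-degrees of $x_1,x_2$ and the out-degrees of $y_1,y_2$; but $x_1,x_2$ carry only the single out-arcs $e_{x_1},e_{x_2}$ into $Q$, and $y_1,y_2$ only the single in-arcs $e_{y_1},e_{y_2}$ from $Q$. Hence $2$-arc-strongness forces each $x_i$ to have a further out-arc and each $y_i$ a further in-arc, and these forced arcs are the only degrees of freedom left. The point that drives the whole proof is a digon obstruction: using only the arcs of $(\ast)$, the out-branching and in-branching would have to partition the eight arcs lying inside $X\cup Y\cup W$, and every such partition is forced to place the whole digon $\{w_1w_2,w_2w_1\}$ inside a single tree, which is impossible. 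So the forced extra arcs must be what lets us escape.

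The core of the argument is then a short case analysis on where the forced arcs go, with two resolution modes. If a forced arc stays inside $X\cup Y\cup W$, I would exhibit a good pair directly, routing one branching around the $W$-digon; for instance when $x_1w_2\in A$ one takes $O_D=O_Q+e_{y_1}+e_{y_2}+y_1w_1+w_1x_1+x_1w_2+y_2x_2$ together with $I_D=I_Q+e_{x_1}+e_{x_2}+y_1x_1+w_1w_2+y_2w_2+w_2x_2$, which are arc-disjoint spanning branchings, and the cases producing a new digon such as $x_1\leftrightarrow w_1$ or $x_1\leftrightarrow y_1$ are handled similarly. If instead a forced arc runs into $Q$, then since $x_i$ (resp.\ $y_i$) then has two arcs to (resp.\ from) the strongly connected $Q$, it enlarges the good pair of $Q$ to a good pair on $|Q|+1=n-5$ vertices via Propositions~\ref{prop3} and \ref{prop4}, and I conclude by Lemma~\ref{lem7} (or by Lemmas~\ref{lem5}, \ref{lem6} if a larger enlargement is available). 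This is exactly where $n\in\{8,9\}$, i.e.\ $|Q|\in\{2,3\}$, is used, since it fixes the order of the enlarged subdigraph and hence which reduction lemma applies.

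The main obstacle will be completeness of this case analysis against the digon obstruction: one must check that for every admissible placement of the forced out-arcs of $x_1,x_2$ and in-arcs of $y_1,y_2$ — including mixed placements and the cases where the extra arcs enter $Q$ for both $n=8$ and $n=9$ — either an enlargement to a good-pair subdigraph on $n-5$, $n-4$ or $n-3$ vertices is available, or an explicit rerouting of one branching around the $W$-digon succeeds. I expect the delicate part to be the bookkeeping for the arcs incident to $Q$, together with confirming that the extra arcs cannot all conspire to leave the internal structure minimal (which, as the obstruction shows, would already contradict $\lambda(D)\ge 2$); several of these routine subcases are the ones I would defer to the Appendix.
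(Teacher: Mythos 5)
Your first step (reduction to the exceptional configuration $(\ast)$ of Lemma~\ref{lem8-1}) coincides with the paper's, and your ``digon obstruction'' is a correct and useful observation. The gap is that your two resolution modes do not cover the decisive case. First, $(\ast)$ does \emph{not} pin down the arcs inside $X\cup Y\cup W$: arcs from $X$ to $Y$, from $X$ to $W$ and from $W$ to $Y$ are all still permitted, and in particular the forced second out-arc of $x_i$ and the forced second in-arc of $y_j$ may be one and the same arc $x_iy_j$, with nothing else available. This case falls under your mode 1, but mode 1 cannot handle it: under $(\ast)$ (with no $X\to W$ or $W\to Y$ arcs present) every in-arc of $x_1,x_2,w_1,w_2$ and every out-arc of $y_1,y_2,w_1,w_2$ lies in the fixed eight-arc set, and an $X\to Y$ arc contributes to neither list; hence your own obstruction argument shows that \emph{no} good pair exists whose out-branching is rooted outside $\{x_1,x_2,w_1,w_2\}$ and whose in-branching is rooted outside $\{y_1,y_2,w_1,w_2\}$ --- in particular none of the shape of your displayed example, where $O_D\supseteq O_Q+e_{y_1}+e_{y_2}$ and $I_D\supseteq I_Q+e_{x_1}+e_{x_2}$. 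In this situation a good pair must be re-rooted inside $X\cup Y\cup W$ and both branchings must thread through $Q$ (one entering via some $x_i$ and exiting to some $y_j$, the other doing the reverse), and whether two such arc-disjoint traversals of $Q$ exist depends on the internal structure of $Q$ and on which vertices of $Q$ the arcs of $(X,Q)_D$ and $(Q,Y)_D$ attach. That analysis is exactly Claim~\ref{lem8-2}.1 of the paper together with its case distinction ``$Q$ contains a bidigon'' versus ``$Q\supseteq E_3$''; it is the bulk of the paper's proof and has no counterpart in your plan, so it is not deferrable bookkeeping.

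Second, mode 2 is unsupported when $n=9$. In $D[V(Q)\cup\{x_i\}]$ the vertex $x_i$ has in-degree $0$ (there are no arcs from $Q$ to $X$ because $N_D^+(Q)=Y$), so $\delta^0=0$ and Proposition~\ref{prop4} does not apply; moreover $Q$ need not be strongly connected (an orientation of $E_3$ with a source vertex has a good pair), so ``two arcs from $x_i$ into $Q$'' does not automatically yield a good pair on $n-5$ vertices when $|Q|=3$ --- establishing this again requires arguing from the structure of $Q$. Your scheme does essentially go through when $n=8$ (there $Q=C_2$, the enlargement has a good pair by Proposition~\ref{prop3}, and Lemma~\ref{lem1} together with Lemmas~\ref{lem6} and~\ref{lem7} finishes), but for $n=9$, and for every configuration whose forced arcs are of type $X\to Y$ or are incident to $Q$, the proposal as written has no argument.
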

\begin{proof}
	By contradiction, suppose that $D$ has no good pair. Follow the definitions of $B_Q^+,B_Q^-,B^+$ and $B^-$ in the proof of Lemma~\ref{lem8}. Observe that Claim~\ref{lem8}.1 still holds here.
	
	From Lemma~\ref{lem8-1}, it suffices to consider the case {\bf ($\ast$)}. Recall that $(Y,X)_D=\{y_jx_i,y_{3-j}x_{3-i}\}$ for some $i,j\in[2]$, $D[W]=C_2$ and $N_W^+(y_j)\cap N_W^+(y_{3-j})=N_W^-(x_i)\cap N_W^-(x_{3-i})=\emptyset$ while $N_W^+(y_j)\cap N_W^-(x_i)\neq \emptyset$ and $N_W^+(y_{3-j})\cap N_W^-(x_{3-i})\neq \emptyset$. W.l.o.g., assume that $i=j=1$ and $w_k\in N_W^+(y_k)\cap N_W^-(x_k)$ for any $k\in[2]$. Note that $|Q|$ is 2 or 3.
	
	Suppose $|Q|=2$, then $Q=C_2$. Set $V(Q)=\{q_1,q_2\}$ and $x_1q_1,x_2q_2\in A$.
	If $q_1y_2,q_2y_1\in A$, then $D$ has a good pair $(B_{x_1}^+,B_{x_1}^-)$ as $B_{x_1}^+=x_1q_1q_2y_1w_1w_2x_2+y_2^-y_2$ and $B_{x_1}^-=x_2q_2q_1y_2w_2w_1x_1+y_1x_1$, where $y_2^-\neq q_1$ as $\lambda(D)\ge2$, a contradiction.
	If $q_1y_1,q_2y_2\in A$, then $D$ has a good pair $(B_{w_2}^+,B_{x_1}^-)$ as $B_{w_2}^+=w_2w_1x_1q_1q_2y_2x_2+y_1^-y_1$ and $B_{x_1}^-=w_1w_2x_2q_2q_1y_1x_1+y_2w_2$, where $y_1^-\neq q_1$ as $\lambda(D)\ge2$, a contradiction.
	
	Now $|Q|=3$. By Proposition~\ref{prop3}, $|E(Q)|\ge4$. Set $V(Q)=\{q_1,q_2,q_3\}$. Note that $Q\supseteq E_3$ or $Q$ contains a bidigon as a subdigraph, i.e., $C_2\subset Q$. W.l.o.g., assume $C_2=q_1q_2q_1$. Set $B^+=w_1w_2x_2+w_1x_1$ and $B^-=y_1w_1+y_2w_2w_1$. Since $X=N_D^-(Q)$ and $Y=N_D^+(Q)$, there exists an arc from $x_i$ to $Q$ and an arc from $Q$ to $y_j$, say $x_iq_{x_i}$ and $q_{y_j}y_j$, respectively, where $i,j\in[2]$.
	
	\begin{description}
		\item[Claim~\ref{lem8-2}.1] If $Q$ has a good pair $(B_{q_{x_i}}^+,B_{q_{y_j}}^-)$, then $N^+(x_i)=\{q_{x_i},y_j\}$ and $N^-(y_j)=\{q_{y_j},x_i\}$, where $i,j\in[2]$.
	\end{description}
	{\it Proof.}
	Suppose that $x_i$ has an out-neighbour $x_i^+\notin \{q_{x_i},y_j\}$ or $y_j$ has an in-neighbour $y_j^-\notin \{q_{y_j},x_i\}$, then $(B^++x_iq_{x_i}+B_{q_{x_i}}^++y_j^-y_j+q_{y_{3-j}}y_{3-j},B^-+q_{y_j}y_j+B_{q_{y_j}}^-+x_ix_i^++x_{3-i}q_{x_{3-i}})$ is a good pair of $D$, a contradiction.
	\hfill $\lozenge$
	
	\vspace{2mm}
	We distinguish several cases as follows.
	
	{\bf Case 1:} $Q$ contains a bidigon.
	
	Set $q_2q_3,q_3q_2\in A$.
	
	{\bf Subcase 1.1:} $x_iq_1\in A$, for some $i\in[2]$.
	
	W.l.o.g., assume $i=1$.
	
	{\bf A.} $(q_1,Y)_D\neq \emptyset$.
	
	Assume $q_1y_j\in A$, $j\in[2]$. Note that $Q$ has a good pair $B_{q_1}^+=q_1q_2q_3$ and $B_{q_1}^-=q_3q_2q_1$. This implies that $N^+(x_1)=\{q_1,y_j\}$ and $N^-(y_j)=\{q_1,x_1\}$ by Claim~\ref{lem8-2}.1.
	
	We first show $q_3y_{3-j},x_2q_3\in A$.
	If $q_3y_{3-j}\notin A$, i.e., $q_3q_1\in A$, then $Q$ has a good pair $(B_{q_i}^+,B_{q_1}^-)$ for any $i\in[3]$ (see Figure~\ref{fig5}). It follows that $(B^++x_1y_j+x_2q_{x_2}+B_{q_{x_2}}^++q_{y_{3-j}}y_{3-j},B^-+x_1q_1y_j+B_{q_1}^-+x_2x_2^+)$ is a good pair of $D$, where $x_2^+\neq q_{x_2}$ as $\lambda(D)\ge2$, a contradiction. Hence $q_3y_{3-j}\in A$.
	If $x_2q_3\notin A$, namely $q_1q_3\in A$, then $(B^++x_1q_1y_j+q_1q_3q_2+y_{3-j}^-y_{3-j},B^-+x_1y_j+q_1q_2q_3y_{3-j}+x_2q_{x_2})$ is a good pair of $D$, where $y_{3-j}^-\neq q_3$ as $\lambda(D)\ge2$, a contradiction.
	
	\begin{figure}[!htpb]
		\centering\includegraphics[scale=0.5]{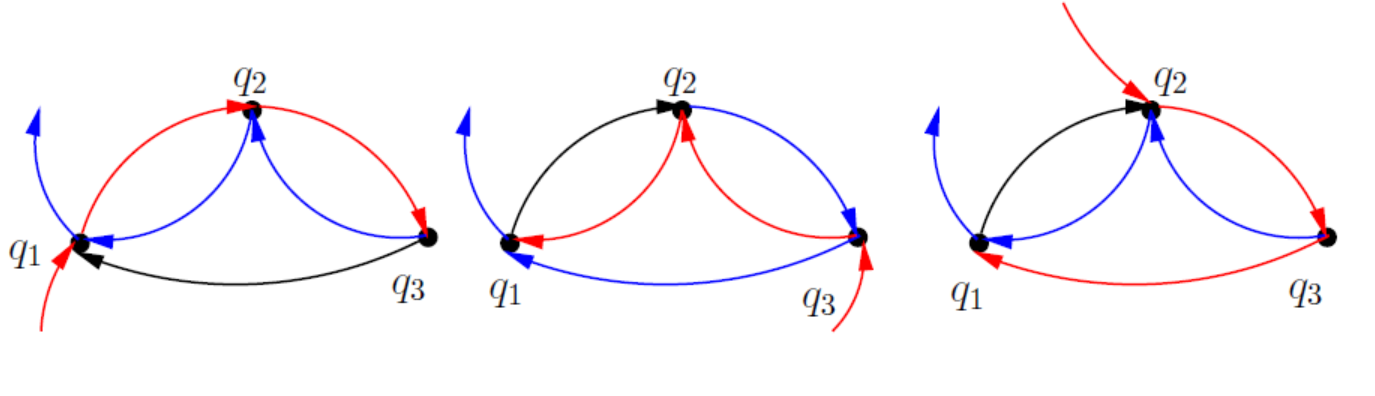}
		\caption{Good pairs $(B_{q_i}^+,B_{q_1}^-)$ of $Q$, for any $i\in[3]$.}
		\label{fig5}
	\end{figure}
	
	Now $q_3y_{3-j},x_2q_3\in A$. Note that $Q$ has a good pair $B_{q_3}^+=q_3q_2q_1$ and $B_{q_3}^-=q_1q_2q_3$. This implies that $N^+(x_2)=\{q_3,y_{3-j}\}$ and $N^-(y_{3-j})=\{q_3,x_2\}$ by Claim~\ref{lem8-2}.1.
	If $j=1$, then $D$ has a good pair $(B_{x_1}^+,B_{x_1}^-)$ with $B_{x_1}^+=x_1q_1q_2q_3+x_1y_1w_1w_2x_2y_2$ and $B_{x_1}^-=w_2w_1x_1+y_2x_2q_3q_2q_1y_1x_1$, a contradiction.
	If $j=2$, then $D$ has a good pair $(B_{x_1}^+,B_{x_1}^-)$ with $B_{x_1}^+=x_1q_1q_2q_3+x_1y_2x_2y_1w_1w_2$ and $B_{x_1}^-=x_1q_3y_1x_1+q_2q_1y_2w_2w_1x_1$, a contradiction.
	
	\vspace{2mm}
	Therefore, $q_1q_3\in A$.
	Discussions of ``$(q_1,Y)_D=\emptyset$ and $(q_3,Y)_D\neq \emptyset$'' and ``$N^+(q_1)\cup N^+(q_3)\subseteq Q$'' and the subcase when ``$N(q_i)\subset Q$, for any $i\in\{1,3\}$'' are given in Appendix.
	
	Case 1 implies that $D$ has a good pair when $Q$ contains a bidigon as a subdigraph.
	
	{\bf Case 2:} $Q\supseteq E_3$.
	
	First assume $q_1q_3,q_2q_3\in A$. Since $Q$ has no bidigon as subdigraph and $\lambda(D)\ge2$, $q_3y_1,q_3y_2\in A$ and $(X,q_1)_D\neq \emptyset$. Set $x_iq_1\in A$. Note that $Q$ has a good pair $B_{q_1}^+=q_1q_2q_3$ and $B_{q_3}^-=q_2q_1q_3$. By Claim~\ref{lem8-2}.1, $N^+(x_i)=\{q_1,y_1\}=\{q_1,y_2\}$, a contradiction.
	
	The case of $q_3q_1,q_3q_2\in A$ is analogous.
	Hence $q_2q_3,q_3q_1\in A$. We distinguish several subcases as follows.

	{\bf Subcase 2.1:} $x_iq_1\in A$, for some $i\in[2]$.
	
	W.l.o.g., assume $i=1$. Since $Q$ has no bidigon, $q_1q_3\notin A$, which implies that $(q_1,Y)_D\neq \emptyset$. Set $q_1y_j\in A,~j\in[2]$.
	Note that $Q$ has a good pair $B_{q_1}^+=q_1q_2q_3$ and $B_{q_1}^-=q_3q_1+q_2q_1$. This implies that $N^+(x_1)=\{q_1,y_j\}$ and $N^-(y_j)=\{q_1,x_1\}$ by Claim~\ref{lem8-2}.1. By Case 1, $x_2q_3,x_2q_2,q_3y_{3-j}\in A$. Then $(B^++x_1y_j+x_2q_3q_1q_2+y_{3-j}^-y_{3-j},B^-+x_1q_1y_j+x_2q_2q_1+q_3y_{3-j})$ is a good pair of $D$, where $y_{3-j}^-\neq q_3$ as $\lambda(D)\ge2$, a contradiction.
	
	\vspace{2mm}
	By the digraph duality, we also get a contradiction when $q_2y_j\in A$, where $j\in[2]$. This implies that $(X,q_1)_D=(q_2,Y)_D=\emptyset$.
	The proof of ``$x_iq_2\in A$, for some $i\in[2]$'' can be found in Appendix.
\end{proof}

\begin{prop}[\cite{BBY}]\label{prop5}
	A digraph $D$ has an out-branching (resp. in-branching) if and only if it has precisely one initial (resp. terminal) strong component. In that case every vertex of the initial (resp. terminal) strong component can be the root of an out-branching (resp. in-branching) in $D.$
\end{prop}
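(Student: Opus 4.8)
The plan is to reduce everything to a statement about reachability in $D$ together with the source structure of its condensation, and to obtain the in-branching half for free by digraph duality, so I would argue only about out-branchings. The key reformulation I would set up first is: $D$ has an out-branching rooted at a vertex $s$ if and only if every vertex of $D$ is reachable from $s$ by a directed path. One direction of this is immediate (following the unique in-arcs of an out-branching back to the root and reversing them gives, for each vertex, a directed path from $s$), and the other is the standard search-tree construction. Passing to the condensation $D^{*}$ (the acyclic digraph obtained by contracting each strong component of $D$ to a single vertex), an initial strong component of $D$ is exactly a source of $D^{*}$, i.e.\ a vertex of in-degree $0$; since a nonempty DAG always has at least one source, "precisely one initial strong component" is equivalent to "$D^{*}$ has a unique source."

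For the forward direction, suppose $D$ has an out-branching rooted at $s$, and let $S_{0}$ be the strong component containing $s$. Since every vertex of $D$ is reachable from $s$, in $D^{*}$ the vertex $S_{0}$ reaches every other vertex. If some $S\neq S_{0}$ were a source of $D^{*}$, then the directed $S_{0}$-to-$S$ path would have positive length and so $S$ would have in-degree at least $1$, a contradiction; hence $S_{0}$ is the only source, i.e.\ $D$ has precisely one initial strong component.

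For the reverse direction, assume $D^{*}$ has a unique source $S_{0}$. I would first show every vertex of $D^{*}$ is reachable from $S_{0}$: let $U$ be the set of vertices not reachable from $S_{0}$ and suppose $U\neq\emptyset$. The sub-DAG $D^{*}[U]$ has a source $u$; since $u$ receives no arc from inside $U$ (being a source there) and no arc from outside $U$ (otherwise $u$ would itself be reachable from $S_{0}$), the vertex $u$ is a source of $D^{*}$ distinct from $S_{0}$, contradicting uniqueness. Thus every component of $D$ is reachable from $S_{0}$. Because $S_{0}$ is strongly connected, any $s\in S_{0}$ reaches all of $S_{0}$ and hence, through $S_{0}$, reaches every vertex of $D$; a breadth-first (or depth-first) search tree from such an $s$ is then a spanning out-tree in which every non-root vertex has exactly one in-arc, i.e.\ an out-branching rooted at $s$. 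This simultaneously establishes the final clause, that every vertex of the unique initial strong component can serve as a root.

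The in-branching statement follows by applying the above to $D^{\rm rev}$, since an out-branching of $D^{\rm rev}$ rooted at $s$ is precisely an in-branching of $D$ rooted at $s$, and the initial strong components of $D^{\rm rev}$ are exactly the terminal strong components of $D$. The only genuinely non-routine step is the reachability characterization used in the reverse direction, namely that a unique source forces universal reachability; this is handled cleanly by the sub-DAG-source argument above, and everything else is either immediate or a standard search-tree construction.
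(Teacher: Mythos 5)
Your proof is correct and complete. Note, however, that the paper does not prove this proposition at all: it is stated as a quoted result from the reference [BBY] (Bang-Jensen, Bessy and Yeo), so there is no in-paper argument to compare yours against. What you have written is the standard proof of this classical fact, and all its steps check out: the equivalence "out-branching rooted at $s$ exists iff every vertex is reachable from $s$" (backward-tracing of in-arcs in one direction, a search tree in the other), the identification of initial strong components with sources of the condensation, the forward direction via reachability killing any second source, the reverse direction via the sub-DAG-source argument showing a unique source reaches everything, and the reduction of the in-branching half to $D^{\rm rev}$. The only stylistic remark is that in the forward direction you should make explicit (as you implicitly do via "a nonempty DAG has a source") that $S_0$ itself is then the source, so that "precisely one initial strong component" is fully justified; this is a one-line addition, not a gap.
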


We use $T_x^+$ (resp. $T_x^-$) to denote an out-tree (resp. in-tree) rooted at $x$.

\begin{prop}\label{prop6}
	Let $D$ be an  oriented graph on $n$ vertices. Let $P_D=x_1x_2\ldots x_n$ be the Hamilton dipath of $D$ and $D'=D-A(P)$. Assume that there are exactly two non-adjacent strong components $I_1$ and $I_2$ in $D'$. Set $q\in \{2,3,n-1,n\}$. If for some $q$, $x_{q-1}$ and $x_q$ are respectively in $I_1$ and $I_2$, then $D$ has a good pair.
\end{prop}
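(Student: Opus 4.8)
The plan is to use the Hamilton dipath $P$ as the skeleton of one branching and the strong structure of $D'$ for the other, linking the two components through a single crossing arc; throughout I use that $\lambda(D)\ge 2$, which holds whenever the proposition is applied. First I record the structure this forces. Since $I_1,I_2$ are non-adjacent strong components of $D'=D-A(P)$, every arc of $D$ with one end in $I_1$ and the other in $I_2$ is a \emph{crossing} arc of $P$, that is, a path arc $x_ix_{i+1}$ whose ends lie in different components. Applying $\lambda(D)\ge 2$ to the cut $(I_1,I_2)$ gives at least two crossings $I_1\to I_2$ and at least two $I_2\to I_1$; since $D$ is oriented, a non-trivial strong component has at least three vertices, and a single-vertex component is impossible under $\lambda(D)\ge 2$, so each of $I_1,I_2$ has at least three vertices. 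By digraph duality I may reverse $D$; this sends the cases $q\in\{n-1,n\}$ to $q\in\{3,2\}$ and swaps $I_1\leftrightarrow I_2$, so I assume $q\in\{2,3\}$. Then the first block $B_1$ of $P$ (the maximal initial run of $P$ inside one component) has at most two vertices, and the first crossing arc $e^{\ast}=vv'$ leaves $B_1$, where $v$ is the last vertex of $B_1$.

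I then build the pair as follows. For the out-branching, use $D'$ and the single arc $e^{\ast}$: by Proposition~\ref{prop5} each strong component $I_k$ admits a $D'$-out-branching rooted at any prescribed vertex, so I take such trees $T_1^{+},T_2^{+}$ of $I_1,I_2$, root the head-side tree at $v'$, and join them by $e^{\ast}$, obtaining an out-branching $O_D$ of $D$ that uses only within-component $D'$-arcs together with $e^{\ast}$. For the in-branching I modify the path: set $I_D=P-e^{\ast}+g$, where $g=vz$ is a $D'$-arc out of $v$ inside its own component. Such a $g$ exists, because $v$ has $D$-out-degree at least $2$ while its only crossing out-arc is $e^{\ast}$, so $d^{+}_{D'[I_k]}(v)\ge 1$. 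Deleting $e^{\ast}$ from the in-branching $P$ (rooted at $x_n$) detaches exactly the small in-tree carried by $B_1$, rooted at $v$, and $g$ reattaches $v$ (hence all of $B_1$) as a leaf pointing into the rest of the tree, so $I_D$ is again an in-branching rooted at $x_n$. Because $A(P)$ and $A(D')$ partition $A(D)$, the trees $O_D$ and $I_D$ can clash only in the single arc $g$, which lies in the $D'$-out-branching of $v$'s component.

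The crux, and the main obstacle, is therefore to choose that $D'$-out-branching so as to avoid $g$, i.e. so that $v$ uses no within-component out-arc and is a \emph{leaf} of $T^{+}$; then every $D'$-out-arc of $v$, in particular $g$, is free for $I_D$, while $v$ is still reached and $e^{\ast}$ still crosses below it. This is exactly where $\lambda(D)\ge 2$ and the near-end position enter. In the critical (and tightest) case, when $v$ has a unique $D'$-out-neighbour $z$, I root $T^{+}$ at $z$: every out-path of $z$ in $D'[I_k]$ avoids the arc $vz$ (which points \emph{into} $z$) and, being simple, avoids $v$ as an interior vertex, so $z$ reaches all of $I_k$ with $v$ a leaf, as required. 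When $v$ has several $D'$-out-neighbours one argues in the same spirit after reserving one of them; the only configurations where no reserved arc survives are those in which some out-neighbour of $v$ would be left with no in-arc, and these are excluded (or dispatched directly) using that $v$ has $D'$-in-degree at least $2$ together with the bounded size ($\le 6$) of the components, where one may also re-select which crossing is freed among the at least two available in each direction. The sub-case $q=3$ with $x_1$ outside the component of $x_2$, and the dual cases $q\in\{n-1,n\}$, are handled identically once $v$ is taken to be the single extremal vertex and $e^{\ast}$ the crossing incident to it; everything else is the bookkeeping of the previous paragraph.
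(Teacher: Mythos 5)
Your construction coincides with the paper's own proof: one branching is the Hamilton dipath with a crossing arc deleted and replaced by a $D'$-arc inside a component; the other is assembled from out-branchings (dually, in-branchings) of the two strong components of $D'$, joined by that crossing arc; and arc-disjointness is forced by rooting the component branching at the head of the reserved replacement arc. Your verification that $I_D=P-e^{\ast}+g$ is again an in-branching (using orientedness and $|B_1|\le 2$ to rule out $z\in B_1$) is sound, and your ``critical case'' argument is exactly the right idea: rooting $T^{+}$ at $z$ works because a simple path starting at $z$ can never use an arc whose head is $z$.

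The gap is in how you close the case where $v$ has several $D'$-out-neighbours. You gesture at ``reserving'' an arc and then worry about ``configurations where no reserved arc survives,'' which you propose to exclude using $\lambda(D)\ge 2$, $D'$-in-degree at least $2$, and component sizes at most $6$. None of these is available: the proposition is stated for an arbitrary oriented graph on $n$ vertices with no arc-connectivity hypothesis, so a proof quoting $\lambda(D)\ge 2$ (or $|I_k|\le 6$) proves a different, weaker statement. More importantly, none of it is needed, because your critical-case argument generalizes verbatim to any reserved arc $g=vz$: deleting $g$ cannot destroy reachability from $z$, since a simple path out of $z$ never re-enters $z$ and hence never uses $vz$; so $I_k-g$ still has an out-branching rooted at $z$ by Proposition~\ref{prop5}, and $g$ stays free for $I_D$. (The vertex you fear might be ``left with no in-arc'' is $z$ itself, which is the root and needs none.) This is precisely the paper's step ``since $I_1$ is strong, $I_1-x_{q-1}y$ is connected and has only one initial strong component, which contains $y$.'' The same observation removes your only other essential appeal to $\lambda(D)\ge 2$: the existence of $g$ follows from strongness of $I_k$ alone, since every vertex of a strong digraph on at least two vertices has an out-neighbour inside it, which is how the paper gets $\delta^0(I_i)\ge 1$.
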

\begin{proof}
	W.l.o.g., assume that $x_{q-1}\in I_1$ and $x_q\in I_2$. Since $I_i$ is strong, $\delta^0(I_i)\ge 1$, for any $i\in[2]$.
	
	First assume $q\in\{n-1,n\}$. Let $x$ be an in-neighbour of $x_q$ in $I_2$. We get an out-branching of $D$ as $B_{x_1}^+=P_D-x_{q-1}x_q+xx_q$. Then we will show that there is an in-branching $B_x^-$ in $D-A(B_{x_1}^+)$. Since $I_2$ is strong, $I_2-xx_q$ is connected and has only one terminal srong component which contains $x$. This implies that there is an in-branching $T_x^-$ in $I_2-xx_q$. Note that there exists an in-branching $T_{x_{q-1}}^-$ in $I_1$, as $I_1$ is strong. Then $B_x^-=T_x^-+x_{q-1}x_q+T_{x_{q-1}}^-$, which implies that $(B_{x_1}^+,B_x^-)$ is a good pair of $D$.
	
	Now we assume $q\in\{2,3\}$. Let $y$ be an out-neighbour of $x_{q-1}$ in $I_1$. We get an in-branching of $D$ as $B_{x_n}^-=P_D-x_{q-1}x_q+x_{q-1}y$. Then we will show that there is an out-branching $B_y^+$ in $D-A(B_{x_n}^-)$. Since $I_1$ is strong, $I_1-x_{q-1}y$ is connected and has only one initial srong component which contains $y$. This implies that there is an out-branching $T_y^+$ in $I_1-x_{q-1}y$. Note that there exists an out-branching $T_{x_q}^+$ in $I_2$, as $I_2$ is strong. Then $B_y^+=T_y^++x_{q-1}x_q+T_{x_q}^+$. So, $(B_y^+,B_{x_n}^-)$ is a good pair of $D$.
\end{proof}

\begin{prop}\label{prop7}
	Let $D$ be a 2-arc-strong oriented graph on at least seven vertices. Then $D$ has  a dipath $P_6.$
\end{prop}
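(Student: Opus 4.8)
The plan is to argue by contradiction using a longest dipath. Since $\lambda(D)\ge 2$, taking single-vertex cuts shows the minimum semi-degree satisfies $\delta_0(D)\ge 2$; in particular every vertex has out-degree and in-degree at least $2$. Suppose $D$ has no $P_6$ and let $P=v_0v_1\cdots v_\ell$ be a longest dipath, so $\ell\le 4$. Because $P$ is longest, every out-neighbour of $v_\ell$ and every in-neighbour of $v_0$ must lie on $P$, as otherwise $P$ could be extended at one end.

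First I would record two exclusions for the terminal vertex $v_\ell$. The path arc $v_{\ell-1}v_\ell$ forbids $v_\ell v_{\ell-1}$ because $D$ is an oriented graph (no digon). Moreover $v_\ell v_0\notin A$: otherwise $v_0v_1\cdots v_\ell v_0$ is a Hamilton cycle of $V(P)$, and since $n\ge 7>|V(P)|$ there is an arc $v_az$ leaving $V(P)$ (indeed $d^+(V(P))\ge 2$), whence $v_{a+1}\cdots v_\ell v_0\cdots v_a z$ is a dipath on $|V(P)|+1$ vertices, contradicting maximality of $P$. Hence $N^+(v_\ell)\subseteq\{v_1,\dots,v_{\ell-2}\}$, a set of only $\ell-2$ vertices; since $|N^+(v_\ell)|\ge 2$ this forces $\ell\ge 4$. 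Dually one gets $N^-(v_0)\subseteq\{v_2,\dots,v_{\ell-1}\}$. Thus a longest dipath has at least $5$ vertices, and it remains only to rule out $\ell=4$.

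For the case $\ell=4$ the two containments above become equalities: $N^+(v_4)=\{v_1,v_2\}$ and $N^-(v_0)=\{v_2,v_3\}$, so in particular $v_4v_2\in A$ and $v_3v_0\in A$. The key move is to reroute $P$ into a different longest dipath and reapply the same structural fact. Consider $P'=v_4v_1v_2v_3v_0$, which is a dipath on $5$ vertices (its arcs $v_4v_1,v_1v_2,v_2v_3,v_3v_0$ are all present), hence also a longest dipath. Applying the dual exclusion to the new initial vertex $v_4$ of $P'$ yields $N^-(v_4)=\{v_2,v_3\}$, so $v_2v_4\in A$; but $v_4v_2\in A$ already, contradicting that $D$ is an oriented graph. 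This contradiction shows $\ell\ge 5$, so $D$ contains a $P_6$.

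I expect the main obstacle to be exactly the case $\ell=4$: the short cases $\ell\le 3$ die immediately from the bound $|N^+(v_\ell)|\le\ell-2<2$, but when $\ell=4$ the forced back-arcs are genuinely consistent, and one really needs the extra idea of passing to the alternative longest dipath $P'$ to manufacture a digon. Throughout, the only role of the hypothesis $n\ge 7$ is to guarantee a vertex outside any such $5$-vertex dipath, which is what legitimises the Hamilton-cycle-closing step excluding the arc $v_\ell v_0$; so the argument in fact goes through for all $n\ge 6$, and I would state it for $n\ge 7$ simply because that is the range needed later.
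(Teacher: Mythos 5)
Your proof is correct and follows essentially the same route as the paper: assume no $P_6$, take a longest dipath, use $\lambda(D)\ge 2$ plus the no-digon and no-closing-cycle exclusions to pin down the endpoint neighbourhoods, then reroute into a second longest $5$-vertex dipath and reapply the argument to force a digon. Your rerouted path $v_4v_1v_2v_3v_0$ with the in-neighbourhood argument at $v_4$ is just the digraph dual of the paper's path $x_4x_5x_3x_1x_2$ with the out-neighbourhood argument at $x_2$, so the two proofs are essentially identical.
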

\begin{proof}
	Suppose that there is no $P_6$ in $D$. Assume that $P_t$ is the longest dipath in $D$, then $t\ge4$, as there is no digon in $D$ and $\lambda(D)\ge2$. Observe that there is no $C_t$ in $D$, otherwise $D$ has a longer dipath $P_{t+1}$.
	
	First assume that $t=4$ and set $P_4=x_1x_2x_3x_4$. Since $d_D^+(x_4)\ge 2$ and $D$ has no digon, the out-neighbourhood of $x_4$ either contains $x_1$ or contains a vertex in $V-V(P_4)$. This implies that there is a $P_5$ in $D$, a contradiction.
	
	Henceforth we may assume that $t=5$ and set $P_5=x_1x_2x_3x_4x_5$. Since $\lambda(D)\ge2$, $d_D^+(x_5)\ge 2$ and $d_D^-(x_1)\ge 2$. Then we get $N_D^+(x_5)=\{x_2,x_3\}$ and $N_D^-(x_1)=\{x_3,x_4\}$, as $P_5$ is the longest dipath in $D$ and $D$ has no digon. Observe that there exsits a different 4-length dipath, $x_4x_5x_3x_1x_2$, in $D$. Likewise, $N_D^+(x_2)=\{x_3,x_5\}$, which implies that $D[\{x_2,x_5\}]$ is a digon, a contradiction.	
\end{proof}

\section{Good pairs in digraphs of order 7}\label{7}
\begin{prop}\label{h1}
	A 2-arc-strong oriented graph $D$ on $n$ vertices has a $P_7$, where $7\le n\le9$.
\end{prop}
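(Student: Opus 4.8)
The plan is to argue by contradiction. Suppose $D$ has no $P_7$. By Proposition~\ref{prop7}, $D$ has a $P_6$, so we may take a \emph{longest} dipath $P=x_1x_2\cdots x_6$; it has exactly six vertices, and $W:=V(D)\setminus V(P)$ satisfies $1\le |W|\le 3$. I would first record three reductions. (a) By maximality of $P$ every out-neighbour of $x_6$ and every in-neighbour of $x_1$ lies on $P$; since $D$ is an oriented graph there are no digons, and $x_6x_1\notin A$ (otherwise $x_1\cdots x_6x_1$ is a $C_6$, and $\lambda(D)\ge2$ forces an arc leaving this cycle to $W$, which combined with a full traversal of the cycle yields a $P_7$). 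Hence $N_D^+(x_6)\subseteq\{x_2,x_3,x_4\}$ and $N_D^-(x_1)\subseteq\{x_3,x_4,x_5\}$, each of size at least $2$ because $\lambda(D)\ge 2$. (b) More generally, any dicycle on exactly six vertices produces a $P_7$ (traverse it and leave along an exit arc, which exists since $\lambda(D)\ge2$ and $n\ge7$), while any dicycle on at least seven vertices already contains a $P_7$; so I may assume $D[V(P)]$ has no Hamiltonian cycle. (c) For $w\in W$ the configurations $w\to x_1$, $x_6\to w$, and $x_j\to w\to x_{j+1}$ $(j\in[5])$ are all forbidden, since each inserts $w$ into $P$ to give a $P_7$; and $\lambda(D)\ge2$ guarantees at least two arcs from $V(P)$ to $W$ and at least two from $W$ to $V(P)$.

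The central tool is a rotation argument that realises many different endpoints for spanning dipaths of $D[V(P)]$. The key computation is: if $x_5\to x_1\in A$, then for each $x_6\to x_k$ with $k\in\{2,3,4\}$ the walk $x_6\,x_k\,x_{k+1}\cdots x_5\,x_1\,x_2\cdots x_{k-1}$ uses only the chord $x_6x_k$, the chord $x_5x_1$, and forward arcs of $P$, and it is a spanning dipath of $D[V(P)]$ starting at $x_6$ and ending at $x_{k-1}$. Thus, as $|N_D^+(x_6)|\ge2$, the vertices $x_6$ (from $P$ itself) together with two of $\{x_1,x_2,x_3\}$ all occur as terminal vertices of spanning dipaths of $D[V(P)]$. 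The configurations in which $x_5\to x_1\notin A$, namely $N_D^-(x_1)=\{x_3,x_4\}$, are handled by analogous explicit reroutings (for instance, using $x_4\to x_1$ and $x_6\to x_2$ gives the spanning dipath $x_5x_6x_2x_3x_4x_1$), and the symmetric statements about \emph{initial} vertices follow by the digraph duality recorded in the introduction.

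Finally I would combine these spanning dipaths with reduction (c). If $v$ is a terminal vertex of some spanning dipath of $D[V(P)]$, then $N_D^+(v)\cap W=\emptyset$ (else append an element of $W$ to obtain a $P_7$); dually each realisable initial vertex $u$ has $N_D^-(u)\cap W=\emptyset$. But $\lambda(D)\ge2$ forces at least two arcs from $V(P)$ into $W$, whose tails must all avoid the realisable terminal set, and at least two arcs from $W$ into $V(P)$, whose heads must avoid the realisable initial set. Showing that the rotations of the previous paragraph make these realisable sets too large for both constraints to hold (using that $|W|\le3$ and the forbidden insertions of (c)) yields the contradiction, completing the proof. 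I expect the main obstacle to be exactly this final bookkeeping: a careful case analysis over the possible chord sets $N_D^+(x_6)\subseteq\{x_2,x_3,x_4\}$ and $N_D^-(x_1)\subseteq\{x_3,x_4,x_5\}$, verifying in each that enough initial/terminal vertices are realisable to force one of the guaranteed arcs between $V(P)$ and $W$ to extend a spanning dipath to a $P_7$. Throughout, the two uniform moves---``traverse a dicycle and exit/enter it'' and ``insert an external vertex''---immediately manufacture the desired $P_7$, so the work lies entirely in guaranteeing that one of them always applies.
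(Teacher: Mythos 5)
Your scaffolding---longest dipath $P=x_1x_2\cdots x_6$ via Proposition~\ref{prop7}, the confinements $N_D^+(x_6)\subseteq\{x_2,x_3,x_4\}$ and $N_D^-(x_1)\subseteq\{x_3,x_4,x_5\}$, the exclusion of $C_6$, the insertion bans, and the rotation producing new endpoints of spanning dipaths of $D[V(P)]$---coincides with the opening of the paper's proof. But your argument stops exactly where the difficulty begins: the ``final bookkeeping'' you defer is the entire content of the proof, and it does not follow from the constraints you have assembled. Concretely, take $n=7$, $W=\{w\}$, and the case $N_D^+(x_6)=\{x_2,x_3\}$, $N_D^-(x_1)=\{x_4,x_5\}$. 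Your realisable terminal set is $\{x_6,x_1,x_2\}$ (from $P$, $x_6x_2x_3x_4x_5x_1$ and $x_6x_3x_4x_5x_1x_2$) and your realisable initial set is $\{x_1,x_5,x_6\}$ (using $x_5x_6x_2x_3x_4x_1$), and the choice $x_4w,x_5w,wx_2,wx_3$ satisfies every condition you impose: no out-arc of a realisable terminal vertex into $W$, no in-arc from $W$ into a realisable initial vertex, no forbidden insertion $x_jwx_{j+1}$ (even relative to all the rotated paths), no $C_6$, and two arcs in each direction between $V(P)$ and $W$. So the realisable sets are \emph{not} ``too large for both constraints to hold'', and no contradiction is available at the level you describe. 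Killing this configuration needs a second round of forcing among the \emph{internal} vertices: e.g.\ $d^-(x_6)\ge2$ together with the $P_7$ $wx_6x_2x_3x_4x_5x_1$ forces $x_4x_6\in A$; completing $d^+(x_2),d^+(x_3),d^-(x_4),d^-(x_5)$ forces chords such as $x_2x_4,x_2x_5$; and one must then find $P_7$'s through spanning dipaths (e.g.\ $x_6x_3x_5x_1x_2x_4w$) that use chords lying outside your enumerated family $N^+(x_6)\cup N^-(x_1)$. None of this is in your plan, so the proposal has a genuine gap rather than a routine verification left to the reader.

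There is also a structural reason to doubt that your all-rotations route closes at all as stated. The paper's own proof does not produce a $P_7$ in every case: in several configurations, including the one above, it instead exhibits arc-disjoint out- and in-branchings of $D[V(P)]$, i.e.\ a good pair in a subdigraph on $6\ge n-3$ vertices, which by Lemmas~\ref{lem5} and~\ref{lem6} yields a good pair of $D$. That is a contradiction only because every invocation of Proposition~\ref{h1} occurs under the standing assumption that $D$ has no good pair (compare the explicit ``without a good pair'' hypothesis in Propositions~\ref{h2-2} and~\ref{h3-2}). Your scheme forgoes this escape hatch, so you are committed to deriving the $P_7$ outright in precisely the cases the authors settled only via the good-pair dichotomy. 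If you wish to salvage the approach, either carry out the full second-order case analysis (endpoint neighbourhoods \emph{and} degree completion of internal vertices, iterating rotations over the newly forced chords), or add the no-good-pair hypothesis and permit yourself the same dichotomy: in each case, conclude either a $P_7$ or a good pair of $D[V(P)]$.
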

\begin{proof}
	The proof is similar as that in Proposition~\ref{prop7}, which will be given in Appendix.
\end{proof}

Now we are ready to prove Theorem~\ref{thm2}. For convenience, we restate it here.
\begin{description}
	\item[Theorem~\ref{thm2}.] {\it Every 2-arc-strong digraph on 7 vertices has a good pair.}
\end{description}
\begin{proof}
	Suppose that $D$ has no good pair. Let $R$ be a largest clique in $D$. By Lemma~\ref{lem5} and Proposition~\ref{prop4}, $|R|=3$. Moreover, $R$ is a tournament by Lemma~\ref{lem6} and Proposition~\ref{prop3}.
	
	\begin{description}
		\item[Claim \ref{thm2}.1] {\it $D$ is an oriented graph.}
	\end{description}
	{\it Proof.}
	Suppose that there is a digon $Q$ in $D$ with $V(Q)=\{s,t\}$. Observe that $Q$ has a good pair. Since $R$ is a tournament with three vertices, both in- and out-neibourhoods of $Q$ in $D$ have at least two vertices. This implies that $D$ has a good pair by Lemma~\ref{lem7}, a contradiction.
	\hfill $\lozenge$
	
	\vspace{2mm}
	Assume that $P_D=x_1x_2\ldots x_7$ is a Hamilton dipath of $D$ by Proposition~\ref{h1}. Set $D'=D-A(P_D)$.
	Let $I_i$ and $T_j$ respectively be the initial and terminal strong component in $D'$, where $i\in[a]$ and $j\in[b]$. Note that $a,b\ge2$ by Proposition~\ref{prop5}. Since $D$ is an oriented graph and $\lambda(D)\ge2$, $|I_i|,|T_j|\ge3$, for any $i\in[a],j\in[b]$. Thus there are only two non-adjacent strong components in $D'$, say $I_1$ and $I_2$, with $|I_1|=3$ and $|I_2|=4$. Note that $|N_{D'}^-(x_1)|\ge2$ and $|N_{D'}^+(x_7)|\ge2$ as $\lambda(D)\ge2$, which implies that $x_1,x_7\in I_2$. Moreover, $x_2,x_6\in I_1$ by Claim~\ref{thm2}.1. Then $D$ has a good pair by Proposition~\ref{prop6}.
\end{proof}

\section{Good pairs in digraphs of order 8}\label{8}

The digraph $E_3$ used in the next proposition is shown in Figure~\ref{fig4}.
\begin{prop}[\cite{BBHY}]\label{prop9}
	Let $D$ be a 2-arc-strong digraph without any subdigraph on order 4 that has a good pair. If $D$ contains an orientation $Q$ of $E_3$ as a subdigraph, then $N_D^+(Q)\cap N_D^-(Q)=\emptyset$, $|N_D^+(Q)|\ge2$ and $|N_D^-(Q)|\ge2$.
\end{prop}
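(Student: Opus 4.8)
The plan is to exploit the very rigid structure of $Q$. An orientation of $E_3$ consists of a digon together with a third vertex joined by a single (oriented) edge to each end of the digon; thus $Q$ is a semicomplete digraph on three vertices with exactly four arcs, and hence $Q$ has a good pair by Proposition~\ref{prop3}. Since $N_D^+(Q)$ and $N_D^-(Q)$ depend only on $V(Q)$, I may replace $Q$ by the induced subdigraph $D[V(Q)]$: it has the same in- and out-neighbourhoods, still contains $Q$ (so still has a good pair), and is still semicomplete with at least four arcs. I will keep calling it $Q$.

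For the disjointness $N_D^+(Q)\cap N_D^-(Q)=\emptyset$, I would suppose some $v$ lies in both sets. Then $v$ has an in-neighbour and an out-neighbour inside $V(Q)$, so applying Lemma~\ref{lem1} to the four-vertex digraph $H:=D[V(Q)\cup\{v\}]$ with $X=\{v\}$ (using that $H-v=Q$ has a good pair) produces a good pair of $H$, contradicting the hypothesis that $D$ has no good pair on four vertices. This part is general and needs no case analysis.

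For $|N_D^+(Q)|\ge 2$ (the bound $|N_D^-(Q)|\ge 2$ then follows by applying the same statement to $D^{\mathrm{rev}}$ and $Q^{\mathrm{rev}}$, since $E_3$ is invariant under reversal, good pairs are preserved by digraph duality, and $N^+_{D^{\mathrm{rev}}}(Q^{\mathrm{rev}})=N^-_D(Q)$), suppose instead $N_D^+(Q)=\{v\}$. Because $\lambda(D)\ge 2$ forces every vertex to have out-degree at least $2$, and every out-arc of every vertex of $Q$ now stays inside $H=D[V(Q)\cup\{v\}]$, each vertex of $Q$ has out-degree at least $2$ in $H$, while $v$ is a sink with in-degree at least $2$. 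If some vertex of $Q$ were a sink of $Q$, its two out-arcs would be two parallel arcs to $v$, which is impossible; hence $Q$ has no sink. It then remains to produce a good pair of the four-vertex digraph $H$, which again contradicts the hypothesis.

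The key observation for this last step is that a Hamilton dipath of $H$ ending at the sink $v$ is simultaneously an out-branching and an in-branching of $H$, so it suffices to find two arc-disjoint Hamilton dipaths of $H$ ending at $v$; equivalently, two arc-disjoint Hamilton dipaths of $Q$ ending at two distinct in-neighbours of $v$ (of which there are at least two). I expect this to be the main obstacle: because $v$ is a sink we have $\lambda(H)=0$ and $\delta^0(H)=0$, so none of the general tools (Corollary~\ref{cor1}, Proposition~\ref{prop2}, or Proposition~\ref{prop4}) applies directly, and the construction must be carried out by hand. Since $Q$ is one of only finitely many semicomplete three-vertex digraphs without a sink, this is a short finite check; in the few configurations (for instance when the two single arcs of $E_3$ form a directed path through the third vertex) no two arc-disjoint Hamilton dipaths end at $v$, and there one instead pairs a Hamilton-dipath out-branching with a ``cherry''-shaped in-branching rooted at $v$. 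In every case $H$ acquires a good pair, giving the desired contradiction.
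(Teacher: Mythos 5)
The paper never proves Proposition~\ref{prop9}: it is imported from \cite{BBHY} as a black box, so there is no in-paper proof to measure you against. The closest internal analogue is the bidigon argument inside Claim~\ref{thm3}.1, which uses exactly your two ingredients: Lemma~\ref{lem1} to kill a common in/out-neighbour (a $4$-vertex subdigraph with a good pair), and an explicit good pair on four vertices when a neighbourhood collapses to a single vertex. So your route is the natural one, and judged on its own it is correct.

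Two remarks on the details. First, your reduction in the second part is sound but you should note explicitly that $N_D^+(Q)\neq\emptyset$ (this needs $\lambda(D)\ge 2$ and $V(Q)\subsetneq V(D)$), and that $v$ being a sink of $H=D[V(Q)\cup\{v\}]$ uses the disjointness already proved in the first part. Second, the only soft spot is that you assert the finite check rather than carry it out; I confirmed it goes through. Writing the digon as $x\leftrightarrow y$, the sinkless induced digraph $D[V(Q)]$ is, up to symmetry, one of: $z\to x,\ z\to y$ (then $xv,yv$ are forced, and $z\to x\to y\to v$, $z\to y\to x\to v$ are arc-disjoint Hamilton dipaths); the path $y\to z\to x$ (then $xv,zv$ are forced, no two arc-disjoint Hamilton dipaths end at $v$, and one takes the out-branching $x\to y\to z\to v$ with the cherry in-branching $\{yx,\,zx,\,xv\}$, exactly as you predicted); a bidigon plus one arc; or the complete digraph, and in the last two families either two arc-disjoint Hamilton dipaths ending at $v$ exist or the same cherry trick works. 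One small inaccuracy: it is not true that no general tool applies in this step. When $v$ is adjacent to all three vertices of $Q$, the digraph $H$ is semicomplete with at least seven arcs, so Proposition~\ref{prop4} hands you the good pair at once, and only the cases with $d_H^-(v)=2$ genuinely require the hand construction.
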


\begin{prop}\label{h2-1}
	Let $D$ be a 2-arc-strong oriented graph on $n$ vertices without $K_4$ as a subdigraph, where $8\le n\le9$. If $D$ has two disjoint cycles $C^1$ and $C^2$ which cover 7 vertices, then $D$ contains a $P_8.$
\end{prop}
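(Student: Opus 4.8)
The plan is to reduce to a single concrete configuration and then build the long dipath by merging the two cycles and absorbing the leftover vertices. Since $D$ is an oriented graph with $\lambda(D)\ge 2$, every directed cycle has length at least $3$; as $|V(C^1)|+|V(C^2)|=7$, this forces $\{|V(C^1)|,|V(C^2)|\}=\{3,4\}$, so I may assume $C^1$ is a triangle and $C^2$ is a $4$-cycle. Write $W=V\setminus(V(C^1)\cup V(C^2))$, so that $|W|=n-7\in\{1,2\}$; note that for $n=8$ a $P_8$ is a Hamilton dipath, while for $n=9$ I am allowed to discard one vertex. The engine of the argument is the elementary \emph{merging move}: a single arc $u_iv_j$ from $C^1$ to $C^2$ yields a dipath on all seven cycle vertices, obtained by traversing $C^1$ so as to end at $u_i$, crossing along $u_iv_j$, and then traversing $C^2$ starting at $v_j$; its endpoints are the successor of $u_i$ on $C^1$ and the predecessor of $v_j$ on $C^2$. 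The companion \emph{absorbing move} inserts a single vertex $w$: if $a\to w\to b$ for some arc $ab$ of a cycle, then replacing $ab$ by $awb$ lengthens that cycle by one.

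Next I would force the connections needed for these moves. A short counting argument shows that not all external arcs at $V(C^1)$ can be incident only with $W$: when $|W|=1$, having $d^+(V(C^1))\ge 2$ and $d^-(V(C^1))\ge 2$ with every such arc meeting the single vertex $w$ would require at least two vertices of the triangle to dominate $w$ and at least two to be dominated by $w$, impossible in an oriented graph on three vertices. Hence for $n=8$ there is always an arc between $C^1$ and $C^2$, and the merging move produces a $P_7$ covering all seven cycle vertices, with one endpoint in each cycle. It remains to attach the unique vertex $w$, which has at least two in-neighbours and at least two out-neighbours, all inside $V(C^1)\cup V(C^2)$. I would attach $w$ either by prepending (if $w$ dominates the start), appending (if the end dominates $w$), or by splicing $w$ into a consecutive pair $p\to p'$ of the path with $p\to w\to p'$; the freedom to choose among the (at least two) crossing arcs and to use the reverse direction $C^2\to C^1$ is what makes one of these realisable. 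Equivalently, one may first use the absorbing move to fold $w$ into a cycle, obtaining two disjoint cycles covering all eight vertices, and then merge them.

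For $n=9$ the obstacle is that $C^1$ and $C^2$ need no longer be joined by an arc: all traffic between them may be routed through $W=\{w_1,w_2\}$. Here I would invoke $2$-arc-strongness (Lemma~\ref{lem3}) to secure an in-arc and an out-arc at each cycle meeting $W$, and assemble an $8$-vertex dipath of the shape ``$C^1$-part, cross into $W$, cross into $C^2$-part'' (or with $W$ distributed over the two junctions), discarding whichever single vertex cannot be incorporated. The genuine difficulty throughout is this last extension step: because the merging move pins down the endpoints of the $P_7$, attaching a $W$-vertex is not automatic, and a fixed $P_7$ can fail to absorb $w$ when the in- and out-neighbours of $w$ are misaligned along it; one must therefore run a case analysis over where these neighbours sit on $C^1$ and $C^2$, exploiting the availability of a second crossing arc. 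I expect the hypothesis that $D$ has no $K_4$ to be exactly what closes the degenerate cases: it restricts the chord structure of the $4$-cycle $C^2$ (at most one of its two diagonals can be an arc), eliminating the tightly-knit local configurations in which every attempted attachment is blocked, so the bulk of the work—and the part one would relegate to case checking—is this endpoint-controlled extension.
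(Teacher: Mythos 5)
Your toolbox (merging the two cycles along a crossing arc, absorbing an outside vertex into a cycle, and the counting argument that a crossing arc must exist) is essentially the same as the paper's, and the $n=8$ half can indeed be finished along these lines. But the step you rely on is exactly the one you never prove: you assert that ``the freedom to choose among the (at least two) crossing arcs and to use the reverse direction'' guarantees that the leftover vertex $w$ can be prepended, appended or spliced in. That assertion is the whole content of the proposition. The missing ingredient is a confinement argument: assuming no $P_8$ exists, $w$ cannot simultaneously have an in-arc from one cycle and an out-arc to the other (otherwise merging \emph{through} $w$ already yields a dipath on $4+1+3=8$ vertices), so all arcs between $w$ and the cycles lie with a single cycle; since $w$ carries at least four such arcs when $n=8$ (at least three when $n=9$), and an oriented triangle can host at most three, that cycle must be the $4$-cycle; and since $w$ then has both an in-neighbour and an out-neighbour on it, some in-neighbour is cyclically followed by an out-neighbour, so $w$ splices into the $4$-cycle and a $P_8$ appears after remerging. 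This is what the paper's Facts~\ref{h2-1}.1 and \ref{h2-1}.2 encode, and without something equivalent your attachment step is a claim, not a proof.

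For $n=9$ --- the only case where the $K_4$-freeness is actually used --- the gap is more serious. Your plan (``assemble a path through $W$, discard whichever vertex cannot be incorporated, then case analysis'') defers the entire difficulty, and your guess about the role of the $K_4$ hypothesis is wrong: it is not used to forbid diagonals of the $4$-cycle. In the paper's labelling ($C^1=x_1x_2x_3x_4x_1$, $C^2=x_5x_6x_7x_5$, $P_7=x_1x_2\cdots x_7$), the hard case is when both outside vertices $y_1,y_2$ have all their arcs with the $4$-cycle and $y_1y_2\in A$; maximality of $P_7$ (Facts~\ref{h2-1}.2 and \ref{h2-1}.3) then forces $N^+(y_1)=\{x_2,y_2\}$, $N^-(y_1)=\{x_3,x_4\}$ and $N^+(y_2)=\{x_2,x_3\}$, and the contradiction is that $\{x_2,x_3,y_1,y_2\}$ --- two consecutive vertices of the $4$-cycle together with the two outside vertices --- is a $K_4$. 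So the forbidden clique mixes outside and cycle vertices; a chord condition on the $4$-cycle alone would not close this case. Since your proposal neither carries out the structural analysis that makes the case checking finite nor identifies how the $K_4$ hypothesis actually enters, it has a genuine gap rather than being a complete alternative proof.
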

\begin{proof}
	Suppose that $P_7$ is the longest dipath of $D$ by Proposition~\ref{h1}. In fact there exist arcs between $C^1$ and $C^2$ from both directions, otherwise $D$ has a $P_8$ as $\lambda(D)\ge2$. W.l.o.g., assume $|C^1|\ge |C^2|$. Then $|C^1|=4$ and $|C^2|=3$. Let $C^1=x_1x_2x_3x_4x_1$, $C^2=x_5x_6x_7x_5$, $P_7=x_1x_2\ldots x_7$ and $y_j$ be the vertex in $V-V(C^1\cup C^2)$, where $j=1$ when $n=8$ and $j\in[2]$ when $n=9$. From the maximality of $P_7$ in $D$, we have the following facts.
	\begin{description}
		\item[Fact~\ref{h2-1}.1.] For any $j$, at least one of $(C^i,y_j)_D$ and $(y_j,C^{3-i})_D$ is empty for any $i\in[2]$.
		\item[Fact~\ref{h2-1}.2.] For any $j$, at least one of arcs $x_iy_j$ and $y_jx_{i+1}$ is not in $A$ for any $i\in[6]$.
		\item[Fact~\ref{h2-1}.3.] For $n=9$, let $y_jy_{3-j}\in A$. If $x_iy_j\in A$, then $y_{3-j}x_{i+1},y_{3-j}x_{i+2}\notin A$, where $j\in[2]$ and $i\in[5]$.
	\end{description}
	Since $D$ is oriented, there are at least three arcs between $y_j$ and $C^i$, for some $i$, by Fact~\ref{h2-1}.1. W.l.o.g., assume $i=1$. Note that $d_{C^1}^+(y_j)\ge1$ and $d_{C^1}^-(y_j)\ge1$. Then $N(y_j)\subset \{y_{3-j}\}\cup C^1$ when $n=9$ and $N(y_j)\subset C^1$ when $n=8$.
	
	If $y_j$ is not adjacent to $y_{3-j}$ or $n=8$, then $N^+(y_j)=\{x_1,x_2\}$ and $N^-(y_{3-j})=\{x_3,x_4\}$ by Fact~\ref{h2-1}.2, which implies that $D$ has a $P_8$ as $y_jx_1\in A$, a contradiction.
	
	Hence $n=9$ and $y_1$ is adjacent to $y_2$. W.l.o.g., assume that $y_1y_2\in A$.
	If $x_1y_1\in A$, then $N^+(y_2)=\{x_1,x_4\}$ by Fact~\ref{h2-1}.3 and $\lambda(D)\ge2$, which implies that $D$ has a Hamilton dipath as $y_2x_1\in A$, a contradiciton. Hence $x_1$ is not adjacent to $y_1$. By Fact~\ref{h2-1}.2, $N^+(y_1)=\{x_2,x_9\}$ and $N^-(y_1)=\{x_3,x_4\}$. By Fact~\ref{h2-1}.3 and the longestness of $P_7$,  $N^+(y_2)=\{x_2,x_3\}$. It implies that $D[\{x_2,x_3,y_1,y_2\}]$ is a $K_4$, a contradiction.
\end{proof}

\begin{prop}\label{h2-2}
	Let $D=(V,A)$ be a 2-arc-strong digraph on $n$ vertices without good pair, where $8\le n\le9$. If $D$ is an oriented graph without $K_4$ as a subdigraph, then $D$ has a $P_8$.
\end{prop}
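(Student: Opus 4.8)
The plan is to argue by contradiction and reduce to Proposition~\ref{h2-1}. Suppose $D$ has no $P_8$. By Proposition~\ref{h1}, $D$ contains a dipath on $7$ vertices, so a longest dipath $P=x_1x_2\cdots x_7$ exists and has exactly $7$ vertices; write $\{y_1\}$ (if $n=8$) or $\{y_1,y_2\}$ (if $n=9$) for the vertices off $P$. The whole strategy is to locate two vertex-disjoint cycles whose union has exactly $7$ vertices: once we have them, Proposition~\ref{h2-1} yields a $P_8$, contradicting our assumption. Since $D$ is an oriented graph, two disjoint cycles covering $7$ vertices must be a $C_3$ and a $C_4$.

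First I would record the constraints forced by maximality of $P$ together with $\lambda(D)\ge 2$ and the absence of digons. No out-arc of $x_7$ and no in-arc of $x_1$ can leave $P$ (otherwise $P$ extends), and no off-path vertex $y$ can satisfy $x_iy,\,yx_{i+1}\in A$ simultaneously, nor $yx_1\in A$, nor $x_7y\in A$ (each would produce a $P_8$), exactly as in the analogues of Facts~\ref{h2-1}.1--\ref{h2-1}.3. Hence $N^+(x_7)\subseteq\{x_1,\dots,x_5\}$ and $N^-(x_1)\subseteq\{x_3,\dots,x_7\}$, each of size at least $2$.

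The core step produces the two cycles from the path. From a back-arc $x_cx_1\in A$ I form the front cycle $F=x_1x_2\cdots x_cx_1$ on $\{x_1,\dots,x_c\}$, and from a forward-arc $x_7x_a\in A$ the back cycle $B=x_a\cdots x_7x_a$ on $\{x_a,\dots,x_7\}$; these are disjoint exactly when $c<a$ and cover $7$ vertices exactly when $a=c+1$ with $c\in\{3,4\}$. I would therefore show that the available back-arcs into $x_1$ and forward-arcs out of $x_7$ can always be chosen so that $x_cx_1\in A$ and $x_7x_{c+1}\in A$ for some $c\in\{3,4\}$; when the indices do not align in this way I would instead build one of the two cycles using an off-path vertex $y$, which can still lie on a triangle $C_3$ with two path vertices even though it cannot be inserted between two consecutive ones, again producing a $C_3$ disjoint from a $C_4$.

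The hard part is the residual, tightly constrained configurations in which the forward- and back-arcs cluster near the ends of $P$ and no pair of disjoint $C_3,C_4$ presents itself; there I would close the argument in one of two ways. Either the accumulated adjacencies force $P$ itself to extend or rearrange into a $P_8$, directly contradicting the assumption, or they let me assemble an out-branching and an arc-disjoint in-branching, i.e.\ a good pair $(O_D,I_D)$, contradicting the hypothesis that $D$ has no good pair; throughout, the absence of $K_4$ is used to forbid the densest local patterns (four mutually adjacent path/off-path vertices), which keeps the number of surviving cases finite. I expect this last bookkeeping---verifying disjointness and the exact count of $7$ covered vertices in every branch, and discharging the degenerate branches by a $P_8$ or a good-pair construction---to be the main obstacle, while the reduction to Proposition~\ref{h2-1} is the conceptual heart.
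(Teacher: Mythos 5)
Your high-level frame (longest $P_7$ from Proposition~\ref{h1}, maximality facts, reduction to Proposition~\ref{h2-1}, fallback to a good pair) matches the paper's announced strategy, but the pivotal step is overclaimed and the genuinely hard part is left open. It is simply not true that the back-arcs into $x_1$ and the forward-arcs out of $x_7$ can always be aligned as $x_cx_1,\,x_7x_{c+1}\in A$ with $c\in\{3,4\}$: all of your maximality constraints permit, e.g., $N^-(x_1)=\{x_3,x_4\}$ with $N^+(x_7)=\{x_2,x_3\}$, or the extreme configuration $N^-(x_1)=\{x_5,x_6\}$, $N^+(x_7)=\{x_2,x_3\}$, in which no pair of disjoint cycles covering $7$ vertices arises from chords of $P$ at all; and a triangle through an off-path vertex $y$ needs arcs such as $x_iy,\,yx_{i-1}$ that nothing in the setup guarantees. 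Since $N^-(x_1)\subseteq\{x_3,\ldots,x_6\}$ and $N^+(x_7)\subseteq\{x_2,\ldots,x_5\}$, the ``misaligned'' configurations you defer to the end are not a degenerate fringe --- they are the majority of the cases, and they are where the entire content of the proof lives. As written, your proposal is a plan whose main obstacle is acknowledged but not overcome.

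For comparison, the paper closes exactly these cases not by hunting for cycles but by directly constructing a good pair, contradicting the hypothesis that $D$ has none. It first proves two attachment tools (Notes~\ref{h2-2}.1 and \ref{h2-2}.2): an out-branching of $D[V(P)]$ together with an in-tree (or two disjoint in-trees) missing one or two vertices of $P$ can be completed to arc-disjoint branchings of all of $D$, using $\lambda(D)\ge 2$ to absorb the one or two off-path vertices --- note that for $n=9$ the possible arc $y_2y_1$ needs separate care, which Note~\ref{h2-2}.1 provides and your sketch does not address. It then runs a case analysis over the possible pairs in $N^-(x_1)$ (and, by digraph duality, $N^+(x_7)$), exhibiting explicit branchings in each case, and finishes the one surviving configuration $N^-(x_1)=\{x_5,x_6\}$, $N^+(x_7)=\{x_2,x_3\}$ by rotating $P$ into the new longest paths $x_7x_2x_3x_4x_5x_6x_1$ and $x_6x_7x_2x_3x_4x_5x_1$ and deriving $N^+(x_1)=\{x_2\}$, contradicting $2$-arc-strongness. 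So Proposition~\ref{h2-1} is only a subsidiary escape hatch in the actual proof; the engine is the branching constructions and the final rotation argument, precisely the pieces your proposal would need to supply to be complete.
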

\begin{proof}
	Suppose that $P=x_1x_2 \ldots x_7$ is the longest dipath in $D$ by Proposition~\ref{h1}. Let $X=V(P)$ and $Y=V-X$. We have the following fact.
	\begin{description}
		\item[Fact~\ref{h2-2}.1.] $N^-(x_1),N^+(x_7)\subset X$ and $x_7x_1\notin A$.
	\end{description}	
	We will now show the following notes.
	\begin{description}
		\item[Note~\ref{h2-2}.1] Let $B_X^+$ be an out-branching of $D[X]$, and let $B^-$ contain two disjoint in-tree $T_{r_1}^-$ and $T_{r_2}^-$ in $D[X]-A(B_X^+)$. Let $H_i=D[T_{r_i}^-]$ for any $i\in[2]$. For some $y\in Y$, if there exists an arc $r_iy$ and an out-arc of $y$ to $H_{3-i}$, then $D$ has a good pair.
	\end{description}	
	{\it Proof.} Let $e_1=r_1y_1$ and $e_2$ be an out-arc of $y_1$ to $H_2$. It is trivial when $n=8$ or $y_2y_1\notin A$ as $\lambda(D)\ge2$. It suffices to consider the case when $n=9$ and $y_2y_1\in A$. Set $B^+=B_X^++y_2y_1+y_2^-y_2$ and $B^-=T_{r_1}^-+T_{r_2}^-+e_1+e_2+y_2y_2^+$ where $y_2^-,y_2^+\neq y_1$ as $\lambda(D)\ge2$. Therefore, $(B^+,B^-)$ is a good pair of $D$.
	\begin{description}
		\item[Note~\ref{h2-2}.2] Let $B_X^+$ and $T_X^-$ respectively be an out-branching and an in-tree of $D[X]$, with $V(T_X^-)=X-v$, for arbitrary $v\in X$. If $v$ has an out-arc which is not into $B_X^+\cup T_X^-$, then $D$ has a good pair.
	\end{description}	
	{\it Proof.}
	It can be seen as a special case of Note~\ref{h2-2}.1, in which let $T_{r_1}^-=v$ and $T_{r_2}^-=T_X^-$.
	
	\vspace{2mm}
	We will distinguish several cases depending on the in-neighbours of $x_1$ in Appendix.
\end{proof}

Now we are ready to show Theorem~\ref{thm3}. For convenience, we restate it here.
\begin{description}
	\item[Theorem~\ref{thm3}.] {\it Every 2-arc-strong digraph on 8 vertices has a good pair.}
\end{description}
\begin{proof}
	Suppose that $D$ has no good pair. Let $R$ be a largest clique in $D$. By Lemma~\ref{lem6} and Proposition~\ref{prop4}, $|R|=3$.
	
	\begin{description}
		\item[Claim \ref{thm3}.1] {\it No subdigraph of $D$ of order at least 3 has a good pair.}
	\end{description}
	{\it Proof.}
	By Lemma~\ref{lem6}, it suffices to show that there is no $Q\subset D$ on 3 vertices with a good pair.
	Suppose that $Q$ has a good pair. If $Q$ is an orientation of $E_3$, then we use Lemma~\ref{lem7} to find a good pair of $D$ by Proposition~\ref{prop9}, a contradiction. Now assume that $Q$ is a bidigon. Set $V(Q)=\{x,y,z\}$ with $Q[\{x,y\}]=C_2$ and $Q[\{y,z\}]=C_2$. If there exists a vertex $w$ in $N_D^+(Q)\cap N_D^-(Q)$, then $D[Q\cup\{w\}]$ has a good pair by Lemma~\ref{lem1}. Thus $N_D^+(Q)\cap N_D^-(Q)=\emptyset$. If $N_D^-(Q)=\{w\}$, then $D[Q\cup\{w\}]$ has a good pair as $B_w^+=wzyx$ and $B_z^-=wxyz$. By symmetry, this implies that $|N_D^+(Q)|\ge2$ and $|N_D^-(Q)|\ge2$. Thus by Lemma~\ref{lem7}, $D$ has a good pair, a contradiction.
	\hfill $\lozenge$
	
	\vspace{2mm}
	By the claim above, $R$ is a tournament.
	
	\begin{description}
		\item[Claim \ref{thm3}.2] {\it $D$ is an oriented graph.}
	\end{description}
	{\it Proof.}
	Suppose that there is a digon $Q$ in $D$ with $V(Q)=\{s,t\}$. Observe that $Q$ has a good pair. Since $R$ is a tournament with 3 vertices, both in- and out-neibourhoods of $Q$ in $D$ have at least two vertices with $N_D^+(Q)\cap N_D^-(Q)=\emptyset$. This implies that $D$ has a good pair by Lemmas~\ref{lem2}, \ref{lem8}, \ref{lem8-1} and \ref{lem8-2}, and Corollary~\ref{cor1}, a contradiction.
	\hfill $\lozenge$
	
	\vspace{2mm}
	By Proposition~\ref{h2-2}, assume that $P_D=x_1x_2\ldots x_8$ is a Hamilton dipath of $D$. Set $D'=D-A(P_D)$. Let $I_i$ and $T_j$ respectively be the initial and terminal strong component in $D'$, where $i\in[a]$ and $j\in[b]$. Note that $a,b\ge2$ by Proposition~\ref{prop5}. Since $D$ is an oriented graph and $\lambda(D)\ge2$, $|I_i|,|T_j|\ge3$ for any $i\in[a],j\in[b]$. Thus there are only two non-adjacent strong components in $D'$, say $I_1$ and $I_2$, as $n=8$. Since $\lambda(D)\ge2$, $x_1$ has at least two in-neighbours and one out-neighbour in $D'$, while $x_8$ has at least two out-neighbours and one in-neighbour in $D'$.
	If $|I_1|=3$ and $|I_2|=5$, then $x_1,x_8\in I_2$ and $|A(I_2)|\ge6$. Note that at least one of $x_2$ and $x_7$ is in $I_1$ as $|R|=3$. Then we use Proposition~\ref{prop6} to get a good pair of $D$.
	Now assume $|I_1|=|I_2|=4$. If $x_8\in I_1$ then $x_7\in I_2$ by Claim~\ref{thm3}.2. By Proposition~\ref{prop6}, $D$ has a good pair.
\end{proof}

\section{Good pairs in digraphs of order 9}\label{9}
We have several generalizations of Proposition~\ref{prop2} here, which are easy to check as they satisfy the conditions in Proposition~\ref{prop2}.

\begin{prop}\label{prop10}
	Let $D=(V,A)$ be a digraph and $Q$ be a subdigraph of $D$ with good pair $(O_Q,I_Q)$. Set $X=N_D^-(Q)$ and $Y=N_D^+(Q)$ with $X\cap Y=\emptyset$ and $X\cup Y=V-V(Q)-W$, where $W=\{w_1,w_2\}$. Let $e_1$ be an arc from $w_1$ to $X$ and $e_2$ be an arc from $Y$ to $w_2$. Set $X'=X\cup w_1$, $Y'=Y\cup w_2$ and $D'=(V,A')$ with $A'=A-\{e_1,e_2\}$.
	Let $\mathcal{X}$ be the set of initial strong components in $D'[X']$ and $\mathcal{Y}$ be the set of terminal strong components in $D'[Y']$. Assume that there exists $X_0$ and $Y_0$ in $\mathcal{X}$ and $\mathcal{Y}$ respectively such that $d_Y^-(X_0)=1$ and $d_X^+(Y_0)=1$. Let $e_x$ and $e_y$ be arcs from $Y$ to $X_0$ and from $Y_0$ to $X$ respectively.
	If one of the following holds, then $D$ has a good pair.
	\begin{enumerate}
		\item\label{prop2-1} $e_x\neq e_y$, but at least one of $\mathcal{X}$ or $\mathcal{Y}$ has only one element.
		\item\label{prop2-2} $e_x$ (or $e_y$) is adjacent to some $Y_x$ (or $X_y$) in $\mathcal{Y}$ (or $\mathcal{X}$), such that $d_X^+(Y_x)\ge 3$ (or $d_Y^-(X_y)\ge 3$).
		\item\label{prop2-3} $e_x$ (or $e_y$) is adjacent to $Y'-V(\mathcal{Y})$ (or $X'-V(\mathcal{X})$).
		\item\label{prop2-4} $e_x$ (or $e_y$) is adjacent to some $Y_x\neq Y_0$ (or $X_y\neq X_0$) in $\mathcal{Y}$ (or $\mathcal{X}$), such that there exists an arc from $Y_x$ (or $X_y$) to $X'-V(\mathcal{X})$ (or $Y'-V(\mathcal{Y})$).
	\end{enumerate}
\end{prop}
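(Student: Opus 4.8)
The plan is to reuse, almost verbatim, the construction in the proof of Proposition~\ref{prop2}, treating $(X',Y')$ as the two sides and absorbing the two extra vertices $w_1,w_2$ through the reserved arcs $e_1,e_2$. Following that proof, let $B^+$ be an out-tree containing $O_Q$ together with an in-arc from $Q$ to each vertex of $Y$, and let $B^-$ be an in-tree containing $I_Q$ together with an out-arc from each vertex of $X$ to $Q$. The arc $e_1$ (from $w_1$ into $X$) will serve as the unique out-arc of $w_1$ in the in-branching, and $e_2$ (from $Y$ into $w_2$) will serve as the in-arc reaching $w_2$ in the out-branching; this is precisely why we delete them to form $D'$, so that the rest of the construction is carried out in $D'$ and cannot reuse them. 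It then remains to cover $X'$ in the out-branching and $Y'$ in the in-branching, which is exactly the task Proposition~\ref{prop2} solves: we must select arc-disjoint families $\mathcal{P}_{X'}$ (one crossing arc from $Y'$ into each member of $\mathcal{X}$) and $\mathcal{P}_{Y'}$ (one crossing arc from each member of $\mathcal{Y}$ into $X'$), and then grow the out-forest $T_{X'}$ and in-forest $T_{Y'}$ inside $X'$ and $Y'$. Once this is done, $O_D=B^++e_2+\mathcal{P}_{X'}+T_{X'}$ and $I_D=B^-+e_1+\mathcal{P}_{Y'}+T_{Y'}$ is a good pair of $D$, since $e_1,e_2$ are internal to $X',Y'$ and hence disjoint both from the crossing families and from $T_{X'},T_{Y'}$.

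The one way a direct application of Proposition~\ref{prop2} can break is the present configuration: a member $X_0\in\mathcal{X}$ with a single crossing in-arc $e_x$ from $Y$ and simultaneously a member $Y_0\in\mathcal{Y}$ with a single crossing out-arc $e_y$ to $X$, so that neither degree condition of Proposition~\ref{prop2} holds. The genuine obstruction is that $\mathcal{P}_{X'}$ is forced to take $e_x$ while $\mathcal{P}_{Y'}$ is forced to take $e_y$, and these two arcs could coincide. I would show that each of the four hypotheses breaks this coincidence and yields a disjoint selection, after which the system-of-distinct-representatives argument behind Proposition~\ref{prop2} completes the two families and the remark following it supplies the forests. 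By the digraph duality used throughout, the parenthetical symmetric versions of (2)--(4) need not be treated separately.

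For the individual cases the arguments are short. Under (1), with say $\mathcal{X}=\{X_0\}$, the single arc $\mathcal{P}_{X'}=\{e_x\}$ already reaches all of $\mathcal{X}$; since $e_x\neq e_y$ we may assign $e_y$ to $Y_0$ and, to every other terminal component (each of out-degree at least $2$), an out-arc distinct from $e_x$, which realizes condition~2 of Proposition~\ref{prop2}. Under (2), using $e_x$ in $\mathcal{P}_{X'}$ deletes at most one out-arc of the terminal component $Y_x$ incident to $e_x$; as $d_X^+(Y_x)\ge 3$, at least two of its out-arcs survive, so $Y_x$ retains a free out-arc for $\mathcal{P}_{Y'}$ while every other margin stays intact. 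Under (3), the tail of $e_x$ lies in $Y'-V(\mathcal{Y})$, outside every terminal component, so $e_x$ is never demanded by $\mathcal{P}_{Y'}$; taking it into $\mathcal{P}_{X'}$ causes no conflict, and the leftover selection again meets condition~2. Under (4), the terminal component $Y_x\neq Y_0$ incident to $e_x$ has an arc into $X'-V(\mathcal{X})$; since the head of that arc avoids every initial component it is never needed by $\mathcal{P}_{X'}$, so we give it to $Y_x$ in $\mathcal{P}_{Y'}$ and thereby free $e_x$ for $\mathcal{P}_{X'}$.

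The main obstacle I expect is the bookkeeping of arc-disjointness exactly at the two degree-one bottlenecks $X_0,Y_0$: ruling out that the two forced arcs coincide, and confirming that $T_{X'},T_{Y'}$ can be built in $D'$ without reusing $e_1,e_2,\mathcal{P}_{X'}$ or $\mathcal{P}_{Y'}$. Each hypothesis is tailored to guarantee a second usable arc at precisely the spot where the two families would otherwise collide, so once the collision is excluded the remainder is the routine construction already validated in Proposition~\ref{prop2}.
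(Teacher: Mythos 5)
Your proposal is correct and takes essentially the same route as the paper: the paper disposes of Proposition~\ref{prop10} with a single remark that the four conditions are ``easy to check as they satisfy the conditions in Proposition~\ref{prop2}'', and your argument --- running Proposition~\ref{prop2}'s construction on $(X',Y')$ in $D'$ with $e_1,e_2$ reserved for $w_1,w_2$, then verifying case by case that each hypothesis breaks the possible coincidence of the forced arcs $e_x,e_y$ --- is precisely the elaboration of that remark. Your implicit degree claims for the non-deficient components (out-/in-degree at least $2$) come from the ambient $2$-arc-strong setting in which the proposition is applied, exactly as the paper's own one-line proof implicitly assumes.
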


\begin{lemma}\label{lem11}
	Let $D$ be a 2-arc-strong digraph on $9$ vertices that contains a digon $Q$. Assume that $D$ has no subdigraph with a good pair on 3 or 4 vertices. Set $X=N_D^-(Q)$ and $Y=N_D^+(Q)$ with $X\cap Y=\emptyset$. If $|X|=3$ and $|Y|=2$, then $D$ has a good pair.
\end{lemma}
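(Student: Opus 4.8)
The plan is to reduce Lemma~\ref{lem11} to Proposition~\ref{prop10}, which is tailored to exactly the configuration at hand: a subdigraph $Q$ carrying a good pair whose in- and out-neighbourhoods $X,Y$ account for all but two vertices $W=\{w_1,w_2\}$ of $D$. Here $Q$ is a digon, hence has the good pair $(O_Q,I_Q)$ given by its two arcs, and since $|Q|+|X|+|Y|=7$ we have $|W|=2$. First I would record the structural consequences of the hypotheses. Because each $w\in W$ lies outside $N_D^-(Q)\cup N_D^+(Q)$, there are no arcs between $W$ and $Q$ in either direction. Moreover, as $D$ has no good pair on $3$ vertices, Proposition~\ref{prop3} forces every vertex outside $Q$ to be joined to $Q$ by at most one arc; in particular each $x\in X$ sends exactly one arc into $Q$ and receives none, and each $y\in Y$ receives exactly one arc from $Q$ and sends none. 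These facts determine how the out-tree $B^+$ (covering $V(Q)\cup Y$) and the in-tree $B^-$ (covering $V(Q)\cup X$) can be extended across $W$.

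Second, I would locate the two arcs $e_1\in (w_1,X)_D$ and $e_2\in (Y,w_2)_D$ needed to invoke Proposition~\ref{prop10}. In the generic case such arcs exist on distinct $W$-vertices, and then I set $X'=X\cup\{w_1\}$, $Y'=Y\cup\{w_2\}$, $D'=D-\{e_1,e_2\}$, and pass to the set $\mathcal{X}$ of initial strong components of $D'[X']$ and the set $\mathcal{Y}$ of terminal strong components of $D'[Y']$. If instead $(W,X)_D=\emptyset$ (or, dually, $(Y,W)_D=\emptyset$), then every external in-arc of $X$ comes from $Y$, and together with $\lambda(D)\ge2$ this tightly constrains the component structure; I would dispose of these degenerate configurations by hand, absorbing $W$ using the fact that each $w_i$ still has in- and out-degree at least $2$ within $X\cup Y\cup W$.

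Third, with $\mathcal{X},\mathcal{Y}$ in place I would verify the degree conditions that drive Proposition~\ref{prop2}, namely $d_Y^-(X_i)\ge 2$ for the initial components and $d_X^+(Y_j)\ge 2$ for the terminal ones. Since $\lambda(D)\ge2$ while $Q$ sends no arcs to $X$ and receives none from $Y$, every such component already has in- or out-degree at least $2$ when counting all of $V$; hence the only way a deficiency $d_Y^-(X_0)=1$ (or $d_X^+(Y_0)=1$) can arise is that the missing external arc is supplied by a vertex of $W$. This is precisely the scenario that the escape conditions~(\ref{prop2-1})--(\ref{prop2-4}) of Proposition~\ref{prop10} are built to absorb, so the core of the argument is to show that one of them always holds. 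I would do this by tracking the second external arc of the deficient component $X_0$ (resp.\ $Y_0$): it emanates from $w_1$ or $w_2$, and following the attachments of these $W$-vertices, together with $2$-arc-strongness forcing further arcs out of $\{w_1,w_2\}$, should always produce an arc realising one of conditions~(\ref{prop2-1})--(\ref{prop2-4}).

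The step I expect to be the main obstacle is exactly this last verification. With $|X'|=4$ and $|Y'|=3$ there are several possible splittings into strong components (a single strong component, or a $3{+}1$ or $2{+}1{+}1$ partition), and for each one must check that the deficient component's second external arc lands in a part rich enough (out-degree or in-degree $\ge3$, or a vertex of $X'$ or $Y'$ outside $V(\mathcal{X})$ or $V(\mathcal{Y})$) to trigger condition~(\ref{prop2-2}), (\ref{prop2-3}) or~(\ref{prop2-4}), or else that $\mathcal{X}$ or $\mathcal{Y}$ is a singleton so that condition~(\ref{prop2-1}) applies. The bookkeeping is delicate because deleting $e_1,e_2$ can change which vertices become initial or terminal, and a few residual configurations in which every escape condition fails will have to be eliminated directly by exhibiting an explicit good pair rooted inside $Q$, in the style of the constructions used in Lemmas~\ref{lem8}--\ref{lem8-2}.
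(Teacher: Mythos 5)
Your plan follows the same route as the paper's proof of Lemma~\ref{lem11}: first show that $(Y,W)_D\neq\emptyset$ and $(W,X)_D\neq\emptyset$ (the paper's Note~\ref{lem11}.3, with the degenerate cases $(W,X)_D=\emptyset$ and $(Y,W)_D=\emptyset$ handled by hand exactly as you propose); then delete a pair of arcs $e_1=w_1x_1$ and $e_2=y_1w_2$, pass to the initial and terminal strong components of $D'[X']$ and $D'[Y']$, and observe that the only possibly deficient components are the ones created by the deleted arcs (the paper's Fact~\ref{lem11}.2, matching your diagnosis that any deficiency must be supplied from $W$); finally, run a case analysis in which either an escape condition of Proposition~\ref{prop10} fires or an explicit good pair is exhibited (the paper's Claims~\ref{lem11}.1--\ref{lem11}.4 and the ensuing cases). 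So the machinery and its intended use coincide with the paper's.

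There is, however, one genuine gap: your case split is not exhaustive. You distinguish (i) the generic case, in which $e_1$ and $e_2$ can be chosen incident to \emph{distinct} vertices of $W$, from (ii) the degenerate cases $(W,X)_D=\emptyset$ or $(Y,W)_D=\emptyset$. This omits the configuration in which both arc sets are non-empty but all their arcs meet the \emph{same} vertex of $W$, say $(W,X)_D=\{w_1x_1\}$ and $(Y,W)_D=\{y_1w_1\}$, while $w_2$ has no in-arc from $Y$ and no out-arc to $X$. Here Proposition~\ref{prop10} cannot be invoked at all, since its hypotheses place the arc into $X$ at $w_1$ and the arc out of $Y$ at the \emph{other} vertex $w_2$, and neither of your fallbacks applies. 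The paper needs a separate argument for precisely this case (its closing paragraph): since $w_2$ has no arcs to or from $Q$, $2$-arc-strongness forces an in-arc $e^-$ of $w_2$ from $X$ and an out-arc $e^+$ of $w_2$ to $Y$; one then deletes the vertex $w_2$ and the arc $e_1$, applies Proposition~\ref{prop2} to $X'=X\cup\{w_1\}$ and $Y$ inside $D-w_2$, and reattaches $w_2$ via $e^-$ (into the out-branching) and via $e_1$ together with $e^+$ (into the in-branching). Your proposal would need this additional step, or an equivalent one, to be complete; everything else in it is the paper's proof in outline.
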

\begin{proof}
	Let $D=(V,A)$ and $W=V-X-Y-V(Q)=\{w_1,w_2\}$. Set $V(Q)=\{q_1,q_2\}$, $X=\{x_1,x_2,x_3\}$ and $Y=\{y_1,y_2\}$. By contradiction, suppose that $D$ has no good pair. Since $\lambda(D)\ge2$, w.l.o.g., assume $q_1y_1,q_2y_2\in A$. Let $B^+=q_1q_2y_2+q_1y_1$ and $B^-$ be an in-tree rooted at $q_1$ such that $A(B^-)\subseteq \{q_2q_1\}\cup (X,Q)_D$.
	By Proposition~\ref{prop4}, we get the following fact.
	\begin{description}
		\item[Fact~\ref{lem11}.1.] There is no digon in the induced subdigraph of $X$ or $Y$.
	\end{description}
	
	We will now show some notes below.
	\begin{description}
		\item[Note~\ref{lem11}.1.] There is no $C_3$ in the induced subdigraph of $X$.
	\end{description}
	{\it Proof.}
	W.l.o.g., assume that $x_1q_1$, $x_2q_1$, $x_3q_2\in A$ and $D[X]=C_3=x_1x_2x_3x_1$. Then $D[Q\cup X]$ has a good pair as $B_{x_1}^+=x_1x_2x_3q_2q_1$ and $B_{q_2}^-=x_3x_1q_1+x_2q_1q_2$. By Lemma~\ref{lem6}, $D$ has a good pair, a contradiction.
	\hfill $\lozenge$
	
	\begin{description}
		\item[Note~\ref{lem11}.2.] None of the following holds:
		\begin{enumerate}
			\item $|(Y,X_1)_D|=4$ for any $X_1\subset X$ and $|X_1|=2$.
			\item $Y$ is not independent and there exists a vertex in $X$ which is dominated by both vertices in $Y$.
			\item $X$ is not independent, w.l.o.g., set that $X_1$ contains two arbitrarily adjacent vertices in $X$, and there exists a vertex in $Y$ which dominates both vertices in $X_1$.
			\item Both $X$ and $Y$ are not independent, say $x_{i_1}x_{i_2},y_jy_{3-j}\in A$, and $y_jx_{i_1},y_{3-j}x_{i_2}\in A$, where $i_1,i_2\in[3]$ and $j\in[2]$.
			\item Both $X$ and $Y$ are not independent, say $x_{i_1}x_{i_2},y_jy_{3-j}\in A$, and $y_jx_{i_2},y_{3-j}x_{i_1}\in A$, where $i_1,i_2\in[3]$ and $j\in[2]$.
		\end{enumerate}
	\end{description}
	{\it Proof.}
	The proof is similar as that of Claim~\ref{lem8}.1. We give some details in Appendix.
	\hfill $\lozenge$
	
	\begin{description}
		\item[Note~\ref{lem11}.3.] There exists respectively an arc from $Y$ to $W$ and an arc from $W$ to $X$.
	\end{description}
	{\it Proof.}
	Let $X_i$ be the initial strong components in $D[X]$ and $Y_j$ be the terminal strong components in $D[Y]$, $i\in [a]$ and $j\in[b]$. Note that $1\le a\le3$ and $1\le b\le2$.
	
	{\bf Case 1:} $(W,X)_D=\emptyset$.
	
	This implies that there are at least two arcs from $W$ to $Y$ as $\lambda(D)\ge2$. For any $i,j$, $d_{X\cup W}^+(Y_j)\ge 2$ and $d_Y^-(X_i)\ge 2$ since $\lambda(D)\ge2$ and $(W,X)_D=\emptyset$. By Proposition~\ref{prop2}, we get $\mathcal{P}_X$ and $T_X$.
	
	If $d_X^+(Y_j)\ge 2$ for any $j$, then we get $\mathcal{P}_Y$ and $T_Y$ with $\mathcal{P}_X\cap \mathcal{P}_Y=\emptyset$ by Proposition~\ref{prop2}. It follows that $(B^++\mathcal{P}_X+T_X,B^-+\mathcal{P}_Y+T_Y)$ is a good pair of $D-W$, which implies that $D$ has a good pair by Lemma~\ref{lem1}, a contradiction.
	Hence there exists a terminal strong component in $D[Y]$, say $Y_1$, such that $d_X^+(Y_1)\le1$. This implies that $|Y_1|=1$, w.l.o.g., say $Y_1=\{y_1\}$. Note that there exists a dipath $P^1$ from $y_1$ to $y_2$ with $P^1-Y\subseteq W$ and $(y_2,X)_D\neq \emptyset$.
	
	If $y_2$ is not a terminal strong component in $D[Y]$ or $y_2$ is a terminal strong component in $D[Y]$ with $d_X^+(y_2)\ge2$, then we get $\mathcal{P}_Y$ and $T_Y$ with $\mathcal{P}_X\cap \mathcal{P}_Y=\emptyset$ by Proposition~\ref{prop2}. It follows that $D-W$ has a good pair $(B^++\mathcal{P}_X+T_X,B^-+\mathcal{P}_Y+T_Y)$, a contradiction.
	
	Hence $y_2$ is a terminal strong component in $D[Y]$ with $d_X^+(y_2)\le1$. This implies that $d_X^+(y_1)=d_X^+(y_2)=1$. Set $(y_i,X)_D=\{e_i\}$ for any $i\in[2]$. Note that $(Y,X)_D=\{e_1,e_2\}$, then there is only one initial strong component in $D[X]$ as $\lambda(D)\ge2$, say $X_1$. Now $(Y,X_1)_D=\{e_1,e_2\}$. By Proposition~\ref{prop2}, we get $\mathcal{P}_X$ and $T_X$, such that $\mathcal{P}_X=\{e_1\}$. Let $P_+=e_1+T_X$ and $P_-=e_2+P^1$.
	If $P^1=y_1w_iw_{3-i}y_2$, then $(B^++P_++w_i^-w_i+w_{3-i}^-w_{3-i},B^-+P_-)$ is a good pair of $D$, where $w_i^-\neq y_1$ and $w_{3-i}^-\neq w_i$ as $\lambda(D)\ge2$, a contradiction.
	If $P^1=y_1w_iy_2$, then $(B^++P_++w_i^-w_i+w_{3-i}^-w_{3-i},B^-+P_-+w_{3-i}w_{3-i}^+)$ is a good pair of $D$, where $w_i^-\neq y_1$ and $w_{3-i}^-,w_{3-i}^+\neq w_i$ as $\lambda(D)\ge2$, a contradiction.
	
	{\bf Case 2:} $(Y,W)_D=\emptyset$.
	
	Note that $|(Y,X)_D|\ge3$ by Fact~\ref{lem11}.1.
	
	First assume that $Y$ is not an independent set, w.l.o.g., say $y_1y_2\in A$. Now $d_X^+(y_1)\ge1$ and $d_X^+(y_2)\ge2$. By Note~\ref{lem11}.2, $y_1$ and $y_2$ can not dominate the same vertex in $X$, which implies that $d_X^+(y_1)=1$ and $d_X^+(y_2)=2$. W.l.o.g., assume $y_1x_1,y_2x_2,y_2x_3\in A$. Then $X$ is independent by Note~\ref{lem11}.2. Since $(Y,W)_D=\emptyset$ and $\lambda(D)\ge2$, there exists a dipath $P^1$ from some vertex $x$ in $\{x_1,x_2\}$ to $x_3$ with $V(P^1)-\{x,x_3\}\subseteq W$. Let $P_+=y_1x_1+y_2x_2+P^1$ and $P_-=y_1y_2x_3$.
	
	Next assume that $Y$ is an independent set, i.e., $d_X^+(y_i)\ge2$ for any $i\in[2]$. By Note~\ref{lem11}.2, at most one vertex in $X$ can be dominated by both vertices in $Y$, namely $|(Y,X)_D|=4$. W.l.o.g., say $x_1$ is dominated by both vertices in $Y$ and $y_1x_2,y_2x_3\in A$. Now $x_1$ is non-adjacent to $x_2$ or $x_3$ by Note~\ref{lem11}.2 and there exists at most one arc between $x_2$ and $x_3$ by Fact~\ref{lem11}.1, w.l.o.g., say $x_2x_3\notin A$. Likewise, there exists a dipath $P^1$ from some vertex $x$ in $\{x_1,x_2\}$ to $x_3$ with $V(P^1)-\{x,x_3\}\subseteq W$ as $(Y,W)_D=\emptyset$ and $\lambda(D)\ge2$. Let $P_+=y_1x_2+y_2x_1+P^1$ and $P_-=y_1x_1+y_2x_3$.
	
	If $P^1=xw_1w_2x_3$, then $(B^++P_+,B^-+P_-+w_1w_1^++w_2w_2^+)$ is a good pair of $D$, where $w_1^+\neq w_2$ and $w_2^+\neq x_3$, a contradiction.
	If $P^1=xw_1x_3$, then $(B^++P_++w_2^-w_2,B^-+P_-+w_1w_1^++w_2w_2^+)$ is a good pair of $D$, where $w_1^+\neq x_3$ and $w_2^-,w_2^+\neq w_1$, a contradiction.
	\hfill $\lozenge$
	
	\vspace{2mm}
	W.l.o.g., assume that $e_1=w_1x_1$ is an arc from $w_1$ to $X$. First assume that $e_2=y_1w_2$ is an arc from $Y$ to $w_2$.
	Set $D'=(V,A')$ with $A'=A-\{e_1,e_2\}$ and $X'=X\cup \{w_1\}$, $Y'=Y\cup \{w_2\}$. Let $\mathcal{X}$ be the set of initial strong components in $D'[X']$ and $\mathcal{Y}$ be the set of terminal strong components in $D'[Y']$.
	
	By Proposition~\ref{prop2}, if there exists at most one strong component $S$ in $\mathcal{X}\cup \mathcal{Y}$ which satisfies that $S\in \mathcal{X}$ (or $S\in \mathcal{Y}$) with $d_Y^-(S)=1$ (or $d_X^+(S)=1$), then we can find a good pair of $D$, a contradiction. We get the fact below.
	\begin{description}
		\item[Fact~\ref{lem11}.2.] There exist two strong components $X_0=x_1$ and $Y_0=y_1$ respectively in $\mathcal{X}$ and $\mathcal{Y}$ such that $d_Y^-(X_0)=1$ and $d_X^+(Y_0)=1$.
	\end{description}
	That is $y_1y_2,x_2x_1,x_3x_1\notin A$.
	Let $e_x$ and $e_y$ be the arc from $Y$ to $X_0$ and from $Y_0$ to $X$, respectively.
	One can easily check the note below.

	\begin{description}
		\item[Note \ref{lem11}.4] {\it Assume that $e_x=e_y=y_1x_1$ and there are at least two arcs from $w_2$ to $X'$ in $D$. Let $e$ be an arc from $w_2$ to $X$ and $D''=(V,A'')$ with $A''=A-\{e_x,e\}$. Then there exists at least one arc from $Y'$ to each initial strong component of $D''[X']$ respectively.}
	\end{description}
		
	\begin{description}
		\item[Claim \ref{lem11}.1] {\it $D[\{w_2,y_2\}]$ is not a digon.}
	\end{description}
	{\it Proof.}
	Suppose $D[\{w_2,y_2\}]=C_2$. Since $D\nsupseteqq E_3$, $A(D[Y'])=\{y_1w_1,w_2y_2,y_2w_2\}$. Set $D''=(V,A'')$ with $A''=A-\{y_2w_2,w_1x_1\}$. Note that $D''[Y']$ has only one terminal strong component $\{y_2\}$. By Fact~\ref{lem11}.2, $d_X^+(y_2)=1$, say $e_{y_2}=(y_2,X)_D$. Then $e_{y_2}=e_x$ by Proposition~\ref{prop10}.\ref{prop2-1}.
	
	Observe that $I_D=B^-+y_1w_2y_1x_1+w_1w_1^+$ is an in-branching of $D$, where $w_1^+\neq x_1$ as $\lambda(D)\ge2$.
	If $w_1^+\in Y'$, then any initial strong component in $D[X']$ has at least one in-arc from $Y'$ which is different from $y_1x_1$. This implies that $D$ has an out-branching $O_D=B^++y_2w_2+T_{X'}+\mathcal{P}_{X'}$ by Proposition~\ref{prop2}. Thus $(O_D,I_D)$ is a good pair in $D$, a contradiction.
	
	Otherwise $w_1^+\in \{x_2,x_3\}$. Set $D^*=(V,A^*)$ with $A^*=A-\{y_2w_2,w_1w_1^+\}$. By Fact~\ref{lem11}.2, $y_2$ and $w_1^+$ is respectively a terminal and an initial strong component in $D^*[Y']$ and $D^*[X']$ with $d_X^+(y_2)=d_Y^-(w_1^+)=1$. Since $e_{y_2}=y_2x_1\neq y_2w_1^+$, $D$ has a good pair by Proposition~\ref{prop10}.\ref{prop2-1}, a contradiction.
	\hfill $\lozenge$
	
	\begin{description}
		\item[Claim \ref{lem11}.2] $w_2y_2 \notin A.$
	\end{description}
	{\it Proof.}
	Suppose to the contrary that $w_2y_2\in A$.
	
	{\bf Case 1:} $y_1$ is adjacent to $y_2$ in $D$.
	
	That is $y_2y_1\in A$. Now $D'[Y']$ has only one terminal strong component $Y_0$, which implies that $e_y=e_x=y_1x_1$. Then $I_D=B^-+w_2y_2y_1x_1+w_1w_1^+$ is an in-branching of $D$, where $w_1^+\neq x_1$ as $\lambda(D)\ge2$.
	If $w_1^+\in Y'$, then any initial strong component in $D[X']$ has at least one in-arc from $Y'$ which is different from $y_1x_1$. This implies that $D$ has an out-branching $O_D=B^++y_1w_2+T_{X'}+\mathcal{P}_{X'}$ by Proposition~\ref{prop2}. Thus $(O_D,I_D)$ is a good pair in $D$, a contradiction.
	
	Otherwise $w_1^+\in \{x_2,x_3\}$. Set $D^*=(V,A^*)$ with $A^*=A-\{y_1w_2,w_1w_1^+\}$. By Fact~\ref{lem11}.2, $y_1$ and $w_1^+$, respectively, is a terminal and an initial strong component in $D^*[Y']$ and $D^*[X']$ with $d_X^+(y_1)=d_Y^-(w_1^+)=1$. Since $e_y=y_1x_1\neq y_1w_1^+$, $D$ has a good pair by Proposition~\ref{prop10}.\ref{prop2-1}, a contradiction.
	
	{\bf Case 2:} $y_1$ is not adjacent to $y_2$ in $D$.
	
	That is $|(y_2,X')_D|\ge2$.
	
	{\bf Subcase 2.1:} $|\mathcal{X}|=1$.
	
	Namely $\mathcal{X}=\{X_0\}$. By Proposition~\ref{prop10}.\ref{prop2-1}, $e_y=e_x=y_1x_1$. By Proposition~\ref{prop2}, we get $T_{X'}$ of $D'[X']$. Then $(B^++y_1x_1+T_{X'}+w_2^-w_2,B^-+y_1w_2y_2y_2^++w_1x_1)$ is a good pair of $D$, where $w_2^-\neq y_1$ and $y_2^+\neq w_2$, a contradiction.
	
	\vspace{2mm}
	Discussions of subcases when ``$|\mathcal{X}|=2$'', ``$|\mathcal{X}|=3$'' and ``$|\mathcal{X}|=4$'' are given in Appendix.
	\hfill $\lozenge$
	
	\begin{description}
		\item[Claim \ref{lem11}.3] $y_2w_2 \notin A.$
	\end{description}
	{\it Proof.}
	Suppose $y_2w_2\in A$. Set $D''=(V,A'')$ with $A''=A-\{w_1x_1,y_2w_2\}$. Note that $y_2$ is a terminal strong component in $D''[Y']$ by Fact~\ref{lem11}.2. Moreover $y_2y_1\notin A$, i.e., $y_1$ is not adjacent to $y_2$ in $D$. Let $e_{y_2}$ be the arc from $y_2$ to $X'$.
	
%
	
	First assume $e_y=e_x=y_1x_1$. Now $y_1$ is not in some terminal strong component of $D''[Y']$, which implies that $D$ has a good pair by Proposition~\ref{prop10}.\ref{prop2-3}, a contradiction.
	
	Hence $e_y\neq e_x$. Analogously $e_{y_2}\neq e_x$.
	Then $e_y$ (resp. $e_{y_2}$) is adjacent to some initial strong component in $D'[X']$ (resp. $D''[X']$) other than $X_0$ (resp. $x_1$). This implies that $e_x=w_2x_1$ and $|\mathcal{X}|\ge2$.
	If $|\mathcal{X}|\ge3$, then $|(Y',X')_D|\ge5$, i.e., $d_{X'}^+(w_2)\ge3$. By Proposition~\ref{prop10}.\ref{prop2-2}, $D$ has a good pair, a contradiction.
	If $|\mathcal{X}|=2$, then we get $T_{X'}$ of $D'[X']$ by Proposition~\ref{prop2}. It follows that $(B^++y_1w_2x_1+e_{y_2}+T_{X'},B^-+w_1x_1+e_y+y_2w_2w_2^+)$ is a good pair of $D$, where $w_2^+\neq x_1,y_2$ by $\lambda(D)\ge2$ and Claim~\ref{lem11}.2, a contradiction.
	\hfill $\lozenge$
	
	\vspace{2mm}
	Therefore $y_2$ is not adjacent to $w_2$ in $D$.
	
	\begin{description}
		\item[Claim \ref{lem11}.4] $y_1$ is not adjacent to $y_2$ in $D$.
	\end{description}
	{\it Proof.}
	Suppose to the contrary that $y_1$ is adjacent to $y_2$, i.e., $y_2y_1\in A$.
	
	{\bf Case 1:} $w_2y_1\in A$.
	
	Now $D'[Y']$ has only one terminal strong component $Y_0$, that is $e_y=e_x=y_1x_1$ by Proposition~\ref{prop10}.\ref{prop2-1}. Then $I_D=B^-+w_2y_1+y_2y_1x_1+w_1w_1^+$ is an in-branching of $D$, where $w_1^+\neq x_1$ as $\lambda(D)\ge2$.
	
	If $w_1^+\in Y'$, then any initial strong component in $D[X']$ has at least one in-arc from $Y'$ which is different from $y_1x_1$. This implies that $D$ has an out-branching $O_D=B^++y_1w_2+T_{X'}+\mathcal{P}_{X'}$ by Proposition~\ref{prop2}. Thus $(O_D,I_D)$ is a good pair in $D$, a contradiction.
	
	Otherwise $w_1^+\in \{x_2,x_3\}$. Set $D^*=(V,A^*)$ with $A^*=A-\{y_1w_2,w_1w_1^+\}$. By Fact~\ref{lem11}.2, $y_1$ is the only terminal strong component of $D^*[Y']$ and $w_1^+$ is an initial strong component in $D^*[X']$ with $d_X^+(y_2)=d_Y^-(w_1^+)=1$. Since $e_y=y_1x_1\neq y_1w_1^+$, $D$ has a good pair by Proposition~\ref{prop10}.\ref{prop2-1}, a contradiction.
	
	Thus $d_{X'}^+(w_2)\ge2$.
	
	{\bf Case 2:} $|\mathcal{X}|=1$.
	
	By Proposition~\ref{prop10}.\ref{prop2-1}, $e_y=e_x=y_1x_1$. By Proposition~\ref{prop2}, we get $T_{X'}$. It follows that $(B^++y_1x_1+T_{X'}+w_2^-w_2,B^-+y_2y_1w_2w_2^++w_1x_1)$ is a good pair of $D$, where $w_2^-\neq y_1$ and $w_2^+\in X'$ as $\lambda(D)\ge2$ and $d_{X'}^+(w_2)\ge2$, a contradiction.
	
	{\bf Case 3:} $|\mathcal{X}|=2$.
	
	Set $\mathcal{X}=\{X_0,X_1\}$. Recall that $X_0=x_1$. By Proposition~\ref{prop2}, we get $T_{X'}$.
	
	{\bf Subcase 3.1:} $e_x=e_y=y_1x_1$.
	
	If $(y_2,X_1)_D\neq \emptyset$, say $e'\in (y_2,X_1)_D$, then $D$ has a good pair $(B^++y_1x_1+e'+T_{X'}+w_2^-w_2,B^-+y_2y_1w_2w_2^++w_1x_1)$, where $w_2^-\neq y_1$ and $w_2^+\in X'$, a contradiction.
	
	Otherwise $d_{X_1}^+(w_2)\ge2$, namley $|X_1|\ge2$. As $(y_2,X'-V(X_0\cup X_1))_D\neq \emptyset$, $|X_1|=2$. This implies that $D[\{w_2\}\cup X_1]\supseteq E_3$, a contradiction to that $D\nsupseteqq E_3$.
	
	\vspace{2mm}
	Discussions of the subcase when ``$e_y\neq e_x$'' and cases when ``$|\mathcal{X}|=3$'' and ``$|\mathcal{X}|=4$'', see Appendix.
	\hfill $\lozenge$
	
	\vspace{2mm}
	Now $|(y_2,X')_D|\ge2$. Similar to Note~\ref{lem11}.4 we get the note below.
	\begin{description}
		\item[Note \ref{lem11}.5] {\it Assume that $e_x=e_y=y_1x_1$, $(w_1,X)_D=\{w_1x_1\}$ and there are at least two arcs from $y_2$ to $X'$ in $D$. Let $e$ be an arc from $y_2$ to $X$ and $D''=(V,A'')$ with $A''=A-\{e_x,e\}$. Then there exists at least one arc from $Y'$ to each initial strong component of $D''[X']$ respectively.}
	\end{description}
	
	Then we distinguish several cases as follows.
	
	{\bf Case 1} $e_x=e_y$.
	
	That is $e_x=e_y=y_1x_1$.
	
	{\bf Subcase 1.1} $w_1x_2\in A$.
	
	Now change $e_1$ from $w_1x_1$ to $w_1x_2$, then $X_0=x_2$, or $D$ has a good pair. If $x_1$ is not in some component in $\mathcal{X}$, then $D$ has a good pair by Proposition~\ref{prop10}.\ref{prop2-3}, a contradiction. Thus we assume that $x_1\in X_1\in \mathcal{X}$. Note that $w_1\in X_1$ as $w_1x_1\in A$.
	If $X_1=C_3$, then $V(X_1)=\{x_1,x_3,w_1\}$. As $\lambda(D)\ge2$, $|(Y',X_1)_D|\ge3$, which implies that $D$ has a good pair by Proposition~\ref{prop10}.\ref{prop2-2}, a contradiction.
	
	Hence $X_1=C_2=x_1w_1x_1$. Note that $|(Y',X)_D|\ge3$ as $|\mathcal{X}|\ge 2$. Since now $X_0=x_2$ and $y_1x_1\in A$, $(Y',X_1)=\{y_1x_1,w_1^-w_1\}$ and $(Y',X_0)=\{x_2^-x_2\}$, where $w_1^-,x_2^-\in Y'$.
	If $w_2y_1\in A$, then $x_2^-\neq w_2$ by Proposition~\ref{prop10}.\ref{prop2-3}. Since $D$ has no subdigraph with a good pair on 4 vertices, $w_2x_3,y_2w_1,y_2x_2\in A$. Let $P_+=y_1x_1w_1+y_2x_2+w_2^-w_2+x_3^-x_3$ and $P_-=y_1w_2x_3+y_2w_1x_2$, where $w_2^-\neq y_1$ and $x_3^-\neq w_2$ as $D\nsupseteq E_3$. Then $(B^++P_+,B^-+P_-)$ is a good pair of $D$, a contradiction.
	Henceforth $|(w_2,X')_D|\ge2$. It follows that $w_2x_3,y_2x_3\in A$ and $x_3\in X_2\in \mathcal{X}$ by Proposition~\ref{prop10}.\ref{prop2-4}. Let $P_+=y_1x_1w_1x_2+y_2x_3+w_2^-w_2$ and $P_-=y_1w_2x_3+w_1x_1+y_2y_2^+$ where $w_2^-\neq y_1$ and $y_2^+\neq x_3$ as $y_2$ is not adjacent to $w_2$. Then $(B^++P_+,B^-+P_-)$ is a good pair of $D$, a contradiction.
	
	The case of $w_1x_3\in A$ can be proved analogously.
	For the discussion of the subcase when ``$(w_1,X)_D=\{w_1x_1\}$'', see Appendix.
	
	{\bf Case 2} $e_x\neq e_y$.
	
	For the discussion of Case 2, see Appendix.
	
	\vspace{2mm}
	The discussion above implies that $D$ has a good pair when there exists an arc from $Y$ to $w_i$ and an arc from $w_{3-i}$ to $X$, where $i\in [2]$.
	
	Henceforth assume that $e_2=y_1w_1$ and there is no arc from $Y$ to $w_2$ or from $w_2$ to $X$. This implies that there exists an in-arc $e^-$ of $w_2$ from $X$ and an out-arc $e^+$ of $w_2$ to $Y$. Set $D'=(V',A')$ with $V'=V-w_2, A'=A-e_1$ and $X'=X\cup \{w_1\}$. Let $\mathcal{X}$ be the set of initial strong components in $D'[X']$ and $\mathcal{Y}$ be the set of terminal strong components in $D'[Y]$. Observe that there is at most one strong component $S_x$ in $\mathcal{X}$ with $d_Y^-(S_x)=1$, and meanwhile for an arbitrary strong component $S$ in $\mathcal{X}\cup \mathcal{Y}-S_x$, we have $d_Y^-(S)\ge 2$ when $S\in \mathcal{X}$ and $d_{X'}^+(S)\ge 2$ when $S\in \mathcal{Y}$. Now by Proposition~\ref{prop2}, we get two arc sets $\mathcal{P}_{X'}$ and $\mathcal{P}_Y$ with $\mathcal{P}_{X'}\cap \mathcal{P}_Y=\emptyset$, and $T_{X'},T_Y$. It follows that $D$ has a good pair $(B^++\mathcal{P}_{X'}+T_{X'}+e^-,B^-+\mathcal{P}_Y+T_Y+e_1+e^+)$, a contradiction.
	This completes the proof.
\end{proof}

\begin{lemma}\label{lem12}
	Let $D=(V,A)$ be a 2-arc-strong digraph on $9$ vertices that contains a digon $Q$. Assume that $D$ has no subdigraph with a good pair on at least 3 vertices. Set $X=N_D^-(Q)$ and $Y=N_D^+(Q)$ with $X\cap Y=\emptyset$ and $W=V-V(Q)-X-Y$. Assume that $|X|=|Y|=2$ and there is an arc $e=st\in A$ such that $s\in Y$ and $t\in W$ (resp. $s\in W$ and $t\in X$). If there are at least three arcs in $D[Y\cup\{t\}]$ (resp. $D[X\cup\{s\}]$), then $D$ has a good pair.
\end{lemma}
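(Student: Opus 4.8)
The plan is to construct a good pair of $D$ explicitly; throughout, I may invoke the standing hypothesis that no subdigraph of $D$ on at least three vertices has a good pair in order to constrain the local structure, since any such subdigraph would already produce a good pair of $D$ by the lifting lemmas (Lemma~\ref{lem6} and its companions). By the digraph duality it suffices to treat the case $s\in Y$, $t\in W$; the case $s\in W$, $t\in X$ then follows by passing to $D^{\mathrm{rev}}$. Write $Q=\{q_1,q_2\}$ for the digon, so that $(q_1q_2,q_2q_1)$ is a good pair of $Q$, and set $X=\{x_1,x_2\}$, $Y=\{y_1,y_2\}$ with $s=y_1$, and $W=\{t,w_2,w_3\}$, noting $|W|=9-2-2-2=3$. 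Exactly as in the proofs of Lemmas~\ref{lem8} and~\ref{lem11}, I would fix a base out-tree $B^+$ consisting of $q_1q_2$ together with one arc from $Q$ to each of $y_1,y_2$, and a base in-tree $B^-$ consisting of $q_2q_1$ together with one arc from each of $x_1,x_2$ to $Q$; these are arc-disjoint by construction, and their union already spans $V(Q)\cup X\cup Y$.

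The key idea is to use the arc $e=y_1t$ to attach $t$ to the out-side and to use the at least three arcs inside $D[\{y_1,y_2,t\}]$ to organise $\{y_1,y_2,t\}$ on the in-side, thereby reducing the free part of $W$ from three vertices to the two vertices $w_2,w_3$ — precisely the situation covered by the machinery of Propositions~\ref{prop2} and~\ref{prop10}. Concretely, I would add $e$ to $B^+$, so that $B^+$ now reaches $t$ as well. Since $e$ is only one of the at least three internal arcs, at least two internal arcs remain, and these are available to build, inside $D[\{y_1,y_2,t\}]$ and disjointly from $e$, an in-tree routing $y_1,y_2,t$ toward a single exit vertex $r\in\{y_1,y_2,t\}$. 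Because $N_D^-(Q)=X$ and $t\in W$, no vertex of $\{y_1,y_2,t\}$ dominates $Q$, so on the in-side all of $Y\cup\{t\}$ must ultimately leave through $X$ (possibly via $w_2,w_3$); this consolidation into a single exit $r$ is what lets me treat $\{y_1,y_2,t\}$ as an augmented terminal part and apply Proposition~\ref{prop2} (or its refinement Proposition~\ref{prop10}) to route $r$, $w_2$, $w_3$ and the two vertices of $X$. Using $\lambda(D)\ge 2$, every initial strong component on the $X$-side has at least two in-arcs and the relevant exit has at least two out-arcs into $X\cup\{w_2,w_3\}$, so one obtains arc-disjoint arc sets $\mathcal{P}_X,\mathcal{P}_Y$ and forests $T_X,T_Y$; assembling $O_D=B^+ + \mathcal{P}_X + T_X$ and $I_D=B^- + (\text{in-tree on }\{y_1,y_2,t\}) + \mathcal{P}_Y + T_Y$, together with the routing of $w_2,w_3$, yields a good pair of $D$.

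I expect the main obstacle to be the case analysis forced by the exact configuration of the (at least three) arcs inside $D[\{y_1,y_2,t\}]$. When these arcs make $\{y_1,y_2,t\}$ strongly enough connected, the consolidation to a single exit $r$ is immediate and the reduction to Proposition~\ref{prop10} is clean; the delicate sub-cases are those with exactly three internal arcs not forming a strong component, where two of $y_1,y_2,t$ may each need their own external out-arc into $X\cup\{w_2,w_3\}$, and one must verify that enough such arcs exist and can be chosen arc-disjointly from $B^+,B^-$ and from the routing of $w_2,w_3$. Here I would lean on $\lambda(D)\ge 2$ to supply the needed second out-arcs and on Lemma~\ref{lem3} to extract arc-disjoint paths where direct arcs are unavailable, while invoking Propositions~\ref{prop3} and~\ref{prop4} and the standing assumption that $D$ has no good pair on three or four vertices to eliminate the configurations that would otherwise create a forbidden small good pair. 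Checking arc-disjointness and spanning in each of these residual sub-cases is the technical heart of the argument, and I would organise it by first fixing where $e$ sits among the internal arcs and then branching on the external out-neighbourhoods of the exit vertices.
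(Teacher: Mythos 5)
Your overall skeleton (duality reduction, base trees anchored at $Q$, placing $e$ on the out-side and the remaining internal arcs of $D[Y\cup\{t\}]$ on the in-side, then routing $w_2,w_3$ and $X$ with the machinery of Propositions~\ref{prop2} and~\ref{prop10}) is the same as the paper's. But your central consolidation claim is false. Under the standing assumption, Proposition~\ref{prop3} forces $D[\{y_1,y_2,t\}]$ to have \emph{exactly} three arcs, and one admissible configuration is $\{y_1t,\,ty_1,\,ty_2\}$, i.e.\ the digon $\{y_1,t\}$ together with $ty_2$ (the paper's case $H_5$ in Figure~\ref{fig2}). Here the two arcs other than $e=y_1t$ both have tail $t$, so no in-tree spanning $\{y_1,y_2,t\}$ and avoiding $e$ exists: $y_2$ has no internal out-arc at all, and $y_1$'s only internal out-arc is $e$ itself. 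You cannot re-choose $e$ (it is the unique arc of the triple going from $Y$ to $W$), you cannot kill the configuration with Propositions~\ref{prop3}/\ref{prop4} (no small subdigraph with a good pair arises from it), and you cannot re-center on the digon $\{y_1,t\}$ and invoke the digon lemmas, because its disjoint in- and out-neighbourhoods may both have size $2$, which is exactly the situation of Lemma~\ref{lem13} --- and Lemma~\ref{lem13} is proved \emph{using} Lemma~\ref{lem12}, so that reduction is circular. (Your diagnosis of the delicate cases as those where the triple is ``not strong'' is also off: the non-strong configurations $H_1,H_2,H_3,H_6$ all consolidate to a single exit without difficulty; the failure occurs precisely when both non-$e$ arcs leave $t$.)

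Consequently, on $H_5$ you are thrown onto your fallback ``each vertex gets its own exit arc'' plan, and that is exactly where the paper's real work lies: it replaces your single in-tree by tailored in-forests $F^-(H_i)$ with two components --- for $H_5$ the final in-branching is even rooted at $y_2$, with $Q$ feeding into $y_2$ via the arc $q_2y_2$, something your template (in-branching rooted inside $Q$ with everything draining into $B^-$) does not accommodate --- and then runs a three-case analysis on $d^+_{Y'}(w_2)$ and $d^+_{Y'}(w_3)$ (Notes~\ref{lem12}.1 and \ref{lem12}.2, Claims~\ref{lem12}.1 and \ref{lem12}.2, plus the Appendix) to match the forest roots' exit arcs $a_i$ with the routing of $w_2,w_3$ while keeping the union acyclic, spanning, and arc-disjoint from the out-branching. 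Your sketch names this verification as ``the technical heart'' but supplies no mechanism for it beyond $\lambda(D)\ge 2$ and Lemma~\ref{lem3}, which is not enough: the head of the exit arc may be forced into $\{w_2,w_3\}$, whose own out-arcs may point back into $Y\cup\{t\}$, and untangling this is the bulk of the paper's proof. Finally, note that the configuration ``digon $\{y_1,t\}$ plus $y_2y_1$'' is disposed of in the paper not by direct construction but by re-centering on that digon and applying Corollary~\ref{cor1} and Lemmas~\ref{lem2} and~\ref{lem11} to its (then large enough) neighbourhoods; some such reduction, or a checked construction, is also absent from your plan.
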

\begin{proof} Suppose $D$ has no good pair.
	Set $V(Q)=\{q_1,q_2\}$, $X=\{x_1,x_2\}$, $Y=\{y_1,y_2\}$ and $W=\{w_1,w_2,w_3\}$.
	By the digraph duality, it suffices to prove the case when $s\in Y$ and $t\in W$. W.l.o.g., let $s=y_1$ and $t=w_1$.
	
	Set $D[V(Q)\cup Y\cup\{w_1\}]=H$. Since both $D[\{q_1,q_2,y_1\}]$ and $D[\{q_1,q_2,y_2\}]$ has at most three arcs by Proposition~\ref{prop3}, there are six possible cases of $H$ by symmetry, which are depicted in Figure \ref{fig2}.
	We partition each $H_i$ into two parts as follows:
	
	~\\
	\begin{tabular}{l|c|c|c}\hline
		$H_i$ & $B^+(H_i)$ & $F^-(H_i)$ & $a_i$\\\hline
		$H_1$ & $q_2q_1y_1w_1+q_2y_2$ & $q_1q_2+y_1y_2+w_1y_2$ & an out-arc of $y_2$\\\hline
		$H_2$ & $B^+(H_1)$ & $q_1q_2+y_1y_2w_1$ & an out-arc of $w_1$\\\hline
		$H_3$ & $B^+(H_1)$ & $q_1q_2+w_1y_1y_2$ & an out-arc of $y_2$\\\hline
		$H_4$ & $B^+_1(H_4)=B^+(H_1)$ & $F^-_1(H_4)=q_1q_2+w_1y_2y_1$ & $a^1_4$: an out-arc of $y_1$\\\hline
		& $B^+_2(H_4)=q_2q_1y_1w_1y_2$ & $F^-_2(H_4)=q_1q_2y_2y_1+w_1$ & $a^2_4$: an out-arc of $w_1$\\\hline
	\end{tabular}\\
	\vspace{2mm}
	\begin{tabular}{l|c|c|c}\hline
		$H_5$ & $B^+_2(H_4)$ & $q_1q_2y_2+w_1y_1$ & an out-arc of $y_1$\\\hline
		$H_6$ & $B^+_1(H_6)=B^+(H_1)$ & $F^-_1(H_6)=q_1q_2+y_2w_1y_1$ & $a^1_6$: an out-arc of $y_1$\\\hline
		& $B^+_2(H_6)=q_1q_2y_2w_1y_1$ & $F^-_2(H_6)=q_2q_1y_1w_1+y_2$ & $a^2_6$: an out-arcs of $y_2$\\\hline
	\end{tabular}

	Notice that, for each case, $B^+(H_i)$ is always an out-branching of $H_i$, whose arcs are in blue, $F^-(H_i)$ is always an in-forest of $H_i$, whose arcs are in red (see Figure \ref{fig2}), and $a_i$ is some arc from $\{y_1,y_2,w_1\}$ to $\{x_1,x_2,w_1,w_2\}$, where $1\leq i\leq 6$. And for $i=4$ and $6$,
	$B^+(H_i)$ (resp. $F^-(H_i)$, $a_i$) denotes $B^+_1(H_i)$ (resp. $F^-_1(H_i)$, $a^1_i$) or $B^+_2(H_i)$ (resp. $F^-_2(H_i)$, $a^2_i$).
	
	\begin{figure}[!htpb]
		\centering\includegraphics[scale=0.8]{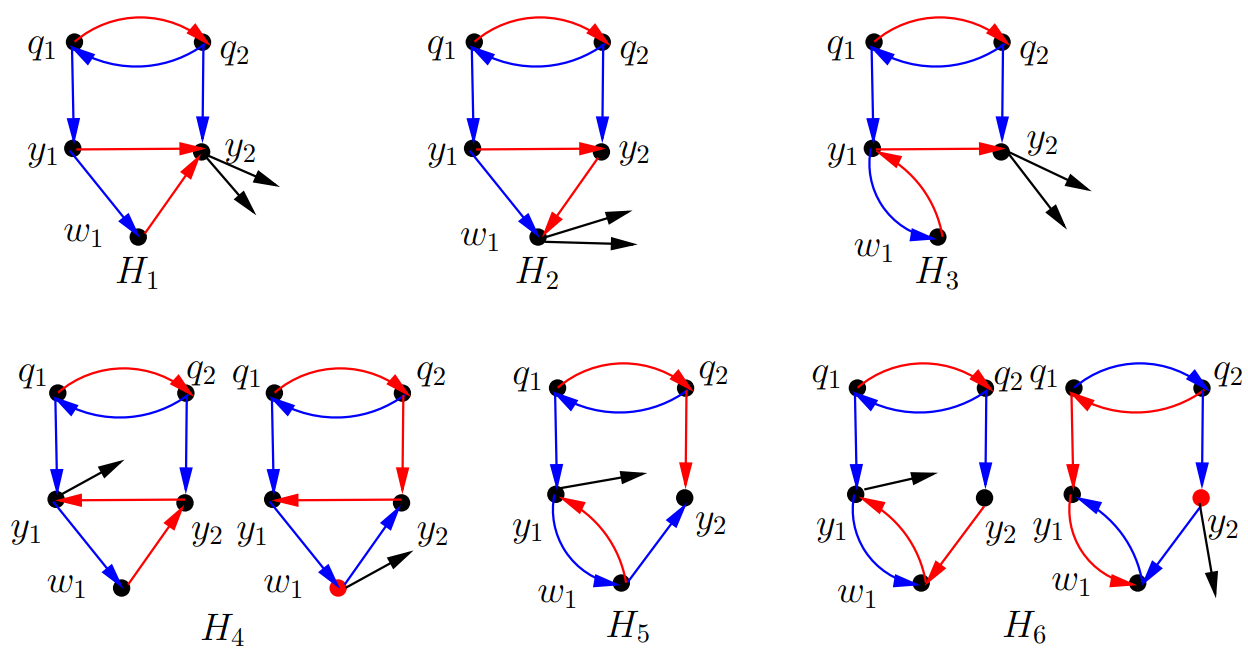}
		\caption{The six possible cases of $H$. The arcs of $B^+(H_i)$ are in blue and the arcs of $F^-(H_i)$ are in red for $1\leq i\leq 6$.}
		\label{fig2}
	\end{figure}
	
	The digraph $H_2-y_1y_2+y_2y_1$ is isomorphic to $H_2$. And, if $H=H_4-w_1y_2+w_1y_1$, then for the digon $D[\{y_1,w_1\}]=\hat{Q}$, $N^-(\hat{Q})$ contains the vertices $q_1,y_2$ and an in-neighbour of $w_1$ from $\{x_1,x_2,w_2\}$. Thus, $|N^-(\hat{Q})|\geq 3$. Since $|N^+(\hat{Q})|\geq 2$, $D$ has a good pair by Corollary \ref{cor1}, and Lemmas \ref{lem2} and \ref{lem11}.
	
	Let $e_{x_j}$ be an arc from $x_j$ to $Q$, for any $j\in[2]$.
	
	\begin{description}
		\item[Note~\ref{lem12}.1] If there exists an arc $a_i$ whose head is $x_1$ or $x_2$, then $F^-(H_i)+e_{x_1}+e_{x_2}+a_i$ is an in-branching of $D\setminus \{w_2,w_3\}$, for each $1\leq i\leq 6$.
	\end{description}
	
	Set $X'=X\cup\{w_2,w_3\}$ and $Y'=Y\cup w_1$.
	
	\begin{description}
		\item[Note~\ref{lem12}.2] Set $D'=(V,A')$ with $A'=A-\{a_i,e_{w_2},e_{w_3}\}$ and each initial strong component in $D'[X']$ has an in-arc from $Y'$. If $F^-(H_i)+e_{x_1}+e_{x_2}+a_i+e_{w_2}+e_{w_3}$ is an in-branching of $D$, then $D$ has a good pair.
	\end{description}
	{\it Proof.}
	Obviously $B^+(H_i)$ can be extended to an out-branching of $D$.
	\hfill $\lozenge$
	
	\vspace{2mm}
	Next, we distinguish three cases:
	
	{\bf Case 1:} $d^+_{Y'}(w_j)\geq 1$ for both $j=2$ and $3$. 
	
	Now there exists an out-arc of $w_j$ with head in $Y'$.	
	Since $\lambda(D)\geq 2$ and $N_D^+(Q)=Y$, each initial strong component in $D[X']$ has at least two in-arcs from $Y'$. Moreover, since $w_j$ (for both $j=2$ and $3$) has an out-arc with head in $Y'$, denoted by $e_{w_j}$, for $D'=(V,A')$ with $A'=A-\{a_i,e_{w_2},e_{w_3}\}$, each initial strong component in $D'[X']$ has at least one in-arc from $Y'$. Thus, if there exists an arc $a_i$ whose head is $x_1$ or $x_2$, then $D$ has a good pair $(O_D,I_D)$ by Notes~\ref{lem12}.1 and \ref{lem12}.2, where $I_D=F^-(H_i)+e_{x_1}+e_{x_2}+a_i+e_{w_2}+e_{w_3}$ and $O_D$ is obtained by extending $B^+(H_i)$, for $1\leq i\leq 6$. Henceforth, we may assume that the head of $a_i$ can only be $w_2$ or $w_3$.
	
	\begin{description}
		\item[Claim \ref{lem12}.1] {\it If there exist an arc $a_i$ with head $w_j$ and an out-arc of $w_j$ with head $x_k$, where $1\leq i\leq 6$, $j\in\{2,3\}$ and $k\in[2]$, then $D$ has a good pair.}
	\end{description}
	{\it Proof.}
	W.l.o.g., suppose that there exists an arc $a_i$ with head $w_2$ and $w_2x_1\in A$. Let $e_{w_2}=w_2x_1$ and $e_{w_3}$ be an out-arc of $w_3$ with head in $Y'$. For $D'=(V,A')$ with $A'=A-e_{w_2}$, if there exists an initial strong component $X_0$ in $D'[X']$ such that $d^-_{Y'}(X_0)=1$, then $x_1\in X_0$ and $w_2\notin X_0$. Hence, for $D''=(V,A'')$ with $A''=A'-\{a_i,e_{w_3}\}$, each initial strong component in $D''[X']$ has at least one in-arc from $Y'$, since the head of $a_i$ is not in $X_0$ and the head of $e_{w_3}$ is not in $X'$. Then $D$ has a good pair $(O_D,I_D)$ by Note~\ref{lem12}.2, where $I_D=F^-(H_i)+e_{x_1}+e_{x_2}+a_i+w_2x_1+e_{w_3}$ and $O_D$ is obtained by extending $B^+(H_i)$, for $1\leq i\leq 6$. The proof is complete.
	\hfill $\lozenge$
	
	\vspace{2mm}
	For $1\leq i\leq 3$, $a_i$ has two choices $e_1$ and $e_2$, both of which are from $y_2$ (for $i=1$ and $3$) or $w_1$ (for $i=2$). Since the head of $a_i$ can only be $w_2$ or $w_3$ and there is no multiple arc in $D$, w.l.o.g., let $e_1=y_2w_2$ (resp. $w_1w_2$) and $e_2=y_2w_3$ (resp. $w_1w_3$) for $i=1$ and $3$ (resp. $i=2$).
	
	We first consider $H_1$ and $H_2$.	
	Now, $D[Y']$ is a tournament of order $3$. If $d^+_{Y'}(w_j)\geq 2$ ($j=2$ or $3$), then $D[Y'\cup\{w_j\}]$ is a tournament of order $4$ or contains a subdigraph on $3$ vertices with $4$ arcs, a contradiction. Hence, $d^+_{Y'}(w_j)=1$ for both $j=2$ and $3$. Moreover, if $D[\{w_2,w_3\}]$ is a digon, then $D[\{y_2,w_2,w_3\}]$ or $D[\{w_1,w_2,w_3\}]$ is a subdigraph on $3$ vertices with $4$ arcs, a contradiction. Thus, at least one of the vertices $w_2$ and $w_3$, say $w_2$, has an out-neighbour in $X$. Let $a_i=e_1$. Now, by Claim~\ref{lem12}.1, $D$ has a good pair.
	
	Discussions of $H_i$, where $i\in\{3,4,5,6\}$, see Appendix.
	
	{\bf Case 2:} $d^+_{Y'}(w_j)=0$ for both $j=2$ and $3$.
	
	Now each out-arc of $w_j$ has head in $X'$. 	
	We will distinguish three subcases in Appendix.
	
	{\bf Case 3:} $d^+_{Y'}(w_j)=0$ and $d^+_{Y'}(w_{5-j})\geq 1$ for $j=2$ or $3$.
	
	W.l.o.g., assume that $d^+_{Y'}(w_2)=0$ (i.e., $d^+_{X'}(w_2)\geq 2$) and $d^+_{Y'}(w_3)\geq 1$. Let $e_{w_3}$ be an out-arc of $w_3$ with head in $Y'$. There are two situations: $(a)$ $w_2x_1,w_2x_2\in A$ and $(b)$ w.l.o.g., $w_2x_1,w_2w_3\in A$ and $w_2x_2\notin A$.
	
	\begin{description}
		\item[Claim \ref{lem12}.2] {\it There exists $a_i$ with head in $\{x_1,x_2,w_2\}$, for each $1\leq i\leq 6$.}
	\end{description}
	{\it Proof.}
	The proof is given in Appendix.
	\hfill $\lozenge$
	
	\vspace{2mm}
	Next, by Claim~\ref{lem12}.2, we distinguish three subcases. Note that, we always let $D'=(V,A')$ with $A'=A-\{e_{w_2},e_{w_3}\}$ and $D''=(V,A'')$ with $A''=A'-\{a_i\}$.
	
	{\bf Subcase 3.1:} $a_i$ with head $x_1$.
	
	For $w_2$ and Situation $(a)$, let $e_{w_2}=w_2x_2$, and for Situation $(b)$, let $e_{w_2}=w_2w_3$. Clearly, for any $1\leq i\leq 6$, $F^-(H_i)+e_{x_1}+e_{x_2}+a_i+e_{w_2}+e_{w_3}+$ is always an in-branching of $D$. If there exists an initial strong component $X_0$ in $D'[X']$ such that $d^-_{Y'}(X_0)=1$, then for Situation $(a)$ (resp. Situation $(b)$), $x_2\in X_0$ (resp. $w_3\in X_0$) and $w_2\notin X_0$. Moreover, $x_1\notin X_0$, since $w_2x_1\in A(D')$ for both situations. Since the head of $a_i$ is $x_1$, in $D''$, each initial strong component of $D''[X']$ has always at least one in-arc from $Y'$. By Note~\ref{lem12}.2, $D$ has a good pair.
	
	For discussions of subcases when ``$a_i$ with head $x_2$'' and ``$a_i$ with head $w_2$'', see Appendix.
	
	The proof is complete.	
\end{proof}

\begin{lemma}\label{lem13}
	Let $D$ be a 2-arc-strong digraph on $9$ vertices that contains a digon $Q$. Assume that $D$ has no subdigraph with a good pair on 3 or 4 vertices. Set $X=N_D^-(Q)$ and $Y=N_D^+(Q)$ with $X\cap Y=\emptyset$. If $|X|=2$ and $|Y|=2$, then $D$ has a good pair.
\end{lemma}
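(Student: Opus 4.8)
The plan is to argue by contradiction, assuming $D$ has no good pair, and to reduce to a rigid ``sparse'' configuration that can be finished by hand. First I would normalise the hypothesis: since $D$ has no good pair, Lemmas~\ref{lem5} and~\ref{lem6} rule out any subdigraph with a good pair on $n-3=6$ or $n-4=5$ vertices, and a good pair on $7$ or $8$ vertices is impossible as well, because the one or two deleted vertices each have an in- and an out-neighbour inside the remaining subdigraph (each has in- and out-degree at least $2$, while at most one arc can go to the single other deleted vertex), so Lemma~\ref{lem1} would yield a good pair of $D$. Together with the standing hypothesis this shows $D$ has \emph{no} subdigraph with a good pair on at least $3$ vertices, which is exactly the hypothesis needed to invoke Lemma~\ref{lem12}. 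In particular, by Proposition~\ref{prop3} every $3$-set spans at most $3$ arcs, so $D$ contains no bidigon and no orientation of $E_3$, and by Proposition~\ref{prop4} neither $D[X]$ nor $D[Y]$ contains a digon.

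Next I would fix the data: write $V(Q)=\{q_1,q_2\}$, $X=\{x_1,x_2\}$, $Y=\{y_1,y_2\}$ and $W=V-V(Q)-X-Y=\{w_1,w_2,w_3\}$, take the trivial good pair of the digon $Q$, and let $B^+$ be an out-tree from $Q$ covering $Y$ and $B^-$ an in-tree into $Q$ covering $X$, as in the proof of Lemma~\ref{lem8}. Because $X=N_D^-(Q)$ and $Y=N_D^+(Q)$, no vertex of $W$ is adjacent to $Q$; hence every arc incident with $W$ lies inside $X\cup Y\cup W$, and $\lambda(D)\ge 2$ gives $d^-(W)\ge 2$ and $d^+(W)\ge 2$ together with the usual degree bounds on individual vertices. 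This is the setting in which the three ``outside'' vertices of $W$ must be threaded into the out-branching (reached from $Q$ through $Y$) and into the in-branching (routed to $Q$ through $X$) by arc-disjoint arcs.

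The engine of the argument is Lemma~\ref{lem12}: if there is an arc from $Y$ to some $t\in W$ with $D[Y\cup\{t\}]$ carrying at least three arcs, or dually an arc from some $s\in W$ to $X$ with $D[X\cup\{s\}]$ carrying at least three arcs, then $D$ has a good pair, a contradiction. So I may assume every such local triangle is sparse, i.e.\ carries at most two arcs. Combined with the facts that $D[X],D[Y]$ have no digon, that every $3$-set has at most three arcs, and that $D$ contains no $E_3$, this forces a very restricted adjacency pattern between $W$ and $X\cup Y$: for instance, a vertex $w\in W$ receiving arcs from both vertices of $Y$ forces $Y$ to be independent and $w$ to send no arc back to $Y$, and symmetrically on the $X$ side. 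I would record these restrictions as a short list of structural claims.

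The remaining, and hardest, step is to finish the sparse residual cases directly. Here the real difficulty is that $|W|=3$: each of the three vertices must appear in \emph{both} branchings simultaneously, and with so few arcs across the groups the arc-disjointness is tight, so one cannot simply appeal to Proposition~\ref{prop2} or Corollary~\ref{cor1}. I would organise the case analysis by the arc pattern inside $D[W]$ and across the cuts $(Y,W)$, $(W,X)$, $(X,W)$, $(W,Y)$ --- for example according to whether $D[W]$ admits a Hamilton dipath or a dicycle, and whether a flow of the form $Q\to Y\to W\to X\to Q$ is available --- and in each subcase exhibit an explicit out-branching and in-branching, using $\lambda(D)\ge 2$ to supply the missing in- or out-arcs at each $w_i$ and the no-small-good-pair constraints (no $E_3$, no $K_4$, at most one arc inside $X$ and inside $Y$) to eliminate the configurations that resist a direct construction. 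This final case distinction is where essentially all the work lies, and it is the part I expect to defer in large measure to the appendix.
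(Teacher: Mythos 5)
Your strategy coincides with the paper's own: argue by contradiction, anchor an out-tree $B^+$ and an in-tree $B^-$ at the digon $Q$, use Lemma~\ref{lem12} as the engine forcing every triangle $D[Y\cup\{t\}]$ or $D[X\cup\{s\}]$ met by a $Y\to W$ (resp.\ $W\to X$) arc to carry at most two arcs, record the resulting adjacency restrictions, and finish by explicit constructions. Your normalization step is correct and genuinely needed: Lemma~\ref{lem12} assumes no good-pair subdigraph on \emph{at least} $3$ vertices, while the lemma's hypothesis only excludes orders $3$ and $4$; your upgrade via Lemmas~\ref{lem5} and~\ref{lem6} for orders $5$ and $6$, and via Lemma~\ref{lem1} for orders $7$ and $8$ (each deleted vertex has in- and out-degree at least $2$ and at most one arc to the other deleted vertex), is exactly right, and the paper leaves this bookkeeping implicit.

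The gap is that everything after the setup is a promissory note, and in this lemma the case analysis \emph{is} the proof. The paper's execution first pins down $D[X\cup Y]$ before ever splitting on $W$: Claims~\ref{lem13}.1--\ref{lem13}.3 force $|(Y,X)_D|\le 1$ outside the independent case, Claim~\ref{lem13}.4 forces both $X$ and $Y$ independent, and Claims~\ref{lem13}.5--\ref{lem13}.7 force $(Y,X)_D=\emptyset$; only then does the analysis of the cuts $(Y,W)_D$ and $(W,X)_D$ begin. That order is not cosmetic: the strength of what Lemma~\ref{lem12} yields about $|[Y,w]_D|$ and $|[X,w]_D|$ depends on $|E(Y)|$ and $|E(X)|$, so your proposed organization by the pattern of $D[W]$ first would have to rediscover these reductions anyway. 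Moreover, several residual configurations in the paper are not killed by exhibiting branchings at all, but by \emph{re-anchoring}: one selects a new digon $Q'$ (e.g.\ $D[\{w_2,w_3\}]$ or $D[\{w_1,y_2\}]$ in Claims~\ref{lem13}.6 and~\ref{lem13}.7), computes its in- and out-neighbourhoods, and reapplies Claim~\ref{lem13}.4, Lemma~\ref{lem11}, or Fact~\ref{lem13}.2 to the new frame. Your plan has no counterpart to this maneuver, and without it --- or without actually producing the dozens of explicit out-/in-branching constructions filling the paper's tables --- there is no evidence that your ``sparse residual configuration'' closes. As written, the proposal is a correct and well-aimed plan with one useful clarification the paper omits, but the substantive verification that constitutes the proof is missing.
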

\begin{proof}
	Let $D=(V,A)$, $Q=q_1q_2q_1$ and $W=V-X-Y-V(Q)=\{w_1,w_2,w_3\}$. Set $X=\{x_1,x_2\}$ and $Y=\{y_1,y_2\}$. By contradiction, suppose that $D$ has no good pair when $|X|=|Y|=2$. Let $B^+$ be an out-tree rooted at $q_1$ such that $A(B^+)\subseteq \{q_1q_2\}\cup (Q,Y)_D$ and $B^-$ be an in-tree rooted at $q_1$ such that $A(B^-)\subseteq \{q_2q_1\}\cup (X,Q)_D$.
	
	By Propositions~\ref{prop3} and \ref{prop4}, and Lemma~\ref{lem11}, we get the following facts.
	\begin{description}
		\item[Fact~\ref{lem13}.1.] There is no digon in the induced subdigraph of $X$ or $Y$.
		\item[Fact~\ref{lem13}.2.] There are exactly two vertices respectively in the disjoint in- and out-neibourhood of any $C_2$ in $D$.
		\item[Fact~\ref{lem13}.3.] $|E(W)|\le3$.
		\item[Fact~\ref{lem13}.4.] $|(W,D-W)_D|\ge 3$ and $|(D-W,W)_D|\ge 3$. That is at least 2 vertices in $W$ have out-arcs to $X\cup Y$, analogously at least 2 vertices in $W$ have in-arcs from $X\cup Y$.
	\end{description}
	Let $v$ (resp. $u$) be the vertex in $W$ which does not have any out-neighbour (resp. in-neighbour) in $X\cup Y$ if it exists, and arbitrarily otherwise.
	From Claim~\ref{lem8}.1 and Note~\ref{lem11}.2, we have the fact below by Lemmas~\ref{lem5} and~\ref{lem6}.
	\begin{description}
		\item[Fact~\ref{lem13}.5] None of the following holds:
		\begin{enumerate}
			\item $|(Y,X)_D|=4$;
			\item $X$ is not independent and there exists a vertex in $Y$ which dominates each vertex in $X$;
			\item Analogously $Y$ is not independent and there exists a vertex in $X$ which is dominated by each vertex in $Y$;
			\item Both $X$ and $Y$ are not independent, say $x_ix_{3-i},y_jy_{3-j}\in A$, and $y_jx_i,y_{3-j}x_{3-i}\in A$, where $i,j\in[2].$
			\item Both $X$ and $Y$ are not independent, say $x_ix_{3-i},y_jy_{3-j}\in A$, and $y_jx_{3-i},y_{3-j}x_i\in A$, where $i,j\in[2]$.
		\end{enumerate}
	\end{description}
	
	Now assume that there is no good pair in $D-W$, i.e., $|(Y,X)_D|\le 3$.
	We will now show the note below.
	\begin{description}
		\item[Note~\ref{lem13}.1.] There exists an arc from $Y$ to $W$ and an arc from $W$ to $X$.
	\end{description}
	{\it Proof.}
	Assume that $(Y,W)_D=\emptyset$, i.e., $|(Y,X)_D|\ge3$ by Fact~\ref{lem13}.1. By Fact~\ref{lem13}.5, $|(Y,X)_D|\neq 4$, then $|(Y,X)_D|=3$, which implies that $Y$ is not an independent set. W.l.o.g., assume $y_1y_2\in A$. Now $d_X^+(y_1)\ge1$ and $d_X^+(y_2)\ge2$, namely $y_2$ dominates both vertices in $X$ and $y_1$ dominates at least one vertex in $X$. This is impossible by Fact~\ref{lem13}.5, a contradiction. Thus $(Y,W)_D\neq \emptyset$. Likewise $(W,X)_D\neq \emptyset$.
	\hfill $\lozenge$
	
	\begin{description}
		\item[Claim \ref{lem13}.1] {\it If $|(Y,X)_D|=3$ or $|(Y,X)_D|=2$ with a $P_3$ in $D[X\cup Y]$, then $D$ has a good pair.}
	\end{description}
	{\it Proof.}
	Suppose that $D$ has no good pair. 	
	If $|(Y,X)_D|=3$, then w.l.o.g., assume $(Y,X)_D=\{y_1x_1,y_1x_2,y_2x_2\}$. By Fact~\ref{lem13}.5, both $X$ and $Y$ are independent sets. Let $P^1=y_1x_1+y_2x_2$ and $P^2=y_1x_2$. Note that $V(B^++P^1)=X\cup Y$ and $V(B^-+P^2)=X\cup Y-y_2$.
	
	We do the similar process to the case when $|(Y,X)_D|=2$ with a $P_3$ in $D[X\cup Y]$. Since there exists a $P_3$ in $D[X\cup Y]$ when $|(Y,X)_D|=2$, exactly one of $X$ and $Y$ is not an independent set by Fact~\ref{lem13}.5. W.l.o.g., assume $x_2x_1\in A$. If $(Y,X)_D=\{y_1x_1,y_2x_2\}$, then let $P^1=y_2x_2x_1$ and $P^2=y_1x_1$. If $(Y,X)_D=\{y_1x_2,y_2x_2\}$, then let $P^1=y_2x_2x_1$ and $P^2=y_1x_2$. Since $\lambda(D)\ge2$, $y_2$ has an out-arc to $W$, say $y_2w_1$. We want to add $y_2$ to $B^-$ by $y_2w_1$.
	
	If there exists an arc from $w_1$ to $X\cup Y-y_2$, say $e_1$, then let $P_-=y_2w_1+e_1$. If there exists an arc from $X\cup Y-y_2$ to $w_1$, say $e_2$, then let $P_+=e_2$. Note that $(B^++P^1+P_+,B^-+P^2+P_-)$ is a good pair of $D\setminus \{w_2,w_3\}$, which implies that $D$ has a good pair by Lemma~\ref{lem1} as $\lambda(D)\ge2$, a contradiction.
	
	Hence assume that at least one of $e_1$ and $e_2$ does not exist. We distinguish several cases below.
	
	{\bf Case 1:} $N^+(w_1)\subseteq W$.
	
	That is $w_1w_2,w_1w_3\in A$. Since $|E(W)|\le3$, there is at least one more arc in $E(W)$ other than $w_1$ and $w_2$. This implies that at least one of $w_2$ and $w_3$ has both in- and out-neighbours in $X\cup Y$, say $w_2$. Let $e_1$ be an arc from $X\cup Y$ to $w_2$.
	
	First assume that $(w_2,X\cup Y-y_2)_D\neq \emptyset$ and $(D-w_3,w_1)_D\neq \emptyset$, say $e_2\in(w_2,X\cup Y-y_2)_D$ and $e_3\in(D-w_3,w_1)_D$. Let $P_+=e_1+e_3$ and $P_-=y_2w_1w_2+e_2$. Note that $(B^++P^1+P_+,B^-+P^2+P_-)$ is a good pair of $D-w_3$, which implies that $D$ has a good pair as $\lambda(D)\ge2$, a contradiction.
	
	Next assume that $w_2y_2\in A$ and $N_{D-y_2}^+(w_2)\subseteq W$.
	If $w_2w_1\in A$, then $w_3$ has at least one in-neighbour and two out-neighbours in $X\cup Y$ as $|E(W)|\le3$, which implies that $D$ has a good pair by the discussion above, a contradiction.
	That is $w_2w_3\in A$. Now $w_3$ has at least one out-neighbour $w_3^+\neq y_2$ in $X\cup Y$ and $w_1$ has at least one in-neighbour $w_1^-$ in $X\cup Y$. Let $P_+=w_1^-w_1w_3+e_1$ and $P_-=y_2w_1w_2w_3$. It follows that $(B^++P^1+P_+,B^-+P^2+P_-)$ is a good pair of $D$, a contradiction.
	
	Henceforth, assume $w_3w_1\in A$. Since $|E(W)|\le3$, $w_2$ has at least one out-neighbour $w_2^+\neq y_2$ in $X\cup Y$ and $w_3$ respectively has an in-neighbour $w_3^-$ and an out-neighbour $w_3^+$ in $X\cup Y$. Let $P_+=e_1+w_3^-w_3w_1$ and $P_-=y_2w_1w_2w_2^++w_3w_3^+$. It follows that $(B^++P^1+P_+,B^-+P^2+P_-)$ is a good pair of $D$, a contradiction.
	
	\vspace{2mm}
	Discussions of cases when ``$w_1y_2\in A$ and $N_{D-y_2}^+(w_1)\subseteq W$'' and ``$e_1$ exists but $N_{D-y_2}^-(w_1)\subset W$'', which are analogous to Case 1, are given in Appendix.
	\hfill $\lozenge$
	
	\begin{description}
		\item[Claim \ref{lem13}.2] {\it At least one of $X$ and $Y$ is an independent set.}
	\end{description}
	{\it Proof.}
	W.l.o.g., assume $q_1y_1,q_2y_2\in A$.
	Suppose that both $X$ and $Y$ are not independent sets, i.e., $|E(X)|=|E(Y)|=1$ by Fact~\ref{lem13}.1. W.l.o.g., assume $y_1y_2,x_1x_2\in A$. By Lemma~\ref{lem12}, $|[Y,w_i]_D|\le1$ and $|[X,w_i]_D|\le1$, for arbitrary $w_i\in W$.
	
	{\bf Case 1:} $|(Y,X)_D|=0$.
	
	Since $\lambda(D)\ge2$, $|(Y,W)_D|=3$ with $|N_W^+(y_1)|=1$ and $|N_W^+(y_2)|=2$ by the fact that $|[Y,w_i]_D|\le1$. W.l.o.g., assume $y_1w_1,y_2w_2,y_2w_3\in A$. Note that $(W,Y)_D=\emptyset$. Likewise, $|(W,X)_D|=3$ and $(X,W)_D=\emptyset$ with $|N_W^-(x_1)|=2$ and $|N_W^-(x_2)|=1$. This implies that $D[W]=C_3$ by Fact~\ref{lem13}.3. Set $C_3=w_1w_{j+1}w_{4-j}w_1$, where $j\in\{1,2\}$.
	
	If $w_1x_1\notin A$, then $w_2x_1,w_3x_1,w_1x_2\in A$. It follows that $(B^++y_1w_1w_{j+1}+y_2w_{4-j}x_1x_2,B^-+y_1y_2w_{j+1}w_{4-j}w_1x_2)$ is a good pair of $D$, a contradiction.
	If $w_1x_1\in A$, then set $w_ix_1\in A$, where $i\in \{2,3\}$. It follows that $(B^++y_1w_1w_{j+1}+y_2w_{4-j}+w_ix_1x_2,B^-+y_1y_2w_{j+1}w_{4-j}w_1x_1)$ is a good pair of $D$, a contradiction.
	
	{\bf Case 2:} $|(Y,X)_D|=1$.
	
	Discussion of Case 2 is given in Appendix.
	\hfill $\lozenge$
	
	\begin{description}
		\item[Claim \ref{lem13}.3] {\it If $|E(X)|+|E(Y)|=1$, then $|(Y,X)_D|\le 1$.}
	\end{description}
	{\it Proof.}
	Suppose to the contrary that $|(Y,X)_D|=2$ by Claim~\ref{lem13}.1. W.l.o.g., assume $E(X)=\{x_2x_1\}$. By Claim~\ref{lem13}.1, assume $(Y,X)_D=\{y_1x_1,y_2x_1\}$. Let $P^1=x_2x_1$ and $P^2=y_1x_1+y_2x_1$. Then $B^-+P^2$ is an in-tree containing $Y$ but $B^++P^1$ is not an out-tree.
	Since $|[W,X\cup Y]_D|\ge4$ and $|W|=3$, at least one vertex in $W$ is adjacent to two vertices in $X\cup Y$, say $w_1$.
	
	{\bf Case 1:} $y_1w_1,y_2w_1\in A$.
	
	{\bf Subcase 1.1:} $w_1x_2\in A$.
	
	If $w_1$ has an out-neighbour $w_1^+$ in $X\cup Y$ such that $w_1^+\neq x_2$, then $(B^++P^1+y_2w_1x_2,B^-+P^2+w_1w_1^+)$ is a good pair of $D-\{w_2,w_3\}$. This implies that $D$ has a good pair by Lemma~\ref{lem1}, a contradiction.
	Otherwise, $N_{D-x_2}^+(w_1)\subset W$. W.l.o.g., assume $w_1w_2\in A$.
	If $(w_2,X\cup Y)_D\neq \emptyset$ and $(X\cup Y,w_2)_D\neq \emptyset$, then set $e_1\in (w_2,X\cup Y)_D$ and $e_2\in (X\cup Y,w_2)_D$. Let $P_+=y_2w_1x_2+e_2$ and $P_-=w_1w_2+e_1$. Then $(B^++P^1+P_+,B^-+P^2+P_-)$ is a good pair of $D-w_3$, a contradiction. Now we discuss situations when $e_1$ or $e_2$ does not exist.
	
	First assume that $e_1$ does not exist, namely $N^+(w_2) \subset W$. Then $E(W)=\{w_1w_2,w_2w_3,w_2w_1\}$. Let $P_+=y_2w_1x_2+w_2^-w_2+w_3^-w_3$ and $P_-=w_1w_2w_3w_3^+$, where $w_2^-,w_3^-,w_3^+\in X\cup Y$ by Fact~\ref{lem13}.3. It follows that $(B^++P^1+P_+,B^-+P^2+P_-)$ is a good pair of $D$, a contradiction.
	
	Next assume that $e_1$ exists, but $e_2$ does not, that is $w_3w_2\in A$. Let $P_+=y_2w_1x_2+w_3^-w_3w_2$ and $P_-=w_1w_2+e_1+w_3w_3^+$, where $w_3^-,w_3^+\neq w_2$ as $\lambda(D)\ge2$. It follows that $(B^++P^1+P_+,B^-+P^2+P_-)$ is a good pair of $D$, a contradiction.
	
	\vspace{2mm}
	Discussions of the subcase when ``$w_2x_2,w_3x_2\in A$'' and the case when ``$y_2w_1,w_1x_2\in A$'' are given in Appendix.
	\hfill $\lozenge$
	
	\begin{description}
		\item[Claim \ref{lem13}.4] {\it Both $X$ and $Y$ are independent sets.}
	\end{description}
	{\it Proof.}
	W.l.o.g., assume $q_1y_1,q_2y_2\in A$.
	Suppose that one of $X$ and $Y$ is not an independent set by Claim~\ref{lem13}.2. W.l.o.g., assume that $|E(Y)|=1$ by Fact~\ref{lem13}.1, say $y_1y_2\in A$. Now $|(Y,X)_D|\le 1$ by Claim~\ref{lem13}.3.
	
	{\bf Case 1:} $|(Y,X)_D|=1$.
	
	{\bf Subcase 1.1:} $(Y,X)_D=\{y_1x_i\},~i\in \{1,2\}$.
	
	W.l.o.g., assume $i=2$. Then $N^-(x_1)\subseteq W$, $d_W^-(x_2)\ge1$ and $d_W^+(y_2)\ge2$. Assume $w_1x_1,w_2x_1\in A$, then $|(y_2,\{w_1,w_2\})_D|\ge1$, w.l.o.g., say $y_2w_1\in A$.
	
	First assume $w_1x_2\in A$. Let $B_{q_2}^+=q_2q_1y_1y_2w_1x_2$ and $B_{y_2}^-$ be an in-tree rooted at $y_2$ such that $A(B_{y_2}^-)\subseteq \{q_1q_2y_2,y_1x_2,w_1x_1\}\cup (X,Q)_D$.
	If $N_{D-\{x_1,w_2\}}^-(w_3)\neq \emptyset$, say $w_3^-\in N_{D-\{x_1,w_2\}}^-(w_3)$, then $D$ has a good pair $(B_{q_2}^++w_3^-w_3+w_2^-w_2x_1,B_{y_2}^-+w_3w_3^++w_2w_2^+)$, where $w_2^-\neq w_3$, $w_3^+\neq w_2$ and $w_2^+\neq x_1$ as $\lambda(D)\ge2$, a contradiction.
	Hence $x_1w_3,w_2w_3\in A$. It follows that $(B_{q_2}^++w_2^-w_2x_1w_3,B_{y_2}^-+w_2w_3w_3^+)$ is a good pair of $D$, where $w_2^-\neq x_1,w_3$ and $w_3^+\neq x_1,w_2$ as $D\nsupseteqq E_3$, a contradiction.
	
	Next assume that $w_1x_2\notin A$ but $w_2x_2\in A$. Now there exists at least one arc from $y_2$ to $\{w_2,w_3\}$. By the discussion above, $y_2w_3\in A$. Let $B_{q_2}^+=q_2q_1y_1y_2w_1x_1+y_2w_3$ and $B_{y_2}^-$ be an in-tree rooted at $y_2$ such that $A(B_{y_2}^-)\subseteq \{q_1q_2y_2,y_1x_2,w_2x_1\}\cup (X,Q)_D$. It follows that $(B_{q_2}^++w_2^-w_2x_2,B_{y_2}^-+w_1w_1^++w_3w_3^+)$ is a good pair of $D$, where $w_2^-\neq w_1,x_2$, $w_1^+\neq x_1$ and $w_3^+\neq w_2$ by $\lambda(D)\ge2$ and Lemma~\ref{lem12}, a contradiction.
	
	Henceforth assume that $w_1x_2,w_2x_2\notin A$ but $w_3x_2\in A$.
	
	{\bf A.} $N^-(w_3)\cap \{y_1,y_2,x_2\}\neq \emptyset$.
	
	Set $w_3^-\in\{y_1,y_2,x_2\}$.
	If $w_2w_1\in A$, then $D$ has a good pair $(B^++y_1x_2+w_3^-w_3+w_2^-w_2w_1x_1,B^-+w_3x_2+w_2x_1+y_1y_2w_1w_1^+)$, where $w_2^-\neq w_1,x_1$ and $w_1^+\notin Y$ as $D\nsupseteqq E_3$ and Lemma~\ref{lem12}, a contradiction.
	Thus $w_2w_1\notin A$. Let $w_1^-$ and $w_2^-$ respectively be an in-neighbour of $w_1$ and an in-neighbour of $w_2$ such that $w_1^-\neq y_2$ and $w_2^-\neq x_1$ as $\lambda(D)\ge2$. Since $D\nsupseteqq E_3$, at least one of $w_1^-$ and $w_2^-$ is not in $\{x_1,w_1,w_2\}$. It follows that $(B^++y_1x_2+w_3^-w_3+w_1^-w_1+w_2^-w_2x_1,B^-+w_3x_2+y_1y_2w_1x_1+w_2w_2^+)$ is a good pair of $D$, where $w_2^+\neq x_1$, a contradiction.
	
	{\bf B.} $y_2w_2\in A$ and $d_{\{x_1,w_1,w_2\}}^-(w_3)\ge2$.
	
	Since $d_{\{x_1,w_1,w_2\}}^-(w_3)\ge2$, assume that $w_iw_3\in A$, where $i\in[2]$. We first show that $w_3w_i\notin A$. If $x_1w_3\in A$, then $w_3w_i\notin A$ as $D\nsupseteqq E_3$. If $w_1w_3,w_2w_3\in A$, then $w_3w_i\notin A$ by Fact~\ref{lem13}.3. Let $B_{q_2}^+=q_2q_1y_1y_2w_ix_1+y_2w_{3-i}$ and $B_{y_2}^-$ be an in-tree rooted at $y_2$ such that $A(B_{y_2}^-)\subseteq \{q_1q_2y_2,y_1x_2,w_{3-i}x_1\}\cup (X,Q)_D$.
	It follows that $(B_{q_2}^++w_3^-w_3x_2,B_{y_2}^-+w_iw_3w_3^+)$ is a good pair of $D$, where $w_3^-\neq w_i$ and $w_3^+\neq x_2$ as $\lambda(D)\ge2$, a contradiction.
	
	\vspace{2mm}
	The proofs of the subcase when ``$(Y,X)_D=\{y_2x_i\},~i\in \{1,2\}$'' and the case when ``$|(Y,X)_D|=0$'', see Appendix.
	\hfill $\lozenge$
	
	\vspace{2mm}
	Now both $X$ and $Y$ are independent sets.
	\begin{description}
		\item[Claim \ref{lem13}.5] {\it If both $X$ and $Y$ are independent sets and $(Y,X)_D=\{y_1x_1,y_2x_2\}$, then $D$ has a good pair.}
	\end{description}
	{\it Proof.}
	Suppose that $D$ has no good pair. Now $|[X\cup Y,W]_D|\ge4$. Since $|W|=3$, at least one vertex in $W$ is adjacent to at least two vertices in $X\cup Y$, w.l.o.g., say $w_1$.
	
	{\bf Case 1:} $y_1w_1,y_2w_1\in A$ ($w_1x_1,w_1x_2\in A$).
	
	By the digraph duality, it suffices to prove the case of $y_1w_1,y_2w_1\in A$. By Lemma~\ref{lem12}, there is no arc from $w_1$ to $Y$.
	
	{\bf Subcase 1.1:} $w_1x_1,w_1x_2\in A$.
	
	Let $P_+=y_1x_1+y_2w_1x_2$ and $P_-=y_1w_1x_1+y_2x_2$. Then $(B^++P_+,B^-+P_-)$ is a good pair of $D-\{w_2,w_3\}$, which implies that $D$ has a good pair by Lemma~\ref{lem1}, a contradiction.
	
	{\bf Subcase 1.2:} $w_1x_2\in A$ but $w_1x_1\notin A$ ($w_1x_1\in A$ but $w_1x_2\notin A$).
	
	By the digraph duality, it suffices to prove the case when $w_1x_2\in A$ but $w_1x_1\notin A$.
	Then $N_{D-x_2}^+(w_1)\subset W$. Since $(W,x_1)_D\neq \emptyset$, w.l.o.g., assume $w_2x_1\in A$. Let $P^1=y_1x_1+y_2w_1x_2$ and $P^2=y_2x_2+y_1w_1+w_2x_1$.
	
	First assume $w_1w_2\in A$.
	If $(w_2,X\cup Y)_D\neq \emptyset$, say $w_2^-w_2\in (w_2,X\cup Y)_D$, then $(B^++P^1+w_2^-w_2,B^-+P^2+w_1w_2)$ is a good pair of $D-w_3$, a contradiction.
	Hence $w_3w_2\in A$. Let $P_+=w_3^-w_3w_2$ and $P_-=w_1w_2+w_3w_3^+$, where $w_3^-\in X\cup Y$ and $w_3^+\neq w_2$. It follows that $(B^++P_++P^1,B^-+P_-+P^2)$ is a good pair of $D$, a contradiction.
		Thus $w_1w_3\in A$ but $w_1w_2\notin A$. Then $(B^++P^2+w_2^-w_2+w_1w_3,B^-+P^1+w_2w_2^++w_3w_3^+)$ is a good pair of $D$, where $w_2^-,w_2^+\neq x_1$ and $w_3^+\neq w_2$ as $\lambda(D)\ge2$, a contradiction.
	
	\vspace{2mm}
	Discussions of subcases when ``$(w_1,X)_D=\emptyset$ and $w_2x_1,w_2x_2\in A$'' and ``$w_2x_1,w_3x_2\in A$'' and cases when ``$y_2w_1,w_1x_2\in A$'' and ``$y_2w_1,w_1x_1\in A$'' are given in Appendix.
	\hfill $\lozenge$
	
	\begin{description}
		\item[Claim \ref{lem13}.6] {\it If both $X$ and $Y$ are independent sets and $(Y,X)_D=\{y_1x_1,y_2x_1\}$, then $D$ has a good pair.}
	\end{description}
	{\it Proof.}
	Suppose that $D$ has no good pair. Now $d_W^-(x_2)\ge2$ and $d_W^+(y_j)\ge1$ for any $j\in[2]$. W.l.o.g., assume $w_1x_2,w_2x_2\in A$.
	
	{\bf Case 1:} $d_Y^-(w_i)\ge1$ for any $i\in[2]$.
	
	W.l.o.g., assume $y_1w_1,y_2w_2\in A$. Since $D\nsupseteqq E_3$, $D[\{w_1,w_2\}]\neq C_2$, w.l.o.g., say $w_1w_2\notin A$.
	If $w_1w_3,w_3w_2\in A$, then $(B^++y_1w_1w_3w_2x_2+y_2x_1,B^-+y_1x_1+y_2w_2w_2^++w_1x_1+w_3w_3^+)$ is a good pair of $D$, where $w_2^+\neq x_2$ and $w_3^+\neq w_2$ as $\lambda(D)\ge2$, a contradiction.
	If $w_1w_3,w_3w_2\notin A$, then $(B^++y_1w_1x_2+y_2x_1+w_2^-w_2,B^-+y_1x_1+y_2w_2x_2+w_1w_1^+)$ is a good pair of $D$, where $w_2^-\neq y_2$ and $w_1^+\neq x_2$, a contradiction.
	Otherwise $D$ has a good pair $(B^++y_1w_1x_2+y_2x_1+w_3^-w_3+w_2^-w_2,B^-+y_1x_1+y_2w_2x_2+w_1w_1^++w_3w_3^+)$, where $w_2^-\neq y_2,w_1^+\neq x_2,w_3^-\neq w_2$ and $w_3^+\neq w_1$ as $\lambda(D)\ge2$, a contradiction.
	
	Then at least one of $w_1$ and $w_2$ has no in-neighbours in $Y$, w.l.o.g., say $d_Y^-(w_1)=1$, that is $N^-(w_1)\cap W\neq \emptyset$ by Lemma~\ref{lem12}.
	
	\vspace{2mm}
	Discussions of cases when ``$d_Y^-(w_2)=2$'', ``$d_Y^-(w_2)=1$'' and ``$(N^-(w_1)\cup N^-(w_2))\cap Y=\emptyset$'' are given in Appendix.
	\hfill $\lozenge$
	
	\vspace{2mm}
	By the digraph duality, we also prove the case of $(Y,X)_D=\{y_1x_1,y_1x_2\}$.
	Now $|(Y,X)_D|\le1$.
	
	\begin{description}
		\item[Claim \ref{lem13}.7] {\it If both $X$ and $Y$ are independent sets, then $(Y,X)_D=\emptyset$.}
	\end{description}
	{\it Proof.}
	Suppose $|(Y,X)_D|=1$. W.l.o.g., assume $y_1x_1\in (Y,X)_D$. Now $|N_W^+(Y)|\ge2$ and $|N_W^-(X)|\ge2$ as $\lambda(D)\ge2$.
	
	{\bf Case 1:} $|N_W^+(Y)|=2$ ($|N_W^-(X)|=2$).
	
	By the digraph duality, it suffices to prove the case of $|N_W^+(Y)|=2$.
	W.l.o.g., assume $y_1w_1,y_2w_1,y_2w_2\in A$.
	
	{\bf Subcase 1.1:} $w_1x_2\in A$.
	
	{\bf A.} $w_2x_2,w_3x_1\in A$.
	
	We find a good pair $(B^++P_+,B^-+P_-)$ of $D$ as follows, a contradiction.
	
	~\\
	\vspace{2mm}
	\begin{tabular}{l|c|c}\hline
		Case & $P_+,P_-$ & Notation \\\hline
		$w_2w_3\notin A$ & $y_1w_1y_2w_2x_2+w_3^-w_3x_1,y_1x_1+y_2w_1x_2+w_2w_2^++w_3w_3^+$ & $w_3^-,w_3^+\neq x_1;w_2^+\neq x_2$\\\hline
		$w_2w_3\in A,w_1y_2\notin A$& $y_1w_1x_2+y_2w_2w_3x_1,y_1x_1+y_2w_1w_1^++w_2x_2+w_3w_3^+$ & $w_1^+\neq x_2,w_3^+\neq x_1$\\\hline
		$w_2w_3,w_1y_2\in A$& $y_1x_1+y_2w_1x_2+w_2^-w_2+w_3^-w_3,y_1w_1y_2w_2w_3x_1$& $w_2^-\neq y_2,w_3^-\neq w_2$\\\hline
	\end{tabular}
	
	Discussions of ``$w_2x_1,w_2x_2\in A$'', ``$w_2x_2,w_1x_1\in A$'', ``$w_1x_1,w_1x_2\in A$'', ``$w_2x_1,w_3x_2\in A$'' and ``$w_1x_1,w_3x_2\in A$'' are shown in Appendix.
	
	The proof of the subcase when ``$w_2x_2,w_3x_2\in A$'' and the case when ``$|N_W^+(Y)|=3$'', see Appendix.
	\hfill $\lozenge$
	
	\vspace{2mm}
	Now we are ready to finish the proof of Lemma~\ref{lem13}. Since $|(Y,X)_D|=0$ and both $X$ and $Y$ are independent sets, $|(Y,W)_D|\ge4$ and $|(W,X)_D|\ge4$. By Lemma~\ref{lem12}, for any $w\in W$, $d_Y^-(w)\le2$ and $d_X^+(w)\le2$. This implies that there exist vertices $w_i$ and $w_j$ in $W$ such that $y_1w_i,y_2w_i,w_jx_1,w_jx_2\in A$. Note that it is possible that $w_i=w_j$.
	
	{\bf Case 1:} At least two vertices in $W$ have two in-neighbours in $Y$.
	
	W.l.o.g., assume $y_1w_1,y_2w_1,y_1w_2,y_2w_2\in A$. Note that $|(Y,W)_D|\ge5$.
	
	{\bf Subcase 1.1:} $j\in \{1,2\}$.
	
	W.l.o.g., assume $j=2$.
	
	First assume $d_X^+(w_1)\ge1$, w.l.o.g., say $w_1x_1\in A$. Then $w_1$ has an out-neighbour $w_1^+\neq x_1,y_2$ as $\lambda(D)\ge2$ and Lemma~\ref{lem12}. Set $P_+=y_1w_1x_1+y_2w_2x_2$ and $P_-=y_1w_2x_1+y_2w_1w_1^+$.
	If $w_1^+\neq w_3$, then $(B^++P_+,B^-+P_-)$ is a good pair of $D-w_3$, a contradiction.
	Hence $w_1^+=w_3$. This implies that $d_X^+(w_3)\ge1$. It follows that $(B^++P_++w_3^-w_3,B^-+P_-+w_3w_3^+)$ is a good pair of $D$, where $w_3^-\neq w_1$ as $\lambda(D)\ge2$ and $w_3^+\in X$, a contradiction.
	
	Next assume $d_X^+(w_1)=0$. This implies that $N^+(w_1)\subset W$ and $d_X^+(w_3)=2$ by Lemma~\ref{lem12}, i.e., $w_1w_2,w_1w_3,w_3x_1,w_3x_2\in A$. It follows that $(B^++y_1w_1w_3x_1+y_2w_2x_2,B^-+y_1w_2x_1+y_2w_1w_2+w_3x_2)$ is a good pair of $D$, a contradiction.
	
	{\bf Subcase 1.2:} $d_X^+(w_1)=d_X^+(w_2)=1$.
	
	By Lemma~\ref{lem12}, $d_X^+(w_3)=2$, i.e., $w_3x_1,w_3x_2\in A$. W.l.o.g., assume $w_1x_1,w_2x_2\in A$. It follows that $(B^++y_1w_1+y_2w_2x_2+w_3^-w_3x_1,B^-+w_3x_2+y_2w_1x_1+y_1w_2w_2^+)$ is a good pair of $D$, where $w_3^-\notin \{w_2\}\cup X$ and $w_2^+\neq x_2,y_1$ as $\lambda(D)\ge2$ and Lemma~\ref{lem12}, a contradiction.
	
	{\bf Case 2:} Only $w_i$ in $W$ has $d_Y^-(w_i)=2$.
	
	W.l.o.g., assume $w_i=w_1$, i.e., $y_1w_1,y_2w_1\in A$. Note that $|(Y,W)_D|=4$ and $d_Y^-(w_2)=d_Y^-(w_3)=1$, w.l.o.g., say $y_1w_2,y_2w_3\in A$.
	
	{\bf Subcase 2.1:} $d_X^+(w_2)=d_X^+(w_3)=2$.
	
	That is $w_2x_1,w_2x_2,w_3x_1,w_3x_2\in A$. It follows that $(B^++y_2w_3x_1+y_1w_1+w_2^-w_2x_2,B^-+y_1w_2x_1+w_3x_2+y_2w_1w_1^+)$ is a good pair of $D$, where $w_2^-\neq y_1,x_2$ and $w_1^+\neq w_2,y_2$ as $\lambda(D)\ge2$ and Lemma~\ref{lem12}, a contradiction.
	
	Discussions of subcases when ``Exactly one of $w_2$ and $w_3$ has two out-neighbours in $X$'' and ``$d_X^+(w_2)=d_X^+(w_3)=1$'' are shown in Appendix.
	
	This compeltes the proof of Lemma~\ref{lem13}.
\end{proof}

\begin{prop}\label{h3-1}
	Let $D=(V,A)$ be a 2-arc-strong oriented graph on 9 vertices without $K_4$ as a subdigraph. If $D$ have two cycles $C^1$ and $C^2$ with $C^1\cap C^2=\emptyset$ which cover 8 vertices, then $D$ contains a Hamilton dipath.
\end{prop}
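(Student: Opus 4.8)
The plan is to push the proof of Proposition~\ref{h2-1} one vertex further. Write $y$ for the unique vertex outside $V(C^1)\cup V(C^2)$, and suppose for contradiction that $D$ has no Hamilton dipath; I may assume $|C^1|\ge|C^2|$, so $(|C^1|,|C^2|)\in\{(5,3),(4,4)\}$. Three elementary routing moves drive everything. (i) \emph{Merging}: any arc $a\to b$ with $a\in C^i$, $b\in C^{3-i}$ yields a dipath on all of $V(C^1)\cup V(C^2)$, obtained by traversing $C^i$ so as to end at $a$, crossing to $b$, and traversing $C^{3-i}$ from $b$. (ii) \emph{Inserting $y$}: if $a_i\to y$ and $y\to a_{i+1}$ for two cyclically consecutive vertices of one cycle, then $y$ splices into that cycle, giving two disjoint cycles covering all $9$ vertices, and merging them produces a Hamilton dipath. (iii) \emph{Bridging through $y$}: if $y$ has an in-neighbour on one cycle and an out-neighbour on the other, then routing one cycle into $y$ into the other is already a Hamilton dipath. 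Since no Hamilton dipath exists, moves (ii) and (iii) are forbidden.

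First I would settle the connectivity between the cycles. Applying $\lambda(D)\ge2$ to the sets $V(C^2)$ and $V(C^2)\cup\{y\}$, I get that if no arc went from $C^2$ to $C^1$ then $y$ would both receive an arc from $C^2$ and send one to $C^1$, and the move (iii) routing through $y$ would give a Hamilton dipath; hence there are arcs between $C^1$ and $C^2$ in both directions. Next, since $y$ has at least two in- and two out-neighbours (again $\lambda(D)\ge2$) and move (iii) is forbidden, all neighbours of $y$ must lie on a single cycle; as $N^-(y)$ and $N^+(y)$ are disjoint of size $\ge2$, that cycle has at least $4$ vertices, so it is $C^1$, and in particular there is no arc between $y$ and $C^2$.

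Forbidding move (ii) now says exactly that no vertex of $N^-(y)$ has its cyclic successor on $C^1$ in $N^+(y)$. Whenever $N^-(y)\cup N^+(y)=V(C^1)$ — which is forced if $|C^1|=4$, and which covers the $(2,3)$ and $(3,2)$ splits when $|C^1|=5$ — this makes $N^-(y)$ closed under the successor map of the single cycle $C^1$, impossible for a proper non-empty subset. This eliminates the $(4,4)$ case outright and leaves only $|C^1|=5$ with $|N^-(y)|=|N^+(y)|=2$ and one vertex of $C^1$ free. A short check of the six splits of the remaining four vertices, using the constraint from (ii), pins down the unique pattern (up to rotation) $a_3\to y$, $a_4\to y$, $y\to a_1$, $y\to a_2$ on $C^1=a_1a_2a_3a_4a_5a_1$, with $a_5$ non-adjacent to $y$.

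For the endgame I would combine merging with the fact that $y$ can be neither prepended nor appended to a merged dipath: prepending forces the successor of any tail of an arc $C^1\to C^2$ to avoid $N^+(y)=\{a_1,a_2\}$, and appending forces the predecessor of any head of an arc $C^2\to C^1$ to avoid $N^-(y)=\{a_3,a_4\}$, so all such tails lie in $\{a_2,a_3,a_4\}$ and all such heads lie in $\{a_1,a_2,a_3\}$. Since every vertex of the $3$-cycle $C^2$ still needs in- and out-degree $\ge2$ and has no arc to or from $y$, each vertex of $C^2$ both sends an arc to and receives an arc from $C^1$, giving at least three arcs in each direction. I would then build a Hamilton dipath explicitly: traverse $C^1$, detour through all of $C^2$ between a suitable forward arc and a backward arc, and hang $y$ off an endpoint via one of its four arcs. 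The hard part is precisely this last step: the positional constraints squeeze the usable forward/backward arcs into a narrow window (essentially a detour of the form $a_2\to C^2\to a_3$), so finishing cleanly requires a finite case analysis over the placements of the three forward and three backward arcs, invoking $\lambda(D)\ge2$ and the absence of $K_4$ (the latter controlling the chords forced at the free vertex $a_5$) to reach the contradiction.
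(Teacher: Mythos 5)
Your reductions are correct as far as they go, and in places they are cleaner than the paper's own argument: by forbidding insertion of $y$ at \emph{cyclically} consecutive pairs (the paper's Fact only forbids it along the path $P_8$), your closure argument disposes of the $(4,4)$ case and of the $(2,3)$/$(3,2)$ splits in the $(5,3)$ case at once, and the surviving pattern $N^-(y)=\{a_3,a_4\}$, $N^+(y)=\{a_1,a_2\}$, the restriction of tails to $\{a_2,a_3,a_4\}$ and heads to $\{a_1,a_2,a_3\}$, and the forbidden window $a_2\to C^2\to a_3$ are all verified correctly.

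The endgame, however, is a genuine gap and not a deferred routine check. The constraints you have assembled at that point are \emph{not} contradictory: writing $C^2=c_1c_2c_3c_1$, take every in-arc of $C^2$ from $a_3$ and every out-arc of $C^2$ to $a_2$ (so $a_3\to c_i$ and $c_i\to a_2$ for all $i$). This satisfies the in/out requirements at each $c_i$, the tail and head restrictions, and the no-window condition vacuously, since $a_2$ sends no arc to $C^2$ at all. Hence no case analysis merely ``over the placements of the three forward and three backward arcs'' can reach a contradiction. What actually closes the proof is the interaction with the chords of $C^1$ that $\lambda(D)\ge 2$ forces elsewhere: $a_1$ needs an out-chord to $\{a_3,a_4\}$, $a_4$ an in-chord from $\{a_1,a_2\}$, and $a_5$ both an in-chord from and an out-chord to $\{a_2,a_3\}$; each admissible combination of these chords, together with the cross arcs, creates new Hamilton dipaths (for instance, with all cross arcs $a_4\to C^2$ and $C^2\to a_1$, the chords $a_2\to a_5$, $a_5\to a_3$, $a_1\to a_4$ give $y\,a_2\,a_5\,a_3\,a_4\,c_1\,c_2\,c_3\,a_1$). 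This chord-by-chord analysis is exactly the content of the paper's Case 2 (its table is organized around the adjacencies of $x_1$ and chords such as $x_3x_1$, $x_1x_3$, $x_3x_5$), and it is the technically heavy part of the proposition; your proposal only gestures at it via ``chords forced at the free vertex $a_5$,'' which in addition mis-locates the issue, since the forced chords at $a_1$ and $a_4$ are equally essential.
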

\begin{proof}
	The proof is similar as that of Proposition~\ref{h2-1}. We will give some details of it in Appendix.
\end{proof}

\begin{lemma}\label{h3-2}
	Let $D=(V,A)$ be a 2-arc-strong digraph on 9 vertices without good pair. If $D$ is an oriented graph without $K_4$ as a subdigraph, then $D$ has a Hamilton dipath.
\end{lemma}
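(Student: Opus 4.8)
The plan is to start from a long dipath and upgrade it to a Hamilton dipath. First I would invoke Proposition~\ref{h2-2}: since $D$ is a $2$-arc-strong oriented graph on $9$ vertices with no $K_4$ and no good pair, it contains a $P_8$, say $P=x_1x_2\cdots x_8$; let $y$ be the unique vertex of $V$ off $P$ (here the ``no good pair'' hypothesis is used precisely to make Proposition~\ref{h2-2} applicable). I argue by contradiction, assuming $D$ has no Hamilton dipath, so that $P$ is a longest dipath. From maximality of $P$ and $\lambda(D)\ge 2$ I would record the standard facts: $yx_1\notin A$ and $x_8y\notin A$ (either arc would extend $P$ to a $P_9$); for every $i\in[7]$ at most one of $x_iy,\,yx_{i+1}$ lies in $A$ (otherwise $y$ could be inserted between $x_i$ and $x_{i+1}$); and $N^-(x_1),N^+(x_8)\subseteq V(P)\setminus\{y\}$. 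Since $D$ is an oriented graph, each of $x_1,x_8$ then has at least two back-neighbours on $P$, and $y$ has at least two in- and at least two out-neighbours on $P$, subject to the interleaving restriction above.

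The key reduction is that it suffices to exhibit \emph{two vertex-disjoint cycles covering $8$ of the $9$ vertices}: Proposition~\ref{h3-1} then produces a Hamilton dipath, contradicting our assumption. I would also note the shortcut that a single directed cycle $C$ on the eight path vertices already yields a Hamilton dipath, since $d^+(y)\ge 2$ gives an out-arc $yx_j$ with $x_j\in C$, and then $yx_jx_{j+1}\cdots$ read once around $C$ is a $P_9$. So the entire problem reduces to a search for one of these two structures.

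To build the cycles I would pair a ``suffix cycle'' with a ``prefix cycle''. An out-arc $x_8x_{m+1}\in A$ gives $C^1=x_{m+1}x_{m+2}\cdots x_8x_{m+1}$, and an in-arc $x_mx_1\in A$ gives $C^2=x_1x_2\cdots x_mx_1$; when these indices align (that is, $x_mx_1,x_8x_{m+1}\in A$ with $3\le m\le 5$, the bounds being forced by the absence of digons) the two cycles are disjoint and cover all eight path vertices, missing only $y$. When the back-arcs of $x_1$ and $x_8$ do not align in this way, I would instead route one cycle through $y$, using an in-arc $x_py\in A$ and an out-arc $yx_q\in A$ with $q\le p$ to form $C=yx_qx_{q+1}\cdots x_py$, and complete the pair with a cycle on the leftover path vertices. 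The interleaving restriction on the neighbours of $y$ together with the exclusion of $K_4$ are what force enough back-arcs to exist and rule out the degenerate configurations in which no admissible pair can be assembled.

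The main obstacle is exactly this last case analysis. The positions of the (at least two) back-arcs of $x_8$, the back-arcs of $x_1$, and the in- and out-neighbours of $y$ can interact in many ways, and the delicate configurations are the boundary ones: small gaps between the prefix and suffix, or forced short cycles that would be digons (excluded since $D$ is oriented) or would create a $K_4$ (excluded by hypothesis). Guaranteeing that in every such configuration some admissible pair of disjoint cycles, or a single spanning cycle of the path, or a direct insertion of $y$, always exists is the heart of the argument, and it is these subcases that I would defer to the Appendix, in line with the paper's treatment of Lemmas~\ref{lem11}, \ref{lem12} and \ref{lem13}.
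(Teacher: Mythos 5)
Your opening matches the paper: get a $P_8$ from Proposition~\ref{h2-2}, assume no Hamilton dipath so that $P=x_1\cdots x_8$ is longest, record $N^-(x_1),N^+(x_8)\subseteq V(P)$, the no-insertion property of $y$, the exclusion of $C_8$ (Fact~\ref{h3-2}.1), and use Proposition~\ref{h3-1} to convert two disjoint cycles covering $8$ vertices into a Hamilton dipath. The gap is your claimed reduction that ``the entire problem reduces to a search for one of these two structures.'' That reduction is unjustified, and the paper's own case analysis shows it is insufficient: the large majority of configurations are \emph{not} killed by exhibiting cycles or an insertion of $y$, but by exhibiting a \textbf{good pair} of $D$, via Notes~\ref{h3-2}.1 and \ref{h3-2}.2 (an out-branching of $D[X]$ together with an arc-disjoint in-tree missing one or two vertices, completed to a good pair of $D$ using $\lambda(D)\ge 2$), which contradicts the hypothesis that $D$ has no good pair. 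You state explicitly that the ``no good pair'' hypothesis is used ``precisely to make Proposition~\ref{h2-2} applicable''; in the paper it is the main engine of contradiction throughout the case analysis, not just an entry ticket. Dropping it after the first step means you are effectively attempting to prove the stronger statement that every $2$-arc-strong oriented graph on $9$ vertices without $K_4$ that contains a $P_8$ has a Hamilton dipath, which the paper does not establish and your proposal gives no reason to believe.

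Concretely, consider the paper's Case~1, Subcase~1.1, first row: path arcs together with $x_6x_1,\,x_8x_6,\,x_4x_1,\,x_8x_4,\,x_3x_8$. Here no aligned pair $x_mx_1,\,x_8x_{m+1}$ exists ($m=4$ would need $x_8x_5$, $m=5$ would need $x_5x_1$, $m=6$ would need the digon $x_8x_7$); the prefix/suffix cycles $x_1x_2x_3x_4x_1$ and $x_6x_7x_8x_6$ cover only seven path vertices, missing $x_5$; and $x_5$ cannot be routed through $y$, because Claim~\ref{h3-2}.1 (itself part of the paper's proof) shows that when $x_4x_1,x_8x_6\in A$ the vertex $x_5$ is not adjacent to $y$. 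Whether any other admissible pair of cycles exists depends on arcs your recorded facts do not control, and the paper resolves this configuration only by the good-pair construction $B_{D[X]}^+=x_1x_2x_3x_8x_4x_5x_6x_7$, $T_{D[X]}^-=x_3x_4x_1+x_7x_8x_6x_1$ and Note~\ref{h3-2}.2. In such configurations your toolkit (cycles, $C_8$, insertion) yields no contradiction at all, so the case analysis you defer ``to the Appendix'' cannot be completed along the lines you describe; the missing idea is precisely the family of good-pair constructions (Claim~\ref{h3-2}.1, Notes~\ref{h3-2}.1 and \ref{h3-2}.2) that the paper builds before entering the case analysis.
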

\begin{proof}
	The proof is similar as that of Proposition~\ref{h2-2}. We will give some details of it in Appendix.	
\end{proof}

Now we are ready to show Theorem~\ref{thm4}. For convenience, we restate it here.
\begin{description}
	\item[Theorem~\ref{thm4}.] {\it Every 2-arc-strong digraph on 9 vertices has a good pair.}
\end{description}
\begin{proof}
	By contradiction, suppose that $D$ has no good pair.
	
	\begin{description}
	\item[Claim \ref{thm4}.1] {\it No subdigraph of $D$ of order at least 4 has a good pair.}
\end{description}
{\it Proof.} Suppose that a subdigraph $Q$ of $D$ of order at least 4 has a good pair. If $|Q|\geq 5=n-4$, then $D$ has a good pair by Lemmas \ref{lem1}, \ref{lem5} and \ref{lem6}. Thus, assume $|Q|=4=n-5$. Set $X=N_D^-(Q)$ and $Y=N_D^+(Q)$.

If $X\cap Y\neq \emptyset$, then there is a vertex $v$ in $D-Q$ which has both an in-neighbour and an out-neighbour in $Q$. By Lemma \ref{lem1}, $D[V(Q)\cup\{v\}]$ has a good pair. Since $|Q\cup\{v\}|=5=n-4$, $D$ has a good pair by Lemma \ref{lem6}. Henceforth, assume $X\cap Y= \emptyset$.

If $|X|\geq 2$ or $|Y|\geq 2$, then $D$ has a good pair by Lemma \ref{lem7}. Therefore, $|X|=|Y|=1$. Set $Q=\{q_1,q_2,q_3,q_4\}$, $X=\{x\}$, $Y=\{y\}$ and $V-V(Q)-X-Y=W=\{w_1,w_2,w_3\}$. Moreover, let $(B^+_s,B^-_t)$ be a good pair of $Q$, where $s,t\in V(Q)$.
Since $\lambda(D)\ge2$, $N_D^-(Q)=\{x\}$ and $N_D^+(Q)=\{y\}$, there are at least two out-neighbours of $x$ and at least two
in-neighbours of $y$ in $Q$.

\begin{description}
	\item[Note~\ref{thm4}.1] $xs,ty\notin A$.
\end{description}
{\it Proof.}
By the digraph duality, it suffices to prove the case of $xs\notin A$. Suppose $xs\in A$. Let $e_x(\neq xs)$ be another arc from $x$ to $Q$. Then $(xs+B^+_s,e_x+B^-_t)$ is a good pair of $D[V(Q)\cup\{x\}]$, which implies that $D$ has a good pair since $|Q\cup\{x\}|=5$, a contradiciton.
\hfill $\lozenge$

\vspace{2mm}
Next, we distinguish two cases:

{\bf Case 1:} $s=t$. 

W.l.o.g., suppose $s=t=q_1$. By Note~\ref{thm4}.1, neither $x$ nor $y$ is adjacent to $q_1$. Thus, $N_D(q_1)\subseteq \{q_2,q_3,q_4\}$.
Since $\lambda(D)\ge2$, $d^+(q_1)\geq 2$ and $d^-(q_1)\geq 2$, which implies that there is a vertex $q_i\in \{q_2,q_3,q_4\}$ such that
$D[\{q_1,q_i\}]$ is a digon. W.l.o.g., let $i=2$. By Proposition \ref{prop1}, $Q$ has a good pair
$(B^+_{q_2},B^-_{q_2})$. Again by Note~\ref{thm4}.1, neither $x$ nor $y$ is adjacent to $q_2$. Thus, $N_D(\{q_1,q_2\})=\{q_3,q_4\}$
and $N^+(x)\cap Q=N^-(y)\cap Q=\{q_3,q_4\}$. Note that, it is impossible that $D[\{q_i,q_j\}]$ is a digon, where
$i=1$ or $2$ and $j=3$ or $4$. Otherwise, by Proposition \ref{prop1}, $Q$ has a good pair $(B^+_{q_j},B^-_{q_j})$, a contradiction.
But $xq_j\in A$ and $q_jy\in A$ ($j=3$ and $4$), contradicting Note~\ref{thm4}.1. Hence, one of the vertices $q_3$ and $q_4$ is an
out-neighbour of $q_1$ (resp. $q_2$) and the other an in-neighbour of $q_1$ (resp. $q_2$). By symmetry, assume $q_1q_3,q_4q_1\in A$.

Now, if $q_2q_3$, $q_4q_2\in A$, then $D[V(Q)\cup\{x\}]$ has a good pair $(B_x^+,B_{q_3}^-)$ with $B^+_x=xq_4q_2q_1q_3$ and $B^-_{q_3}=q_4q_1q_2q_3+xq_3$. Since $|Q\cup\{x\}|=5=n-4$, by Lemma \ref{lem6}, $D$ has a good pair, a contradiction. Hence, $q_3q_2$, $q_2q_4\in A$.
Figure \ref{fig1} shows the subgraph $H$ of $D$, which contains an out-branching $\hat{B}^+_x=xq_4q_1q_2+xq_3y$ of $H$
and an in-branching $\hat{B}^-_y=q_1q_3q_2q_4y$ of $H\setminus \{x\}$. If $yx\in A$, then $(\hat{B}^+_x,\hat{B}^-_y+yx)$
is a good pair of $H$, which implies that $D$ has a good pair by Lemma \ref{lem5}, a contradiction. Thus, assume $yx\notin A$.

\begin{figure}[!htpb]
	\centering\includegraphics[scale=0.8]{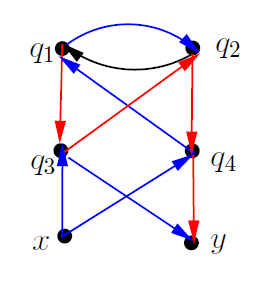}
	\caption{The subgraph $H$ of $D$. The arcs of an out-branching of $H$ are in blue and the arcs of an in-branching of $H\setminus \{x\}$ are in red.}
	\label{fig1}
\end{figure}

Since $N_D^-(Q)=\{x\}$ and $N_D^+(Q)=\{y\}$, $Q$ contains no out-neighbours of $y$ or in-neighbours of $x$. Moreover, since $yx\notin A$,
$N^+_D(y)\subseteq W=\{w_1,w_2,w_3\}$ and $N^-_D(x)\subseteq W$. Because $\lambda(D)\ge2$, $d^+(y)\geq 2$ and $d^-(x)\geq 2$.
Thus, there is a vertex $w_i\in W$ such that $yw_i,w_ix\in A$. W.l.o.g., let $i=1$, i.e., $yw_1,w_1x\in A$.

If $xw_1\in A$, then $(\hat{B}^+_x+xw_1,\hat{B}^-_y+yw_1x)$ is a good pair of $D[V(H)\cup\{w_1\}]$,
which implies that $D$ has a good pair by Lemma \ref{lem1}, a contradiction. By the digraph duality, we also get a contradiction when $w_1y\in A$. Therefore, assume $xw_1,w_1y\notin A$.

Now, suppose that there is another vertex $w_i(\neq w_1)\in W$ such that $yw_i,w_ix\in A$. W.l.o.g., let $i=2$.
If $w_jw_{3-j}\in A$ ($j=1$ or $2$), then $(w_jx+\hat{B}^+_x+yw_{3-j},\hat{B}^-_y+yw_jw_{3-j}x)$ is a good pair of
$D[V(H)\cup\{w_1,w_2\}]$, which implies that $D$ has a good pair by Lemma \ref{lem1}, a contradiction. Hence, $w_1$ and $w_2$ are
not adjacent. Note that, $xw_1,w_1y\notin A$ and for a similar reason, $xw_2,w_2y\notin A$. For both $i=1$ and $2$, since
$d^+(w_i)\geq 2$ and $d^-(w_i)\geq 2$, $D[\{w_i,w_3\}]$ is a digon. Then,
$(w_1w_3w_2x+\hat{B}^+_x,\hat{B}^-_y+yw_1x+w_2w_3w_1)$ is a good pair of $D$, a contradiction. Thus, $w_1$ is the unique vertex which is both an out-neighbour of $y$ and an in-neighbour of $x$.
Now, w.l.o.g., suppose that $yw_2\in A$ and $w_3x\in A$, and moreover, $w_2x\notin A, yw_3\notin A$.

Next, consider the following two situations.

{\bf (A):} $w_2y\in A$.

Since $d^+(w_2)\geq 2$ and $w_2x\notin A$, at least one of the vertices $w_1$ and $w_3$ is an out-neighbour of $w_2$.
If $w_2w_1\in A$, then $(\hat{B}^+_x+yw_2w_1,\hat{B}^-_y+yw_1x+w_2y)$ is a good pair of $D-\{w_3\}$,
which implies that $D$ has a good pair by Lemma \ref{lem1}, a contradiction. If $w_2w_1\notin A$ but $w_2w_3\in A$,  $w_3w_1\in A$ since $d^-(w_1)\geq 2$ and $xw_1\notin A$. Then, $(\hat{B}^+_x+yw_2w_3w_1,\hat{B}^-_y+yw_1x+w_2y+w_3x)$ is a good pair of $D$,
a contradiction.

{\bf (B):} $w_2y\notin A$.

Since $d^+(w_2)\geq 2$ and $w_2y,w_2x\notin A$,  $w_2w_1,w_2w_3\in A$. Moreover, since $d^+(w_1)\geq 2$ and $w_1y\notin A$,
at least one of the vertices $w_2$ and $w_3$ is an out-neighbour of $w_1$. If $w_1w_2\in A$, then $(\hat{B}^+_x+yw_1w_2w_3,\hat{B}^-_y+yw_2w_1x+w_3x)$ is a good pair of $D$, a contradiction. If $w_1w_3\in A$, then $(\hat{B}^+_x+yw_2w_1w_3,\hat{B}^-_y+yw_1x+w_2w_3x)$ is a good pair of $D$, a contradiction.

Therefore, in this case, we can always obtain a contradiction.

{\bf Case 2:} $s\neq t$. 

W.l.o.g., suppose that $s=q_1$ and $t=q_2$, i.e., $Q$ has a good pair $(B^+_{q_1},B^-_{q_2})$. By Note~\ref{thm4}.1, $xq_1\notin A$.
But $N^-(Q)=\{x\}$ and so $N^-_D(q_1)\subseteq \{q_2,q_3,q_4\}$, namely $d^-_Q(q_1)\geq 2$. Moreover, both $B^+_{q_1}$
and $B^-_{q_2}$ contain an out-arc of $q_1$. So, $d^+_Q(q_1)\geq 2$. Similarly, $d^+_Q(q_2)\geq 2$ and $d^-_Q(q_2)\geq 2$.

Since the root of any out-branching has in-degree zero and the root of any in-branching has out-degree zero, if $q_2q_1\in A$,
then $q_2q_1\notin B^+_{q_1}\cup B^-_{q_2}$. Set $\tilde{B}^-_{q_1}=B^-_{q_2}+q_2q_1-e$, where $e$ is the unique out-arc of $q_1$ in $B^-_{q_2}$.
Obviously, $\tilde{B}^-_{q_1}$ is an in-branching of $Q$ with root $q_1$. So, $Q$ has a good pair $(B^+_{q_1},\tilde{B}^-_{q_1})$,
whose out-branching and in-branching have the same root $q_1$, which is Case 1. Thus, assume $q_2q_1\notin A$.

Since $d^-_Q(q_1)\geq 2$ and $q_2q_1\notin A$, $N^-_D(q_1)=\{q_3,q_4\}$. Moreover, since $d^+_Q(q_1)\geq 2$, $D$ contains
at least one of the arcs $q_1q_3$ and $q_1q_4$. W.l.o.g., let $q_1q_3\in A$, that is $D[\{q_1,q_3\}]$ is a digon.
Similarly, since $d^+_Q(q_2)\geq 2$ and $q_2q_1\notin A$, $N^+_D(q_2)=\{q_3,q_4\}$. Moreover, since $d^-_Q(q_2)\geq 2$, there is an arc
$e$ from $\{q_1,q_3\}$ to $q_2$. Set $\tilde{B}^+_{q_1}=q_1q_3+e+q_2q_4$ and $\tilde{B}^-_{q_1}=q_2q_3q_1+q_4q_1$.
Then, $(\tilde{B}^+_{q_1},\tilde{B}^-_{q_1})$ is a good pair of $Q$, whose out-branching and in-branching have the same root $q_1$, which is Case 1.
The proof is complete.
\hfill $\lozenge$

	\vspace{2mm}
	Let $R$ be a largest clique in $D$. Then $R$ has three vertices by Claim \ref{thm4}.1 and Proposition~\ref{prop4}.	
	
	\begin{description}
		\item[Claim \ref{thm4}.2] {\it No subdigraph of $D$ of order at least 3 has a good pair.}
	\end{description}
	{\it Proof.}
	By Lemma~\ref{lem7}, it suffices to show that there is no $Q\subset D$ on 3 vertices with good pair.
	Suppose to the contrary that $Q$ has a good pair. Analogous to Claim \ref{thm3}.1, $|N_D^+(Q)|\ge2$ and $|N_D^-(Q)|\ge2$ with $N_D^+(Q)\cap N_D^-(Q)=\emptyset$. Thus by Lemma~\ref{lem8}, $D$ has a good pair, a contradiction.
	\hfill $\lozenge$
	
	\vspace{2mm}
	By the claim above, $R$ is a tournament.
	
	\begin{description}
		\item[Claim \ref{thm4}.3] {\it $D$ is an oriented graph.}
	\end{description}
	{\it Proof.}
	Suppose that $D$ has a digon $Q$. Set $X=N_D^-(Q)$ and $Y=N_D^+(Q)$. By Claim~\ref{thm4}.2, $X\cap Y=\emptyset$. Since $\lambda(D)\ge2$, both $X$ and $Y$ have at least two vertices.
	If $|X|+|Y|=4$, then $D$ has a good pair by Lemma~\ref{lem13}, a contradiction. If $|X|+|Y|=5$, then $D$ has a good pair by Lemma~\ref{lem11} and the digraph duality, a contradiction. If $|X|+|Y|=6$, then $D$ has a good pair by Lemma~\ref{lem2}, a contradiction. If $|X|+|Y|=7$, then $D$ has a good pair by Corollary~\ref{cor1}, a contradiction.
	\hfill $\lozenge$
	
	\vspace{2mm}
	Now we are ready to finish the proof of Theorem~\ref{thm3}. By Lemma~\ref{h3-2}, assume that $P_D=x_1x_2\ldots x_9$ is a Hamilton dipath of $D$. Set $D'=D-A(P_D)$. Let $I_i,~i\in[a]$, be the initial strong components in $D'$ and let $T_j,~j\in[b]$, be the terminal strong components in $D'$. Note that $a,b\ge2$ by Proposition~\ref{prop5}. Since $D$ is an oriented graph and $\lambda(D)\ge2$, $|I_i|,|T_j|\ge3$, for any $i\in[a],j\in[b]$.  Since $\lambda(D)\ge2$, $x_1$ has at least two in-neighbours and one out-neighbour in $D'$ and $x_9$ has at least two out-neighbours and one in-neighbour in $D'$. Thus there are only two non-adjacent strong components in $D'$, say $I_1$ and $I_2$, as $n=9$ and $D$ is an oriented graph.
	We distinguish two cases below.
	
	{\bf Case 1:} $|I_1|=4$ and $|I_2|=5$.
	
	If $x_9\in I_1$, then $x_8\in I_2$ as $|R|=3$. Analogously, if $x_1\in I_1$, then $x_2\in I_2$.
	By Proposition~\ref{prop6}, $D$ has a good pair for each cases.
	Henceforth, both $x_1$ and $x_9$ are in $I_2$. Note that at least one of $x_2$ and $x_8$ is in $I_1$ as $|R|=3$. By Proposition~\ref{prop6}, $D$ has a good pair, a contradiction.
	
	{\bf Case 2:} $|I_1|=3$ and $|I_2|=6$.
	
	In this case, $x_1,x_9\in I_2$ and $|A(I_2)|\ge7$. If one of $x_2$ and $x_8$ is in $I_1$, then $D$ has a good pair by Proposition~\ref{prop6}. Thus both $x_2$ and $x_8$ are in $I_2$. Then $V(I_1)=\{x_3,x_5,x_7\}$, which implies that $D$ has a good pair by Proposition~\ref{prop6}, a contradiction.
	
	This completes the proof of Theorem~\ref{thm4}.
\end{proof}

\paragraph{Acknowledgments.} Gu was supported by Natural Science Foundation
of Jiangsu Province (No. BK20170860), National Natural Science Foundation of
China (No. 11701143), and Fundamental Research Funds for the Central Universities. Li was supported by National Natural Science Foundation of China (No. 11301480),
Zhejiang Provincial Natural Science Foundation of China (No. LY18A010002),
and the Natural Science Foundation of Ningbo, China. Shi and Taoqiu are supported by the National Natural Science Foundation of China (No. 11922112),
the Natural Science
Foundation of Tianjin (Nos. 20JCJQJC00090 and 20JCZDJC00840) and the Fundamental Research Funds for the Central Universities, Nankai University.

\appendix
\section{Appendix}\label{app}
In this appendix, we give partial proofs of some lemmas and propositions which are omitted above. We use tables to shorten our partial proofs. It will not be hard for the reader to construct full proofs from our partial proofs.

\subsection{Lemma~\ref{lem6}}
{\bf Case 2:} $P^2_{(y,x)}\cap \{w_1,w_2\}\neq \emptyset$.

~\\
\begin{tabular}{l|c|c}\hline
	Case & $O_D$ & $I_D$\\\hline
	$P^1_{(y,x)}\supseteq \{w_1,w_2\},P^2_{(y,x)}\supseteq \{w_1,w_2\}$ & $O_Q+e_y+P^1_{(y,x)}$ & $P^2_{(y,x)}+e_x+I_Q$\\\hline
	$P^1_{(y,x)}\supseteq \{w_1,w_2\}$, w.l.o.g., $P^2_{(y,x)}\cap \{w_1,w_2\}=w_1$ & $O_Q+e_y+P^1_{(y,x)}$ & $P^2_{(y,x)}+a+e_x+I_Q$\\\hline
	\multicolumn{3}{l}{$P^1_{(y,x)}=yw_ix,P^2_{(y,x)}=yw_jx$, $i\neq j\in[2]$}\\\hline
	$w_iw_j,w_jw_i\in A$ & $O_Q+e_y+yw_iw_jx$ & $yw_jw_ix+e_x+I_Q$\\\hline
	$w_iw_j\notin A$ & $O_Q+e_y+P^1_{(y,x)}+xw_j$ & $P^2_{(y,x)}+a+e_x+I_Q+w_iy$\\\hline
\end{tabular}

\subsection{Lemma~\ref{lem7}}
{\bf Subcase 2.2:} $f_1(w_i)=f_2(w_{3-i})=1$ and $f_1(w_{3-i})=f_2(w_i)=0$, where $i\in[2]$.

W.l.o.g., assume $i=1$. Similarly, since $f_1(w_1)=1$ and $f_1(w_2)=0$,
$w_2$ is a leaf of $\hat{B}^-_1$ and $P_{(y_k,x)}$ containing $w_1$ in $\hat{B}^-_1$, where $y_kw_1\in \hat{B}^-_1$ ($k=1$ or $2$).
Since $f_2(w_1)=0$ and $f_2(w_2)=1$, $w_1$ is a leaf of $\hat{B}^-_2$ and $P_{(y_t,x)}$ containing $w_2$ in $\hat{B}^-_2$, where $y_tw_2\in \hat{B}^-_2$ ($t=1$ or $2$). Next, we consider the following two situations.

{\bf Subcase 2.2.1:} $d_{\hat{B}^-_j- w_{3-j}}^-(w_j)=1$ for some $j\in[2]$.

W.l.o.g., suppose that $j=1$ and $y_kw_1\in A$.
Note that, in any in-branching, any vertex except the root has a unique out-neighbour. In $\hat{B}^-_1$, let $v^*$ be the unique out-neighbour of $w_2$.

~\\
\vspace{1mm}
\begin{tabular}{l|c|c}\hline
	Case & Good Pair of $D$ & Notation\\\hline
	$v^*\neq w_1$ & $B^++P_{(y_t,x)}+a,\hat{B}^-_1+B^-$ & $P_{(y_t,x)}\subseteq \hat{B}^-_2$ and $a\notin \hat{B}^-_1\cup \hat{B}^-_2$ with head $w_1$\\\hline
\end{tabular}

Henceforth, assume $v^*=w_1$, that is $w_2w_1\in \hat{B}^-_1$.

{\bf (A):} $N_{\hat{B}^-_2}^+(w_1)\neq \{w_2\}$.

If $d_{\hat{B}^-_2- w_1}^-(w_2)=1$, then $D$ has a good pair by symmetry and the above discussion. Hence, assume that in $\hat{B}^-_2- w_1$, $w_2$ has two in-neighbours, which must be $y_1$ and $y_2$.
So $y_1w_2x+y_2w_2\subseteq \hat{B}^-_2$.

~\\
\vspace{1mm}
\begin{tabular}{l|c|c}\hline
	Case & Good Pair of $D$ & Notation\\\hline
	$x\in N^+(w_1)$ & $B^++y_2w_2w_1x,
	\hat{B}^-_1-w_2w_1+w_2x+B^-$ & $w_2w_1\in \hat{B}^-_1,w_2x\in \hat{B}^-_2$\\\hline
	$\{y_1,y_2\}\cap N^+(w_1)\neq \emptyset$\\ (w.l.o.g., let $w_1y_1\in \hat{B}^-_2$) & $B^++y_{3-k}w_2x+y_kw_1,\hat{B}^-_1-y_kw_1+y_kw_2+B^-$ & $y_{3-k}w_2x,y_kw_2\in \hat{B}^-_2;y_kw_1\in \hat{B}^-_1$\\\hline
\end{tabular}

{\bf (B):} $N_{\hat{B}^-_2}^+(w_1)=\{w_2\}$.

Since $N_{\hat{B}^-_1- w_2}^-(w_1)=\{y_k\}$, $y_{3-k}w_1\notin \hat{B}^-_1$.
Moreover, since $w_2$ is a leaf of $\hat{B}^-_1$, the out-neighbour of $y_{3-k}$ is $y_k$ or $x$ in $\hat{B}^-_1$.

If $y_{3-k}y_k\in \hat{B}^-_1$, then $y_{3-k}y_kw_1+w_1w_2+P_{(y_t,x)}-y_tw_2+B^-$ contains an in-branching $I_D$ of $D$, where $y_{3-k}y_kw_1\in \hat{B}^-_1$ and $P_{(y_t,x)}\in \hat{B}^-_2$. It follows that $(O_D,I_D)$ is a good pair of $D$ with $O_D=B^++y_tw_2+w_2w_1x$, where $y_tw_2\in \hat{B}^-_2$ and $w_2w_1x\in \hat{B}^-_1$.

If $y_{3-k}x\in \hat{B}^-_1$, then $D$ has a good pair $(O_D,I_D)$ with $O_D=B^++y_kw_1w_2+y_{3-k}x$ and $I_D=\hat{B}^-_2-w_1w_2+w_1v^*+B^-$, where $y_kw_1w_1^+,y_{3-k}x\in \hat{B}^-_1$, $w_1w_2\in \hat{B}^-_2$ and $N_{\hat{B}^-_1}^+(w_1)=\{w_1^+\}$.

{\bf Subcase 2.2.2:} $d_{\hat{B}^-_j- w_{3-j}}^-(w_j)=2$ for any $j\in[2]$.

Hence, both $y_1$ and $y_2$ are in-neighbours of $w_j$ in $\hat{B}^-_j$, for any $j=1$ and $2$. Then $D$ has a good pair $(O_D,I_D)$ with $O_D=B^++y_1w_1+y_2w_2x$ and $I_D=\hat{B}^-_1-y_1w_1+y_1w_2+B^-$, where $y_1w_1\in \hat{B}^-_1$ and $y_2w_2x,y_1w_2\in \hat{B}^-_2$.

~\\
\begin{tabular}{l|c|c}\hline
	Case & $O_D$ & $I_D$\\\hline
	$f_i(w_1)+f_i(w_2)=1,f_{3-i}(w_1)+f_{3-i}(w_2)=0,i\in[2]$ & $B^++P_{(y_j,x)}+a$ & $\hat{B}^-_2+B^-$\\\hline
	& \multicolumn{2}{l}{w.l.o.g., $i=1$, where $a\notin \hat{B}_1^-\cup \hat{B}_2^-$ with head $w_2$}\\\hline
	$f_i(w_1)+f_i(w_2)=0,i\in[2]$ & $B^++y_jx+a_1+a_2$ & $\hat{B}^-_1+B^-$\\\hline
	\multicolumn{3}{l}{w.l.o.g., $i=1$, where $a_1\notin \hat{B}_1^-$ such that the head of $a_1$ is $w_1$ and the tail of $a_1$ is not $w_2$, $a_2\notin \hat{B}_1^-$ with head $w_2$}\\\hline
\end{tabular}

\subsection{Lemma~\ref{lem8}}
\subsubsection{Claim~\ref{lem8}.3}
{\bf Subcase 1.2:} $(Y,x_1)_D=\emptyset$ but $(Y,x_2)_D\neq \emptyset.$

~\\
\vspace{1mm}
\begin{tabular}{l|c|c|c}\hline
	Case & $O_D$ & $I_D$ & Notation\\\hline
	$w_iw_{3-i}\notin A$ & $B^++y_2w_ix_1x_2+w_{3-i}^-w_{3-i}$ & $B^-+y_1y_2w_{3-i}x_1+w_iw_i^+$ & \bb{$w_{3-i}^-\neq y_2,w_i^+\neq x_1$}\\\cline{1-3}
	$D[W]=C_2$ & $B^++y_2w_2w_1x_1x_2$ & $B^-+y_1y_2w_1w_2x_1$ &\\\hline
\end{tabular}

{\bf Subcase 1.3:} $(Y,X)_D=\emptyset.$

{\bf A.} $w_1x_1\in A$.

~\\
\vspace{1mm}
\begin{tabular}{l|c|c|c}\hline
	Case & $O_D$ & $I_D$ & Notation\\\hline
	$w_iw_{3-i}\notin A$ & $B^++y_1w_1x_1x_2+w_2^-w_2$ & $B^-+y_1y_2w_2x_2+w_1w_1^+$ & \bb{$i=1,w_2^-\neq y_2,w_1^+\neq x_1$} \\\cline{1-3}
	$D[W]=C_2$ & $B^++y_2w_2w_1x_1x_2$ & $B^-+y_2y_1w_1w_2x_2$ &\\\hline
\end{tabular}

{\bf B.} $w_1x_1\notin A$ but $w_1x_2\in A$.

Analogous to {\bf A} after interchanging $x_1$ and $x_2$.

{\bf Case 2:} $y_2w_2,w_1x_1\notin A$.

If $x_2x_1\in A$ ($x_2x_1\notin A$), then we get Case 1 after interchanging $x_1$ and $x_2$ ($w_1$ and $w_2$).

\subsubsection{Claim~\ref{lem8}.4}
{\bf Subcase 1.2:} $w_2x_1\notin A$ but $(Y,x_1)_D\neq \emptyset$.

We find a good pair of $D$ as $(B^++P_+,B^-+P_-)$ as follows, a contradiction.

~\\
\vspace{1mm}
\begin{tabular}{l|c|c}\hline
	Case & $P_+,P_-$ & Notation\\\hline
	$w_2^+\notin \{y_1,w_1\}$ & $y_1x_1x_2+y_2w_2+w_1^-w_1,w_1x_i+y_2y_2^++y_1w_2w_2^+$ & $w_1^-\neq y_2;y_2^+\neq w_2$ \\\hline
	\multicolumn{3}{l}{$N^+(w_2)=\{y_1,w_1\}$ ($P'_+=y_1w_2w_1,P'_-=y_2w_2y_1x_1$)}\\\hline
	$w_1x_1\in A$ & $P'_++w_1x_1x_2,P'_-+w_1w_1^+$ & $w_1^+\neq x_1$\\\hline
	$w_1x_2\in A$ & $P'_++w_1x_2+x_1^-x_1,P'_-+w_1w_1^+$ & $x_1^-\neq y_1;w_1^+\neq x_2$\\\hline
\end{tabular}

{\bf Subcase 1.3:} $(Y\cup \{w_2\},x_1)_D= \emptyset$.

Interchange $x_1$ and $x_2$, we get Subcases 1.1 and 1.2.

{\bf Case 2: $|(Y,w_2)_D|=1$.}

{\bf Subcase 2.1:} $(Y\cup \{w_2\},x_1)_D\neq \emptyset$.

We find a good pair of $D$ as $(B^++P_+,B^-+P_-)$ as follows, a contradiction.

~\\
\vspace{1mm}
\begin{tabular}{l|c|c}\hline
	Case & $P_+,P_-$ & Notation\\\hline
	$w_2x_1\in A$ & $y_1w_2x_1x_2+y_2w_1,w_1x_i+y_2y_2^++y_1y_1^++w_2w_2^+$ & $w_2^+\neq x_1;y_1^+\neq w_2;y_2^+\neq w_1$\\\hline
	$y_1x_1\in A$ & $y_1x_1x_2+y_2w_1+w_2^-w_2,w_1x_i+y_2y_2^++y_1w_2w_2^+$ & $w_2^+,w_2^-\neq y_1;y_2^+\neq w_1$\\\hline
	$y_2x_1\in A;\exists y_1^+\neq w_1$ & $y_2x_1x_2+y_1w_2+w_1^-w_1,y_2w_1x_i+y_1y_1^++w_2w_2^+$ & $w_1^-\neq y_2;w_2^+\neq w_1;y_1^+\neq w_2$\\\hline
	$y_2x_1\in A;N^+(y_1)=W$ & $y_2x_1x_2+y_1w_1+w_2^-w_2,y_2w_1x_i+y_1w_2w_2^+$ & $w_2^-,w_2^+\neq y_1$\\\hline
\end{tabular}

{\bf Subcase 2.2:} $(Y\cup \{w_2\},x_1)_D= \emptyset$.

Interchange $x_1$ and $x_2$, we get Subcase 2.1.

\subsection{Lemma~\ref{lem8-1}}
{\bf Subcase 1.2:} $(Y,X)_D=\{y_jx_i,y_{3-j}x_{3-i}\}$, for some $i,j\in[2]$.

W.l.o.g., assume $j=i=1$. There exists an arc from $y_j$ to $W$ and an arc from $W$ to $x_i$, say $e_{y_j}$ and $e_{x_i}$ respectively, where $i,j\in[2]$.

~\\
\vspace{1mm}
\begin{tabular}{l|c|c|c}\hline
	Case & $O_D$ & $I_D$ & Notation\\\hline
	\multicolumn{4}{l}{$N_W^+(y_1)\cap N_W^+(y_2)\neq \emptyset$ (w.l.o.g., say $y_1w_1,y_2w_1\in A$, $P_+=y_1x_1+y_2w_1$ and $P_-=y_2x_2+y_1w_1$)}\\\hline
	$e_{x_2}=w_1x_2$ & $B^++P_-+x_1^-x_1+w_2^-w_2$ & $B^-+P_++e_{x_2}+w_2w_2^+$ & $x_1^-\neq y_1;w_2^-,w_2^+\neq x_1$\\\hline
	$w_1^+\notin \{y_1,w_2\}$ & $B^++P_++w_2^-w_2+e_{x_2}$ & $B^-+P_-+w_1w_1^++w_2w_2^+$ & $w_2^-,w_2^+\neq x_2$\\\hline
	$w_2^+\notin X$ & $B^++y_1w_1w_2x_2+x_1^-x_1$ & $B^-+y_2w_1y_1x_1+w_2w_2^+$ & $x_1^-\neq y_1$\\\hline
	$N^+(w_2)=X$ & $B^++P_-+w_1w_2x_1$ & $B^-+P_++w_1y_1+w_2x_2$ \\\hline
	\multicolumn{4}{l}{$N_W^+(y_1)\cap N_W^+(y_2)=\emptyset,N_W^-(x_1)\cap N_W^-(x_2)=\emptyset$ (w.l.o.g., say $y_1w_1,y_2w_2\in A$)}\\\hline
	$D[W]\neq C_2$ & $B^++y_1w_1x_i+y_2w_2x_{3-i}$ & $B^-+y_1x_1+y_2x_2+w_1w_1^++w_2w_2^+$ & $w_1^+\neq x_i,w_2^+\neq x_{3-i},\forall i$\\\hline
	$D[W]=C_2$ & $B^++y_1w_1w_2x_1+y_2x_2$ & $B^-+y_1x_1+y_2w_2w_1x_2$\\\hline
\end{tabular}

{\bf Cases 2 to 4:}

~\\
\begin{tabular}{l|c|c}\hline
	Case & Good Pair of $D$ & Notation\\\hline
	\multicolumn{3}{l}{$|(Y,X)_D|=3$: $(Y,X)_D=\{y_1x_1,y_1x_2,y_2x_2\}$}\\\hline
	\multicolumn{3}{l}{$e_y=y_2w_1,e_x\in (W,x_1)_D$, $P_+=y_2x_2+y_1x_1$ and $P_-=y_1x_2+e_y+e_x$}\\\hline
	$e_x=w_1x_1$ & $B^++P_++w_2^-w_2+w_1^-w_1,B^-+P_-+w_2w_2^+$ & $w_2^-,w_2^+\neq w_1;w_1^-\neq y_2$\\\hline
	$e_x=w_2x_1$ & $B^++P_++w_2^-w_2+w_1^-w_1,B^-+P_-+w_1w_1^+$ & $w_1^-,w_1^+\neq y_2;w_2^-\neq w_1$\\\hline
	\multicolumn{3}{l}{$|(Y,X)_D|=1$: $(Y,X)_D=\{y_1x_1\}$; $e_y=y_1w_1,e_x\in (W,x_1)_D$}\\\hline
\end{tabular}\\
\begin{tabular}{l|c|c}\hline
	$e_x\neq w_2x_1$ & $B^++y_1w_1+y_2w_2x_2+e_x,B^-+y_1x_1+y_2w_1x_2+w_2w_2^+$ & $w_2^+\neq x_2$\\\hline
	$w_1^+\notin \{y_2,x_2\}$ & $B^++y_1w_1x_2+y_2w_2x_1,B^-+y_1x_1+y_2w_1w_1^++w_2x_2$\\\hline
	$w_2^-\notin \{x_2,y_2\}$ & $B^++y_1x_1+y_2w_1+w_2^-w_2x_2,B^-+y_1w_1x_2+y_2w_2x_1$\\\hline
	$N^+(w_1)=N^-(w_2)=\{x_2,y_2\}$ & $B^++y_2w_1x_2w_2x_1,B^-+y_1w_1y_2w_2x_2$\\\hline
	$|(Y,X)_D|=0$ & $B^++y_1w_1x_1+y_2w_2x_2,B^-+y_1w_2x_1+y_2w_1x_2$\\\hline
\end{tabular}

\subsection{Lemma~\ref{lem8-2}}
\subsubsection{Case 1}
\begin{tabular}{l|c|c|c}\hline
	Case & $O_D$ & $I_D$ & Notation\\\hline
	$(q_1,Y)_D=\emptyset;(q_3,Y)_D\neq \emptyset;x_2q_1\in A$ & $B^++x_1y_j+x_2q_1q_3q_2+q_{y_{3-j}}y_{3-j}$ & $B^-+x_1q_1q_2q_3y_j+x_2x_2^+$ & $x_2^+\neq q_1$\\\hline
	$(q_1,Y)_D=\emptyset;(q_3,Y)_D\neq \emptyset;x_2q_2\in A$ & $B^++x_1y_j+x_2q_2q_1q_3+q_{y_{3-j}}y_{3-j}$ & $B^-+x_1q_1q_2q_3y_j+x_2x_2^+$ & $x_2^+\neq q_2$\\\hline
	$N^+(q_1)\cup N^+(q_3)\subseteq Q$ & $B^++x_1q_1q_3q_2y_1+y_2^-y_2$ & $B^-+x_1y_1+q_3q_1q_2y_2+x_2q_{x_2}$ & $y_2^-\neq q_2$\\\hline
\end{tabular}

\vspace{1mm}
By symmetry, we get the case when $x_iq_3\in A$ analogously. By the digraph duality, we also get a contradiction when $q_1y_j$ or $q_3y_j$ is in $A$, where $j\in[2]$.

~\\
\begin{tabular}{l|c|c|c}\hline
	{\bf Subcase 1.2} & $O_D$ & $I_D$ & Notation\\\hline
	$N(q_i)\subset Q,\forall i\in\{1,3\}$ & $B^++x_1y_1+x_2q_2q_1+q_2q_3+q_2y_2$ & $B^-+x_1q_2y_1+q_1q_2+q_3q_2+x_2x_2^+$ & $x_2^+\neq q_2$\\\hline
\end{tabular}

\subsubsection{Case 2}
{\bf Subcase 2.2:} $x_iq_2\in A$, for some $i\in[2]$.

W.l.o.g., assume $i=1$. By Case 1, $(q_1,Y)_D$, $(q_3,Y)_D$ and $(X,q_3)_D$ are not empty. Set $q_1y_j\in A,~j\in[2]$.

~\\
\vspace{1mm}
\begin{tabular}{l|c|c}\hline
	Case & Good Pair of $D$ & Notation\\\hline
	$x_1q_3,q_3y_{3-j}\in A$ & $B^++x_1q_2q_3q_1+y_j^-y_j+y_{3-j}^-y_{3-j},B^-+x_1q_3y_{3-j}+q_2q_1y_j+x_2q_{x_2}$ & $y_j^-\neq q_1,y_{3-j}^-\neq q_3$\\\hline
	$q_3y_j,x_2q_3\in A$ & $B^++x_1q_2q_1y_j+x_2q_3+q_{y_{3-j}}y_{3-j},B^-+x_1q_3y_j+q_1q_2q_3+x_2x_2^+$ & $x_2^+\neq q_3$\\\hline
	$q_3y_j,x_2q_2\in A$ & $B^++x_1q_3+x_2q_2q_1y_j+q_{y_{3-j}}y_{3-j},B^-+x_1q_2q_3y_j+q_1q_2+x_2x_2^+$ & $x_2^+\neq q_2$\\\hline
	\multicolumn{3}{l}{Therefore, $x_2q_3\in A$. Note that $q_3y_{3-j}\in A$.}\\\hline
	$\exists x_1^+\notin \{q_2\}\cup Y$ & $B^++x_1q_2q_3q_1+y_j^-y_j+y_{3-j}^-y_{3-j},B^-+x_1x_1^++x_2q_3y_{3-j}+q_2q_1y_j$ & \bb{$y_j^-\neq q_1,y_{3-j}^-\neq q_3$}\\\cline{1-2}
	$\exists x_2^+\notin \{q_3\}\cup Y$ & $B^++x_2q_3q_1q_2+y_j^-y_j+y_{3-j}^-y_{3-j},B^-+x_1q_2q_1y_j+x_2x_2^++q_3y_{3-j}$\\\hline
\end{tabular}

Hence $N^+(x_1)\subseteq \{q_2\}\cup Y$ and $N^+(x_2)\subseteq \{q_3\}\cup Y$. By the digraph duality, $N^-(y_j)\subseteq \{q_1\}\cup X$ and $N^-(y_{3-j})\subseteq \{q_3\}\cup X$.

If some vertex in $X$, say $x_i$, dominates both vertices in $Y$, then set $P_+=x_1q_2q_1+x_2q_3+x_iy_j$ and $P_-=x_{3-i}x_{3-i}^++x_iy_{3-j}$, where $x_{3-i}\neq q_2$ when $i=2$ and $x_{3-i}\neq q_3$ when $i=1$.
It follows that $(B^++P_++q_3y_{3-j},B^-+P_-+q_2q_3q_1y_j)$ is a good pair of $D$, a contradiction.
Hence $|N^+(x_1)|=|N^+(x_2)|=2$. Analogously, $|N^-(y_1)|=|N^-(y_2)|=2$.

If $x_1y_1,x_2y_2\in A$, then let $B_{x_1}^+=x_1y_1w_1w_2x_2y_2+x_1q_2q_3q_1$ and $B_{x_1}^-=y_2w_2w_1x_1+y_1x_1$.
If $x_1y_2,x_2y_1\in A$, then let $B_{x_1}^+=x_1y_2x_2y_1w_1w_2+x_1q_2q_3q_1$ and $B_{x_1}^-=y_2w_2w_1x_1+y_1x_1$.
W.l.o.g., assume $q_1y_j\in A$, then $q_3y_{3-j}\in A$, where $j\in[2]$. We find a good pair of $D$ as $(B_{x_1}^+,B_{x_1}^-+x_2q_3y_{3-j}+q_2q_1y_j)$, a contradiction.

\subsection{Proposition~\ref{h1}}
Suppose to the contraty that $P$ is the longest dipath in $D$, where $|P|=6$. Obviously $D$ has no $C_6$ by Proposition~\ref{prop7}. Set $P=x_1x_2x_3x_4x_5x_6$. Since $\lambda(D)\ge2$, $d_D^+(x_6)\ge 2$ and $d_D^-(x_1)\ge 2$. Note that $N_D^+(x_6)\subseteq \{x_2,x_3,x_4\}$ and $N_D^-(x_1)\subseteq \{x_3,x_4,x_5\}$.

~\\
\begin{tabular}{l|c}\hline
	Case & Contradiction \\\hline
	\multicolumn{2}{l}{$N_D^+(x_6)\cap N_D^-(x_1)=\emptyset$}\\\hline
	$N_D^+(x_6)=\{x_2,x_3\},N_D^-(x_1)=\{x_4,x_5\}$ & $O_{D[P]}^+=x_4x_1x_2+x_4x_5+x_4x_6x_3$ and $I_{D[P]}^-=x_5x_6x_2x_3x_4+x_1x_3$\\\hline
	$N_D^+(x_6)=\{x_2,x_4\},N_D^-(x_1)=\{x_3,x_5\}$ & $C_6=x_6x_2x_3x_4x_5x_1x_6$\\\hline
\end{tabular}\\
\begin{tabular}{l|c}\hline
	$N_D^+(x_6)\cap N_D^-(x_1)\neq\emptyset,x_2\in N_D^+(x_6)$ & $C_6=x_6x_2x_3x_4x_5x_1x_6$ (note that $x_3\in N_D^+(x_6)\cap N_D^-(x_1)$)\\\hline
	$N_D^+(x_6)=\{x_3,x_4\},x_4\in N_D^-(x_1)$ & $O_{D[P]}^+=x_2x_6x_4+x_6x_3x_1x_5$ and $I_{D[P]}^-=P_D$\\\hline
	$N_D^+(x_6)=\{x_3,x_4\},x_5\in N_D^-(x_1)$ & $O_{D[P]}^+=x_2x_5x_1x_6x_3+x_6x_4$ and $I_{D[P]}^-=P_D$\\\hline
\end{tabular}

\subsection{Proposition~\ref{h2-2}}
\begin{tabular}{l|c|c}\hline
	Case & Contradiction & Analogous Case \\\hline
	$\{x_3,x_4\} \subset N^-(x_1)$ & $O_{D[X]}^+=x_2x_6x_7x_3x_1+x_3x_4x_5,~I_{D[X]}^-=x_5x_6x_4x_1x_2x_3+x_7x_2$ & $\{x_4,x_5\}\subset N^+(x_7)$\\\hline
	$\{x_3,x_5\}\subset N^-(x_1)$ & $O_{D[X]}^+=P,~T_{x_1}^-=x_2x_5x_1+x_7x_3x_1+x_4x_4^+$ and Note~\ref{h2-2}.2 & $\{x_3,x_5\}\subset N^+(x_7)$\\\hline
	& \multicolumn{2}{l}{where $x_4^+\in D[X]-x_5$}\\\hline
	$\{x_3,x_6\}\subset N^-(x_1)$ & $O_{D[X]}^+=x_3x_1x_5x_6x_7x_2+x_4^-x_4,~I_{D[X]}^-=x_6x_1x_2x_3x_4x_5+x_7x_7^+$ & $\{x_2,x_5\}\subset N^+(x_7)$\\\hline
	& \multicolumn{2}{l}{where $x_4^-\in X-x_3;~x_7^+\in \{x_3,x_5\}$}\\\hline
	$\{x_4,x_5\}\subset N^-(x_1)$ & $O_{D[X]}^+=x_3x_4x_5x_1x_2+x_6^-x_6x_7,~I_{D[X]}^-=x_4x_1x_7x_2x_3+x_5x_6x_6^+$ & $\{x_3,x_4\}\subset N^+(x_7)$\\\hline
	& \multicolumn{2}{l}{where $x_6^-\in\{x_3,x_4\}; x_6^+\in\{x_2,x_3\}$}\\\hline
	$\{x_4,x_6\}\subset N^-(x_1)$ & $N^+(x_1)=x_2$ & $\{x_2,x_4\}\subset N^+(x_7)$\\\hline
\end{tabular}

\vspace{1mm}
Therefore, $N^-(x_1)=\{x_5,x_6\}$ and $N^+(x_7)=\{x_2,x_3\}$.
We get another two $P_7$: $x_7x_2x_3x_4x_5x_6x_1$ and $x_6x_7x_2x_3x_4x_5x_1$, which implies that $N^+(x_1)=x_2$ by Fact~\ref{h2-2}.2 and \ref{h2-2}.3, a contradiction to $\lambda(D)\ge2$.

\subsection{Lemma~\ref{lem11}}
\subsubsection{Note~\ref{lem11}.2}
\begin{tabular}{l|c|c}\hline
	Item & Assumption & Good Pair\\\hline
	1 & $X_1=\{x_1,x_2\}$ & $O_{D\setminus W\cup \{x_3\}}^+=B^++y_1x_1+y_2x_2,~I_{D\setminus W\cup \{x_3\}}^-=B^-+y_1x_2+y_2x_1$\\\hline
	2 & $y_1y_2\in A,~d_Y^-(x_1)=2$ & $O_{D\setminus W\cup \{x_2,x_3\}}^+=B^++y_1x_1,~I_{D\setminus W\cup \{x_2,x_3\}}^-=B^-+y_1y_2x_1$\\\hline
	3 & $x_1x_2\in D[X_1],~d_X^+(y_1)=2$ & $O_{D\setminus W\cup \{y_2,x_3\}}^+=B^++y_1x_1x_2,~I_{D\setminus W\cup \{y_2,x_3\}}^-=B^-+y_1x_2$\\\hline
	4 & $i_1=1,i_2=2$ and $j=1$ & $O_{D\setminus W\cup \{x_3\}}^+=B^++y_1x_1x_2,~I_{D\setminus W\cup \{x_3\}}^-=B-+y_1y_2x_2$\\\hline
	\multicolumn{3}{l}{5: assume that $i_1=1,i_2=2$ and $j=1$}\\\hline
	& $x_1q_i,x_2q_i\in A$ & $O_{D\setminus W\cup \{x_3\}}^+=x_1q_iq_{3-i}y_2+q_iy_1x_2,~I_{D\setminus W\cup \{x_3\}}^-=q_{3-i}q_i+y_1y_2x_1x_2q_i$\\\hline
	\multicolumn{3}{l}{next assume $x_1q_1,x_2q_2\in A$}\\\hline
	& $q_1y_2,q_2y_1\in A$ & $O_{D\setminus W\cup \{x_3\}}^+=x_1x_2+x_1q_1q_2y_1y_2,~I_{D\setminus W\cup \{x_3\}}^-=y_1x_2q_2q_1y_2x_1$\\\hline
	& $q_1y_1,q_2y_2\in A$ & $O_{D\setminus W\cup \{x_3\}}^+=x_1q_1q_2y_2+q_1y_1x_2,~I_{D\setminus W\cup \{x_3\}}^-=y_1y_2x_1x_2q_2q_1$\\\hline
\end{tabular}

\subsubsection{Claim~\ref{lem11}.2}
Assume that $\mathcal{X}=\{X_0,X_1,\ldots,X_k\}$ when $|\mathcal{X}|=k$.

~\\
\begin{tabular}{l|c|c}\hline
	Case & Good pair of $D$ & Notation \\\hline
	\multicolumn{3}{l}{{\bf Subcase 2.2:} $|\mathcal{X}|=2$ (by Proposition~\ref{prop2}, we find $T_{X'}$ of $D'[X']$)}\\\hline
	$e_y=e_x=y_1x_1$ & $B^++y_1x_1+e_{y_2}+T_{X'}+w_2^-w_2,B^-+y_1w_2y_2y_2^++w_1x_1$\\\hline
	& \multicolumn{2}{l}{where $y_2^+\notin e_{y_2}\in (y_2,X_1)_D;w_2^-\neq y_1$}\\\hline
	$e_y\neq e_x$ & $B^++e_y+y_2x_1+T_{X'}+w_2^-w_2,B^-+y_1w_2y_2y_2^++w_1x_1$& $w_2^-\neq y_1;y_2^+\notin e_y$\\\hline
	\multicolumn{3}{l}{{\bf Subcase 2.3:} $|\mathcal{X}|=3$ (assume that $X'=\{x_1,a,b,c\}$ with $X_1=a,X_2=b$)}\\\hline
	$e_x=y_2x_1$ & $B^++y_1w_2a+y_2x_1+w_2b+c^-c,B^-+y_1a+w_2y_2b+w_1x_1$ & $\forall c^-\in N_D^-(c)$\\\hline
	$e_x=e_y=y_1x_1,c=w_1$ & $B^++y_1w_2a+w_2b+w_1^-w_1x_1,B^-+y_1x_1+w_2y_2a+w_1w_1^+$ & $w_1^-,w_1^+\neq x_1$\\\hline
	$e_x=e_y=y_1x_1,c\neq w_1$ & $B^++y_1w_2w_1x_1+w_2b+c^-c,B^-+y_1x_1+w_2y_2b+w_1w_1^+$ & $c^-\neq w_1;w_1^+\neq x_1$\\\hline
	{\bf Subcase 2.4:} $|\mathcal{X}|=4$ & $B^++y_1w_2w_1x_1+w_2x_2+w_2x_3,B^-+y_1x_1+w_2y_2x_2+w_1w_1^+$ & $w_1^+\neq x_1$\\\hline
\end{tabular}

\subsubsection{Claim~\ref{lem11}.4}
{\bf Subcase 3.2:} $e_y\neq e_x$.

By Proposition~\ref{prop10}.\ref{prop2-3}, $e_x=w_2x_1$ and $(y_1,X_1)_D=\{e_y\}$. By Proposition~\ref{prop10}.\ref{prop2-4}, $(y_2,X_1)_D= \emptyset$ since $y_2$ is not in any terminal strong component in $D'[Y']$. That is $(w_2,X_1)_D\neq \emptyset$ as $\lambda(D)\ge2$. By Proposition~\ref{prop10}.\ref{prop2-2}, $d_{X'}^+(w_2)=2$. Note that $|X_1|\le2$ as $(y_2,X'-V(X_0\cup X_1))_D\neq \emptyset$.
If $|X_1|=2$, then $X_1=w_1x_iw_1$ where $i\in\{2,3\}$ by Fact~\ref{lem11}.1. Let $D''=D'+w_1x_1-w_1x_i$. Since $D''[X']$ has only one initial strong component, $D$ has a good pair by Case 2, a contradiction.

Hence $|X_1|=1$. Set $X_1=\{a\}$. We first show that $(w_1,X-x_1)_D=\emptyset$. Suppose that $(w_1,X-x_1)_D\neq\emptyset$, w.l.o.g., say $w_1x_2\in A$. Then $a\neq x_2$. Set $D''=D'+w_1x_1-w_1x_2$. By Fact~\ref{lem11}.2, $x_2$ is an initial strong component of $D''[X']$ with $d_{Y'}^-(x_2)=1$. Let $e_{x_2}$ be the arc from $Y'$ to $x_2$. Note that $e_{x_2}=y_2x_2$ as $a\neq x_2$ and $d_{X'}^+(w_2)=2$. Then $D$ has a good pair by Proposition~\ref{prop10}.\ref{prop2-3}.

Thus $N_{D-x_1}^+(w_1)\subset Y'$.
If $a=w_1$, then $w_1y_2\in A$, as $D\nsupseteqq E_3$. Then change $Y'$ from $Y\cup \{w_2\}$ to $Y\cup \{w_1\}$ and change $X'$ from $X\cup \{w_1\}$ to $X\cup \{w_2\}$. By Claim~\ref{lem11}.2, $D$ has a good pair, a contradiction.
If $a\in \{x_2,x_3\}$, then $D[X']$ only has one initial strong component. We get $T_{X'}$ of $D[X']$ by Proposition~\ref{prop2}. It follows that $(B^++y_1w_2a+T_{X'},B^-+y_2y_1a+w_2x_1+w_1w_1^+)$ is a good pair of $D$, where $w_1^+\in Y'$, a contradiction.

~\\
\begin{tabular}{l|c|c}\hline
	Case & Good pair of $D$ & Notation \\\hline
	\multicolumn{3}{l}{{\bf Case 4:} $|\mathcal{X}|=3$ (assume that $X'=\{x_1,a,b,c\}$ with $X_1=a,X_2=b$)}\\\hline
	$e_x=e_y=y_1x_1$ & $B^++y_1x_1+y_2a+y_2b+c^-c+w_2^-w_2,B^-+y_2y_1w_2a+w_1x_1$ & $w_2^-\neq y_1;\forall c^-\in N_D^-(c)$\\\hline
	$e_x=w_2x_1$ & $B^++y_1w_2x_1+y_2a+y_2b+c^-c,B^-+y_2y_1a+w_2b+w_1x_1$ & $\forall c^-\in N_D^-(c)$\\\hline
	{\bf Case 5:} $|\mathcal{X}|=4$ & $B^++y_1w_2+y_2w_1x_1+y_2x_2+y_2x_3,B^-+y_2y_1x_1+w_2x_2+w_1w_1^+$ & $w_1^+\neq x_1$\\\hline
\end{tabular}

\subsubsection{Case 1}
{\bf Subcase 1.2} $(w_1,X)_D=\{w_1x_1\}$.

That is $|(w_1,Y')_D|\ge1$. Let $D''=D-y_1w_2$. By Proposition~\ref{prop2}, we get arc-disjoint $\mathcal{P}_{X'},~\mathcal{P}_{Y'}$ and $T_{X'},~T_{Y'}$ of $D''$. Let $P_+=\mathcal{P}_{X'}$ and $P_-=\mathcal{P}_{Y'}$. Then $O_D=B^++P_++T_{X'}+y_1w_2$ is an out-branching of $D$. Let $I_D=B^-+P_-+T_{Y'}+w_1w_1^+$, then $I_D$ is arc-disjoint with $O_D$ and $V(I_D)=V$.\textbf{ We will show that $I_D$ is an in-branching of $D$}, i.e., there is no digon in $P_-+w_1w_1^+$.

If $w_2y_1\in A$, then let $P_-=w_2y_1x_1+y_1y_1^+$ where $y_1^+\neq w_1$, which is possible by Note~\ref{lem11}.5. Henceforth assume $N^+(w_2)\cap Y=\emptyset$, i.e., $|(w_2,X')_D|\ge2$. Now $|(Y',X')_D|\ge5$.

If $w_1y_1\in A$, then $w_1w_1^+=w_1y_1$. Since $\lambda(D)\ge2$, there are at least two arcs from $Y'$ to any initial strong component in $D''[X']$. Then let $P_-=y_1x_1+w_2w_2^++y_2y_2^+$ where $w_2^+,y_2^+\in X'$, such that $P_-\cap P_+=\emptyset$ as $(y_1,X')_D=\{y_1x_1\}$.

Thus assume $w_1^+\in \{w_2,y_2\}$, w.l.o.g., say $w_1^+=w_2$. If $D''[X']$ is strong, then let $P_+=e$, where $e$ is an arbitrary out-arc of $y_2$ and
$P_-=w_2w_2^++y_2y_2^+$ where $w_2^+\in X$ and $y_2y_2^+$ is an out-arc of $y_2$ which is different from $e$ as $\lambda(D)\ge2$.

If $D''[\{x_1,w_1,x_2\}]$ is strong, then there are at most two initial strong components in $D''[X']$. Set $V_0=\{x_1,w_1,x_2\}$. It follows that $y_2$ and $w_2$ to any initial strong component has at least one arc respectively by the fact $|(Y',X')_D|\ge5$ and $\lambda(D)\ge2$.
If $D''[X']$ has only one initial strong component, then assume that $e\neq y_1x_1$ is an in-arc of the initial strong component from $y_2$. Let $P_+=e$ and $P_-=w_2w_2^++y_2y_2^+$, where $w_2^+\neq w_1$ and $y_2y_2^+\neq e$.
Otherwise $D''[X']$ has exactly two initial strong components, i.e., $V_0$ and $x_3$. Then let $P_+=y_2x_3+w_2w_2^+$ and $P_-=w_2x_3+y_2y_2^+$ where $w_2^+,y_2^+\in V_0$.
The case when $D''[\{x_1,w_1,x_3\}]$ is strong can be proved analogously.

If $D''[\{x_1,w_1\}]$ is strong, i.e., $D''[\{x_1,w_1\}]=x_1w_1x_1$, then $w_2w_1\notin A$ as $D$ has no subdigraph on 3 vertices with a good pair. This implies that $P_-+w_1w_2$ has no digon, as required.

Henceforth $x_1$ is a strong component in $D''[X']$. If $w_2w_1\notin A$, then $P_-+w_1w_2$ has no digon as required. Thus it is suffices to consider the case of $w_2w_1\in A$. Note that $x_1$ is not an initial strong component in $D''[X']$ as $w_1x_1\in A$. Since $|(w_2,X')_D|\ge 2$, there exists at least one out-arc of $w_2$ to $\{x_2,x_3\}$, w.l.o.g., say $w_2x_2$. Let $P_-=w_2x_2+y_2y_2^+$ and $P_+=x_2^-x_2+x_3^-x_3+w_2w_1$ where $x_2^-\neq w_2$, $y_2^+\neq x_2$ and $x_3^-\neq y_2$ as $\lambda(D)\ge2$.

\subsubsection{Case 2}
Assume that $\mathcal{X}=\{X_0,X_1,\ldots,X_k\}$ when $|\mathcal{X}|=k+1$.

{\bf Subcase 2.1} $e_x=w_2x_1$.

By Proposition~\ref{prop10}.\ref{prop2-3}, $w_2y_1\notin A$, i.e., $|(w_2,X')_D|\ge2$. Note that there are at least two initial strong components in $D'[X']$ by Proposition~\ref{prop10}.\ref{prop2-2}, i.e., $|\mathcal{X}|\ge2$.

~\\
\vspace{1mm}
\begin{tabular}{l|c|c}\hline
	\multicolumn{3}{l}{$|\mathcal{X}|=2$}\\\hline
	Case & Good pair of $D$ & Notation \\\hline
	$|X_1|=3$ & by Proposition~\ref{prop10}.\ref{prop2-2}\\\hline
	$|X_1|=2$ & $B^++y_1w_2+x_2^-x_2w_1x_1+x_3^-x_3,B^-+w_2x_1+y_1w_1x_3+y_2y_2^+$ & $x_2^-\in Y';x_3^-\in X';y_2^+\neq x_2$\\\hline
	\multicolumn{3}{l}{$|X_1|=1$, set $X_1=v$, where $v\in X'-x_1$}\\\hline
	$v=w_1$ & $B^++w_2^-w_2+y_1w_1x_1,B_-+w_1w_1^++y_2y_2^++y_1w_2x_1$ & $w_2^-\in X';w_1^+\neq x_1;y_2^+\neq w_1$ \\\hline
	\multicolumn{3}{l}{$v\in \{x_2,x_3\}$, w.l.o.g., say $v=x_2$}\\\hline
	$x_1w_1\notin A$ & $B_++y_1w_2x_2+w_1^-w_1x_1,B_-+y_1x_2+w_2x_1+w_1w_1^++y_2y_2^+$ & $w_1^-,w_1^+\neq x_1;y_2^+\neq w_1$\\\hline
	$x_1w_1\in A$ & $B_++y_1x_2+w_2^-w_2x_1w_1,B_-+y_1w_2x_2+w_1x_1+y_2y_2^+$ & $w_2^-\notin \{y_1,x_1,w_1\};y_2^+\neq w_2$\\\hline
\end{tabular}

\textbf{Henceforth, $|\mathcal{X}|\ge3$. }
If $|(w_1,X)_D|\ge2$, w.l.o.g., assume $w_1x_2\in A$, then $y_1x_2\notin A$, or else we find a good pair of $D$ by changing $e_1$ from $w_1x_1$ to $w_1x_2$. Change $e_1$ from $w_1x_1$ to $w_1x_2$ and keep $D'=D-\{e_1,e_2\}$, now $X_0=\{x_2\}$ in $D'[X']$, or else $D$ has a good pair. This implies that there is no arc from $X$ to $x_1$ and $x_2$. Since $w_2x_2\notin A$ by Proposition~\ref{prop10}.\ref{prop2-4}, $y_2x_2\in A$ as $\lambda(D)\ge2$. Note that $N^-(x_1)=W$ and $N^-(x_2)=\{w_1,y_2\}$, which implies that $N_{D-w_2}^+(y_1)\cup N_{D-x_1}^+(w_2)\cup N_{D-x_2}^+(y_2)\subseteq \{w_1,x_3\}$. By Propositions~\ref{prop10}.\ref{prop2-3} and \ref{prop10}.\ref{prop2-4}, $N_{D-w_2}^+(y_1),N_{D-x_1}^+(w_2),N_{D-x_2}^+(y_2)$ are respectively in some initial strong components in $\mathcal{X}$. Note that $w_1$ and $x_3$ are both initial strong components in $\mathcal{X}$ by the fact that $|\mathcal{X}|\ge3$, which implies that $d_{Y'}^-(w_1)\ge2$ and $d_{Y'}^-(x_3)\ge2$. Since $|(y_1,\{w_1,x_3\})_D|=1$, at least one of $y_2$ and $w_2$ has three out-neighbours in $X'$, which implies that $D$ has a good pair by Proposition~\ref{prop10}.\ref{prop2-2}, a contradiction.

Thus $(w_1,X)_D=\{w_1x_1\}$. Namely $(w_1,Y')_D\neq \emptyset$.
If $|\mathcal{X}|=4$, then $|(Y',X'-x_1)_D|\ge6$ as $\lambda(D)\ge2$. Since $d_{X'}^+(y_1)=1$, $d_{X'}^+(w_2)\ge3$, which implies that $D$ has a good pair by Proposition~\ref{prop10}.\ref{prop2-2}, a contradiction.
Hence $|\mathcal{X}|=3$. Set $\mathcal{X}=\{X_0,X_1,X_2\}$. Recall that $X_0=x_1$. Since there is no $C_2$ in $X$ and $(w_1,X)_D=\{w_1x_1\}$, $|X_1|=|X_2|=1$.
If $y_1w_1\in A$, then $w_1$ is non-adjacent to $y_2$ by Claims~\ref{lem11}.2 and \ref{lem11}.3. By Proposition~\ref{prop10}.\ref{prop2-3}, $w_1$ is an initial strong component in $\mathcal{X}$, which implies that $|[y_1\cup w_2,w_1]_D|\ge2$, i.e., $D[\{y_1\}\cup W]\supseteq E_3$, a contradiction.
If $y_2w_1\in A$, then likewise $w_1$ is non-adjacent to $y_1$. Now $(w_2,X-x_1)_D=\emptyset$, or else we get $D$ has a good pair by changing $X'$ from $X\cup \{w_1\}$ to $X\cup \{w_2\}$ and $Y'$ from $Y\cup \{w_2\}$ to $Y\cup \{w_1\}$, a contradiction. Thus $w_2w_1\in A$. By Proposition~\ref{prop10}.\ref{prop2-4}, $w_1$ is an initial strong component in $\mathcal{X}$. Since $w_1x_1\in A$, $w_1y_2\notin A$ by Proposition~\ref{prop10}.\ref{prop2-4}, which implies that $w_1w_2\in A$. Then $D[W\cup \{x_1\}]\supseteq E_3$, a contradiction.
Therefore,  $(Y,w_1)_D=\emptyset$, which implies that $w_2w_1\in A$. It follows that $w_1$ is an initial strong component in $\mathcal{X}$ by Proposition~\ref{prop10}.\ref{prop2-4}, namely $N^-(w_1)=\{w_2\}$, a contradiction to $\lambda(D)\ge2$.

{\bf Subcase 2.2} $e_x=y_2x_1$.

Since $\lambda(D)\ge2$ and $y_2$ is not adjacent to $w_2$ and $y_1$, $|(Y',X')_D|\ge3$. Then $|\mathcal{X}|\ge2$ by Proposition~\ref{prop10}.\ref{prop2-2}.

~\\
\begin{tabular}{l|c|c}\hline
	Case & Good pair of $D$ & Notation \\\hline
	\multicolumn{3}{l}{$|\mathcal{X}|=2$}\\\hline
	$|X_1|=3$ & by Proposition~\ref{prop10}.\ref{prop2-2}\\\hline
	$|X_1|=2$ & $B_++y_2x_1+y_1v_1+w_2^-w_2,B_-+y_1w_2x_3+y_2v_2$ & $X_1=w_1x_2w_1;w_2^-\neq y_1$\\\hline
	$|X_1|=1$ & $B_++y_1v+y_2x_1+w_2^-w_2+u_1^-u_1+u_2^-u_2,B_-+w_1x_1+y_2v+y_1w_2w_2^+$ & $w_2^+\neq y_1;w_2^-\neq y_1$\\\hline
	\multicolumn{3}{l}{Note that $X_1=\{v\},X'-\{x_1,v\}=\{u_1,u_2\}$ and $u_1^-u_1,u_2^-u_2\in D[X']$}\\\hline
	\multicolumn{3}{l}{$|\mathcal{X}|=3$ (say $X_1=x_2$)}\\\hline
	\multicolumn{3}{l}{{\bf A.} $|X_2|=2$ (set $y_ix_2\in A$, $P^1=y_ix_2$ and $P^2=y_1w_2x_2+y_2x_1+w_1x_3$)}\\\hline
	$y_{3-i}x_3\in A$ & $B_++P^1+y_{3-i}x_3w_1x_1+w_2^-w_2,B^-+P^2$ & $w_2^-\neq y_1$\\\hline
	$y_{3-i}w_1\in A$ & $B_++P^1+y_{3-i}w_1x_1+w_2^-w_2x_3,B^-+P^2$ & $w_2^-\neq y_2,x_3$\\\hline
	\multicolumn{3}{l}{{\bf B.} $|X_2|=1$}\\\hline
	\multicolumn{3}{l}{$\bullet$ $X_2=w_1$}\\\hline
\end{tabular}\\
\vspace{1mm}
\begin{tabular}{l|c|c}\hline
	$y_1w_1,y_2x_2\in A$ & $B_++y_1w_2w_1x_1+y_2x_2+x_3^-x_3,B_-+y_2x_1+w_2x_2+w_1w_1^+$ & $w_1^+\notin \{x_1,y_1\};x_3^-\neq w_1$\\\hline
	$y_1x_2,y_2w_1\in A$\\$w_1w_2\notin A$ &$B_++y_1x_2+y_2x_1+x_3^-x_3+w_2^-w_2,B_-+y_1w_2x_2+y_2w_1x_1$ & $x_3^-\in X'$\\\hline
	\multicolumn{3}{l}{$y_1x_2,y_2w_1,w_1w_2\in A$}\\\hline
	$x_1q_2\in A$ & $q_1q_2y_2w_1x_1+w_1w_2x_2+y_1^-y_1+x_3^-x_3,y_2x_1q_2q_1y_1w_2w_1+x_2x_2^++x_3x_3^+$\\\hline
	& \multicolumn{2}{l}{where $x_2^+,x_3^+\in Q,y_1^-\neq q_1,x_3^-\neq y_1$}\\\hline
	$x_1q_1,x_2q_2\in A$ & $q_1q_2y_2w_1x_1+w_1w_2x_2+y_1^-y_1+x_3^-x_3,y_2x_1q_1y_1w_2w_1+x_2q_2q_1+x_3x_3^+$ & $x_3^+\in Q;y_1^-\neq q_1;x_3^-\neq y_1$\\\hline
	\multicolumn{3}{l}{$\bullet$ $X_2=x_3$}\\\hline
	$y_1x_3,y_2x_2\in A$ & $B_++y_1w_2x_3+y_2x_2+w_1^-w_1,B_-+y_1x_3+y_2x_1+w_2x_2+w_1w_1^+$ & $w_1^+,w_1^-\neq x_1$\\\hline
	$y_1x_2,y_2x_3\in A$ & $B_++y_1w_2x_2+y_2x_3+w_1^-w_1,B_-+y_1x_2+y_2x_1+w_2x_3+w_1w_1^+$ & $w_1^+,w_1^-\neq x_1$\\\hline
\end{tabular}

Henceforth assume $|\mathcal{X}|=4$, i.e., $w_1,x_2,x_3$ are all initial strong components in $\mathcal{X}$. Then $|(Y',X'-x_1)_D|\ge6$. Since $d_{X'}^+(y_1)=1$, $|(\{y_2,w_2\},X'-x_1)_D|\ge5$, which implies that $d_{X'}^+(y_2)\ge 3$. Then by Proposition~\ref{prop10}.\ref{prop2-2}, $D$ has a good pair, a contradiction.

\subsection{Lemma~\ref{lem12}}
\subsubsection{Case 1}
\begin{tabular}{l|c|c}\hline
	Case & Contradiction & Reference\\\hline
	\multicolumn{3}{l}{$H_3$}\\\hline
	$d^+_{Y'}(w_j)=1,\forall j\in \{2,3\}$ & at least one of the vertices $w_2$ and $w_3$ has an out-neighbour in $X$ & Claim~\ref{lem12}.1\\\hline
	$d^+_{Y'}(w_2)\geq 2$ & $w_3$ has an out-neighbour in $X$ & Claim~\ref{lem12}.1\\\hline
	$H_4$ & at least one of the vertices $w_2$ and $w_3$ has an out-neighbour in $X$ & Claim~\ref{lem12}.1\\\hline
	$H_5$ & $B^-_D=F^-(H_5)+y_1w_2(=a_5)+w_2y_2(=e_{w_2})+e_{w_3}+e_{x_1}+e_{x_2}$ & Note~\ref{lem12}.2\\\hline
	& \multicolumn{2}{l}{where $e_{w_3}$ is an out-arc of $w_3$ with head in $Y'$}\\\hline
	$H_6$ ($w_2y_1$ or $w_2w_1$ is in $A$ ) & $B^-_D=F^-(H_6)+y_2w_2(=a_6)+w_2s(=e_{w_2})+e_{w_3}+e_{x_1}+e_{x_2}$ & Note~\ref{lem12}.2\\\hline
	& \multicolumn{2}{l}{where $e_{w_3}$ is an out-arc of $w_3$ with head in $Y'$}\\\hline
	\multicolumn{3}{l}{$w_2y_2\in A$ and $d^+_{Y'}(w_2)=1$}\\\hline
	$\exists w_2^+\in X$ & $D$ has a good pair & Claim~\ref{lem12}.1\\\hline
	$w_2w_3\in A$ & $B^-_D=F^-(H_6)+y_2w_2(=a_6)+w_2w_3(=e_{w_2})+e_{w_3}+e_{x_1}+e_{x_2}$ & Note~\ref{lem12}.2\\\hline
\end{tabular}

\subsubsection{Case 2}
{\bf Subcase 2.1:} $d^+_X(w_2)=2$ and $d^+_X(w_3)=2$.

That is, $\hat{A}=\{w_2x_1,w_2x_2,w_3x_1,w_3x_2\}\subseteq A$.
Let $e_{w_2}=w_2x_2$ and $e_{w_3}=w_3x_1$. Moreover, let $D'=(V,A')$ with $A'=A-\{e_{w_2},e_{w_3}\}$ and $F$ be a subdigraph of $D$ such that $V(F)=X'$ and $A(F)=\hat{A}$.

Note that each initial strong component in $D'[X']$ has at least two in-arcs from $Y'$. It follows that for each $1\leq i\leq 6$, $a_i$ and $D''=(V,A'')$ with $A''=A'-a_i$, each initial strong component in $D''[X']$ has an in-arc from $Y'$. Moreover, $F^-(H_i)+e_{x_1}+e_{x_2}+e_{w_2}+e_{w_3}+a_i$ is an in-branching of $D$. Thus $D$ has a good pair by Note~\ref{lem12}.2.

{\bf Subcase 2.2:} $d^+_X(w_2)=1$ and $d^+_X(w_3)=1$.

Since $d^+_{X'}(w_2)\geq 2$ and $d^+_{X'}(w_3)\geq 2$, $w_2w_3,w_3w_2\in A$. Thus, the arc from $w_2$ to $X$ and the arc from $w_3$ to $X$ has different heads. W.l.o.g., assume $w_2x_1,w_3x_2\in A$. Let $\hat{A}=\{w_2w_3,w_3w_2,w_2x_1,w_3x_2\}$ and $F$ be a subdigraph of $D$ such that $V(F)=X'$ and $A(F)=\hat{A}$. Since $D[X']$ has no good pair, by symmetry, $D[X']=F$ or $F+x_1x_2$. Let $D'=(V,A')$ with $A'=A-\{a_i,e_{w_2},e_{w_3}\}$.

Note that each initial strong component of $D'[X']$ has an in-arc from $Y'$ in $D'$. Moreover, $F^-(H_i)+e_{x_1}+e_{x_2}+e_{w_2}+e_{w_3}+a_i$ is always an in-branching of $D$. Thus $D$ has a good pair by Note~\ref{lem12}.2.

{\bf Subcase 2.3:} $d^+_X(w_j)=2$ and $d^+_X(w_{5-j})=1$ for $j=2$ or $3$.

W.l.o.g., assume $d^+_X(w_2)=2$, i.e., $w_2x_1,w_2x_2\in A$, and $d^+_X(w_3)=1$. By symmetry, let $w_3x_2\in A$. Since $d^+_{X'}(w_3)\geq 2$,
$w_3w_2\in A$. Moreover, since $D[\{w_2,w_3,x_2\}]$ can contain at most three arcs, $w_2w_3,x_2w_3\notin A$. Note that $d^-_{D}(w_3)\geq 2$. Thus, for any $a_i$, there is an arc $e\neq a_i$ from $Y'\cup x_1$ to $w_3$.

~\\
\begin{tabular}{l|c|c}\hline
	Case & $O_D$ & $I_D$ \\\hline
	$x_2x_1\in A$ & $B^+(H_i)+e+w_3w_2x_2x_1$ & $F^-(H_i)+e_{x_1}+e_{x_2}+w_2x_1+w_3x_2+a_i$\\\hline
	\multicolumn{3}{l}{Condition ($\ast$): in $D-a_i$, $x_1$ has an in-arc $\hat{e}$ from $Y'$}\\\hline
	\multicolumn{3}{l}{for each $H_i$ where $1\leq i\leq 6$ and $i\neq 5$ or for $H_5$ and $a_5$ satisfies ($\ast$)}\\\hline
	($\ast$) holds & $B^+(H_i)+\hat{e}+e+w_3w_2x_2$ & $F^-(H_i)+e_{x_1}+e_{x_2}+w_2x_1+w_3x_2+a_i$\\\hline
	\multicolumn{3}{l}{For $H_5$ and $a_5$ doesn't satisfy the condition ($\ast$)}\\\hline
	$y_2w_2\in A$ & $B^+(H_i)+y_2w_2+w_2x_1+e+w_3x_2$ & $F^-(H_i)+e_{x_1}+e_{x_2}+y_1x_1(=a_5)+w_3w_2x_2$\\\hline
	$y_2x_2\in A$ & $B^+(H_i)+e+w_3w_2x_1+y_2x_2$ & $F^-(H_i)+e_{x_1}+e_{x_2}+y_1x_1(=a_5)+w_2x_2+w_3x_2$\\\hline
\end{tabular}

\subsubsection{Claim~\ref{lem12}.2}
For $H_i$ ($1\leq i\leq 3$), $a_i$ has two choices, both of which are from a same vertex, which implies that they have different heads. Hence, there exists an $a_i$ with head in $\{x_1,x_2,w_2\}$. For $H_4$ to $H_6$, we show contradictions below.

~\\
\begin{tabular}{l|c}\hline
	Case & Contradiction \\\hline
	$H_4$ & $D[Y'\cup\{w_3\}]$ is either a tournament of order $4$ or contains a subdigraph on $3$ vertices with $4$ arcs\\\hline
	$H_5$ & $D$ has a good pair by Note~\ref{lem12}.2\\\hline
	$H_6$ & $D$ contains no subdigraph with a good pair on at least 3 vertices\\\hline
\end{tabular}

\subsubsection{Case 3}
{\bf Subcase 3.2:} $a_i$ with head $x_2$.

For both situations, let $e_{w_2}=w_2x_1$ and then $F^-(H_i)+e_{x_1}+e_{x_2}+a_i+e_{w_2}+e_{w_3}$ is an in-branching of $D$ for any $1\leq i\leq 6$. If there exists an initial strong component $X_0$ in $D'[X']$ such that $d^-_{Y'}(X_0)=1$, then $x_1\in X_0$ and $w_2\notin X_0$. For $w_2$ and Situation $(a)$, $x_2\notin X_0$, since $w_2x_2\in A(D')$. For Situation $(b)$, if $D[X]$ is a digon, then $D[Q\cup X]$ is $E_4$, which has a good pair, a contradiction. So, $D[X]$ is not a digon. Thus, if $x_2\in X_0$, then $w_3\in X_0$. But, $w_2w_3\in A(D')$, contradicting the assumption that $X_0$ is initial. Hence, we also have $x_2\notin X_0$. Since the head of $a_i$ is $x_2$, in $D''$, each initial strong component of $D''[X']$ has always at least one in-arc from $Y'$. By Note~\ref{lem12}.2, $D$ has a good pair.

{\bf Subcase 3.2:} $a_i$ with head $w_2$.

For both situations, let $e_{w_2}=w_2x_1$ and then $F^-(H_i)+e_{x_1}+e_{x_2}+a_i+e_{w_2}+e_{w_3}$ is an in-branching of $D$ for any $1\leq i\leq 6$. If there exists an initial strong component $X_0$ in $D'[X']$ such that $d^-_{Y'}(X_0)=1$, then $x_1\in X_0$ and $w_2\notin X_0$. Since the head of $a_i$ is $w_2$, in $D''$, each initial strong component of $D''[X']$ has always at least one in-arc from $Y'$. By Note~\ref{lem12}.2, $D$ has a good pair.

\subsection{Lemma~\ref{lem13}}
\subsubsection{Claim~\ref{lem13}.1}
We find a good pair $(B^++P_+,B^-+P_-)$ of a subdigraph $H$ of $D$ as follows, which implies that $D$ has a good pair by Lemma~\ref{lem1}.

{\bf Case 2:} $w_1y_2\in A$ and $N_{D-y_2}^+(w_1)\subseteq W$.

W.l.o.g., assume $w_1w_2\in A$. Note that there exists an arc from $w_2$ to $X\cup Y-y_2$, say $e_1$, as $D\nsupseteqq E_3$.
If $(X\cup Y,w_2)_D\neq \emptyset$ and $(D-w_3,w_1)_D\neq \emptyset$, then set $e_2\in (X\cup Y,w_2)_D$ and $e_3\in (D-w_3,w_1)_D$.

~\\
\begin{tabular}{l|c|c|c}\hline
	Case & $P_+,P_-$ & $H$ & Notation \\\hline
	$\exists e_2,e_3$ & $P^1+e_2+e_3,P^2+y_2w_1w_2+e_1$ & $D-w_3$\\\hline
	$\exists e_2,\nexists e_3$ & $P^1+e_2+w_3^-w_3w_1,P^2+y_2w_1w_2+e_1+w_3w_3^+$ & $D$ & $w_3^-,w_3^+\neq w_1$\\\hline
	$\nexists e_2$ and $w_3w_1\notin A$ & $P^1+w_3^-w_3w_2+w_1^-w_1,P^2+y_2w_1w_2+e_1+w_3w_3^+$ & $D$ & $w_3^-,w_3^+\neq w_2;w_1^-\neq y_2$\\\hline
	\multicolumn{4}{l}{Thus $\nexists e_2$ and $w_3w_1\in A$.}\\\hline
	\multicolumn{4}{l}{{\bf Subcase 2.1:} $|(Y,X)_D|=3$.}\\\hline
\end{tabular}\\
\vspace{1mm}
\begin{tabular}{l|c|c|c}\hline
	$w_3x_1\in A$ & $P^2+w_3^-w_3x_1+y_2w_1,P^1+w_3w_1y_2$ & $D-w_2$ & $w_3^-\in X\cup Y-x_1$\\\hline
	$w_2x_1\in A$ & $P^2+y_2w_1w_2x_1,P^1+w_1y_2+w_2w_2^+$ & $D-w_3$ & $w_2^+\in X\cup Y$\\\hline
	$w_1x_1\in A$ & $P^2+y_2w_1x_1,P^1+w_1y_2$ & $D-\{w_2,w_3\}$\\\hline
	\multicolumn{4}{l}{{\bf Subcase 2.2:} $|(Y,X)_D|=2$. (Let $P_*^1=x_2x_1$ and $P_*^2=(Y,X)_D$.)}\\\hline
	$w_3x_2\in A$ & $P_*^1+w_3^-w_3x_2+y_2w_1,P_*^2+w_3w_1y_2$ & $D-w_2$ & $w_3^-\in Y$\\\hline
	$w_2x_2\in A$ & $P_*^1+y_2w_1w_2x_2,P_*^2+w_1y_2+w_2w_2^+$ & $D-w_3$ & $w_2^+\notin W$\\\hline
	$w_1x_2\in A$ & $P_*^1+y_2w_1x_2,P_*^2+w_1y_2$ & $D-w_2-w_3$\\\hline
\end{tabular}

{\bf Case 3:} $e_1$ exists but $N_{D-y_2}^-(w_1)\subset W$.

W.l.o.g., assume $w_2w_1\in A$. Let $P'_-=y_2w_1+e_1$.
If $(X\cup Y,w_2)_D\neq \emptyset$ and $(w_2,X\cup Y)_D\neq \emptyset$, then set $e_3\in (X\cup Y,w_2)_D$ and $e_4\in (w_2,X\cup Y)_D$.

~\\
\begin{tabular}{l|c|c|c}\hline
	Case & $P_+,P_-$ & $H$ & Notation \\\hline
	$\exists e_3,e_4$ & $P^1+e_3+w_2w_1,P^2+P'_-+e_4$ & $D-w_3$\\\hline
	$\nexists e_3$ & $P^1+w_3^-w_3w_2w_1,P^2+P'_-+e_4+w_3w_3^+$ & $D$ & $w_3^-,w_3^+\in X\cup Y$\\\hline
	$\exists e_3,\nexists e_4$ & $P^1+e_3+w_2w_1+w_3^-w_3,P^2+P'_-+e_4+w_2w_3w_3^+$ & $D$ & $w_3^-\neq w_2;w_3^+\notin W$\\\hline
\end{tabular}

\subsubsection{Claim~\ref{lem13}.2}
{\bf Subcase 2.1:} $(Y,X)_D=y_1x_1$ ($(Y,X)_D=y_2x_2$).

By the digraph duality, it suffices to prove the case of $(Y,X)_D=y_1x_1$.

Now $d_W^+(y_2)\ge2$ and $d_W^-(x_1),d_W^-(x_2)\ge1$. W.l.o.g., assume $y_2w_1,y_2w_2\in A$. Note that there exists at least one arc from $w_1$ or $w_2$ to $X$ by the fact that $|[X,w_i]_D|\le1$. W.l.o.g., assume $w_1x_i\in A,~i\in[2]$. Let $P_+=y_1x_1x_2+y_2w_2$ and $P_-=y_1y_2w_1x_i$. Let $w_1w_1^+$ and $w_1^-w_1$ be respectively an out- and in-arc of $w_1$ such that $w_1^+\neq x_i$ and $w_1^-\neq y_2$. This implies that $w_1^+,w_1^-\in\{w_2,w_3\}$.

~\\
\vspace{1mm}
\begin{tabular}{l|c|c}\hline
	Case & Good Pair of $D$ & Notation \\\hline
	$w_1^-=w_2$ & $B^++P_++w_2w_1w_3,B^-+P_-+w_3w_3^++w_2w_2^+$ & $w_3^+\neq w_2;w_2^+\neq w_1$\\\hline
	$w_1^-=w_3$ & $B^++P_++w_3^-w_3w_1,B^-+P_-+w_2w_2^++w_3w_3^+$ & $w_3^-,w_3^+\neq w_1;w_2^+\neq w_3$\\\hline
\end{tabular}

{\bf Subcase 2.2:} $(Y,X)_D=y_1x_2$.

Likewise $d_W^+(y_2)\ge2$, say $y_2w_1,y_2w_2\in A$. Note that $d_W^-(x_1)\ge2$ and $N^-(x_1)\cap\{w_1,w_2\}\neq \emptyset$, say $w_1x_1\in A$. Let $B_{q_2}^+=q_2q_1y_1y_2w_1x_1x_2+y_2w_2$ and $B_{y_2}^-$ be an in-tree rooted at $y_2$ such that $A(B_{y_2}^-)\subseteq \{q_1q_2y_2,y_1x_2\}\cup (X,Q)_D$. Let $w_1w_1^+$ and $w_1^-w_1$ be respectively an out- and in-arc of $w_1$ such that $w_1^+\neq x_1$ and $w_1^-\neq y_2$. This implies that $w_1^+,w_1^-\in\{w_2,w_3\}$ by the fact that $|[Y,w_i]_D|\le1$ and $|[X,w_i]_D|\le1$.

~\\
\vspace{1mm}
\begin{tabular}{l|c|c}\hline
	Case & Good Pair of $D$ & Notation \\\hline
	$w_2w_1,w_2w_3\in A$ & $B_{q_2}^++w_2w_3,B_{y_2}^-+w_2w_1w_3w_3^+$ & $w_3^+\notin W$\\\hline
	$\exists w_2^+\notin W$ & $B_{q_2}^++w_2w_2^++w_3w_3^++w_1w_1^+,B_{y_2}^-+w_3^-w_3$ & $w_3^+,w_3^-\neq w_1;\forall w_1^+\in N_D^+(w_1)$\\\hline
\end{tabular}

{\bf Subcase 2.3:} $(Y,X)_D=y_2x_1$.

Now $d_W^+(y_i)\ge1$ for any $i\in[2]$, w.l.o.g., say $y_1w_1,y_2w_2\in A$. Let $B_{q_2}^+=q_2q_1y_1y_2x_1x_2+y_2w_2$ and $B_{y_2}^-$ be an in-tree rooted at $y_2$ such that $A(B_{y_2}^-)\subseteq q_1q_2y_2\cup (X,Q)_D$.

~\\
\begin{tabular}{l|c|c}\hline
	Case & Good Pair of $D$ & Notation \\\hline
	$(w_2,X)_D\neq \emptyset$ & $B_{q_2}^++w_3^-w_3+w_1^-w_1,B_{y_2}^-+w_2x_i+y_1w_1w_1^++w_3w_3^+$ & $w_3^+,w_3^-\neq w_1;w_1^-\neq y_1;w_1^+\neq w_3,y_1;i\in[2]$\\\hline
	$(w_2,X)_D=\emptyset$ & $B_{q_2}^++w_2w_1+w_3^-w_3,B_{y_2}^-+y_1w_1x_i+w_2w_3w_3^+$ & $w_3^-,w_3^+\neq w_2;i\in[2]$\\\hline
\end{tabular}

\subsubsection{Claim~\ref{lem13}.3}
We find a good pair $(B^++P_+,B^-+P_-)$ of a subdigraph $H$ of $D$ as follows, which implies that $D$ has a good pair by Lemma~\ref{lem1}.

{\bf Subcase 1.2:} $w_2x_2,w_3x_2\in A$.

\textbf{First assume $w_1w_2\in A$.} Let $P'_+=y_2w_1w_2x_2$.
If $(w_1,X\cup Y)_D\neq \emptyset$ and $(w_2,D-w_3)_D\neq \emptyset$, then set $e_1\in (w_1,X\cup Y)_D$ and $e_2\in (w_2,D-w_3)_D$.

~\\
\vspace{1mm}
\begin{tabular}{l|c|c|c}\hline
	Case & $P_+,P_-$ & $H$ & Notation \\\hline
	$\exists e_1,e_2$ &   $P^1+P'_+,P^2+e_1+e_2$ & $D-w_3$\\\hline
	$\nexists e_1$ & $P^1+P'_++w_3^-w_3,P^2+w_2w_2^++w_1w_3x_2$ & $D$ & $w_2^+\neq x_2;w_3^-\neq w_1$\\\hline
	$\exists e_1,\nexists e_2, w_2w_1\in A$ & $P^1+P'_+,P^2+w_2w_1+e_1$ & $D-w_3$\\\hline
	$\exists e_1,\nexists e_2, w_2w_3\in A$ & $P^1+P'_++w_3^-w_3,P^2+e_1+w_2w_3x_2$ & $D$ & $w_3^-\neq w_2$\\\hline
\end{tabular}

The case of $w_1w_3\in A$ can be proved analogously.

\textbf{Next assume $N^+(w_1) \nsubseteq W$.} Since $D\nsupseteqq E_3$, at least one of $w_2$ and $w_3$ has two in-neighbours, w.l.o.g., say $w_2$. This implies that there exists an arc $e_1$ from $Y$ to $w_2$. Let $P'_+=e_1+w_2x_2$.

If $(w_2,x_1\cup Y)_D\neq \emptyset$, then set $e_2\in (w_2,x_1\cup Y)_D$.

~\\
\vspace{1mm}
\begin{tabular}{l|c|c|c}\hline
	Case & $P_+,P_-$ & $H$ & Notation \\\hline
	$\exists e_2$ & $P^1+P'_+,P^2+e_2$ & $D-\{w_1,w_3\}$\\\hline
	$\nexists e_2, w_2w_1\in A$ & $P^1+P'_++y_2w_1,P^2+w_2w_1w_1^+$ & $D-w_3$ & $w_1^+\in X\cup Y$\\\hline
	$\nexists e_2, w_2w_3\in A$ & $P^1+P'_++w_3^-w_3,P^2+w_2w_3x_2$ & $D-w_1$ & $w_3^-\in X\cup Y$\\\hline
\end{tabular}

{\bf Case 2:} $y_2w_1,w_1x_2\in A$ ($y_1w_1,w_1x_2\in A$).

By the digraph duality, it suffices to prove the case of $y_2w_1,w_1x_2\in A$.

By Case 1, $y_1w_1\notin A$, then w.l.o.g., assume $y_1w_2\in A$. If $(w_1,x_1\cup Y)_D\neq \emptyset$, then set $e_1\in (w_1,x_1\cup Y)_D$.

~\\
\begin{tabular}{l|c|c|c}\hline
	Case & $P_+,P_-$ & $H$ & Notation\\\hline
	$\exists e_1$ & $P^1+y_2w_1x_2,P^2+e_1$ & $D-\{w_2,w_3\}$\\\hline
	\multicolumn{3}{l}{{\bf Subcase 2.1:} $\nexists e_1, w_2x_2\in A$.}\\\hline
	$w_1w_2\in A$ ($w_2w_1\in A$ is analogous) & $P^1+y_2w_1x_2+y_1w_2,P^2+w_1w_2x_2$ & $D-w_3$\\\hline
	$w_1w_3,w_2w_3\in A$ & $P^1+y_2w_1x_2+y_1w_2w_3,P^2+w_1w_3w_3^++w_2x_2$ & $D$ & $w_3^+\notin W$\\\hline
	\multicolumn{3}{l}{{\bf Subcase 2.2:} $\nexists e_1, w_3x_2\in A$.}\\\hline
	$w_1w_2\in A,\exists w_2w_2^+\in (w_2,X\cup Y)_D$ & $P^1+y_2w_1x_2+y_1w_2,P^2+w_1w_2w_2^+$ & $D-w_3$\\\hline
	$w_1w_2\in A,N^+(w_2)\subset W$ & $P^1+y_2w_1x_2+y_1w_2+w_3^-w_3,P^2+w_1w_2w_3x_2$ & $D$ & $w_3^-\notin W$\\\hline
	$w_1w_3\in A,\exists w_3^-w_3\in (X\cup Y,w_3)_D$ & $P^1+y_2w_1x_2+w_3^-w_3,P^2+w_1w_3x_2$ & $D-w_2$\\\hline
	$w_1w_3,w_2w_3\in A;\exists w_3^+\notin w_2 \cup X$ & $P^1+y_2w_1w_3x_2,P^2+w_1x_2+w_3w_3^+$ & $D-w_2$\\\hline
	$w_1w_3,w_2w_3,w_3w_2\in A$ & $P^1+y_2w_1w_3x_2+y_1w_2,P^2+w_3w_2w_2^+$ & $D$ & $w_2^+\neq w_3$\\\hline
\end{tabular}

\subsubsection{Claim~\ref{lem13}.4}
{\bf Subcase 1.2:} $(Y,X)_D=y_2x_i,~i\in \{1,2\}$.

W.l.o.g., assume $i=1$. Note that $(y_1,W)_D\neq \emptyset$, say $y_1w_1\in A$.

\textbf{First assume $w_1x_1,w_1x_2\in A$.} Now $(\{w_2,w_3\},x_2)_D\neq \emptyset$, w.l.o.g., say $w_2x_2\in A$. Set $P_+=y_1w_1x_1$ and $P_-=y_1y_2x_1+w_1x_2$.

~\\
\vspace{1mm}
\begin{tabular}{l|c|c}\hline
	Case & Good Pair of $D$ & Notation\\\hline
	$x_2w_3,w_2w_3\in A$ & $B^++P_++w_2^-w_2x_2w_3,B^-+P_-+w_2w_3w_3^+$ & $w_2^-,w_3^+\notin \{x_2,w_2,w_3\}$\\\hline
	$\exists w_3^-\neq w_2,x_2$ & $B^++P_++w_3^-w_3+w_2^-w_2x_2,B^-+P_-+w_3w_3^++w_2w_2^+$ & $w_2^-,w_2^+\neq x_2;w_3^+\neq w_2$\\\hline	
\end{tabular}

\textbf{Next assume that $w_1x_1\in A$ but $w_1x_2\notin A$.} Since $d_W^-(x_2)\ge2$, $w_2x_2,w_3x_2\in A$. Note that $|(w_1,\{w_2,w_3\})_D|\ge1$ as $w_1x_2\notin A$ by Lemma~\ref{lem12}. W.l.o.g., assume $w_1w_2\in A$.

~\\
\vspace{1mm}
\begin{tabular}{l|l}\hline
	Case & Good Pair of $D$\\\hline
	$w_3w_2\notin A$ & $B^++y_1w_1x_1+w_3^-w_3x_2+w_2^-w_2,B^-+y_1y_2x_1+w_1w_2x_2+w_3w_3^+$\\\hline
	&where $w_2^-\neq w_1;w_3^+\neq x_2;w_3^-\neq x_2,w_2$\\\hline
	$w_3w_2\in A;w_2w_1\notin A$ & $B^++y_1w_1x_1+w_3^-w_3w_2x_2,B^-+y_1y_2x_1+w_3x_2+w_1w_2w_2^+$\\\hline
	&where $w_2^+\neq x_2;w_3^-\neq x_2,w_2$\\\hline
	$w_3w_2,w_2w_1\in A$ & $B^++y_1w_1w_2x_2+y_2x_1+w_3^-w_3,B^-+w_3w_2w_1x_1+y_1y_2y_2^+$\\\hline
	&where $w_3^-\neq y_2,y_2^+\in W$\\\hline
\end{tabular}

\textbf{Henceforth assume that $w_1x_1\notin A$ but $w_1x_2\in A$.} Note that $w_1$ and $y_2$ respectively have an out-neighbour $w_1^+$ and an out-neighbour $y_2^+$ such that $w_1^+,y_2^+\in \{w_2,w_3\}$ by $\lambda(D)\ge2$ and Lemma~\ref{lem12}. W.l.o.g., assume $w_1w_2\in A$.

~\\
\vspace{1mm}
\begin{tabular}{l|c|c}\hline
	Case & Good Pair of $D$ & Notation\\\hline
	\multicolumn{3}{l}{{\bf A.} $y_2^+=w_2$ (Let $w_2^+$ be an arbitrary out-neighbour of $w_2$.)}\\\hline
	$w_2^+\in\{y_1,y_2,x_2\}$ & $B^++y_1w_1x_2+y_2w_2+w_3^-w_3+x_1^-x_1,B^-+y_1y_2x_1+w_1w_2w_2^++w_3w_3^+$ & $w_3^-,w_3^+\neq x_1;x_1^-\neq y_2$\\\hline
	$w_2^+=x_1$ & $B^++y_1w_1w_2+y_2x_1+w_3^-w_3+x_2^-x_2,B^-+y_1y_2w_2x_2+w_1x_2+w_3w_3^+$ & $w_3^-,w_3^+\neq x_2;x_2^-\neq w_1$\\\hline
	$w_2w_1,w_2w_3\in A$ & $B^++y_1w_1w_2w_3+y_2x_1+x_2^-x_2,B^-+y_1y_2w_2w_1x_2+w_3w_3^+$ & $x_2^-\neq w_1;w_3^+\neq x_2$\\\hline
	\multicolumn{3}{l}{{\bf B.} $y_2^+=w_3$ ($\exists w_1^-\in X\cup W$)}\\\hline
	\multicolumn{3}{l}{Let $B_{q_2}^+=q_2q_1y_1y_2$ and $B_{y_2}^-$ be an in-tree rooted at $y_2$ such that $A(B_{y_2}^-)\subseteq q_1q_2y_2\cup (X,Q)_D$.}\\\hline
	$w_1^-=x_1$ & $B_{q_2}^++y_2x_1w_1w_2+y_2w_3+x_2^-x_2,B_{y_2}^-+y_1w_1x_2+w_iw_i^++w_{5-i}w_{5-i}^+$ & $w_i^+\neq w_{5-i};w_{5-i}^+\neq x_2$\\\hline
	&\multicolumn{2}{l}{note that $x_2^-\neq w_1,w_i$, where $i\in\{2,3\}$}\\\hline
	$w_1^-=x_2$ & $B_{q_2}^++y_2x_2+y_2w_3x_2w_1w_2,B_{y_2}^-+y_1w_1x_2+w_2w_2^++w_3w_3^+$ & $w_2^+\neq w_3,w_3^+\neq x_2$\\\hline
	$w_1^-=w_2$ & $B_{q_2}^++y_2x_1+y_2w_3+w_2^-w_2w_1x_2,B_{y_2}^-+w_3x_2+y_1w_1w_2w_2^+$\\\hline
	&\multicolumn{2}{l}{where $w_2^-\notin \{x_2,w_1\};w_2^+\notin \{y_1,w_1\}$}\\\hline
	\multicolumn{3}{l}{Hence $w_1^-=w_3$. Now $x_1$ and $x_2$ respectively has an in-neighbour $x_1^-$ and an in-neighbour $x_2^-$ which is in $\{w_2,w_3\}$.}\\\hline
	$w_3\in \{x_1^-,x_2^-\}$ & $B^++y_1w_1w_2+y_2w_3+x_1^-x_1+x_2^-x_2,B^-+y_1y_2x_1+w_3w_1x_2+w_2w_2^+$ & $w_2w_2^+\notin \{x_1^-x_1,x_2^-x_2\}$\\\hline
	$w_2x_1,w_2x_2\in A$ & $B_{q_2}^++y_2w_3w_1w_2x_2+y_2x_1,B_{y_2}^-+y_1w_1x_2+w_2x_1+w_3w_3^+$ & $w_3^+\neq w_1$\\\hline
\end{tabular}

\textbf{Therefore, $w_1x_1,w_1x_2\notin A$.} By Lemma~\ref{lem12}, $N^+(w_1)\subset W$, i.e., $w_1w_2,w_1w_3\in A$. This implies that $w_2x_2,w_3x_2\in A$, $N^-(x_1)\cap \{w_2,w_3\}\neq \emptyset$ and $N^+(y_2)\cap \{w_2,w_3\}\neq \emptyset$. W.l.o.g., assume $w_2x_1\in A$.

~\\
\vspace{1mm}
\begin{tabular}{l|c|c}\hline
	Case & Good Pair of $D$ & Notation\\\hline
	$y_2w_2\in A$ & $B^++y_2w_2x_1+y_1w_1w_3x_2,B^-+y_1y_2x_1+w_1w_2x_2+w_3w_3^+$ & $w_3^+\neq x_2$\\\hline
	$y_2w_3\in A$ & $q_2q_1y_1y_2x_1+y_2w_3+w_2^-w_2x_2+w_1^-w_1,q_1q_2y_2+y_1w_1w_2x_1x_1^++w_3x_2x_2^++x_3x_3^+$\\\hline
	&\multicolumn{2}{l}{where $x_ix_i^+\in (X,Q)_D,~i\in[3];w_2^-\neq w_1,x_2;w_1^-\neq y_1$}\\\hline
\end{tabular}

{\bf Case 2:} $|(Y,X)_D|=0$.

Now $|(Y,W)_D|\ge3$ and $|(W,X)_D|\ge4$. This implies that there exists a $w_i\in W$ such that $d_X^+(w_i)\ge2$. By Lemma~\ref{lem12}, assume $y_1w_1,y_2w_2,y_2w_3\in A$. We find a good pair $(B^++P_+,B^-+P_-)$ of $D$ as follows, a contradiction.

~\\
\begin{tabular}{l|c|c}\hline
	Case & $P_+,P_-$ & Notation\\\hline
	\multicolumn{3}{l}{{\bf Subcase 2.1:} $w_1x_1,w_1x_2\in A$. (W.l.o.g., assume $w_2x_1\in A$)}\\\hline
	$w_3w_2\in A$ & $y_1w_1x_2+y_2w_3w_2x_1,w_1x_1+y_1y_2w_2w_2^++w_3w_3^+$ & $w_3^+\neq w_2;w_2^+\in \{w_1,x_2\}$\\\hline
	$w_3w_2\notin A$ & $y_1w_1x_1+y_2w_3+w_2^-w_2+x_2^-x_2,y_1y_2w_2x_1+w_1x_2+w_3w_3^+$ & $w_3^+\neq x_2;x_2^-\neq w_1;w_2^-\neq y_2$\\\hline
	\multicolumn{3}{l}{{\bf Subcase 2.2:} $w_ix_1,w_ix_2\in A,~i\in\{2,3\}$. (W.l.o.g., assume $w_2x_1,w_2x_2\in A$)}\\\hline
	$w_1x_1,w_1w_2\in A$ & $y_1w_1w_2x_1+y_2w_3x_2,y_1y_2w_2x_2+w_1x_1+w_3w_3^+$ & $w_3^+\neq x_2$\\\hline
	$w_1x_1,w_3w_2\in A$ & $y_1w_1x_1+y_2w_3w_2x_2,y_1y_2w_2x_1+w_3x_2+w_1w_1^+$ & $w_1^+\neq x_1$\\\hline
	$w_1x_1,w_1x_2\notin A$ & $y_1w_1w_3x_2+y_2w_2x_1,y_1y_2w_3x_1+w_1w_2x_2$\\\hline
\end{tabular}

\subsubsection{Claim~\ref{lem13}.5}
We find a good pair $(B^++P_+,B^-+P_-)$ of a subdigraph $H$ of $D$ as follows, which implies that $D$ has a good pair by Lemma~\ref{lem1}.

~\\
\begin{tabular}{l|c|c|c}\hline
	Case & $P_+,P_-$ & $H$ & Notation\\\hline
	\multicolumn{4}{l}{{\bf Subcase 1.3:} $(w_1,X)_D=\emptyset$ and $w_2x_1,w_2x_2\in A$. ($P^1=y_1w_1+y_2x_2+w_2x_1,P^2=y_1x_1+y_2w_1+w_2x_2$)}\\\hline
	$w_1w_2\notin A$ & $P^1+w_2^-w_2,P^2+w_1w_1^+$ & $D-w_3$ & $w_1^+\in X;w_2^-\in Y$\\\hline
	$w_1w_2\in A$, $\exists w_2^-\in X\cup Y$ & $P^1+w_2^-w_2,P^2+w_1w_2$ & $D-w_3$\\\hline
	$w_1w_2,w_3w_2\in A$ & $P^1+w_3^-w_3w_2,P^2+w_1w_2+w_3w_3^+$ & $D$ & $w_3^-\in X\cup Y;w_3^+\neq w_2$\\\hline
	\multicolumn{4}{l}{{\bf Subcase 1.4:} $w_2x_1,w_3x_2\in A$.}\\\hline
\end{tabular}\\
\vspace{1mm}
\begin{tabular}{l|c|c|c}\hline
	$\exists w_2^-\in X\cup Y;\exists w_3^+\notin \{x_2,w_2\}$ & $y_1x_1+y_2w_1w_3x_2+w_2^-w_2,y_1w_1w_2x_1+w_3w_3^+$ & $D-w_3$ \\\hline
	$\exists w_3^-\in X\cup Y;\exists w_2^+\notin \{x_1,w_3\}$ & $y_1w_1w_2x_1+w_3^-w_3,y_1x_1+y_2w_1w_3x_2+w_2w_2^+$ & $D-w_3$ \\\hline
\end{tabular}

{\bf Case 2:} $y_2w_1,w_1x_2\in A$ ($y_1w_1,w_1x_1\in A$).

By the digraph duality, it suffices to prove the case of $y_2w_1,w_1x_2\in A$.

{\bf Subcase 2.1:} $y_1w_3,w_3x_1\in A$.

Set $e_1\in (w_1,Y)_D$, $e_2\in (w_3,Y)_D$, $e_3\in (X,w_1)_D$ and $e_4\in (X,w_3)_D$ (if such sets are not empty).

~\\
\vspace{2mm}
\begin{tabular}{l|c|c|c}\hline
	Case & $P_+,P_-$ & $H$ & Notation\\\hline
	$\exists~e_1,e_2$ & $y_1w_3x_1+y_2w_1x_2,y_1x_1+y_2x_2+e_1+e_2$ & $D-w_2$ \\\hline
	$\exists~e_3,e_4$ & $y_1x_1+y_2x_2+e_3+e_4,y_1w_3x_1+y_2w_1x_2$& $D-w_2$ \\\hline
	$\exists~e_1,e_4$ & $y_2w_1x_2+y_1x_1+e_4,y_1w_3x_1+y_2x_2+e_1$ & $D-w_2$ \\\hline
	$\exists~e_2,e_3$ & $y_1w_3x_1+y_2x_2+e_3,y_2w_1x_2+y_1x_1+e_2$ & $D-w_2$ \\\hline
	\multicolumn{4}{l}{First assume $e_1$ exists, then $e_2$ and $e_4$ do not exist.}\\\hline
	$D[\{w_2,w_3\}]=C_2$ & $y_1w_3x_1+y_2w_1x_2+w_2^-w_2,y_1x_1+y_2x_2+w_3w_2w_2^++e_1$ & $D$ & $w_2^-,w_2^+\neq w_3$\\\hline
	$w_1w_3\in A$ & $y_1x_1+y_2w_1x_2+w_1w_3,y_1w_3x_1+y_2x_2+e_1$ & $D-w_2$\\\hline
	$w_3w_1\in A$ & $y_1w_3x_1+y_2w_1x_2,y_1x_1+y_2x_2+w_3w_1+e_1$ & $D-w_2$\\\hline
\end{tabular}

The case when $e_2$ exists can be proved analogouslsy.

Henceforth assume that $e_1$ and $e_2$ do not exist. This implies that both $N^+(w_1)\cap W$ and $N^+(w_3)\cap W$ are not empty.
We first show that $D[\{w_1,w_3\}]\neq C_2$. W.l.o.g., assume $x_1q_1,x_2q_2\in A$. If $q_1y_1,q_2y_2\in A$, then let $B_{y_2}^+=y_2x_2q_2q_1y_1w_3x_1+w_3w_1$ and $B_{w_3}^-=y_1x_1q_1q_2y_2w_1w_3$. If $N_{D-\{w_2,q_2\}}^+(x_2)\neq \emptyset$, say $x_2^+\in N_{D-\{w_2,q_2\}}^+(x_2)$, then $(B_{y_2}^+,B_{w_3}^-+x_2x_2^+)$ is a good pair of $D-w_3$, a contradiction. Hence $x_2w_2\in A$. Then $(B_{y_2}^++w_2^-w_2,B_{w_3}^-+x_2w_2w_2^+)$ is a good pair of $D$, where $w_2^+,w_2^-\neq x_2$ as $\lambda(D)\ge2$, a contradiction.
If $q_2y_1,q_1y_2\in A$, then let $B_{x_2}^+=x_2q_2q_1y_2w_1w_3x_1$ and $B_{x_2}^-=y_2x_2+x_1q_1q_2y_1w_3w_1x_2$. If $N_{D-\{w_2,q_2\}}^-(y_1)\neq \emptyset$, say $y_1^-\in N_{D-\{w_2,q_2\}}^-(y_1)$, then $(B_{x_2}^++y_1^-y_1,B_{x_2}^-)$ is a good pair of $D-w_2$, a contradiction. Hence $w_2y_1\in A$. Then $(B_{x_2}^++w_2^-w_2y_1,B_{x_2}^-+w_2w_2^+)$ is a good pair of $D$, where $w_2^-,w_2^+\neq y_1$, a contradiction.
This implies that $D[\{w_1,w_3\}]\neq C_2$.

If $w_1w_2,w_3w_2\in A$, then at least one of $e_3$ and $e_4$ exists by Fact~\ref{lem13}.3, w.l.o.g., say $e_3$. Let $P_+=y_1w_3x_1+y_2x_2+e_3+w_1w_2$ and $P_-=y_1x_1+y_2w_1x_2+w_3w_2w_2^+$, where $w_2^+\neq w_3$ as $\lambda(D)\ge2$. It follows that $(B^++P_+,B^-+P_-)$ is a good pair of $D$, a contradiction.
If $w_1w_3,w_3w_2\in A$, then let $P_+=y_1w_3x_1+y_2w_1x_2+w_2^-w_2$ and $P_-=y_1x_1+y_2x_2+w_1w_3w_2w_2^+$, where $w_2^-\neq w_3,w_2^+\notin W$ by $\lambda(D)\ge2$ and Fact~\ref{lem13}.3. It follows that $(B^++P_+,B^-+P_-)$ is a good pair of $D$, a contradiction.

The case of $w_3w_1,w_1w_2\in A$ can be proved analogously.

{\bf Subcase 2.2:} $y_1w_2,w_3x_1\in A$.

Set $e_1\in (w_1,Y)_D$, $e_2\in (X,w_1)_D$, $e_3\in (w_3,Y)_D$, $e_4\in (x_2\cup Y,w_3)_D$, $e_5\in (X,w_2)_D$ and $e_6\in (w_2,y_2\cup X)_D$ (if such sets are not empty). If one of the following holds, then $D-w_2$ or $D-w_3$ has a good pair $(B^++P_+,B^-+P_-)$, a contradiction.

~\\
\vspace{2mm}
\begin{tabular}{l|c|c}\hline
	Case & $P_+$ & $P_-$ \\\hline
	$\exists~e_1,e_3,e_4$ & $y_2w_1x_2+e_4+w_3x_1$ & $y_2x_2+y_1x_1+e_3+e_3$\\\hline
	$\exists~e_2,e_3,e_4$ & $y_2x_2+e_2+e_4+w_3x_1$ & $y_2w_1x_2+y_1x_1+e_3$\\\hline
	$\exists~e_1,e_5,e_6$ & $y_2w_1x_2+e_5+y_1x_1$ & $y_2x_2+e_1+y_1w_2+e_6$\\\hline
	$\exists~e_2,e_5,e_6$ & $y_2x_2+e_2+y_1x_1+e_5$ & $y_2w_1x_2+y_1w_2+e_6$\\\hline
\end{tabular}

First assume $N^-(w_3)\subset W$.
If $e_2$ exists, then let $P_+=y_1x_1+y_2x_2+e_2+w_1w_3+w_2^-w_2$ and $P_-=y_1w_2w_3x_1+y_2w_1x_2$, where $w_2^-$ is an arbitrary in-neighbour of $w_2$. It follows that $(B^++P_+,B^-+P_-)$ is a good pair of $D$, a contradiction.
Hence assume that $e_2$ does not exist.
If $w_2w_1\in A$, then $e_3$ exists by Fact~\ref{lem13}.3. Let $P_+=y_1w_2w_1w_3x_1+y_2x_2$ and $P_-=y_1x_1+y_2w_1x_2+w_2w_3+e_3$. It follows that $(B^++P_+,B^-+P_-)$ is a good pair of $D$, a contradiction.

Thus $w_3w_1\in A$. W.l.o.g., assume $q_1y_1,q_2y_2\in A$.
If $x_1q_1,x_2q_2\in A$, then $D$ has a good pair $(B_{w_1}^+,B_{w_1}^-)$ with $B_{w_1}^+=w_1w_3x_1+w_1x_2q_2q_1y_1w_2+y_2^-y_2$ and $B_{w_1}^-=y_1x_1q_1q_2y_2w_1+w_3w_1+x_2x_2^+$, where $y_2^-,x_2^+\neq q_2$ as $\lambda(D)\ge2$, a contradiction.
If $x_1q_2,x_2q_1\in A$, then $D$ has a good pair $(B_{q_1}^+,B_{w_1}^-)$ with $B_{q_1}^+=q_1q_2y_2w_1w_3x_1+w_1x_2+y_1^-y_1+w_2^-w_2$ and $B_{w_1}^-=y_2x_2q_1y_1w_2w_3w_1+x_1q_2q_1$, where $y_1^-\neq q_1$ and $w_2^-\neq y_1$ as $\lambda(D)\ge2$, a contradiction.

By the digraph duality, we also get a contradiction when $N^+(w_2)\subset W$.
Hence $(w_2,X\cup Y)_D,(X\cup Y,w_3)_D\neq \emptyset$.

Next assume that $D[\{x_1,w_3\}]=C_2$ and $N_{D-x_1}^-(w_3)\subset W$. We find $D$ has a good pair $(B^++P_+,B^-+P_-)$ as follows, a contradiciton.

~\\
\vspace{2mm}
\begin{tabular}{l|c|c}\hline
	Case & $P_+,P_-$ & Notation\\\hline
	$D[\{y_1,w_2\}]=C_2;N_{D-y_1}^+(w_2)\subset W$ & $y_1w_2w_1w_3x_1+y_2x_2,w_2y_1x_1+y_2w_1x_2+w_3w_3^+$ & $w_3^+\neq x_1$\\\hline
	$w_2w_3\in A;w_1w_2\notin A$ & $y_1x_1w_3+y_2w_1x_2+w_2^-w_2,y_1w_2w_3x_1+y_2x_2+w_1w_1^+$ & $w_2^-\neq y_1;w_1^+\neq x_2$\\\hline
	$w_2w_3,w_1w_2\in A$ & $y_1w_2w_3x_1+y_2w_1x_2,y_1x_1+y_2x_2+w_1w_2+e_6+w_3w_3^+$ & $w_3^+\neq x_1$\\\hline
	$w_1w_3\in A$ & $y_1x_1w_3+y_2w_1x_2+w_2^-w_2,y_1w_2+e_6+y_2x_2+w_1w_3x_1$ & $w_2^-\neq y_1$\\\hline
\end{tabular}

By the digraph duality, we also get a contradiction when $D[\{y_1,w_2\}]=C_2$ and $N_{D-y_1}^+(w_2)\subset W$.
Thus $e_4$ and $e_6$ exist.

Henceforth assume that $e_1$ and $e_2$ do not exist, namely $N_{D-y_2}^-(w_1)\cup N_{D-x_2}^+(w_1)\subset W$. Then $D$ has a good pair $(B^++P_+,B^-+P_-)$ as follows, a contradiciton.

~\\
\vspace{2mm}
\begin{tabular}{l|c|c|c}\hline
	Case & $P_+,P_-$ & Notation & Analogous case\\\hline
	$w_1w_2\in A$ & $y_1w_2+y_2w_1x_2+e_4+w_3x_1,y_1x_1+y_2x_2+w_1w_2+e_6+w_3w_3^+$ & $w_3^+\neq x_1$ & $w_3w_1\in A$ \\\hline
	$w_2w_1\in A$ & $y_1w_2w_1+y_2x_2+e_4+w_3x_1,y_1x_1+y_2w_1x_2+e_6+w_3w_3^+$ & $w_3^+\neq x_1$ & $w_1w_3\in A$ \\\hline
\end{tabular}

Therefore $e_1$ or $e_2$ exists, w.l.o.g., say $e_1$. Let $P_+=y_1w_2+y_2w_1x_2+e_4+w_3x_1$ and $P_-=y_1x_1+y_2x_2+e_1+e_6+w_3w_3^+$, where $w_3^+\neq x_1$. It follows that $(B^++P_+,B^-+P_-)$ is a good pair of $D$, a contradiction.

{\bf Case 3:} $y_2w_1,w_1x_1\in A$ ($y_1w_1,w_1x_2\in A$).

By the digraph duality, it suffices to prove the case of $y_2w_1,w_1x_1\in A$.

{\bf Subcase 3.1:} $y_1w_3,w_3x_2\in A$.

Set $e_1\in (X,w_1)_D$, $e_2\in (X,w_3)_D$, $e_3\in (w_1,Y)_D$ and $e_4\in (w_3,Y)_D$ (if such sets are not empty).

~\\
\vspace{2mm}
\begin{tabular}{l|c|c|c}\hline
	Case & $P_+,P_-$ & $H$ & Notation\\\hline
	$\exists~e_1,e_2$ & $y_1x_1+y_2x_2+e_1+e_2,y_1w_3x_2+y_2w_1x_1$ & $D-w_2$\\\hline
	$\exists~e_3,e_2$ & $y_1w_3x_2+y_2w_1x_1,y_1x_1+y_2x_2+e_3+e_4$ & $D-w_2$\\\hline
	\multicolumn{4}{l}{\textbf{First assume that $N_{D-y_2}^-(w_1),N_{D-x_1}^+(w_1)\subset W$.}}\\\hline
	$w_1w_3,w_3w_1\in A$ & $y_1w_3w_1x_1+y_2x_2,y_1x_1+y_2w_1w_3x_2$ & $D-w_2$\\\hline
	$w_1w_3\in A$\\$w_3w_1,w_3w_2\notin A$ & $y_1w_3x_2+y_2w_1x_1,y_1x_1+y_2x_2+w_1w_3w_3^+$ & $D-w_2$ & $w_3^+\neq w_1,w_2$\\\hline
	$w_1w_3,w_3w_2\in A$\\$w_3w_1\notin A$ & $y_1w_3x_2+y_2w_1x_1+w_2^-w_2,y_1x_1+y_2x_2+w_1w_3w_2w_2^+$ & $D$ & $w_2^+\notin W;w_2^-\neq w_3$\\\hline
	\multicolumn{4}{l}{The case of $w_3w_1$ can be proved analogously. Thus $w_1$ is not adjacent to $w_3$ and $D[\{w_1,w_2\}]=C_2$.}\\\hline
	$w_3w_2\in A$ & $y_1x_1+y_2x_2+w_3^-w_3w_2w_1,y_1w_3x_2+y_2w_1x_1+w_2w_2^+$ & $D$ & $w_3^-\in X\cup y_2;w_2^+\neq w_1$\\\hline
	\multicolumn{4}{l}{The case of $w_2w_3\in A$ can be proved analogously.}\\\hline
	$N(w_2)\cap (X\cup Y)\neq \emptyset$ & $y_1w_3x_2+y_2w_1x_1+w_2^-w_2,y_1x_1+y_2x_2+w_1w_2w_2^++w_3w_3^+$ & $D$ & $w_3^+\neq x_2$\\\hline
	& \multicolumn{3}{l}{Note that $w_2^+\in N^+(w_2)\cap (X\cup Y)$ and $w_2^- \in N^-(w_2)\cap (X\cup Y)$.}\\\hline
	\multicolumn{4}{l}{The case when $N_{D-y_1}^-(w_3),N_{D-x_2}^+(w_3)\subset W$ can be proved analogously.}\\\hline
	\multicolumn{4}{l}{\textbf{Next assume that $e_1$ and $e_4$ exist but $e_2$ and $e_3$ do not.}}\\\hline
	$w_1w_3\in A$ & $y_1x_1+y_2x_2+e_1+w_1w_3,y_1w_3x_2+y_2w_1x_1$ & $D-w_2$\\\hline
	\multicolumn{4}{l}{The case of $w_3w_1\in A$ can be proved analogously.}\\\hline
	$w_1w_2,w_2w_3\in A$ & $y_1x_1+y_2x_2+e_1+w_1w_2w_3,y_1w_3x_2+y_2w_1x_1+w_2w_2^+$ & $D$ & $w_2^+\neq w_3$\\\hline
	\multicolumn{4}{l}{The case when $e_2$ and $e_3$ exist but $e_1$ and $e_4$ do not can be proved analogously.}\\\hline
\end{tabular}

{\bf Subcase 3.2:} $y_1w_2,w_3x_2\in A$.

Set $e_1\in (w_1,Y)_D$, $e_2\in (X,w_1)_D$, $e_3\in (w_3,Y)_D$, $e_4\in (x_1\cup Y,w_3)_D$, $e_5\in (X,w_2)$ and $e_6\in (w_2,X\cup y_2)$ (if such sets are not empty). If one of the following holds, then $D-w_2$ or $D-w_3$ has a good pair $(B^++P_+,B^-+P_-)$, a contradiction.

~\\
\vspace{2mm}
\begin{tabular}{l|c|c}\hline
	Case & $P_+$ & $P_-$ \\\hline
	$\exists~e_1,e_3,e_4$ & $y_2w_1x_1+e_4+w_3x_2$ & $y_1x_1+y_2x_2+e_1+e_3$\\\hline
	$\exists~e_2,e_5,e_6$ & $y_1x_1+y_2x_2+e_2+e_5$ & $y_1w_2+e_6+y_2w_1x_1$\\\hline
\end{tabular}

First assume $N^-(w_3)\subset W$. Then $D$ has a good pair $(B^++P_+,B^-+P_-)$ as follows, a contradiction.

~\\
\vspace{2mm}
\begin{tabular}{l|c|c}\hline
	Case & $P_+,P_-$ & Notation\\\hline
	$w_2w_1\in A$ & $y_1w_2w_3x_2+y_2w_1x_1,y_1x_1+y_2x_2+w_2w_1w_3w_3^+$ & $w_3^+\notin W$\\\hline
	otherwise & $y_1w_2w_3+w_1^-w_1x_1+y_2x_2,y_1x_1+y_2w_1w_3x_2+w_2w_2^+$ & $w_1^-\neq y_2,x_1;w_2^+\neq w_3$\\\hline
\end{tabular}

The case of $N^+(w_2)\subset W$ can be proved analogously.
This implies that $N^-(w_3)\cap (X\cup Y)\neq \emptyset$ and $N^+(w_2)\cap (X\cup Y)\neq \emptyset$.

Next assume that $D[\{x_2,w_3\}]=C_2$ and $N_{D-x_2}^-(w_3)\subset W$.
If $w_2w_3\in A$, then $D$ has a good pair $(B^++P_+,B^-+P_-)$ below.

~\\
\vspace{2mm}
\begin{tabular}{l|c|c|c}\hline
	Case & $P_+$ & $P_-$ & Notation \\\hline
	$D[\{w_1,w_2\}]\neq C_2$& $y_1x_1+y_2x_2w_3+w_2^-w_2+w_1^-w_1$ &$y_1w_2w_3x_2+y_2w_1x_1$& $w_1^-\neq y_2,w_2^-\neq y_1$\\\hline
	$D[\{w_1,w_2\}]=C_2$& $y_1w_2w_1x_1+y_2x_2w_3$& $y_1x_1+y_2w_1w_2w_3x_2$\\\hline
\end{tabular}

Thus $w_1w_3\in A$ and $w_2w_3\notin A$. We find a good pair $(B^++P_+,B^-+P_-)$ of a subdigraph $H$ of $D$ as follows, which implies that $D$ has a good pair by Lemma~\ref{lem1}.

~\\
\vspace{2mm}
\begin{tabular}{l|c|c|c}\hline
	Case & $P_+,P_-$ & $H$ & Notation\\\hline
	$x_1w_1\notin A$ & $w_1^-w_1x_1+y_2x_2w_3,y_1x_1+y_2w_1w_3x_2$ & $D-w_2$ & $w_1^-\neq y_2$\\\hline
	$D[\{x_1,w_1\}]=C_2$ & $y_1x_1w_1+y_2x_2w_3+w_2^-w_2,y_1w_2w_2^++y_2w_1w_3x_2$ & $D$ & $w_2^-,w_2^+\neq y_1$\\\hline
\end{tabular}

The case when $D[\{y_1,w_2\}]=C_2$ and $N_{D-y_1}^+(w_2)\subset W$ can be proved analogously.
Henceforth $e_4$ and $e_6$ exist. We find that $D$ has a good pair $(B^++P_+,B^-+P_-)$ below.

~\\
\begin{tabular}{l|c|c}\hline
	Case & $P_+,P_-$ & Notation\\\hline
	$D[\{w_1,w_2\}]\neq C_2$ & $y_1x_1+y_2x_2+e_4+w_2^-w_2+w_1^-w_1,y_1w_2+e_6+y_2w_1x_1+w_3x_2$ & $w_1^-\neq y_2;w_2^-\neq y_1$\\\hline
	$D[\{w_1,w_2\}]= C_2$ & $y_1x_1+y_2w_1w_2+e_4+w_3x_2,y_1w_2w_1x_1+y_2x_2+w_3w_3^+$ & $w_3^+\neq x_2$\\\hline
\end{tabular}

\subsubsection{Claim~\ref{lem13}.6}
{\bf Case 2:} $d_Y^-(w_2)=2$.

That is $y_1w_2,y_2w_2\in A$. Let $P'_+=y_1w_2x_2+y_2x_1$ and $P'_-=y_1x_1+y_2w_2$. By Lemma~\ref{lem12} and $\lambda(D)\ge2$, $w_2$ has an out-neighbour $w_2^+\notin Y\cup x_2$. We find a good pair $(B^++P_+,B^-+P_-)$ of a subdigraph $H$ of $D$, which implies that $D$ has a good pair by Lemma~\ref{lem1}.

~\\
\vspace{2mm}
\begin{tabular}{l|c|c|c}\hline
	Case & $P_+,P_-$ & $H$ & Notation\\\hline
	$w_2^+=x_1$ & $P'_+,P'_-+w_2x_1$ & $D-\{w_1,w_3\}$\\\hline
	$w_2^+=w_1;\exists w_1^-\notin \{w_2,w_3\}$ & $P'_++w_1^-w_1,P'_-+w_2w_1x_2$ & $D-w_3$\\\hline
	$w_2^+=w_1;w_3w_1\in A$ & $P'_++w_3^-w_3w_1,P'_-+w_2w_1x_2+w_3w_3^+$ & $D$ & $w_3^+,w_3^-\neq w_1$\\\hline
	$w_2^+=w_3;w_3^+=w_1$ & $P'_++w_1^-w_1+w_3^-w_3,P'_-+w_2w_3w_1x_2$ & $D$ & $w_1^-\neq w_3;w_3^-\neq w_2$\\\hline
	$w_3^+\in X\cup \{y_1\};\exists w_3^-\neq w_1,w_2$ & $P'_++w_3^-w_3,P'_-+w_2w_3w_3^+$ & $D-w_1$\\\hline
	$w_3^+\in X\cup \{y_1\};w_1w_3\in A$ & $P'_++w_1^-w_1w_3,P'_-+w_2w_3w_3^++w_1x_2$ & $D$ & $w_1^-\neq w_3$\\\hline
\end{tabular}

{\bf Case 3:} $d_Y^-(w_2)=1$.

W.l.o.g., assume $y_1w_2\in A$, then $y_2w_3\in A$. Since $D\nsupseteqq E_3$, $w_1$ has an in-neighbour $w_1^-\notin \{x_2,w_2\}$. Set $P'_+=y_1x_1$ and $P'_-=y_1w_2x_2+y_2x_1$.

{\bf Subcase 3.1:} $x_2w_1\in A$.

Since $\lambda(D)\ge2$ and $D\nsupseteqq E_3$, $w_1$ and $w_2$ respectively have an in-neighbour $w_1^-$ and $w_2^-$ such that $w_1^-\notin \{x_2,w_2\}$ and $w_2^-\notin \{y_1,x_2,w_1\}$. We find a good pair $(B^++P_+,B^-+P_-)$ of a subdigraph $H$ of $D$ as follows, which implies that $D$ has a good pair by Lemma~\ref{lem1}.

~\\
\vspace{2mm}
\begin{tabular}{l|c|c|c}\hline
	Case & $P_+,P_-$ & $H$ & Notation\\\hline
	$w_1^-,w_2^-\neq w_3;\exists w_1^+\notin \{x_2,w_3\}$ & $P'_++w_1^-w_1x_2+w_2^-w_2,P'_-+w_1w_1^+$ & $D-w_3$\\\hline
	$w_1^-,w_2^-\neq w_3;w_1w_3\in A$ & $P'_++w_1^-w_1x_2+w_2^-w_2+y_2w_3,P'_-+w_1w_3w_3^+$ & $D$ & $w_3^+\neq w_1$\\\hline
	$w_1^-=w_3;w_2^-\neq w_3$ & $P'_++y_2w_3w_1x_2+w_2^-w_2,P'_-+w_1w_1^++w_3w_3^+$ & $D$ & $w_1^+\notin \{x_2,w_3\};w_3^+\neq w_1$\\\hline\
	$w_1^-\neq w_3;w_2^-=w_3$ & $P'_++y_2w_3w_2+w_1^-w_1x_2,P'_-+w_3w_3^++w_1w_1^+$ & $D$ & $w_1^+\neq x_2;w_3^+\neq w_2$\\\hline
	$w_1^-=w_2^-=w_3;\exists w_2^+\notin \{x_2,y_1\}$ & $y_1x_1+y_2w_3w_2x_2w_1,y_1w_2w_2^++y_2x_1+w_3w_1x_2$ & $D$ & $w_2^+\neq y_1$\\\hline
	$w_1^-=w_2^-=w_3;w_2y_1\in A$ & $y_1w_2x_2w_1+y_2x_1+w_3^-w_3,y_2w_3w_2y_1x_1+w_1x_2$ & $D$ & $w_3^-\neq y_2$\\\hline
\end{tabular}

{\bf Subcase 3.2:} $x_1w_1\in A$.

~\\
\vspace{2mm}
\begin{tabular}{l|c|c|c}\hline
	$w_1w_2,w_2w_3\in A$ & $y_1x_1+w_1w_2x_2+y_2w_3,y_1w_2w_3w_3^++y_2x_1+w_1x_2$ & $D$ & $w_3^+\notin \{y_1,w_2\}$\\\hline
	$w_1w_2\in A;w_2w_3\notin A$ & $y_1x_1+w_1w_2x_2,y_1w_2w_2^++y_2x_1+w_1x_2$ & $D-w_3$ & $w_2^+\neq x_2$\\\hline
	$w_1w_3,w_3w_2,w_3y_2\in A$ & $y_1x_1w_1x_2+y_2w_3w_2,y_1w_2x_2+w_1w_3y_2x_1$ & $D$\\\hline
	$w_1w_3,w_3w_2\in A;w_3y_2\notin A$ & $y_2x_1w_1w_3w_2x_2,y_1x_1+y_2w_3w_3^++w_2w_2^++w_1x_2$ & $D$ & $w_3^+\neq w_2;w_2^+\neq x_2$\\\hline
	$w_1w_3,w_3w_2\notin A$ & $y_1x_1w_1x_2+w_2^-w_2,y_1w_2x_2+y_2x_1+w_1w_1^+$ & $D-w_3$ & $w_2^-\neq y_1;w_1^+\neq x_2$\\\hline
	$w_1w_3\in A$ & $y_1x_1w_1x_2+y_2w_3+w_2^-w_2,y_1w_2x_2+y_2x_1+w_1w_3w_3^+$ & $D$ & $w_2^-\neq y_1;w_3^+\neq w_1$\\\hline
	$w_3w_2\in A$ & $y_1x_1w_1x_2+y_2w_3w_2,y_1w_2x_2+y_2x_1+w_1w_1^++w_3w_3^+$ & $D$ & $w_1^+\neq x_2;w_3^+\neq w_2$\\\hline
\end{tabular}

{\bf Subcase 3.3:} $N^-(w_1)\subset W$.

~\\
\vspace{2mm}
\begin{tabular}{c|c|c}\hline
	$P_+,P_-$ & $H$ & Notation\\\hline
	$y_1x_1+y_2w_3w_1x_2+w_2^-w_2,y_1w_2x_2+y_2x_1+w_3w_3^++w_1w_1^+$ & $D$ & $w_2^-\neq y_1;w_3^+\neq w_1;w_1^+\neq x_2$\\\hline
\end{tabular}

{\bf Case 4:} $(N^-(w_1)\cup N^-(w_2))\cap Y=\emptyset$.

That is $y_1w_3,y_2w_3\in A$. By Lemma~\ref{lem12}, $N^-(w_i)\cap W\neq \emptyset$ for any $i\in[2]$ and $d_Y^+(w_3)=0$.

{\bf Subcase 4.1:} $w_3x_i\in A,~i\in[2]$.

Since $D\nsupseteqq E_3$, $w_2$ has an in-neighbour $w_2^-\notin \{w_1,x_2\}$. Set $P_+=y_1x_1+y_2w_3+w_2^-w_2x_2$ and $P_-=y_1w_3x_i+y_2x_1+w_2w_2^+$, where $w_2^+\neq x_2$ as $\lambda(D)\ge2$.
If $w_2^+=w_1$, then $(B^++P_++w_1^-w_1,B^-+P_-+w_1x_2)$ is a good pair of $D$, a contradiction.
If $w_2^+\neq w_1$, then $(B^++P_+,N^-+P_-)$ is a good pair of $D-w_1$, a contradiction.

{\bf Subcase 4.2:} $N^+(w_3)\subset W$.

That is $w_3w_2,w_3w_1\in A$. Since $D\nsupseteqq E_3$, $D[\{w_1,w_2\}]\neq C_2$. Now assume that $w_iw_{3-i}\notin A$ for some $i\in[2]$. It follows that $(B^++y_1w_3w_ix_2+y_2x_1+w_{3-i}^-w_{3-i},B^-+y_1x_1+y_2w_3w_{3-i}x_2+w_iw_i^+)$ is a good pair of $D$, where $w_{3-i}^-\neq w_3$ and $w_i^+\neq x_2$ as $\lambda(D)\ge2$, a contradiction.

\subsubsection{Claim~\ref{lem13}.7}
We find a good pair $(B^++P_+,B^-+P_-)$ of $D$ or $D-w_3$ as follows, a contradiction.

~\\
\begin{tabular}{l|c|c}\hline
	Case & $P_+,P_-$ & Notation \\\hline
	\multicolumn{3}{l}{{\bf B.} $w_2x_1,w_2x_2\in A$.}\\\hline
	$N^+(w_1)\cap \{w_2,x_1,y_1\}=\emptyset$ & $y_1w_1x_2+y_2w_2x_1,y_1x_1+y_2w_1w_1^++w_2x_2$\\\hline
	$w_1w_3\in A$ & $y_1w_1x_2+y_2w_2x_1+w_3^-w_3,y_1x_1+y_2w_1w_3w_3^++w_2x_2$ & $w_3^-\neq w_1;w_3^+\neq w_1,y_2$\\\hline
	$w_1y_2\in A,w_3w_2\notin A$ & $y_1x_1+y_2w_1x_2+w_2^-w_2,y_1w_1y_2w_2x_2$& $w_2^-\neq y_2$\\\hline
	$w_1y_2,w_3w_2\in A$& $y_1x_1+y_2w_1x_2+w_3^-w_3w_2,y_1w_1y_2w_2x_2+w_3w_3^+$& $w_3^-,w_3^+\neq w_2$\\\hline
	\multicolumn{3}{l}{{\bf C.} $w_2x_2,w_1x_1\in A$.}\\\hline
	$w_2w_3\notin A$ & $y_1w_1x_1+y_2w_2x_2,y_1x_1+y_2w_1x_2+w_2w_2^+$& $w_2^+\neq x_2$\\\hline
	$w_3w_2\notin A$& $y_1x_1+y_2w_1x_2+w_2^-w_2,y_1w_1x_1+y_2w_2x_2$& $w_2^-\neq y_2$\\\hline
\end{tabular}\\
\vspace{2mm}
\begin{tabular}{l|c|c}\hline
	$D[\{w_2,w_3\}]=C_2$& $y_1w_1x_1+y_2w_2x_2+w_3^-w_3,y_1x_1+y_2w_1x_2+w_2w_3w_3^+$& $w_3^-,w_3^+\neq w_2$\\\hline
	\multicolumn{3}{l}{{\bf D.} $w_1x_1,w_1x_2\in A$.}\\\hline
	$D[\{x_2,w_3\}]=C_2$\\$D[\{y_2,w_2\}]=C_2$\\$w_2w_3\in A$ & $y_2w_1x_2w_3x_1+w_2^-w_2,y_1x_1+y_2w_2w_3x_2+w_1w_1^+$ & $w_2^-\notin \{y_2,w_1\};w_1^+\notin \{w_2,x_2\}$\\\hline
	otherwise & $y_1x_1+y_2w_1+w_2^-w_2+w_3^-w_3,y_1w_1x_2+y_2w_2w_2^++w_3x_1$ & $w_3^-\neq x_2;w_2^-,w_2^+\neq y_2$\\\hline
	\multicolumn{3}{l}{{\bf E.} $w_2x_1,w_3x_2\in A$.}\\\hline
	$w_2w_3\notin A$ & $y_1w_1+y_2w_2x_1+w_3^-x_2,y_1x_1+y_2w_1x_2+w_2w_2^++w_2w_3^+$& $w_3^-,w_3^+\neq x_2;w_2^+\neq x_1$\\\hline
	$w_2w_3,x_2w_3\in A$& $y_1x_1+y_2w_1+w_2^-w_2w_3x_2,y_1w_1x_2+y_2w_2x_1+w_3w_3^+$& $w_2^-\notin \{w_2,x_2,y_2\};w_3^+\neq x_2$\\\hline
	$w_2w_3\in A$\\$x_2w_3,w_3w_2\notin A$ & $y_1w_1+y_2w_2x_1+w_3^-w_3x_2,y_1x_1+y_2w_1x_2+w_2w_3w_3^+$& $w_3^-\neq w_2;w_3^+\neq x_2$\\\hline
	$D[\{w_2,w_3\}]=C_2$\\$x_2w_3,w_1w_3\notin A$ & $y_2w_1x_2+w_3^-w_3w_2x_1,y_1x_1+y_2w_2w_3x_2+w_1w_1^+$ & $w_3^-\notin \{w_2,x_1\};w_1^+\neq x_2$\\\hline
\end{tabular}

This implies that $D[\{w_2,w_3\}]=C_2$, $x_2w_3\notin A$ and $w_1w_3\in A$.
Now let $Q'=\{w_2,w_3\}$, $X'=N^-(Q')$ and $Y'=N^+(Q')$. Then $X'=\{y_2,w_1\}$ and $Y'=\{x_1,x_2\}$. Since $y_2w_1\in A$, by Claim~\ref{lem13}.4, $D$ has a good pair, a contradiction.

~\\
\vspace{2mm}
\begin{tabular}{l|c|c}\hline
	\multicolumn{3}{l}{{\bf F.} $w_1x_1,w_3x_2\in A$.}\\\hline
	$w_2w_3\notin A$ & $y_1w_1x_1+y_2w_2+w_3^-w_3x_2,y_1x_1+y_2w_1x_2+w_2w_2^++w_3w_3^+$ & $w_3^-,w_3^+\neq x_2;\forall w_2^+\in N_D^+(w_2)$\\\hline
	otherwise & $y_1w_1x_1+y_2w_2w_3x_2,y_1x_1+y_2w_1x_2+w_2w_2^++w_3w_3^+$ & $w_2^+\neq w_3;w_3^+\neq x_2$\\\hline
\end{tabular}

{\bf Subcase 1.2:} $w_2x_2,w_3x_2\in A$.

We find a good pair $(B^++P_+,B^-+P_-)$ of $D$ as follows, a contradiction.

~\\
\vspace{2mm}
\begin{tabular}{l|c|c}\hline
	Case & $P_+,P_-$ & Notation \\\hline
	$w_1x_1\in A;w_3w_2\notin A$ & $y_1x_1+y_2w_1+w_2^-w_2+w_3^-w_3x_2,y_1w_1x_1+y_2w_2x_2+w_3w_3^+$ & $w_2^-\neq y_2;w_3^-,w_3^+\neq x_2$\\\hline
	$w_1x_1,w_3w_2\in A,w_2y_2\notin A$ & $y_1x_1+y_2w_1+w_3^-w_3w_2x_2,y_1w_1x_1+y_2w_2w_2^++w_3x_2$ & $x_3^-\notin \{w_2,x_2\};w_2^+\neq x_2$\\\hline
	$w_1x_1,w_3w_2,w_2y_2\in A$ & $y_1w_1x_1+y_2w_2x_2+w_3^-w_3,y_1x_1+w_2y_2w_1w_1^++w_3x_2$\\\hline
	&\multicolumn{2}{l}{where $w_1^+\notin \{w_2,y_2\};\forall w_3^-\in N_D^-(w_3)$}\\\hline
	\multicolumn{3}{l}{$w_2x_1\in A$}\\\hline
	$D[\{w_3,x_2\}]=C_2,$\\$D[\{w_1,y_2\}]=C_2$ & $y_1x_1+y_2w_1+w_2^-w_2x_2w_3,y_1w_1y_2w_2x_1+w_3x_2$ & $w_2^-\notin \{w_3,x_2\}$\\\hline
	otherwise & $y_1w_1+y_2w_2x_1+w_3^-w_3x_2,y_1x_1+y_2w_1w_1^++w_2x_2+w_3w_3^+$ & $w_3^-,w_3^+\neq x_2;w_1^+\neq y_2$\\\hline
	\multicolumn{3}{l}{$w_3x_1\in A$}\\\hline
	$D[\{w_3,x_1\}]=C_2,w_1w_2\notin A$ & $y_1x_1w_3x_2+y_2w_1+w_2^-w_2,y_1w_1w_1^++y_2w_2x_2+w_3x_1$ & $w_2^-\neq y_2;w_1^+\neq y_1$\\\hline
	$D[\{w_3,x_1\}]=C_2,w_2w_3\notin A$ & $y_1w_1w_2x_2+w_3^-w_3x_1,y_2w_1y_1x_1+w_2w_2^++w_3x_2$ & $w_3^-\neq x_1;w_2^+\neq x_2$\\\hline
\end{tabular}

Hence $w_1w_2,w_2w_3\in A$.
Now w.l.o.g., assume $x_1q_1,x_2q_2\in A$.
If $q_1y_1,q_2y_2\in A$, then $D$ has a good pair $(B_{q_2}^+,B_{w_3}^-)$ with $B_{q_2}^+=q_2q_1y_1w_1w_2w_3x_2+w_3x_1+y_2^-y_2$ and $B_{w_3}^-=w_2x_2q_2y_2w_1y_1x_1w_3+q_1q_2$, where $y_2^-\neq q_2$, a contradiction.
If $q_2y_1,q_1y_2\in A$, then $D$ has a good pair $(B_{y_1}^+,B_{w_3}^-)$ with $B_{y_1}^+=y_1w_1w_2w_3x_2+w_3x_1q_1q_2+y_2^-y_2$ and $B_{w_3}^-=w_2x_2q_2q_1y_2w_1y_1x_1w_3$, where $y_2^-\neq q_1$, a contradiction.
Hence $D[\{w_3,x_1\}]\neq C_2$. Analogously, $D[\{w_1,y_1\}]\neq C_2$.

~\\
\vspace{2mm}
\begin{tabular}{l|c|c}\hline
	Case & $P_+,P_-$ & Notation \\\hline
	$D[\{w_1,y_2\}]=C_2$ & $y_1x_1+y_2w_1+w_2^-w_2+w_3^-w_3x_2,y_1w_1y_2w_2x_2+w_3x_1$ & $w_3^-\neq x_2;w_2^-\neq y_2$\\\hline
	\multicolumn{3}{l}{The case of $D[\{w_3,x_2\}]=C_2$ can be proved analogously.}\\\hline
	$N^-(w_3)\cup N^+(w_1)\subseteq W$ & $y_1w_1w_3x_1+y_2w_2x_2,y_1x_1+y_2w_1w_2w_3x_2$\\\hline
	Otherwise & $y_1w_1+y_2w_2x_2+w_3^-w_3x_1,y_1x_1+y_2w_1w_1^++w_2w_2^++w_3x_2$ & $w_3^-,w_1^+\notin W;w_2^+\neq x_2$\\\hline
\end{tabular}

{\bf Case 2:} $|N_W^+(Y)|=3$ ($|N_W^-(X)|=3$).

By the digraph duality, it suffices to prove the case of $|N_W^+(Y)|=3$.

W.l.o.g., assume $y_1w_3,y_2w_1,y_2w_2\in A$. We find a good pair $(B^++P_+,B^-+P_-)$ of $D$ as follows, a contradiction.

{\bf Subcase 2.1:} $w_1x_2,w_2x_2,w_3x_1\in A$.

Since $D\nsupseteqq E_3$, $D[\{w_1,w_2\}]\neq C_2$.

~\\
\vspace{2mm}
\begin{tabular}{l|c|c}\hline
	Case & $P_+,P_-$ & Notation \\\hline
	$w_1w_2,w_3w_1\in A$ & $y_1w_3x_1+y_2w_1w_2x_2,y_1x_1+y_2w_2w_2^++w_3w_1x_2$ & $w_2^+\notin \{x_2,y_2\}$\\\hline
	$w_1w_2\in A;w_3w_1\notin A$ & $y_1w_3x_1+y_2w_2+w_1^-w_1x_2,y_1x_1+y_2w_1w_2x_2+w_3w_3^+$ & $w_1^-\notin \{x_2,y_2\};w_3^+\neq x_1$\\\hline
	\multicolumn{3}{l}{The case of $w_2w_1\in A$ can be proved analogously.}\\\hline
	$w_2w_3\in A;w_3w_2\notin A$ & $y_1x_1+y_2w_1x_2+w_2^-w_2w_3,y_1w_3x_1+y_2w_2x_2+w_3w_3^+$ & $w_2^-\neq y_2;w_3^+\neq x_2$\\\hline
\end{tabular}

Hence $w_3w_2\in A$.
If $D[\{w_1,y_2\}]=C_2$, then let $V(Q')=\{w_1,y_2\}$, $X'=N^-(Q')$ and $Y'=N^+(Q')$. Note that $Y'=\{w_2,x_2\}$. Since $w_2x_2\in A$, by Claim~\ref{lem13}.4, $D$ has a good pair, a contradiction.
The case of $D[\{w_1,x_2\}]= C_2$ is analogous. Just let $V(Q')=\{w_1,x_2\}$. Then $X'=\{w_2,y_2\}$ with $y_2w_2\in A$, which implies that $D$ has a good pair by Claim~\ref{lem13}.4, a contradiction.

~\\
\vspace{2mm}
\begin{tabular}{l|c|c}\hline
	Case & $P_+,P_-$ & Notation \\\hline
	$D[\{w_1,y_2\}]\neq C_2$\\$D[\{w_1,x_2\}]\neq C_2$ & $y_2w_2w_3x_1+w_1^-w_1x_2,y_1w_3w_2x_2+y_2w_1w_1^+$ & $w_1^-,w_1^+\notin \{x_2,y_2\}$\\\hline
	\multicolumn{3}{l}{The following cases can be proved analogously: $w_1w_3\in A$; $w_3w_1\in A$; $w_3w_2\in A$.}\\\hline
	$w_1,w_2\notin N_D(w_3)$ & $y_1w_3x_1+y_2w_2x_2+w_1^-w_1,y_1x_1+y_2w_1x_2+w_2w_2^++w_3w_3^+$ & $w_1^-\neq y_2;w_2^+\neq x_2;w_3^+\neq x_1$\\\hline
\end{tabular}

{\bf Subcase 2.2:} $w_1x_1,w_2x_2,w_3x_2\in A$.

~\\
\vspace{2mm}
\begin{tabular}{l|c|c}\hline
	Case & $P_+,P_-$ & Notation \\\hline
	$w_1w_2\in A;w_2w_3\notin A$ & $y_1x_1+y_2w_1w_2x_2+w_3^-w_3,y_1w_3x_2+y_2w_2w_2^++w_1x_1$ & $w_3^-\neq y_1;w_2^+\notin \{x_2,y_2\}$\\\hline
	$w_1w_2\in A;D[\{w_1,x_1\}]\neq C_2$ & $y_1w_3+y_2w_2x_2+w_1^-w_1x_1,y_1x_1+y_2w_1w_2w_3x_2$ & $w_1^-\neq y_2$\\\hline
	$w_1w_2\in A;D[\{w_3,y_1\}]\neq C_2$ & $y_1x_1+y_2w_1w_2w_3x_2,y_1w_3w_3^++y_2w_2x_2+w_1x_1$ & $w_3^+\neq x_2$\\\hline
\end{tabular}

Hence both $D[\{w_1,x_1\}]$ and $D[\{w_3,y_1\}]$ are $C_2$.
W.l.o.g., assume $x_1q_2,x_2q_2\in A$.
If $q_1y_1,q_2y_2\in A$, then $D$ has a good pair $(B_{w_1}^+,B_{w_3}^-)$ with $B_{w_1}^+=w_1x_1q_1q_2y_2w_2w_3y_1$ and $B_{w_3}^-=y_2w_1w_2x_2q_2q_1y_1w_3+x_1y_1$, a contradiction.
If $q_2y_1,q_1y_2\in A$, then $D$ has a good pair $(B_{w_1}^+,B_{x_2}^-)$ with $B_{w_1}^+=w_1x_1q_1y_2w_2w_3x_2q_2+y_1^-y_1$ and $B_{x_2}^-=w_3y_1x_1w_1w_2x_2+y_2w_1+q_1q_2y_1$, where $y_1^-\neq q_2$, a contradiction.

By the digraph duality, we also prove the case of $w_2w_3\in A$. Thus $w_1w_2,w_2w_3\notin A$.

~\\
\begin{tabular}{l|c|c|c}\hline
	Case & $P_+$ & $P_-$ & Notation \\\hline
	$D[\{w_1,w_3\}]=C_2$ & $y_1w_3w_1x_1+y_2w_2x_2$ & $y_1x_1+y_2w_1w_3x_2+w_2w_2^+$ & $w_2^+\neq x_2$\\\hline
	$D[\{w_1,w_3\}]\neq C_2$ & $y_1w_3x_2+y_2w_1x_1+w_2^-w_2$ & $y_1x_1+y_2w_2x_2+w_1w_1^++w_3w_3^+$ & $w_2^-\neq y_2;w_1^+\neq x_1;w_3^+\neq x_2$\\\hline
	\multicolumn{3}{l}{The case when $w_1x_2,w_3x_2,w_2x_1\in A$ can be proved analogously.}\\\hline
\end{tabular}

\subsubsection{Case 2}
We find a good pair $(B^++P_+,B^-+P_-)$ of $D$, a contradiction.

{\bf Subcase 2.2:} Exactly one of $w_2$ and $w_3$ has two out-neighbours in $X$.

W.l.o.g., assume $w_3x_1,w_3x_2\in A$. Then $d_X^+(w_1)\le1$ and $d_X^+(w_1)\ge1$ as $|(W,X)_D|=4$, w.l.o.g., say $w_1x_1\in A$. Note that $w_3$ has an in-neighbour $w_3^-\in \{w_1,w_2\}$ by Lemma~\ref{lem12} and $d_Y^-(w_3)=1$.

~\\
\vspace{2mm}
\begin{tabular}{l|c|c}\hline
	Case & $P_+,P_-$ & Notation \\\hline
	$w_3^-=w_1$ & $y_1w_1x_1+y_2w_3x_2+w_2^-w_2,y_2w_1w_3x_1+y_1w_2w_2^+$ & $w_2^-,w_2^+\neq y_1$\\\hline
	$w_3^-=w_2$ & $y_1w_2w_3x_2+y_2w_1x_1,y_2w_3x_1+y_1w_1w_1^++w_2w_2^+$ & $w_1^+\notin \{x_1,y_1\};w_2^+\neq w_3$\\\hline
\end{tabular}

{\bf Subcase 2.3:} $d_X^+(w_2)=d_X^+(w_3)=1$.

That is $w_1x_1,w_1x_2\in A$. W.l.o.g., assume $w_2x_1,w_3x_2\in A$.

First assume $w_3w_2\in A$. Let $w_2^+$ be an out-neighbour of $w_2$ such that $w_2^+\neq x_1$ as $\lambda(D)\ge2$. If $w_2^+=y_1$, then $D[\{w_2,y_1\}]=C_2$. Set $Q'=D[\{w_2,y_1\}]$ and $Y'=N^+(Q')$. Now $Y'=\{w_1,x_1\}$ with $w_1x_1\in A$, which implies that $D$ has a good pair by the cases with $Y$ is not an independent set.

~\\
\begin{tabular}{l|c|c}\hline
	Case & $P_+,P_-$ & Notation \\\hline
	$w_2^+\neq y_1$ & $y_1w_1x_2+y_2w_3w_2x_1,y_2w_1x_1+w_3x_2+y_1w_2w_2^+$\\\hline
	$w_3w_2\notin A$ & $y_1w_1x_1+y_2w_3x_2+w_2^-w_2,y_1w_2x_1+y_2w_1x_2+w_3w_3^+$ & $w_2^-\neq y_1;w_3^+\neq x_2$\\\hline
\end{tabular}

\subsection{Proposition~\ref{h3-1}}
By contradiction, suppose that $P_8$ is the longest dipath of $D$ by Proposition~\ref{h2-2}. In fact there exist arcs between $C^1$ and $C^2$ from both directions, otherwise $D$ has a $P_9$ as $\lambda(D)\ge2$. W.l.o.g., assume $|C^1|\ge |C^2|$. Let $y$ be the vertex in $V-V(C^1\cup C^2)$ and $x_i,~\in[8]$ be the vertex in $V(C^1\cup C^2)$.
From the longestness of $P_8$ in $D$, we have the following facts.
\begin{description}
	\item[Fact~\ref{h3-1}.1] At least one of $(C^{i},y)_D$ and $(y,C^{3-i})_D$ is empty for any $i\in[2]$.
	\item[Fact~\ref{h3-1}.2] At least one of arcs $x_iy$ and $yx_{i+1}$ is not in $A$ for any $i\in[7]$.
\end{description}
We distinguish two cases as follows.

{\bf Case 1:} $|C^1|=|C^2|=4$.

Let $C^1=x_1x_2x_3x_4x_1$ and $C^2=x_5x_6x_7x_8x_5$ with $x_4x_5\in A$. Since $\lambda(D)\ge2$, $y$ have at least two in-arcs from and two out-arcs into $C^1\cup C^2$. W.l.o.g., assume that there exists an arc from $y$ to $C^1$. By Fact 1, $N(y)\subset C^1$. It follows that $N^+(y)=\{x_1,x_2\}$ and $N^-(y)=\{x_3,x_4\}$ by Fact 2.
Then we find a Hamilton dipath as $yx_1\in A$, a contradiction.

{\bf Case 2:} $|C^1|=5$ and $|C^2|=3$.

Let $C^1=x_1x_2x_3x_4x_5x_1$ and $C^2=x_6x_7x_8x_6$ with $x_5x_6\in A$.
Analogously to Case 1, $N(y)\subset C^1$ as $|C^2|=3$ and $D$ is oriented. Consider several cases below.

~\\
\vspace{2mm}
\begin{tabular}{l|c}\hline
	Case & Contradiction\\\hline
	$x_1y,yx_3\in A$ & $D$ has a Hamilton dipath $x_2x_3x_1yx_4x_5x_6x_7x_8$\\\hline
	$x_1y,x_3x_1\in A$ & $D$ has a Hamilton dipath $x_2x_3x_1yx_4x_5x_6x_7x_8$\\\hline
	$x_1y,x_1x_3,x_3x_5\in A$ & $N^+(x_4)=x_5$\\\hline
	$y\notin N_D(x_1)$ & $D$ has a Hamilton dipath $yx_2x_3x_1x_4x_5x_6x_7x_8$\\\hline
\end{tabular}

This compeltes the proof of Proposition~\ref{h3-1}.

\subsection{Lemma~\ref{h3-2}}
Suppose that $P=x_1x_2\ldots x_8$ is the longest dipath of $D$ by Proposition~\ref{h2-2}. Let $X=V(P)$ and $y=V-X$.
\begin{description}
	\item[Claim~\ref{h3-2}.1] If $x_4x_1,x_8x_6\in A$, then $x_5$ is not adjacent to $y$.
\end{description}
{\it Proof.}
Suppose to the contrary that $x_5$ is adjacent to $y$ when $x_4x_1,x_8x_6\in A$. Let $C'=x_1x_2x_3x_4x_1$ and $C'=x_6x_7x_8x_6$.

First assume $x_5x_9\in A$. Note that $N^+(y)\subset C'-x_1$ as $P_8$ is the longest dipath in $D$. Then $N^+(y)=\{x_3,x_4\}$ by Proposition~\ref{h3-1}. Since $\lambda(D)\ge2$, $N_{D-x_4}^-(x_5)\subset \{x_1,x_2\}$ by Proposition~\ref{h3-1} and $D$ contains no $K_4$.
If $x_2x_5\in A$, then we find a Hamilton dipath of $D$, $yx_3x_4x_1x_2x_5x_6x_7x_8$, a contradiction. Thus $x_1x_5\in A$. Now we find a new dicycle $Z'=x_1x_5yx_3x_4x_1$, which implies that $D$ has a Hamilton dipath by Proposition~\ref{h3-1}.

Henceforth, $x_9x_5\in A$. Note that $N^-(y)=\{x_6,x_7\}$ by the longestness of $P$ in $D$. It follows that we find a new dicycle $Z''=x_5x_6x_7yx_5$. Then by Proposition~\ref{h3-1}, $D$ has a Hamilton dipath, a contradiction.
\hfill $\lozenge$

\begin{description}
	\item[Note~\ref{h3-2}.1] Let $B^+$ and $T^-$ respectively be an out-branching and an in-tree of $D[X]$. If $V(T^-)=X-v$ for some $v\in X$ and $v$ has an out-arc which is not in $A(B^+)$, then $D$ has a good pair.\\
	Analogously, let $B^-$ and $T^+$ respectively be an in-branching and an out-tree of $D[X]$. If $V(T^+)=X-v$ for some $v\in X$ and $v$ has an in-arc which is not in $A(B^-)$, then $D$ has a good pair.
	\item[Note~\ref{h3-2}.2] Let $B^+$ and $T^-$ respectively be an out-branching and an in-tree of $D[X]$ with $V(T^-)=X-\{v_1,v_2\}$ for some $v_1,v_2\in X$. If for any $i\in[2]$, $v_i$ has an out-arc which is not in $A(B^+)$ and at least one of the two arcs is in $D[X]$, then $D$ has a good pair.\\
	Analogously, let $B^-$ and $T^+$ respectively be an in-branching and an out-tree of $D[X]$ with $V(T^+)=X-\{v_1,v_2\}$ for some $v_1,v_2\in X$. If for any $i\in[2]$, $v_i$ has an in-arc which is not in $A(B^-)$ and at least one of the two arcs is in $D[X]$, then $D$ has a good pair.
\end{description}
{\it Proof.}
Note~\ref{h3-2}.1 is trivial as $\lambda(D)\ge2$. It suffices to prove Note~\ref{h3-2}.2.
Let $e_i$ be the out-arc of $v_i$ which is described in Note~\ref{h3-2}.2. If some $e_i$ is adjacent to some vertex in $T^-$, then it is just the case in Note~\ref{h3-2}.1. Henceforth assume that $e_i$ is adjacent to $y$ or $v_{3-i}$, for any $i\in[2]$. W.l.o.g., assume that $e_1=v_1v_2$ and $e_2=v_2y$. Now let $B^-=T^-+e_1+e_2$. Since $\lambda(D)\ge2$, $y$ can be added to $B^+$ and $B^-$ respectively by an in-arc of $y$ which is different from $e_2$ and an arbitrary out-arc of $y$.
\hfill $\lozenge$~\\

We have the fact below as $P$ is the longest dipath of $D$.
\begin{description}
	\item[Fact~\ref{h3-2}.1.] $N^-(x_1),N^+(x_8)\subset X$ and $x_8x_1\notin A$, i.e., $D$ has no $C_8$.
\end{description}
We distinguish several cases as follows.

{\bf Case 1:} $x_6\in N^+(x_8)\cap N^-(x_1)$.

~\\
\vspace{2mm}
\begin{tabular}{l|c|c}\hline
	Case & Contradiction & Reference\\\hline
	\multicolumn{3}{l}{{\bf Subcase 1.1:} $x_4x_1\in A$}\\\hline
	$x_8x_4,x_3x_8\in A$ & $B_{D[X]}^+=x_1x_2x_3x_8x_4x_5x_6x_7,~T_{D[X]}^-=x_3x_4x_1+x_7x_8x_6x_1$ & Note~\ref{h3-2}.2\\\hline
	$x_8x_4,x_3x_6\in A;x_2x_5\notin A$ & $B_{D[X]}^+=x_8x_4x_1x_2x_3x_6x_7+x^-_5x_5,~T_{D[X]}^-=x_3x_4x_5x_6x_1+x_7x_8x_6$ & Note~\ref{h3-2}.1\\\hline
	&\multicolumn{2}{l}{where $x_5^-\in D[X]-\{x_2,x_4\}$}\\\hline
	$x_8x_4,x_3x_6,x_2x_5\in A$ & $B_{D[X]}^+=x_1x_2x_3x_6x_7x_8x_4x_5,~T_{D[X]}^-=x_3x_4x_1+x_2x_5x_6x_1+x_8x_6$ & Note~\ref{h3-2}.1\\\hline
	$x_8x_3,x_2x_4\in A$ & $B_{D[X]}^+=x_1x_2x_4x_5x_6x_7x_8x_3,~T_{D[X]}^-=x_2x_3x_4x_1+x_8x_6x_1$ & Note~\ref{h3-2}.2\\\hline
	$x_8x_3,x_2x_6\in A;x_7x_5\notin A$ & $B_{D[X]}^+=x_2x_6x_7x_8x_3x_4x_1+x^-_5x_5,~T_{D[X]}^-=x_4x_5x_6x_1x_2x_3+x_8x_6$ & Note~\ref{h3-2}.1\\\hline
	& \multicolumn{2}{l}{where $x_5^-\in D[X]-\{x_4,x_7\}$}\\\hline
	$x_8x_3,x_2x_6,x_7x_5\in A$ & $B_{D[X]}^+=x_1x_2x_6x_7x_8x_3x_4x_5,~T_{D[X]}^-=x_4x_1+x_8x_6x_1+x_7x_5x_6$ & Note~\ref{h3-2}.2\\\hline
	$x_8x_3,x_2x_8,x_7x_5\in A$ & $x_1x_2x_8x_3x_4x_1$ and $x_5x_6x_7x_5$ are two cycles covering $8$ vertices & Proposition~\ref{h3-1}\\\hline
	$x_8x_3,x_2x_8\in A;x_7x_5\notin A$ & $B_{D[X]}^+=x_2x_8x_3x_4x_1+x_5^-x_5x_6x_7,~B_{D[X]}^-=x_7x_8x_6x_1x_2x_3+x_4x_5x_5^+$\\\hline
	& \multicolumn{2}{l}{where $x_5^-\in D[X]-\{x_4,x_6,x_7\},~x_5^+ \in D[X]-\{x_4,x_6\}$}\\\hline
	$x_8x_2,x_1x_3\in A$ & $B_{D[X]}^+=x_1x_3x_4x_5x_6x_7x_8x_2,~T_{D[X]}^-=x_4x_1x_2x_3+x_8x_6x_1$ & Note~\ref{h3-2}.2\\\hline
	$x_8x_2,x_1x_8\in A$ & $B_{D[X]}^+=x_1x_8x_2x_3x_4x_5x_6x_7,~T_{D[X]}^-=x_4x_1x_2+x_7x_8x_6x_1$ & Note~\ref{h3-2}.2\\\hline
	\multicolumn{3}{l}{{\bf Subcase 1.2:} $x_3x_1\in A$}\\\hline
	$x_8x_2,x_1x_5\in A$ & $B_{D[X]}^+=x_8x_6x_1x_2x_3x_4x_5+x_7^-x_7,~T_{D[X]}^-=x_3x_1x_5x_6x_7x_8x_2$ & Note~\ref{h3-2}.1\\\hline
	& \multicolumn{2}{l}{where $x_7^-\in D[X]-\{x_4,x_6\}$}\\\hline
	$x_8x_2,x_1x_8\in A$ & $B_{D[X]}^+=x_1x_8x_2x_3x_4x_5x_6x_7,~T_{D[X]}^-=x_3x_1x_2+x_7x_8x_6x_1$ & Note~\ref{h3-2}.2\\\hline
	$x_8x_3,x_1x_4\in A$ & $B_{D[X]}^+=x_4x_5x_6x_7x_8x_3x_1x_2,~T_{D[X]}^-=x_8x_6x_1x_4+x_2x_3x_4$ & Note~\ref{h3-2}.2\\\hline
	$x_8x_3,x_6x_4,x_5x_3\in A$ & $B_{D[X]}^-=x_5x_6x_1x_2x_3x_4+x_7x_8x_3,~T_{D[X]}^+=x_8x_6x_4x_5x_3x_1+x_6x_7$ & Note~\ref{h3-2}.1\\\hline
	$x_8x_3,x_6x_4,x_5x_8\in A$ & $B_{D[X]}^-=x_5x_6x_1x_2x_3x_4+x_7x_8x_6,~T_{D[X]}^+=x_6x_4x_5x_8x_3x_1+x_6x_7$ & Note~\ref{h3-2}.1\\\hline
	$x_8x_5,x_7x_4\in A$ & $B_{D[X]}^+=x_7x_8x_5x_6x_1x_2x_3x_4,~T_{D[X]}^-=x_8x_6x_7x_4x_5x_3x_1$ & Note~\ref{h3-2}.1\\\hline
	$x_8x_5\in A;x_7x_4\notin A$ & $B_{D[X]}^+=x_6x_7x_8x_5x_3x_1x_2+x_4^-x_4,~T_{D[X]}^-=x_2x_3x_4x_5x_6x_1+x_8x_6$ & Note~\ref{h3-2}.1\\\hline
	& \multicolumn{2}{l}{where $x_4^-$ in $D[X]-\{x_3,x_7\}$}\\\hline
	{\bf Subcase 1.3:} $x_7x_1\in A$ & $C_8=x_5x_8x_6x_7x_1x_2x_3x_4x_5$ & Fact~\ref{h3-2}.1\\\hline
\end{tabular}

The case when $x_3\in N^+(x_8)\cap N^-(x_1)$ is analogous.

{\bf Case 2} $x_5\in N^+(x_8)\cap N^-(x_1)$.

~\\
\vspace{2mm}
\begin{tabular}{l|c|c}\hline
	Case & Contradiction & Reference\\\hline
	\multicolumn{3}{c}{{\bf Subcase 2.1:} $x_3x_1\in A$}\\\hline
	\multicolumn{3}{l}{$x_6x_4\in A$}\\\hline
	$yx_3,yx_7\in A$ & $D$ has a Hamilton dipath $x_6x_4x_5yx_7x_8x_2x_3x_1$\\\hline
	otherwise & $B_{D[X]}^-=x_1x_2x_3x_4x_5+x_6x_7x_8x_5,~T_{D[X]}^+=x_5x_1x_8x_2+x_5x_6x_4$ & Notes~\ref{h3-2}.1 and \ref{h3-2}.2\\\hline
	$x_7x_4\in A$ & $C_8=x_2x_3x_4x_5x_1x_6x_7x_8x_2$ & Fact~\ref{h3-2}.1\\\hline
	\multicolumn{3}{c}{{\bf Subcase 2.2:} $x_6x_1\in A$}\\\hline
	\multicolumn{3}{l}{$x_1x_7\in A$}\\\hline
	$x_2y,x_3y\in A$ & $D$ has a Hamilton dipath $x_7x_8x_3x_4x_5x_6x_1x_2y$\\\hline
	otherwise & $B_{D[X]}^+=x_5x_6x_7+x_5x_1x_2x_3x_4x_8,~T_{D[X]}^-= x_4x_5+x_6x_1x_7x_8x_5$ & Notes~\ref{h3-2}.1 and \ref{h3-2}.2\\\hline
	$x_2x_7\in A$ & $P_8$: $x_3x_4x_5x_6x_1x_2x_7x_8$ & Case 1\\\hline
	$x_3x_7\in A$ & $B_{D[X]}^+=x_5x_1x_2x_3x_4x_8+x_5x_6x_7,~T_{D[X]}^-=x_4x_5+x_3x_7x_8x_5+x_6x_1x_1^+$ & Note~\ref{h3-2}.1\\\hline
	& \multicolumn{2}{l}{where $x_1^+\in \{x_3,x_4,x_5,x_7,x_8\}$}\\\hline
\end{tabular}\\
\vspace{2mm}
\begin{tabular}{l|c}\hline
	Case & Contradiction\\\hline
	\multicolumn{2}{l}{{\bf Subcase 2.3:} $x_7x_1\in A$}\\\hline
	\multicolumn{2}{l}{$x_i^+$: an out-neighbour of $x_i$ which is not the successor of $x_i$ in $P$, $\forall i\in[8]$}\\\hline
	\multicolumn{2}{l}{$x_i^-$: an in-neighbour of $x_i$ which is not the predesessor of $x_i$ in $P$, $\forall i\in[8]$}\\\hline
	\multicolumn{2}{l}{$x_1x_6\in A$}\\\hline
	$x_2^+=x_3^+=y$ & $B_D^+=x_5x_1x_2x_3yx_7+x_5x_6+x_8^-x_8+x_4^-x_4,~B_D^-=x_3x_4x_5+x_2yx_5+x_1x_6x_7x_8x_5$\\\hline
	$x_i^+=y,~x_{5-i}^+=x_6$ & $B_D^+=x_5x_1x_2x_3x_4x_7+x_5x_6+x_8^-x_8+y^-y,~B_D^-=x_4x_5+x_iyx_5+x_1x_6x_7x_8x_5+x_{5-i}x_6$\\\hline
	& \multicolumn{1}{l}{where $y^-\in D\setminus x_i$}\\\hline
	otherwise & $B_{D[X]}^+=P$ and an in-tree $T_{D[X]}^-=x_8x_5x_1+x_4x_7x_1$ by Notes~\ref{h3-2}.1 and \ref{h3-2}.2\\\hline
	\multicolumn{2}{l}{$x_2x_6\in A$}\\\hline
	$x_6^+\neq x_3$ & $B_{D[X]}^+=P,~T_{D[X]}^-=x_8x_5x_1+x_4x_7x_1+x_2x_6x_6^+$ by Note~\ref{h3-2}.1\\\hline
	$x_6^+=x_3;x_3^+\neq y$ & $D\setminus y$ has a good pair\\\hline
	$x_6^+=x_3;x_3^+=y$ & $D$ has a Hamilton dipath $x_4x_7x_8x_5x_1x_2x_6x_3y$\\\hline
	$x_3x_6\in A$ & $B_D^+=x_5x_1+x_5x_6x_2x_3x_4x_7+y^-y+x_8^-x_8,~B_D^-=x_4x_5+x_3x_6x_7x_8x_5+x_1x_2x_5$ ($y^-\in D\setminus x_2$)\\\hline
\end{tabular}

The case when $x_4\in N^+(x_8)\cap N^-(x_1)$ is analogous.

{\bf Case 3} $x_3\in N^-(x_1)$.

~\\
\vspace{2mm}
\begin{tabular}{l|c|c}\hline
	Case & Contradiction & Reference\\\hline
	\multicolumn{3}{l}{{\bf Subcase 3.1:} $x_4x_1\in A$}\\\hline
	$x_3x_5\in A;x_6y\notin A$ & $B_{D[X]}^+=x_3x_4x_5x_6x_7x_8x_2+x_3x_1,~T_{D[X]}^-=x_4x_1x_2x_3x_5+x_8x_6x_6^+$ & Note~\ref{h3-2}.1\\\hline
	& \multicolumn{2}{l}{where $x_6^+ \in V(T^-)$}\\\hline
	$x_3x_5,x_6y\in A$ & $B_D^+=x_3x_4x_5x_6x_7x_8x_2+x_3x_1+y^-y,~B_D^-=x_4x_1x_2x_3x_5+x_8x_6yy^++x_7x_7^+$\\\hline
	&\multicolumn{2}{l}{where $y^-\neq x_6$, $y^+ \in V(T^-)$ and $x_7^+\neq x_8$}\\\hline
	$x_7x_5,x_5x_8\in A$ & $x_4x_1x_2x_3x_4$ and $x_8x_6x_7x_5x_8$ are two cycles covering 8 vertices & Proposition~\ref{h3-1}\\\hline
	$x_7x_5\in A;x_5x_8\notin A$ & $B_{D[X]}^+=x_3x_4x_5x_6x_7x_8x_2+x_3x_1,~T_{D[X]}^-=x_4x_1x_2x_3+x_7x_5x_5^+$ & Note~\ref{h3-2}.2\\\hline
	& \multicolumn{2}{l}{where $x_5^+ \in \{x_1,x_2,x_3\}$}\\\hline
	\multicolumn{3}{l}{{\bf Subcase 3.2:} $x_6x_1\in A$}\\\hline
	 & $B_{D[X]}^-=x_4x_5x_6x_7x_8x_2+x_3x_1x_5,~T_{D[X]}^+=x_6x_1x_2x_3x_4+x_3x_7$ & Note~\ref{h3-2}.2\\\hline
	\multicolumn{3}{l}{{\bf Subcase 3.3:} $x_7x_1\in A$}\\\hline
	$\{x_2,x_5\}\subseteq N^+(x_8)$ & $B_{D[X]}^+=x_3x_4x_5x_6x_7x_8x_2+x_3x_1,~T_{D[X]}^-=x_7x_1x_2x_3x_8x_5$ & Note~\ref{h3-2}.2\\\hline
	$N^+(x_8)=\{x_2,x_6\}$ & $x_5x_6x_7x_1x_5$ and $x_2x_3x_4x_8x_2$ are two cycles covering 8 vertices & Proposition~\ref{h3-1}\\\hline
	$N^+(x_8)=\{x_5,x_6\}$ & analogous to the case when $N^-(x_1)\supset \{x_3,x_4\}$\\\hline
\end{tabular}

The case of $x_6\in N^+(x_8)$ is analogous.

{\bf Cases 4 to 6}

~\\
\vspace{2mm}
\begin{tabular}{l|c|c}\hline
	Case & Contradiction & Reference\\\hline
	\multicolumn{3}{l}{{\bf Case 4:} $x_4\in N^-(x_1)$}\\\hline
	$x_5x_1\in A$ & $x_5x_6x_7x_8x_5$ and $x_1x_2x_3x_4x_1$ are two cycles covering 8 vertices & Proposition~\ref{h3-1}\\\hline
	$x_6x_1\in A$ & $N^+(x_2)=\{x_3\}$ & $\lambda(D)\ge2$\\\hline
	$x_7x_1\in A$ & $N^+(x_1)=\{x_2\}$ & $\lambda(D)\ge2$\\\hline
	\multicolumn{3}{l}{The case of $x_5\in N^+(x_8)$ is analogous.}\\\hline
	\multicolumn{3}{l}{{\bf Case 5:} $x_5\in N^-(x_1)$}\\\hline
	$x_6x_1,x_8x_4\in A$ & $N^+(x_7)=x_8$ & $\lambda(D)\ge2$\\\hline
	$x_6x_1\in A;N^+(x_8)=\{x_2,x_3\}$ & $N^+(x_2)=\{x_3,x_8\}$ & $D$ is oriented\\\hline
	$x_7x_1,x_8x_2\in A$ & $N^+(x_1)=x_2$ & $\lambda(D)\ge2$\\\hline
	$x_7x_1\in A;N^+(x_8)=\{x_3,x_4\}$ & $N^+(x_2)=x_3$ & $\lambda(D)\ge2$\\\hline
	\multicolumn{3}{l}{The case of $x_4\in N^+(x_8)$ is analogous.}\\\hline
	\multicolumn{3}{l}{{\bf Case 6:} $N^-(x_1)=\{x_6,x_7\}$ and $N^+(x_8)=\{x_2,x_3\}$}\\\hline
	& $N^+(x_2)=x_3$ & $\lambda(D)\ge2$\\\hline
\end{tabular}

This completes the proof of Lemma~\ref{h3-2}.

\end{document}